\newtheorem{thm}{Theorem}[section]
\newtheorem{prop} [thm]{Proposition}
\newtheorem{cor} [thm]{Corollary}
\newtheorem{lemma} [thm]{Lemma}
\newtheorem{problem}[thm]{Problem}
\theoremstyle{definition}
\newtheorem{definition}[thm]{Definition}
\newtheorem{remark}[thm]{Remark}
\renewcommand\leq{\leqslant} 
\renewcommand\geq{\geqslant}
\DeclareMathOperator{\Aut}{Aut}
\DeclareMathOperator{\diam}{diam}
\DeclareMathOperator{\PG}{PG}
\DeclareMathOperator{\HS}{HS}
\DeclareMathOperator{\McL}{McL}
\DeclareMathOperator{\M}{M}
\DeclareMathOperator{\J}{J}
\DeclareMathOperator{\GL}{GL}
\DeclareMathOperator{\PSL}{PSL}
\DeclareMathOperator{\GammaL}{\Gamma L}
\DeclareMathOperator{\PGammaL}{P\Gamma L}
\DeclareMathOperator{\PGaU}{P\Gamma U}
\DeclareMathOperator{\PSigmaU}{P\Sigma U}
\DeclareMathOperator{\PSU}{PSU}
\DeclareMathOperator{\PGaSp}{P\Gamma Sp}
\DeclareMathOperator{\PGammaO}{P\Gamma O}
\DeclareMathOperator{\PSO}{PSO}
\DeclareMathOperator{\POmega}{P\Omega}
\DeclareMathOperator{\SO}{SO}
\title{On $k$-connected-homogeneous graphs}
\author{Alice Devillers, Joanna B. Fawcett,  Cheryl E. Praeger, Jin-Xin Zhou}
\address[Devillers]{Centre for the Mathematics of Symmetry and Computation,
Department of Mathematics and Statistics,
The University of Western Australia,
Crawley, W.A. 6009, Australia}
\email{alice.devillers@uwa.edu.au}
\address[Fawcett]{Department of Mathematics,  Imperial College London, South Kensington Campus, 
London, SW7 2AZ, United Kingdom}
\email{j.fawcett@imperial.ac.uk}
\address[Praeger]{Centre for the Mathematics of Symmetry and Computation,
Department of Mathematics and Statistics,
The University of Western Australia,
Crawley, W.A. 6009, Australia}
\email{cheryl.praeger@uwa.edu.au}
\address[Zhou]{Department of Mathematics, Beijing Jiaotong University, 
Beijing, 100044, P.R. China}
\email{jxzhou@bjtu.edu.cn}
\keywords{locally finite graph,  homogeneous, k-connected-homogeneous, s-arc-transitive}
\thanks{This work forms part of the Australian Research Council Discovery Project grant DP130100106 of the first and third authors. The second author was supported by this same grant, the London Mathematical Society and  the European Union's Horizon 2020 research and innovation programme under the Marie Sk\l{}odowska-Curie grant agreement No.\ 746889”. The fourth author was supported by the National
Natural Science Foundation of China (11671030).
 We thank the reviewer for helpful comments.  
}
\subjclass[2010]{20B25, 05C75, 05E18, 05E20, 05E30}
\begin{document}
    
\maketitle

\begin{abstract}
A graph $\Gamma$ is $k$-connected-homogeneous ($k$-CH) if $k$ is a positive integer and  any isomorphism between connected induced subgraphs  of order at most $k$ extends to an automorphism of $\Gamma$, and connected-homogeneous (CH) if this property holds for all $k$. 
 Locally finite, locally connected graphs often fail to be $4$-CH  because of a combinatorial obstruction called the unique $x$ property; we prove that this property holds for locally  strongly regular graphs under various purely  combinatorial assumptions. We then classify the locally finite,  locally connected  $4$-CH graphs. We also classify the locally finite, locally disconnected $4$-CH graphs containing $3$-cycles and induced $4$-cycles, and 
 prove that, with the possible exception of   locally disconnected graphs
 containing $3$-cycles but no induced $4$-cycles,   
every finite $7$-CH graph is CH.
\end{abstract}

\section{introduction}
\label{s:intro}

A (simple undirected) graph $\Gamma$ is \textit{homogeneous} if any isomorphism between finite induced subgraphs extends to an automorphism of $\Gamma$. 
 An analogous definition can be made for any relational structure, and the study of these highly symmetric objects dates back to Fra\"iss\'e~\cite{Fra1953}. 
The finite and countably infinite homogeneous graphs have been classified~\cite{Gar1976,LacWoo1980,GolKli1978}, and very few families of graphs arise  (see Theorem \ref{thm:hom}). Consequently, various relaxations of homogeneity have been considered. For example,  a graph $\Gamma$ is $k$\textit{-homogeneous} if $k$ is a positive integer and  any isomorphism between induced subgraphs of order at most $k$ extends to an automorphism of $\Gamma$. Every locally finite $5$-homogeneous graph is homogeneous~\cite{Cam1980}---remarkably, this result does not rely upon the classification of the finite simple groups (CFSG)---but for each $k$,  there are uncountably many  countable $k$-homogeneous graphs that are not $(k+1)$-homogeneous~\cite{DroMac1991}. Further, for $2\leq k\leq 4$, the locally finite $k$-homogeneous graphs  have been classified using the CFSG~\cite{Buc1980,CamMac1985,LieSax1986} (see \S\ref{s:results}).
 
We require the following definition: for a graph (or graph property) $X$, we say that  a graph $\Gamma$ is \textit{locally X}  if the neighbourhood of any vertex in $\Gamma$   is non-empty and induces a graph that is  isomorphic to (or has  property) $X$;  see \S\ref{s:defn} for other unexplained terms. 

Here is another way to relax the concept of homogeneity: a graph is $k$\textit{-connected-homogeneous}, or $k$\textit{-CH}, if $k$ is a positive integer and  any isomorphism between connected induced subgraphs  of  order at most $k$ extends to an automorphism of the graph, and \textit{connected-homogeneous}, or \textit{CH}, if it is $k$-CH for all $k$. 
 The locally finite CH graphs have been classified~\cite{Gar1978,Eno1981} (see Theorem~\ref{thm:CH}), as have the countably infinite CH graphs~\cite{GraMac2010}, and Gray~\cite{Gra2009}  proved  that any infinite locally finite $3$-CH graph with more than one end is CH. 
The $1$-CH  and $2$-CH graphs are precisely the vertex-transitive and regular arc-transitive graphs, respectively.
The $3$-CH graphs  with girth at least~$4$ are  $2$-arc-transitive, and it is infeasible to classify such graphs in general, even in the finite case.  Indeed, for a classification of the  finite $2$-arc-transitive graphs,  dealing with just one of the cases arising from the reduction of  the third author in \cite[Theorem~2 and Corollary~4.2]{Pra1993} would require a classification of all transitive actions of all finite simple groups for which a point stabiliser acts $2$-transitively on one of its orbits. 
Finite $3$-CH graphs with girth $3$ are studied using different  terminology in~\cite{BamDevFawPra2015};  see also \cite{LiZhoSub}.   In this paper, we investigate locally finite $k$-CH graphs for~$k\geq 4$.

\begin{remark}
\label{remark:connected}
A  graph is $k$-CH if and only if it  is a disjoint union of connected $k$-CH graphs, all of which are isomorphic. Thus we will often restrict our attention to connected graphs.
\end{remark}

The study of $k$-CH graphs  (with non-zero valency) naturally divides into  the locally connected case and the locally disconnected case.   First we consider the locally connected case. (Definitions of the graphs that arise below may be found in \S\ref{s:graphs}.)

\begin{thm}
\label{thm:connected}
Let $\Gamma$ be a locally finite, connected, locally connected graph.  If $\Gamma$ is $4$-CH, then one of the following holds.
\begin{itemize}
\item[(i)] $\Gamma$ is  $K_n$ where $n\geq 2$ or  $K_{m[r]}$ where $m\geq 3$ and $r\geq 2$. Here $\Gamma$ is homogeneous.
\item[(ii)] $\Gamma$ is the Schl\"{a}fli graph. Here $\Gamma$ is $4$-homogeneous but not $5$-CH.
\item[(ii)] $\Gamma$ is the  McLaughlin graph. Here  $\Gamma$ is $4$-CH but not $5$-CH.
\end{itemize}
\end{thm}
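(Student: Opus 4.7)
The plan is to first pin down the local structure of $\Gamma$ using the $3$-CH hypothesis, and then to invoke the paper's combinatorial obstruction---the unique $x$ property for locally strongly regular graphs---to handle the $4$-CH condition. Since $\Gamma$ is locally connected, $\Gamma(v)$ contains an edge, and so $\Gamma$ contains triangles. Every connected induced subgraph of order $3$ of $\Gamma$ containing $v$ has the form $\{v,u,w\}$ for distinct neighbours $u,w$ of $v$, and it is a triangle when $u\sim w$ and an induced path when $u\not\sim w$. Applying $3$-CH to each of these two isomorphism types, together with the arc-transitivity that follows from $2$-CH, shows that the stabiliser $\Aut(\Gamma)_v$ acts transitively on $\Gamma(v)$, on the edges of $\Gamma(v)$, and on the non-edges of $\Gamma(v)$. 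Hence $\Aut(\Gamma)_v$ induces a permutation group of rank at most $3$ on $\Gamma(v)$, so $\Gamma(v)$ is either a complete graph or a connected strongly regular graph.

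If $\Gamma(v)\cong K_n$, then $\Gamma$ is locally complete and connected, so $\Gamma\cong K_{n+1}$, which gives the first conclusion.

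Otherwise $\Gamma$ is locally strongly regular, and I would apply the paper's theorem establishing the unique $x$ property in this setting. Informally, that property says that for certain connected induced $4$-vertex configurations in $\Gamma$, any two ``completing'' vertices must coincide, and its failure obstructs $4$-CH. Translating this into constraints on the parameters $(n',k',\lambda',\mu')$ of $\Gamma(v)$---and exploiting that $\Gamma(v)$ supports a rank-$3$ action---I would reduce the possibilities to a short list. I expect the main obstacle to lie precisely here: making the unique $x$ property tight enough to rule out every strongly regular graph as $\Gamma(v)$ except the complete multipartite graphs $K_{(m-1)[r]}$ (with $m\geq 3$, $r\geq 2$), the local graph of the Schl\"{a}fli graph, and the local graph of the McLaughlin graph. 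This step requires a careful case analysis of common-neighbour counts in the second neighbourhood of $v$, combined with classifications of rank-$3$ strongly regular graphs.

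For each admissible candidate for $\Gamma(v)$, I would then reconstruct $\Gamma$ by appealing to classical characterisations of locally-$X$ graphs: the locally $K_{(m-1)[r]}$ case yields $\Gamma\cong K_{m[r]}$, while the two exceptional local strongly regular graphs force $\Gamma$ to be the Schl\"{a}fli graph and the McLaughlin graph respectively. Finally, the claimed homogeneity of the complete and complete-multipartite cases, the $4$-homogeneity but failure of $5$-CH for the Schl\"{a}fli graph, and the $4$-CH but not $5$-CH behaviour of the McLaughlin graph are verified directly using the well-known actions of $W(E_6)$ on $27$ points and of $\McL{.}2$ on $275$ points.
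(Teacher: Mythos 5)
Your opening reduction is fine as far as it goes: $3$-CH does force $\Aut(\Gamma)_v$ to act with rank at most $3$ on $\Gamma(v)$, so the local graph is complete or a finite connected strongly regular graph, and the locally complete case gives $K_{n+1}$. But everything after that is a plan rather than an argument: the step you yourself flag as ``the main obstacle'' --- ruling out every strongly regular candidate for $\Gamma(v)$ except $K_{(m-1)[r]}$, the Clebsch graph and the point graph of $Q_5^-(3)$ --- is precisely the content of the theorem, and it is not carried out. Moreover, the route you sketch would not go through as described. The unique $x$ property is not read off from the parameters $(n',k',\lambda',\mu')$ of $\Gamma(v)$: in \S\ref{s:uniquex} it is established only under specific structural hypotheses on the $\mu$-graphs of the local graph (five separate families), and several local graphs are excluded or identified by entirely different means --- an explicitly constructed fixed vertex fed into Lemma~\ref{lemma:x} for locally $K_n\times K_n$ and for the $q=2$ subcase of Lemma~\ref{lemma:pointgraph}, faithfulness of the local action combined with maximal-subgroup data from the Atlas for the Higman--Sims and McLaughlin graphs and their complements (Lemmas~\ref{lemma:HS} and~\ref{lemma:McL}), and the Buekenhout--Hubaut classification together with a rank condition on $G_u^{\Gamma(u)}$ for locally $Q_5^-(q)$. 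In addition, by exploiting only $3$-CH you constrain $\Gamma(v)$ merely to be $2$-homogeneous, so your ``short list'' would have to come from the classification of finite primitive rank~$3$ groups; the paper instead uses the full strength of $4$-CH via Lemma~\ref{lemma:localhom}, so that $\Gamma(v)$ is $3$-homogeneous and Cameron--Macpherson's list (Theorem~\ref{thm:3hom}) applies at once --- a far shorter case analysis, and the one the paper deliberately prefers.

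The reconstruction and verification steps are also thinner than you suggest. There is no off-the-shelf ``locally $X$'' characterisation to quote for the two exceptional cases: the paper needs the $4$-CH hypothesis to force $\diam(\Gamma)=2$ (Lemma~\ref{lemma:diam2}), then in the locally Clebsch case to pin down $c_2(\Gamma)=8$ via Lemmas~\ref{lemma:k2=1} and~\ref{lemma:x} before invoking uniqueness of the $(27,16,10,8)$ strongly regular graph, and in the locally $Q_5^-(3)$ case to combine the rank-$3$ local action with the Kantor--Liebler and Buekenhout--Hubaut results, plus a further argument forcing $G=\McL{:}2$. Finally, the $4$-CH property of the McLaughlin graph is verified in the paper by a {\sc Magma} computation rather than ``directly'' from the group actions, and the failure of $5$-CH is deduced from Lemma~\ref{lemma:localhom} applied to the local graph. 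So the skeleton you propose is compatible with the truth, but the case analysis and the specific tools that constitute the actual proof are missing.
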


\begin{cor}
\label{cor:connected}
Any  locally finite, connected, locally connected $5$-CH graph  is homogeneous.
\end{cor}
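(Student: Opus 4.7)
The plan is to derive the corollary as an immediate consequence of Theorem \ref{thm:connected}. Since every $5$-CH graph is by definition $4$-CH, any locally finite, connected, locally connected $5$-CH graph $\Gamma$ falls into one of the three cases (i)--(iii) of Theorem \ref{thm:connected}.

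I would argue by elimination. In cases (ii) and (iii) the theorem explicitly records that $\Gamma$ is the Schl\"afli graph or the McLaughlin graph, and in both cases $\Gamma$ is stated to be not $5$-CH. Thus, assuming $\Gamma$ is $5$-CH, these two cases are excluded, and $\Gamma$ must lie in case (i), i.e.\ $\Gamma \cong K_n$ with $n \geq 2$ or $\Gamma \cong K_{m[r]}$ with $m \geq 3$ and $r \geq 2$. The theorem records that such graphs are homogeneous, which completes the deduction.

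Since the argument is a one-line elimination from the classification in Theorem \ref{thm:connected}, there is no real obstacle beyond invoking the theorem; the work has already been done in establishing Theorem \ref{thm:connected} itself and in verifying the particular non-$5$-CH assertions for the Schl\"afli and McLaughlin graphs (which would be part of the proof of Theorem \ref{thm:connected}). The only minor point worth remarking, for completeness, is that the homogeneity of the complete multipartite graphs $K_n$ and $K_{m[r]}$ is a standard fact: their automorphism groups are $\mathrm{Sym}(n)$ and $\mathrm{Sym}(m)\wr \mathrm{Sym}(r)$ respectively, both of which act on the vertex set in a way that realises every isomorphism between induced subgraphs.
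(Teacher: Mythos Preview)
Your proposal is correct and matches the paper's approach exactly: the corollary is stated in the paper without proof, as it follows immediately from Theorem~\ref{thm:connected} by the elimination argument you give. One small slip in your closing remark: for $K_{m[r]}$ (with $m$ parts of size $r$) the automorphism group is $S_r \wr S_m$, not $S_m \wr S_r$; but this aside is not needed for the corollary, since Theorem~\ref{thm:connected}(i) already records homogeneity.
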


Any   disconnected $2$-homogeneous graph must be a disjoint union of  complete graphs with the same order (see Lemma \ref{lemma:2homsimple}). 
 Since a $3$-CH graph is locally $2$-homogeneous (see Lemma \ref{lemma:localhom}), it follows that a locally finite $3$-CH graph  is locally disconnected if and only if it is locally $(t+1)\cdot K_s$ for some positive integers $t$ and $s$. 
For such graphs, one important parameter is the number of common neighbours of two vertices at distance two; this number is constant for a $3$-CH graph~$\Gamma$, and we denote it by $c_2(\Gamma)$, or $c_2$ when context permits. Note that a  locally $(t+1)\cdot K_s$ graph  has girth~$3$ precisely when $s>1$,  and girth at least~$5$ precisely when $s=c_2=1$. 
Further, it has no induced $4$-cycles precisely when $c_2=1$ (see Lemma~\ref{lemma:notinduced}).
 We divide the locally disconnected case  into four cases: girth~$3$ with $c_2>1$, girth~$4$, girth at least~$5$, and girth~$3$ with $c_2=1$. In our next result, we classify the locally finite $4$-CH graphs in the first of these cases.

\begin{thm}
\label{thm:girth3quad}
Let $\Gamma$ be a locally finite, connected, locally disconnected graph with girth $3$ for which   $c_2>1$. If $\Gamma$ is $4$-CH, then one of the following holds.
\begin{itemize}
\item[(i)] $\Gamma$ is  $K_{n}\Box K_{n}$ where $n\geq 3$. Here $\Gamma$ is CH.
\item[(ii)] $\Gamma$ is the point graph of the generalised quadrangle $Q_4(3)$, $Q^-_5(3)$ or $Q^-_5(4)$. Here $\Gamma$ is $4$-CH but not $5$-CH.
\item[(iii)] $\Gamma$ is the point graph of the generalised quadrangle $Q^-_5(2)$. Here $\Gamma$ is $5$-CH but not~$6$-CH.
\end{itemize}
\end{thm}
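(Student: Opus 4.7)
The plan is to extract a very restrictive local structure from the $4$-CH hypothesis, identify $\Gamma$ as the point graph of a generalised quadrangle, and then classify via known results on highly symmetric finite generalised quadrangles.

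Since $\Gamma$ is $3$-CH, Lemma~\ref{lemma:localhom} says it is locally $2$-homogeneous; combined with ``locally disconnected'' and ``girth $3$'', this forces $\Gamma_u\cong (t+1)\cdot K_s$ for constants $t\geq 1$ and $s\geq 2$. Hence each edge lies in a unique maximal clique, of size $s+1$, and each vertex lies on exactly $t+1$ such cliques. To analyse the $\mu$-graph, observe that for $u,w$ at distance $2$ the $3$-CH property forces every pair in $\Gamma(u)\cap\Gamma(w)$ to have the same adjacency, so that induced subgraph is either $K_{c_2}$ or its complement. The first possibility would place an edge $xy$ into two distinct maximal cliques (one via $u$, one via $w$), contradicting uniqueness of the maximal clique through an edge; therefore the $\mu$-graph is empty, $c_2\leq t+1$, and the common neighbours of $u,w$ sit one each in $c_2$ distinct components of $\Gamma_u$.

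Next I would promote the set of maximal cliques $\mathcal L$ to a line system and set $\mathcal Q=(V(\Gamma),\mathcal L)$. The clique-uniqueness argument also gives $|\Gamma(v)\cap C|\leq 1$ for any antiflag $v\notin C$, so $\mathcal Q$ is a partial linear space. To identify $\Gamma$ as the point graph of a generalised quadrangle of order $(s,t)$ I need every antiflag $(v,C)$ to satisfy $|\Gamma(v)\cap C|=1$, equivalently, $\Gamma$ to have diameter $2$. Supposing the contrary and choosing $(v,C)$ with $\Gamma(v)\cap C=\emptyset$ and $v$ at minimum possible distance from $C$, I would pick $w\in C$ at distance $2$ from $v$ and use the $4$-CH transitivity on the induced $4$-cycles through $v,w$ (which exist because $c_2\geq 2$) to transport the configuration $\{v,w\}\cup (\Gamma(v)\cap\Gamma(w))$ into one sitting inside the clique-neighbourhood of $C$, where the image of $v$ would be adjacent to a point of $C$---a contradiction. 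Consequently $\mathcal Q$ is a generalised quadrangle of order $(s,t)$ and $\Gamma$ is its point graph.

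It remains to classify the possible $\mathcal Q$. The $4$-CH condition forces $\Aut(\mathcal Q)$ to be flag-transitive (from $2$- and $3$-CH), antiflag-transitive, and $2$-transitive on the common perp $p^\perp\cap q^\perp$ of any two non-collinear points (from $4$-CH applied to induced $4$-cycles). For $t=1$, $\mathcal Q$ is a grid and these transitivity conditions immediately yield $\Gamma\cong K_n\Box K_n$, giving case~(i). For $t\geq 2$ I would invoke the classification of finite generalised quadrangles admitting a flag-transitive automorphism group subject to the extra transitivity just noted; this cuts $\mathcal Q$ down to the classical families $W(q)$, $Q_4(q)$, $H_3(q^2)$, $Q_5^-(q)$, $H_4(q^2)$ and their duals, together with a short list of small examples, and I would then eliminate every candidate except $Q_4(3)$ and $Q_5^-(q)$ for $q\in\{2,3,4\}$ by exhibiting in each rejected case an induced $P_4$ or induced $C_4$ whose orbit under $\Aut(\mathcal Q)$ is too small. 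I expect this last case analysis---ruling out the infinite families uniformly and checking a handful of exceptional parameter values---to be the main obstacle; verifying that the three listed graphs are genuinely $4$-CH (and that $Q_5^-(2)$ is additionally $5$-CH while none is $6$-CH) is a similar but finite group-theoretic check.
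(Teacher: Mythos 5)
Your overall strategy (recognise $\Gamma$ as the point graph of a generalised quadrangle of order $(s,t)$, then classify via known results on highly symmetric GQs and check the small survivors) is the same as the paper's, but the two load-bearing steps are gaps. First, the identification step: your asserted equivalence ``every antiflag $(v,C)$ satisfies $|\Gamma(v)\cap C|=1$ if and only if $\diam(\Gamma)=2$'' is false -- diameter $2$ alone does not give the GQ axiom; you also need $c_2=t+1$ (so that the $c_2$ pairwise non-adjacent common neighbours of $v$ and $w\in C$ occupy \emph{all} $t+1$ cliques through $w$, hence one lies in $C$). More importantly, your proposed proof of the axiom -- ``transport the configuration $\{v,w\}\cup(\Gamma(v)\cap\Gamma(w))$ via $4$-CH transitivity on induced $4$-cycles into one sitting inside the clique-neighbourhood of $C$'' -- does not produce a contradiction as described: an automorphism carrying one induced $4$-cycle to another carries no information about the line $C$ (the line is neither determined by the $4$-cycle nor fixed by the automorphism), so nothing forces the image of $v$ to be adjacent to a point of $C$. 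The paper's Lemma~\ref{lemma:girth3c2>1finite} does this differently and precisely: $\diam(\Gamma)=2$ is proved by comparing a $3$-geodesic $(u,v,w,x)$ with the set $\{u,v,w,z\}$, where $z$ lies in the second clique through $w$ inside $\Gamma_2(u)$; both induce path graphs, so $4$-CH would map $x\in\Gamma_3(u)$ to $z\in\Gamma_2(u)$. Separately, $c_2=t+1$ is forced because otherwise $\Gamma_2(u)\cap\Gamma_2(v)\cap\Gamma(w)$ induces $(c_2-1)\cdot K_{s-1}\,\cup\,(t+1-c_2)\cdot K_s$, which is not vertex-transitive, contradicting $4$-CH. (Your $\mu$-graph claim is fine in its conclusion, but it follows from the no-$K_4$-minus-an-edge fact, Lemma~\ref{lemma:notinduced}, not from $3$-CH acting on pairs.)

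Second, the classification step is invoked incorrectly and the elimination is deferred rather than done. There is no classification of finite flag-transitive generalised quadrangles to cite -- that problem is open. What is available, and what the paper uses (Theorem~\ref{thm:GQ}), is Buekenhout--Van Maldeghem's classification of \emph{distance-transitive} GQs, applicable because a $3$-CH diameter-$2$ graph is distance-transitive; and even that list must first be trimmed using $s\mid t$, which the paper extracts from $4$-CH by the counting argument of Lemma~\ref{lemma:sdividest} ($s^{m-2}$ divides $t$ with $m=3$), leaving $W_3(q)$, $Q_4(q)$, $Q_5^-(q)$, $H_4(q^2)$. The remaining elimination -- which you acknowledge as ``the main obstacle'' -- is the bulk of the paper's proof and does not reduce to exhibiting a small orbit of an induced $P_4$ or $C_4$: the bound $s\in\{2,3,4\}$ comes from applying $4$-CH to four collinear points (a $K_4$ inside a line) to force $\Aut(\mathbb{F}_s)$ to be transitive on $\mathbb{F}_s\setminus\{0,1\}$, and $W_3(s)$ and $H_4(4)$ are killed (and the non-$5$-CH claims for $Q_5^-(3)$, $Q_5^-(4)$ established) by explicit symplectic/unitary/quadratic form computations showing that three or four suitably chosen pairwise non-collinear points through a point have no common neighbour at distance $2$, contradicting Lemma~\ref{lemma:sdividest}. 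Without these ingredients your outline does not yet constitute a proof.
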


Note that the point graph of $Q^-_5(2)$  and its complement the Schl\"{a}fli graph, which arose in the locally connected case,  are the only two locally finite $4$-homogeneous graphs that are not homogeneous~\cite{Buc1980}.  The following is an immediate consequence of Corollary~\ref{cor:connected} and Theorem~\ref{thm:girth3quad}.

\begin{cor}
\label{cor:girth3quad}
Any locally finite $6$-CH graph  with girth $3$ and $c_2>1$ is CH.
\end{cor}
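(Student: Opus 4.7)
The plan is to deduce this directly by combining Corollary~\ref{cor:connected} and Theorem~\ref{thm:girth3quad}, exploiting the fact that being $6$-CH trivially implies being $5$-CH and $4$-CH. First, by Remark~\ref{remark:connected}, I may assume $\Gamma$ is connected, since $\Gamma$ is $k$-CH if and only if it is a disjoint union of isomorphic connected $k$-CH components (and a disjoint union of CH graphs is CH).

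Next, I would split into two cases according to whether $\Gamma$ is locally connected or locally disconnected. Since $\Gamma$ has girth $3$ and $c_2>1$, these are the only alternatives (a $3$-CH graph is either locally connected, or locally of the form $(t+1)\cdot K_s$ as noted in the discussion preceding Theorem~\ref{thm:girth3quad}). In the locally connected case, $\Gamma$ is in particular $5$-CH, so Corollary~\ref{cor:connected} applies and gives that $\Gamma$ is homogeneous, hence CH.

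In the locally disconnected case, the hypotheses of Theorem~\ref{thm:girth3quad} are satisfied (since $\Gamma$ is $4$-CH with girth $3$ and $c_2>1$), so $\Gamma$ appears in the list (i)--(iii) of that theorem. The graphs in case (ii) are not $5$-CH and those in case (iii) are not $6$-CH, so under our $6$-CH assumption these cases are immediately ruled out. Therefore $\Gamma=K_n\Box K_n$ for some $n\geq 3$, and by Theorem~\ref{thm:girth3quad}(i) this graph is CH.

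There is no real obstacle here: the statement is essentially a bookkeeping consequence of the two preceding results, and the only thing to check carefully is that the CH-levels recorded in Theorem~\ref{thm:girth3quad} are strong enough to eliminate the exceptional point graphs of the generalised quadrangles $Q_4(3)$, $Q_5^-(3)$, $Q_5^-(4)$, and $Q_5^-(2)$ under the $6$-CH hypothesis, which they are.
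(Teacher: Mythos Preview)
Your proposal is correct and follows essentially the same approach as the paper, which simply records (without an explicit proof) that the corollary is an immediate consequence of Corollary~\ref{cor:connected} and Theorem~\ref{thm:girth3quad}. Your case split into locally connected versus locally disconnected, together with the elimination of the exceptional generalised quadrangle graphs using the recorded $5$-CH and $6$-CH failures, is exactly the intended argument.
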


For the three remaining cases, we do not have classifications of the locally finite $4$-CH graphs. Instead, we consider finite $k$-CH graphs for slightly larger $k$.  For the girth $4$ case, we use the classification of the  finite $4$-transitive permutation groups (a well-known consequence of the CFSG)  together with some results from~\cite{Cam1975} to prove the following; note that  families of finite CH graphs with girth $4$ do exist (see Theorems~\ref{thm:CH} or~\ref{thm:girth4plus}). 

 \begin{thm}
 \label{thm:girth4}
 \begin{itemize}
 \item[(i)]  Any finite $5$-CH graph with girth  $4$ is CH.
 \item[(ii)] There are infinitely many finite connected $4$-CH graphs with girth $4$ that are not $5$-CH.
 \end{itemize}
\end{thm}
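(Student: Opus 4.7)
For (i), let $\Gamma$ be a finite $5$-CH graph of girth~$4$ with $G=\Aut(\Gamma)$. Triangle-freeness makes $\Gamma$ locally empty, so for each vertex $v$ any four distinct neighbours $u_1,\ldots,u_4$ together with $v$ span an induced $K_{1,4}$, which is connected of order~$5$; hence $5$-CH forces $G_v^{\Gamma(v)}$ to act $4$-transitively on $\Gamma(v)$ whenever $|\Gamma(v)|\geq 4$. The sub-cases $|\Gamma(v)|\leq 3$ force $\Gamma\in\{C_4,K_{3,3},Q_3\}$ directly, all of which are CH. Applying the CFSG classification of finite $4$-transitive groups then gives $A_k\leq G_v^{\Gamma(v)}\leq S_k$ for some $k=|\Gamma(v)|\geq 4$, or $(G_v^{\Gamma(v)},k)\in\{(M_{11},11),(M_{12},12),(M_{23},23),(M_{24},24)\}$.

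For each surviving case I would analyse the $G_v$-action on $\Gamma_2(v)$: each $w\in\Gamma_2(v)$ determines a $c_2$-subset $\Gamma(v)\cap\Gamma(w)\subseteq\Gamma(v)$ with $c_2\geq 2$, and $5$-CH applied to connected induced $5$-vertex subgraphs through $v$ and $w$---for instance an induced $4$-cycle $v,u_1,w,u_2$ with a pendant at $v$, or two copies of $P_3$ joined at $v$---imposes design-like constraints on the family of such $c_2$-subsets. Combined with the suborbit results of \cite{Cam1975}, this should eliminate the Mathieu cases and collapse the $A_k/S_k$ cases to the girth-$4$ entries of the Gardiner--Enomoto list (Theorem~\ref{thm:CH}), all of which are CH. The main obstacle is this final elimination: the Mathieu and large $A_k/S_k$ actions are transitive enough that several layers of constraint (local $4$-transitivity, the $c_2$-block structure, and Cameron's suborbit bounds) must be invoked simultaneously to force $\Gamma$ onto the short list of known CH graphs.

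For (ii), I would exhibit an explicit infinite family of finite $4$-CH, non-$5$-CH graphs of girth~$4$. Natural candidates arise from bipartite doubles or incidence graphs of suitable $2$-designs whose automorphism groups are locally transitive enough for $4$-CH yet admit a parity- or colouring-type obstruction on induced $P_5$'s. Verification splits into (a) checking transitivity of the automorphism group on each type of connected induced subgraph of order at most~$4$ (namely edges, $P_3$, $P_4$, $K_{1,3}$, $C_4$), and (b) exhibiting two isomorphic connected induced $5$-vertex subgraphs lying in distinct automorphism orbits---typically two $P_5$'s, or a $P_5$ against a $C_4$ with a pendant---certifying failure of $5$-CH.
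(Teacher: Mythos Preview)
Your strategy for (i) is the same as the paper's---reduce to the $4$-transitive local action and invoke the CFSG list plus Cameron's work in \cite{Cam1975}---but you stop exactly where the real work begins. You acknowledge this yourself (``should eliminate'', ``main obstacle is this final elimination''). The paper closes the gap with a short but essential preliminary lemma: for a locally finite connected $5$-CH graph of girth~$4$, if $\diam(\Gamma)\geq 4$ then $c_2=c_3$, and if $\diam(\Gamma)\geq 3$ with valency at least $4$ then $c_2\geq 3$. Both are proved by writing down an explicit $5$-vertex connected subgraph and using $5$-CH directly. With this in hand, Cameron's Theorems~4.4 and~4.5 give concrete finite lists (the $n$-cube, folded $n$-cube, two small design incidence graphs in the $A_n$ case; Hadamard graphs or $c_2=2$ in the Mathieu case), and every non-CH entry on those lists either has $c_2=2$ with $\diam\geq 3$ or is a Hadamard graph with $c_2\neq c_3$, so the lemma kills them immediately. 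Your vague ``design-like constraints'' and ``$c_2$-block structure'' are not a substitute for this lemma; without it you have no mechanism for eliminating, say, the $n$-cube. Your treatment of valency $\leq 3$ is also too quick: the assertion that this forces $\Gamma\in\{C_4,K_{3,3},Q_3\}$ is not obvious and in the paper is subsumed into the $A_n$ case of Cameron's theorem rather than handled separately.

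For (ii) you have no proof at all, only a description of what a proof would look like. The paper simply observes that the $n$-cube $Q_n$ (for $n\geq 4$) and the folded $n$-cube $\Box_n$ (for $n\geq 6$) are $4$-CH---a routine verification via the orbit criterion of Lemma~\ref{lemma:detkCH}---while part~(i) already shows they are not $5$-CH since they are not on the CH list. You should name a family and carry out the check; ``bipartite doubles or incidence graphs of suitable $2$-designs'' is not a candidate, it is a search space.
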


For $k\geq 4$, finite $k$-CH graphs  with girth at least $5$ are $s$-arc-transitive for some $s\geq 3$, and such graphs have been studied extensively. In particular, Weiss \cite{Wei1981} proved that if $\Gamma$ is a finite $s$-arc-transitive graph with valency at least $3$, then $s\leq 7$, and if $s=7$, then $\Gamma$ has valency $3^e+1$ for some  positive integer $e$. Further,  Conder and Walker~\cite{ConWal1998} have constructed  infinitely many finite connected quartic $7$-arc-transitive graphs. Using these results, as well as the classification of the finite $4$-transitive permutation groups, we obtain our next theorem; see Theorems~\ref{thm:girth5diam2} and~\ref{thm:girth5strong} for more details.   Note that finite CH graphs with girth at least $5$ do exist: these include  the Petersen graph and any  cycle with  at least $5$ vertices.
 
 \begin{thm}
 \label{thm:girth5}
 \begin{itemize}
 \item[(i)] Any finite $7$-CH graph with girth at least $5$ is $CH$.
 \item[(ii)]  There are infinitely many finite connected quartic $6$-CH  graphs  with girth at least $12$ that are not $7$-CH.
 \item[(iii)]  A finite quartic graph with girth at least $7$ is $6$-CH if and only if it is $7$-arc-transitive.
 \end{itemize}
\end{thm}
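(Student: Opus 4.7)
My plan is to dispose of the three parts of Theorem~\ref{thm:girth5} sequentially, using as principal tools Weiss's theorem~\cite{Wei1981} bounding $s$ for finite $s$-arc-transitive graphs of valency at least $3$, and the explicit infinite family of quartic $7$-arc-transitive graphs constructed by Conder and Walker~\cite{ConWal1998}.

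For part~(i), I would first separate the case of valency at most $2$: a finite connected graph of valency $\leq 2$ and girth $\geq 5$ is a cycle $C_n$ with $n\geq 5$, and every such cycle is CH. For valency $\geq 3$, observe that in a graph of girth $\geq 5$ every $6$-arc spans an induced $P_7$ subgraph (since any cycle through the vertices would have length $\leq 6$), so the $7$-CH hypothesis forces $\Gamma$ to be $6$-arc-transitive, and Weiss's theorem then yields $s\leq 7$ for the largest $s$ with $s$-arc-transitivity. I would then split on $\diam(\Gamma)$: for $\diam(\Gamma)=2$, $\Gamma$ is strongly regular with $\lambda=0$, and a feasibility analysis of the parameters leaves only a short list of candidates, each of which can be individually checked against the CH property (this is the content of Theorem~\ref{thm:girth5diam2}); for $\diam(\Gamma)\geq 3$, one uses the strong local rigidity imposed by high arc-transitivity together with the distance-structure forced by $7$-CH to conclude that $\Gamma$ must be a cycle (or among a handful of known CH graphs), which is the content of Theorem~\ref{thm:girth5strong}.

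For part~(ii), I would invoke the infinite family of finite connected quartic $7$-arc-transitive graphs of Conder and Walker~\cite{ConWal1998}, choosing parameters so that the girth is at least $12$. Since any connected induced subgraph of order at most $6$ in a graph of girth $\geq 12$ is a tree, verifying $6$-CH amounts to showing that isomorphism orbits of order-$\leq 6$ tree subgraphs are preserved by the automorphism group. The strong local transitivity forced by $7$-arc-transitivity (the vertex stabiliser acting richly on the neighborhood and, inductively, on extensions along any arc) suffices to realise each such isomorphism by an automorphism. These graphs fail to be $7$-CH since by part~(i) any finite $7$-CH graph of valency $\geq 3$ and girth $\geq 5$ is CH, while no Conder--Walker graph is CH (none is $8$-arc-transitive by Weiss's theorem, contradicting what CH would force at a graph of such large girth).

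For part~(iii), the forward implication ($7$-arc-transitive $\Rightarrow$ $6$-CH) is handled exactly as in part~(ii), with girth $\geq 7$ in place of $12$; the weaker girth assumption still ensures that every connected induced subgraph of order $\leq 6$ is a tree. The converse ($6$-CH $\Rightarrow$ $7$-arc-transitive) is the heart of the argument: $6$-CH immediately yields $5$-arc-transitivity from the transitivity on induced $P_6$ subgraphs, and to lift this to $7$-arc-transitivity I would systematically exploit the transitivity on non-path tree subgraphs of order $6$ (e.g., paths with a pendant leaf at an interior vertex), applying the $6$-CH extension property to such configurations to deduce that certain arc-stabilisers act transitively on non-terminal neighbor sets, and iterating these local arguments to build two additional layers of arc-transitivity. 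The main obstacle is the precise bookkeeping of which $6$-vertex subgraph configurations yield exactly the stabiliser-transitivity required at each layer; this is where the quartic hypothesis --- giving concrete control of the local permutation groups as subgroups of $S_4$ --- plays an essential role.
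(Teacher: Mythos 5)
The decisive gap is in the forward implications of (ii) and (iii), where you assert that the ``strong local transitivity forced by $7$-arc-transitivity \dots suffices to realise each such isomorphism by an automorphism'' for tree subgraphs of order at most $6$. That assertion is precisely the hard content, and as stated your argument proves too much: in a graph of girth at least $12$ every connected induced subgraph of order $7$ is \emph{also} a tree, so the same inductive hand over the neighbourhoods would establish $7$-CH, contradicting your own part (ii). In other words, the stabiliser-transitivity you need for branched trees is not a formal consequence of $7$-arc-transitivity: it holds for the order-$\leq 6$ trees and fails for an order-$7$ tree in the very same graphs, and telling these apart requires the actual structure of the vertex and arc stabilisers. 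The paper does this in Proposition~\ref{prop:7trans} by working inside the Conder--Walker presentation of $R_{4,7}$ (so $H=3^5{:}\GL_2(3)$ with the arc stabiliser of index $4$), applying Lemma~\ref{lemma:detkCH} to each non-path tree of order at most $6$ and exhibiting explicit elements witnessing transitivity of the relevant pointwise stabilisers, and conversely exhibiting a $6$-vertex tree whose pointwise stabiliser fixes an extra neighbour, which is exactly the failure of $7$-CH. None of this can be recovered from a generic ``richness'' argument, so your proofs of (ii) and of the forward direction of (iii) are missing their main ingredient.

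The converse of (iii) as you sketch it also would not go through. The $6$-CH hypothesis only constrains configurations on at most $6$ vertices, so no iteration of such local data can by itself produce transitivity on $7$- and $8$-vertex arcs; indeed the incidence graph of $W_3(2)$ is cubic of girth $8$, is $6$-CH (Theorem~\ref{thm:girth5strong}(ii)), and is only $5$-transitive, so the analogous lift is false at valency $3$ and the quartic hypothesis cannot enter through elementary bookkeeping with subgroups of $S_4$. What is actually needed --- and what the paper uses --- is the refined form of Weiss's theorem \cite{Wei1981}: for $s\geq 4$ the local group has a normal $\PSL_2(t)$ and either $s=4$ or $s=2\ell+1$ with $t$ a power of $\ell\leq 3$, so a quartic graph that is $s$-transitive for some $s\geq 5$ must have $s=7$. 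The correct route is therefore: girth $\geq 7$ and $6$-CH give $5$-arc-transitivity combinatorially, and Weiss's classification (not merely the bound $s\leq 7$ you quote) excludes $s=5,6$ at valency $4$. A smaller slip: in (i) you claim that girth $\geq 5$ makes every $6$-arc span an induced path, but chords creating $5$-, $6$- or $7$-cycles are permitted (e.g.\ in the Petersen graph), so ``$7$-CH forces $6$-arc-transitivity'' is not immediate; the paper first settles diameter $2$ and forces girth at least $8$ when the diameter is large before making such deductions --- minor here only because you delegate (i) entirely to Theorems~\ref{thm:girth5diam2} and~\ref{thm:girth5strong}.
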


The case where $\Gamma$ is locally disconnected  with  girth $3$ but $c_2=1$ seems to be more difficult. Here $\Gamma$ is locally $(t+1)\cdot K_s$ where $t\geq 1$ and $s\geq 2$. When $t=1$, we can make some progress, for   $\Gamma$ is the line graph of  a  graph $\Sigma$ with valency $s+1$ and  girth at least $5$, and it turns out that $\Gamma$ is $k$-CH precisely when $\Sigma$ is $(k+1)$-CH with  girth at least $k+2$ (see Lemma \ref{lemma:linegraphCH}). Thus results about $k$-CH graphs with girth at least $5$ 
can be interpreted for locally $2\cdot K_s$ graphs with $c_2=1$. In particular, we obtain the following; see Theorem~\ref{thm:2Ksplus} for more details. Note that the line graph of the regular tree of valency $s+1$ is an infinite CH locally $2\cdot K_s$ graph with $c_2=1$.

\begin{thm}
\label{thm:2Ks}
\begin{itemize}
\item[(i)] For $s\geq 2$, there are no finite $6$-CH locally $2\cdot K_s$ graphs with $c_2=1$.
\item[(ii)] There are infinitely many finite connected $5$-CH locally $2\cdot K_3$ graphs with $c_2=1$.
\end{itemize}
\end{thm}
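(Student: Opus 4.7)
My plan rests on the translation in Lemma~\ref{lemma:linegraphCH}: a locally finite, locally $2\cdot K_s$ graph $\Gamma$ with $c_2=1$ is the line graph $L(\Sigma)$ of a graph $\Sigma$ of valency $s+1$ and girth at least $5$, and $\Gamma$ is $k$-CH if and only if $\Sigma$ is $(k+1)$-CH with girth at least $k+2$. Both parts of Theorem~\ref{thm:2Ks} are obtained by transporting the problem from $\Gamma$ to $\Sigma$ and then invoking the girth-at-least-$5$ results already established in Theorem~\ref{thm:girth5}.

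For part (i) I argue by contradiction. A hypothetical finite $6$-CH locally $2\cdot K_s$ graph $\Gamma$ with $c_2=1$ and $s\geq 2$ yields, via Lemma~\ref{lemma:linegraphCH}, a finite $7$-CH graph $\Sigma$ with valency $s+1\geq 3$ and girth at least $8$. Theorem~\ref{thm:girth5}(i) promotes $\Sigma$ to a finite CH graph, and I will then appeal to the classification of finite CH graphs recorded in Theorem~\ref{thm:CH} to rule out such a graph: among finite CH graphs with girth at least $5$, the only examples are the cycles (of valency $2$) and the Petersen graph (of girth $5$), so none can simultaneously have valency $\geq 3$ and girth $\geq 8$. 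This contradiction gives the non-existence claim.

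For part (ii) I will exhibit the required family explicitly by taking line graphs of the infinite family of finite connected quartic $7$-arc-transitive graphs constructed by Conder and Walker~\cite{ConWal1998}. As recorded in Theorem~\ref{thm:girth5}(ii), these graphs have girth at least $12$, so Theorem~\ref{thm:girth5}(iii) applies and shows each of them is $6$-CH. Applying Lemma~\ref{lemma:linegraphCH} with $k=5$ and $s=3$, their line graphs form an infinite family of finite connected $5$-CH locally $2\cdot K_3$ graphs with $c_2=1$; because distinct connected graphs of valency at least $3$ have non-isomorphic line graphs (Whitney's theorem), this family is infinite up to isomorphism. I expect no serious obstacle beyond correctly marshalling these citations, since the real combinatorial content is already absorbed into Lemma~\ref{lemma:linegraphCH} and Theorem~\ref{thm:girth5}; the only point that deserves a quick verification is that a finite graph with valency $\geq 3$ and girth $\geq 8$ is indeed absent from Theorem~\ref{thm:CH}, which is a direct inspection rather than any new work.
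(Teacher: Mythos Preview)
Your proposal is correct and follows essentially the same route as the paper: translate via the line-graph correspondence and then invoke the girth-at-least-$5$ results (the paper cites the more detailed Theorems~\ref{thm:girth5diam2}, \ref{thm:girth5strong} and Proposition~\ref{prop:7trans} directly rather than the packaged Theorem~\ref{thm:girth5}, but this is the same argument). Two small points to tidy up when you write it out: the fact that a finite connected locally $2\cdot K_s$ graph with $c_2=1$ is a line graph is Lemma~\ref{lemma:linegraph}, not Lemma~\ref{lemma:linegraphCH}, and for part~(i) you should first reduce to the connected case via Remark~\ref{remark:connected}, since Lemma~\ref{lemma:linegraph} requires connectedness.
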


Thus, with the possible exception of locally $(t+1)\cdot K_s$ graphs where $t\geq 2$, $s\geq 2$ and $c_2=1$, there exists an absolute constant $A$ such that every  finite $A$-CH graph is CH, and $A=7$ is the best possible constant. We summarise this result here.

\begin{cor}
\label{cor:all}
For a finite  $7$-CH graph $\Gamma$, one of the following holds.
\begin{itemize}
\item[(i)]  $\Gamma$ is locally $m\cdot K_n$ for some $m\geq 3$ and $n\geq 2$, and $c_2=1$.
\item[(ii)] $\Gamma$ is CH.
\end{itemize}
 Further, there are infinitely many finite connected $6$-CH graphs that are not $7$-CH.
\end{cor}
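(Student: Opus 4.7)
My plan is to prove Corollary~\ref{cor:all} by assembling the classifications already stated in the introduction via a case split on the local structure of $\Gamma$, its girth, and the parameter $c_2$. A key observation I will use throughout is the trivial fact that a $7$-CH graph is also $k$-CH for every $1\leq k\leq 7$, so the hypotheses of all previously cited results (stated in terms of $5$-CH, $6$-CH or $7$-CH) are available. By Remark~\ref{remark:connected} I may assume $\Gamma$ is connected, since both the local-structure conclusion of~(i) and the CH conclusion of~(ii) pass between a graph and its connected components when the components are pairwise isomorphic.

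I would first dispose of the locally connected case: as $\Gamma$ is in particular $5$-CH, Corollary~\ref{cor:connected} shows that $\Gamma$ is homogeneous, hence CH, placing us in~(ii). In the locally disconnected case, the discussion preceding Theorem~\ref{thm:girth3quad} tells me that $\Gamma$ is locally $(t+1)\cdot K_s$ for some $t\geq 1$ and $s\geq 1$, and I would then split by girth. If the girth is $3$ with $c_2>1$, then Corollary~\ref{cor:girth3quad} yields CH. If the girth is $4$, Theorem~\ref{thm:girth4}(i) yields CH. If the girth is at least $5$, Theorem~\ref{thm:girth5}(i) yields CH. The remaining case is girth $3$ with $c_2=1$, which forces $s\geq 2$; if additionally $t=1$, so that $\Gamma$ is locally $2\cdot K_s$ with $s\geq 2$, then Theorem~\ref{thm:2Ks}(i) rules out any finite such graph, contradicting finiteness of $\Gamma$. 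Hence $t\geq 2$, giving case~(i) with $m=t+1\geq 3$ and $n=s\geq 2$.

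For the sharpness claim, Theorem~\ref{thm:girth5}(ii) already provides infinitely many finite connected quartic $6$-CH graphs of girth at least $12$ that are not $7$-CH, and these serve as the required witnesses. The proof is essentially a bookkeeping exercise, and no single step is mathematically difficult; the main obstacle is simply to verify that the four girth/$c_2$ subcases of the locally disconnected analysis together with the locally connected case genuinely exhaust every possibility, and to correctly translate the parameters $(t+1,s)$ into the statement's $(m,n)$ in the surviving subcase.
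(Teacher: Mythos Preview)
Your proof is correct and follows exactly the paper's intended argument; the corollary is presented there as a summary of the preceding results without an explicit proof, and your case analysis assembles precisely those pieces (Corollary~\ref{cor:connected}, Corollary~\ref{cor:girth3quad}, Theorems~\ref{thm:girth4}(i), \ref{thm:girth5}(i)--(ii), and \ref{thm:2Ks}(i)). The only trivial omission is the valency-$0$ case, which is neither locally connected nor locally disconnected but is obviously CH.
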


For $k$-CH graphs ($k\geq 4$) that satisfy the condition of Corollary \ref{cor:all}(i), we only have partial results; see Proposition~\ref{prop:badsmallval} and Lemmas~\ref{lemma:c2=1} and~\ref{lemma:cyclebig}. 
In particular, there are examples of $4$-CH graphs in this case: the point graph of the dual of the split Cayley hexagon of order $(2,2)$ is $4$-CH but not $5$-CH, as is 
the  point graph of the Hall-Janko near octagon. This leads us to the following open problem; note that there are infinite locally finite CH graphs that satisfy the condition of Corollary~\ref{cor:all}(i), but there are no such finite graphs  (see Theorem~\ref{thm:CH}).

\begin{problem}
Determine whether there exists an absolute constant $A$ for which there are no finite $A$-CH graphs satisfying the condition of Corollary $\ref{cor:all}$(i).
\end{problem}

Our approach for the locally connected case is to  use the classification of the finite $3$-homogeneous graphs (see Theorem \ref{thm:3hom}) together with the observation that any  $4$-CH graph is locally a $3$-homogeneous graph (see Lemma~\ref{lemma:localhom}). Further, we will show that there is a combinatorial property of graphs---the unique $x$ property---that acts as an obstruction for $4$-connected-homogeneity (see  Definition~\ref{defn:x} and Lemma~\ref{lemma:xplus});  we will prove that this property holds for 
 locally  strongly regular graphs under various   combinatorial assumptions   (see~\S\ref{s:uniquex}).

This paper is organised as follows. In \S\ref{s:prelim}, we provide some notation and definitions (\S\ref{s:defn}-\ref{s:graphs}), state some basic  results (\S\ref{s:other}), and state some classification theorems (\S\ref{s:results}). In \S\ref{s:kCH}, we establish some properties of $k$-CH graphs, and in \S\ref{s:uniquex}, we consider graphs with the unique $x$ property. In \S\ref{s:LC}, we consider locally connected graphs and prove Theorem \ref{thm:connected}; in \S\ref{s:GQ}, we consider locally disconnected graphs with girth $3$ and $c_2>1$ and prove Theorem \ref{thm:girth3quad}; in \S\ref{s:girth4}, we consider graphs with girth $4$ and prove Theorem \ref{thm:girth4}; in \S\ref{s:girth5}, we consider graphs with girth at least $5$ and prove Theorem \ref{thm:girth5};
 and in \S\ref{s:LDbad}, we consider locally disconnected graphs with girth $3$ and $c_2=1$ and prove Theorem \ref{thm:2Ks}.  Note that the proofs of  our main results depend upon the CFSG.
 
\begin{remark}
\label{remark:magma}
 We sometimes use {\sc  Magma}~\cite{Magma}   to determine whether a graph is  $k$-CH. These computations are routine: we construct   a $G$-arc-transitive graph $\Gamma$ using a   representation of a group $G$  provided by~\cite{web-atlas} or standard techniques, and we analyse $\Gamma$ using Lemma~\ref{lemma:detkCH}. This analysis could be performed using various  software packages; we find {\sc  Magma} the most convenient, and there is extensive online documentation  for the commands we use to implement Lemma~\ref{lemma:detkCH}.
\end{remark}

\section{Preliminaries}
\label{s:prelim}

All graphs in this paper are  undirected,  simple (no multiple
edges or loops) and have non-empty vertex sets, but they need not be finite or even locally finite. Basic graph theoretical terminology not given here may be found in the appendix of~\cite{BroCohNeu1989}. All group actions and graph isomorphisms are written on the right, and basic group theoretic terminology may be found in~\cite{Cam1999}. The notation used to denote the
finite simple groups (and their automorphism groups) is consistent with~\cite{KleLie1990}.

\subsection{Notation and definitions}
\label{s:defn}

 A  graph $\Gamma$ consists of a non-empty vertex set $V\Gamma$ and an edge set $E\Gamma$, which is a set of $2$-subsets of $V\Gamma$.  The \textit{order} of $\Gamma$ is $|V\Gamma|$. For a non-empty subset $X$ of $V\Gamma$, we often abuse notation and write $X$ for the subgraph of $\Gamma$ induced by $X$.   The \textit{girth} of a graph $\Gamma$ is the length of a shortest cycle in $\Gamma$ (or infinity when $\Gamma$ has no cycles). We write $\overline{\Gamma}$ for the complement of $\Gamma$. When $E\Gamma$ is non-empty, the \textit{line graph} of $\Gamma$, denoted by $L(\Gamma)$, has vertex set $E\Gamma$, and two vertices of $L(\Gamma)$ are adjacent whenever the corresponding edges of $\Gamma$ have a common vertex in $\Gamma$. We denote the distance between $u,v\in V\Gamma$ by
$d_\Gamma(u,v)$, and the diameter of $\Gamma$  by $\diam(\Gamma)$. When $\Gamma$ is connected and  bipartite, a \textit{halved graph} of $\Gamma$ is one of the two connected components of $\Gamma_2$, where $\Gamma_2$ is the graph with  vertex set $V\Gamma$  in which  $u$ and $v$  
are adjacent whenever $d_\Gamma(u,v)=2$.
For $u\in V\Gamma$ and any integer $i\geq 0$, let $\Gamma_i(u):=\{v\in
V\Gamma:d_\Gamma(u,v)=i\}$. We write $\Gamma(u)$ for the \textit{neighbourhood} $\Gamma_1(u)$. The cardinality of $\Gamma(u)$ is the \textit{valency} of $u$, and the graph induced by $\Gamma(u)$ (when non-empty) is a \textit{local graph}  of $\Gamma$. When every vertex of $\Gamma$ has the same valency, we say that  $\Gamma$ is \textit{regular} and refer to the \textit{valency} of $\Gamma$. A graph is \textit{locally finite} if every vertex has finite valency; this extends the definition given in the introduction to include graphs with valency $0$ (such as $K_1$). Note that, by definition,  a graph with valency $0$  is neither locally connected nor locally disconnected.
 
For a positive integer $s$, a \textit{path of length} $s$ in $\Gamma$ is a sequence of vertices $(u_0,\ldots,u_s)$ such that $u_i$ is adjacent to $u_{i+1}$ for $0\leq i<s$,   an \textit{s-arc} is a path $(u_0,\ldots,u_s)$ where $u_{i-1}\neq u_{i+1}$ for $0< i< s$, and an \textit{arc} is a $1$-arc. An  ($s$\textit{-})\textit{geodesic}  is a path $(u_0,\ldots,u_s)$ where $d_\Gamma(u_0,u_s)=s$. A \textit{path graph} is a tree $T$ with $|T(u)|\leq 2$ for all $u\in VT$. A $\mu$\textit{-graph} of $\Gamma$ is a graph that is induced by $\Gamma(u)\cap \Gamma(w)$ for some $u,w\in V\Gamma$ with $d_\Gamma(u,w)=2$. 

For this paragraph, assume that $\Gamma$ is locally finite, and let $i$ be a non-negative integer.  We write $k_i(\Gamma)$ or $k_i$ for $|\Gamma_i(u)|$ whenever $|\Gamma_i(u)|$ does not depend on the choice of $u$,    or to indicate that we are assuming this. 
For $u,v\in V\Gamma$ such that $d_\Gamma(u,v)=i$, let $c_i(u,v) :=|\Gamma_{i-1}(u)\cap \Gamma(v)|$ (when $i\geq 1$),  $a_i(u,v) :=|\Gamma_{i}(u)\cap \Gamma(v)|$ and $b_i(u,v)  :=|\Gamma_{i+1}(u)\cap \Gamma(v)|$. Whenever $c_i(u,v)$  does not depend on the choice of $u$ and $v$,  or to indicate that we are assuming this, 
 we write $c_i(\Gamma)$ or $c_i$, and similarly for $a_i(u,v)$ and $b_i(u,v)$. A graph $\Gamma$ is \textit{strongly regular} with parameters $(v,k,\lambda,\mu)$ if it is finite with order $v$ and regular with valency $k$ where $0<k<v-1$, and if any two adjacent vertices have $\lambda=\lambda(\Gamma)$ common neighbours and any two non-adjacent vertices have $\mu=\mu(\Gamma)$ common neighbours. Note that  complete graphs and edgeless graphs are not strongly regular. The complement of a strongly regular graph is again strongly regular with parameters $(v,v-k-1,v-2k+\mu-2,v-2k+\lambda)$.

Let $G$ be a group acting on a set $\Omega$. We denote the permutation group induced by this action  by $G^\Omega$, and  the pointwise stabiliser in $G$ of $u_1,\ldots,u_n\in \Omega$ by $G_{u_1,\ldots,u_n}$. When $G$ is transitive on $\Omega$, any orbit of $G$ on $\Omega\times\Omega$ is an  \textit{orbital}; the \textit{trivial} orbital is $\{(u,u): u\in \Omega\}$, and the other orbitals are \textit{non-trivial}.   The \textit{rank} of $G$ is the number of orbitals.  Equivalently, for any $u\in \Omega$, the rank of $G$ is the number of orbits of $G_u$ on $\Omega$. The group $G$ is \textit{primitive} if it is transitive and there are no non-trivial $G$-invariant equivalence relations  on $\Omega$ (the trivial ones are $\{(u,u):u\in \Omega\}$ and $\Omega\times\Omega$).
The group $G$ is $k$\textit{-transitive} if $1\leq k\leq |\Omega|$ and $G$ acts transitively on the set of $k$-tuples of pairwise distinct elements of $\Omega$.

A graph $\Gamma$ is
$G$\textit{-vertex-transitive} (or $G$\textit{-arc-transitive})  if $G$ is a subgroup of the automorphism group $ \Aut(\Gamma)$ and $G$  acts transitively on $V\Gamma$ (or the arcs of $\Gamma$); we omit the prefix $G$ when $G=\Aut(\Gamma)$.
  We say that $\Gamma$ is   $s$\textit{-arc-transitive} if  $\Aut(\Gamma)$ acts transitively on the set of $s$-arcs, and  $s$\textit{-transitive} if $\Gamma$ is $s$-arc-transitive but not $(s+1)$-arc-transitive. (Note that we have defined two different concepts of $n$-transitivity: one for groups above, and one for graphs here.) When  $\Aut(\Gamma)$ acts transitively on ordered pairs of vertices at distance $i$ for each integer $i\geq 0$, we say that $\Gamma$ is \textit{distance-transitive}, and when $\Gamma$ is also finite with diameter $d$, it has \textit{intersection array} $\{b_0,b_1,\ldots, b_{d-1};c_1,c_2,\ldots,c_d\}$. 
We say that $\Gamma$ is $(G,k)$\textit{-homogeneous} (or $(G,k)$\textit{-CH}) when $k$ is a positive integer,   $G\leq \Aut(\Gamma)$,  and any isomorphism between (connected) induced subgraphs of $\Gamma$  with order at most $k$ extends to an element of $G$. Note that a $(G,2)$-CH graph is $G$-vertex-transitive and $G$-arc-transitive.

 A \textit{partial linear space} is a pair $(\mathcal{P},\mathcal{L})$  where $\mathcal{P}$ is a non-empty set of \textit{points} and  $\mathcal{L}$  is a collection of subsets of $\mathcal{P}$ called \textit{lines} such that two distinct points are in at most one line, and every line contains at least two points.   The \textit{incidence graph} of a partial linear space  is the bipartite graph whose vertices are the points and lines, where a point $p$ is adjacent to a line $\ell$ whenever $p\in \ell$. The \textit{point graph} of a partial linear space is the graph whose vertices are the points, where two points are adjacent whenever they are collinear. For a positive integer $n$, a \textit{generalised n-gon} is a partial linear space whose incidence graph $\Gamma$ has diameter $n$  with  $c_i=1$ 
 (i.e., $c_i(u,v)=1$ for all $u,v\in V\Gamma$ such that $d_\Gamma(u,v)=i$) for $i<n$.
 A generalised $n$-gon is \textit{thick} when every line has size at least three and every point is contained in at least three lines, and it has \textit{order} $(s,t)$ when there exist positive integers $s$ and $t$  such that every line has size $s+1$ and every point is contained in $t+1$ lines.  It is routine to prove that  any thick generalised $n$-gon has order $(s,t)$ for some $s$ and $t$.  A generalised $n$-gon is \textit{distance-transitive} if its point graph is distance-transitive.  When $n=4$, $6$ or $8$, a generalised $n$-gon is a \textit{generalised quadrangle}, \textit{generalised hexagon} or \textit{generalised octagon} respectively. A generalised quadrangle satisfies the \textit{GQ Axiom}: for each point $p$ and line $\ell$ such that $p\notin\ell$, there is a unique $q\in \ell$ such that $p$ is collinear with $q$. Conversely, any partial linear space with at least two lines that satisfies the GQ Axiom is a generalised quadrangle. See \cite[\S 6.5]{BroCohNeu1989} for more details. 
 
 We denote a finite field of order $q$ by $\mathbb{F}_q$ and a $d$-dimensional vector space over $\mathbb{F}_q$ by $V_d(q)$. We will use the following terminology concerning forms. See \cite[\S 2.1,2.3-2.5]{KleLie1990} for  more information.  A \textit{symplectic}, \textit{unitary} or \textit{quadratic} space is a pair $(V_d(q),\kappa)$ where $\kappa$ is, respectively, a non-degenerate symplectic, unitary or quadratic form on $V_d(q)$. In a symplectic or unitary space $(V,f)$,   a vector $v\in V$ is \textit{singular} if $f(v,v)=0$, and in a quadratic space  $(V,Q)$, a vector $v\in V$ is \textit{singular} if  $Q(v)=0$. In a symplectic, unitary or quadratic space $(V,\kappa)$, a subspace $W$ of $V$ is \textit{totally singular} if every vector in $W$ is singular. For a positive integer $m$, a quadratic space $(V_{2m}(q),Q)$ has \textit{plus type}  when the maximal totally singular subspaces of $V_{2m}(q)$ have dimension $m$,  and  \textit{minus type}  when the maximal totally singular subspaces of $V_{2m}(q)$ have dimension $m-1$; we also say that the quadratic space has type $\varepsilon$ where $\varepsilon\in \{+,-\}$. 
 
\subsection{Families of graphs}
\label{s:graphs}

Let $m$ and $n$ be   positive integers. We denote the complete graph with $n$ vertices  by $K_n$, the cycle with $n$ vertices by $C_n$, the complete multipartite graph with $n$ parts of size $m$  by
$K_{n[m]}$, and its complement (the disjoint union of $n$ copies of $K_m$) by $n\cdot K_m$. 
We also write $K_{n,n}$ for the complete bipartite graph $K_{2[n]}$. The \textit{grid graph} $K_n\Box K_m$ has vertex set $VK_n\times VK_m$, where distinct vertices $(u_1,u_2)$ and $(v_1,v_2)$ are adjacent whenever $u_1=v_1$ or $u_2=v_2$. We denote the complement of this graph by $K_n\times K_m$. 
In the literature, the graph $K_2\times K_{n}$ is often described as the graph $K_{n,n}$ with the edges of a perfect matching removed. The \textit{n-cube} $Q_n$ has vertex set $\mathbb{F}_2^n$, where two vertices are adjacent whenever they differ in exactly one coordinate. The \textit{folded n-cube} $\Box_n$ is obtained from $Q_n$ by identifying those vertices $u$ and $v$  for which $u+v=(1,\ldots,1)$. The \textit{affine polar graph} $VO^\varepsilon_{2m}(q)$ has vertex set $V_{2m}(q)$,  and  vectors $u$ and $v$ are adjacent whenever $Q(u-v)=0$, where $(V_{2m}(q),Q)$ is  a quadratic space with type  $\varepsilon$.  

We will be interested in the point or incidence graphs of the following classical generalised quadrangles where $q$ is a power of a prime: $W_3(q)$, $H_3(q^2)$, $H_4(q^2)$, $Q_4(q)$ for $q$ odd, and $Q_5^-(q)$. These are defined as follows. In each case, we have a symplectic, unitary or quadratic space $(V_d(s),\kappa)$, 
the points are the one-dimensional totally singular subspaces of $V_d(s)$ with respect to $\kappa$, and the lines are the two-dimensional totally singular subspaces (which we may of course view as sets of points). For $W_3(q)$, we take a symplectic space with $(d,s)=(4,q)$; for $H_3(q^2)$ or $H_4(q^2)$, a unitary space with $(d,s)=(4,q^2)$ or $(5,q^2)$ respectively; for $Q_4(q)$, a quadratic space with $(d,s)=(5,q)$ and $q$ odd; and for $Q_5^-(q)$, a quadratic space of minus type  with $(d,s)=(6,q)$. 

 We will also be interested in certain other   generalised $n$-gons. The \textit{split Cayley hexagon} is a generalised hexagon of order $(q,q)$ whose automorphism group contains the exceptional group of Lie type $G_2(q)$.  
The \textit{Ree-Tits octagon} is  a generalised octagon  of order $(q,q^2)$ whose automorphism group contains the exceptional  group of Lie type $^2F_4(q)$. See~\cite{Van2012} 
 for more details.

The \textit{Clebsch graph} is the halved $5$-cube (see~\S\ref{s:defn} for the definition of a halved graph). We caution the reader that some authors define the Clebsch graph to be the complement of the halved $5$-cube, which  is isomorphic to  $\Box_5$ and  $VO_4^-(2)$;  our definition is consistent with~\cite{BroCohNeu1989} and Seidel~\cite{Sei1968}. The \textit{Petersen graph} is the complement of the local graph of the halved $5$-cube. The \textit{Higman-Sims} graph is a strongly regular graph with parameters $(100,22,0,6)$ whose automorphism group is $\HS{:}2$, where  $\HS$ denotes the Higman-Sims group, a sporadic simple group (see \cite[\S 13.1B]{BroCohNeu1989}). Similarly, the \textit{McLaughlin graph} is a strongly regular graph with parameters $(275,112,30,56)$ whose automorphism group is $\McL{:}2$, where  $\McL$ denotes the McLaughlin group, another sporadic simple group (see \cite[\S 11.4H]{BroCohNeu1989}). The \textit{Schl\"{a}fli graph} is the complement of the point graph of the generalised quadrangle $Q_5^-(2)$. 

For positive integers $t$ and $s$, the \textit{biregular tree} $T_{t+1,s+1}$ is an (infinite) tree with  bipartition $(V_t,V_s)$ such that the vertices in $V_t$ have valency $t+1$, and the vertices in $V_s$ have valency $s+1$. The halved graph of $T_{t+1,s+1}$ with vertex set $V_t$ is  locally  $(t+1)\cdot K_s$, while the halved graph with vertex set $V_s$ is locally $(s+1)\cdot K_t$.  We will see in \S\ref{s:results} that the halved graphs of $T_{t+1,s+1}$ for $t,s\geq 1$ are precisely  the infinite locally finite connected CH  graphs. To obtain the complete list of such graphs, it suffices to either consider only those halved graphs with vertex set $V_t$, or  assume that $t\geq s$, but we allow this redundancy in the notation for simplicity. 

\subsection{Basic results} 
\label{s:other}

Almost by definition,  we obtain the following  useful observation concerning $2$-homogeneous graphs; see \cite[Lemma~1]{Gar1976}.

\begin{lemma}[\cite{Gar1976}]
\label{lemma:2homsimple}
A $2$-homogeneous graph is either a disjoint union of complete graphs with the same  order, or connected with diameter $2$.
\end{lemma}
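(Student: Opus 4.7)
The plan is to exploit the fact that $2$-homogeneity tells us exactly two things: the automorphism group $G=\Aut(\Gamma)$ acts transitively on vertices (extending isomorphisms between subgraphs of order $1$), and $G$ has at most two orbits on unordered pairs of distinct vertices, namely edges and non-edges (extending isomorphisms between subgraphs of order $2$, since any two non-edges are isomorphic as induced subgraphs, and similarly for edges). The proof then just chases what these transitivity properties say about distances and components.

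First I would handle the connected case. If $\Gamma$ is connected and complete, we are done, so assume $\Gamma$ is connected and contains some non-adjacent pair of vertices. Suppose for a contradiction that $\diam(\Gamma)\geq 3$. Then there exist vertices $u,v$ with $d_\Gamma(u,v)=2$ and vertices $x,y$ with $d_\Gamma(x,y)=3$. The induced subgraphs on $\{u,v\}$ and $\{x,y\}$ are both edgeless of order $2$, hence isomorphic, so by $2$-homogeneity some $g\in G$ sends $\{u,v\}$ to $\{x,y\}$. But automorphisms preserve graph distance, so this is impossible. Therefore $\diam(\Gamma)\leq 2$, and the non-complete connected case yields diameter exactly $2$.

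Next I would handle the disconnected case. Vertex-transitivity immediately forces all connected components to be pairwise isomorphic, and in particular to have the same order. It remains to show that each component is complete. Suppose not: then some component $C$ contains a pair $u,w$ at distance $2$ in $\Gamma$. Since $\Gamma$ is disconnected, we may also choose $v$ in a component different from $C$, so that $\{u,v\}$ is a non-adjacent pair lying in distinct components. Now $\{u,w\}$ and $\{u,v\}$ are again both edgeless induced subgraphs of order $2$, so by $2$-homogeneity some $g\in G$ maps the first to the second. However, automorphisms permute the connected components of $\Gamma$, so they preserve the relation of lying in the same component, contradicting the fact that $u,w$ share a component while $u,v$ do not. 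Thus every component has diameter at most $1$ and is a complete graph of common order, as required.

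There is no real obstacle here; the argument is a direct application of the fact that graph automorphisms preserve both distance and the partition into connected components, combined with the observation that all non-edges (respectively, all edges) of a $2$-homogeneous graph lie in a single $G$-orbit. Care should just be taken with the trivial cases (for instance $K_1$ or an edgeless graph on one vertex per component), which fit uniformly into the disjoint-union-of-complete-graphs conclusion.
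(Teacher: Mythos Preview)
Your argument is correct. The paper does not actually supply a proof of this lemma: it simply records it as an observation obtained ``almost by definition'' and cites \cite{Gar1976}. Your write-up is the standard elementary argument --- transitivity on non-edges forces all non-adjacent pairs to be at the same distance and in the same component-relation --- and it matches what Gardiner's original proof does.
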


Note that we permit the disjoint union to contain only one complete graph.  The following is immediate from Lemma~\ref{lemma:2homsimple}.

\begin{lemma}
\label{lemma:2homsimplelf}
If $\Gamma$ is a locally finite $2$-homogeneous graph, then either $\Gamma$ is a (possibly infinite) disjoint union of finite complete graphs with the same  order, or $\Gamma$ is finite with diameter $2$.
\end{lemma}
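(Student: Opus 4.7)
The plan is to invoke Lemma~\ref{lemma:2homsimple} directly, which splits into two cases, and then use local finiteness to upgrade each case to the desired finiteness conclusion.

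First, suppose $\Gamma$ is a disjoint union of complete graphs $K_n$, all of the same cardinality $n$ (where $a$ priori $n$ could be any cardinal). Pick any vertex $v$; it lies in some component $K_n$, and its valency in $\Gamma$ equals $n-1$. Since $\Gamma$ is locally finite, $n-1$ is finite, so $n$ is finite. This gives a (possibly infinite) disjoint union of finite complete graphs of common order, as required.

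Second, suppose $\Gamma$ is connected with diameter $2$. Being $2$-homogeneous implies (in particular) that $\Gamma$ is vertex-transitive, hence regular; let $k$ denote its common valency, which is finite by local finiteness. Fixing a vertex $v$, every vertex of $\Gamma$ lies in $\{v\}\cup\Gamma(v)\cup\Gamma_2(v)$ since $\diam(\Gamma)=2$, and a trivial counting argument gives
\[
|V\Gamma|\;\leq\; 1+|\Gamma(v)|+\sum_{u\in \Gamma(v)}|\Gamma(u)|\;\leq\; 1+k+k^2,
\]
so $\Gamma$ is finite. Combining the two cases yields the statement. There is no serious obstacle here; this is essentially a one-line consequence of Lemma~\ref{lemma:2homsimple} together with the observation that regularity plus finite valency plus bounded diameter forces finiteness.
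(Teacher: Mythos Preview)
Your proof is correct and takes essentially the same approach as the paper, which simply states that the lemma is immediate from Lemma~\ref{lemma:2homsimple}. You have merely spelled out the two routine deductions (local finiteness forces each complete component to be finite; regularity plus finite valency plus diameter~$2$ forces $|V\Gamma|\leq 1+k+k^2$) that the paper leaves implicit.
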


Note that Lemma \ref{lemma:2homsimplelf} is often used to derive a result about locally finite 2-homogeneous graphs from the analogous result for the finite case. 

We will sometimes use a stronger form of Lemma~\ref{lemma:2homsimple}. Recall the definition of a $(G,k)$-homogeneous graph from \S\ref{s:defn}. If $\Gamma$ is a non-complete $(G,2)$-homogeneous graph that contains an edge, then the non-trivial orbitals of $G$ on $V\Gamma$ 
are the sets of adjacent pairs and distinct non-adjacent pairs, so $G$ is transitive of rank $3$ on $V\Gamma$. If $G$ also preserves a non-trivial  equivalence relation $\equiv$ on $V\Gamma$, then the set of pairs of distinct vertices $u$ and $v$ such that $u\equiv v$ must be either the set of adjacent pairs, or the set of distinct non-adjacent pairs, so $\Gamma$ is either a disjoint union of complete graphs, or a complete multipartite graph. Thus we have the following result.

\begin{lemma}
\label{lemma:2hom}
Let $\Gamma$ be a $(G,2)$-homogeneous graph. Then  exactly one of the following holds.
\begin{itemize}
\item[(i)] $\Gamma$ or $\overline{\Gamma}$ is a disjoint union of complete graphs with the same  order.
\item[(ii)] $\diam(\Gamma)=2$ and $G$ is primitive of rank $3$ on $V\Gamma$.
\end{itemize}
\end{lemma}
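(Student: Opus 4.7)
The plan is to split into trivial sub-cases first and then carry out the orbital analysis that is already foreshadowed in the paragraph immediately preceding the lemma. If $\Gamma$ has no edges then $\Gamma$ is a disjoint union of copies of $K_1$, and if $\Gamma$ is complete then it is a disjoint union consisting of a single copy of itself, so in both degenerate situations (i) holds. Hence we may assume $\Gamma$ contains both an edge and a pair of distinct non-adjacent vertices.

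Under this assumption the first step is to show that $G$ has rank exactly $3$ on $V\Gamma$. Any two edges (respectively any two distinct non-edges) of $\Gamma$ give isomorphic induced subgraphs of order $2$, so $(G,2)$-homogeneity makes $G$ transitive on ordered adjacent pairs and on ordered distinct non-adjacent pairs; together with the diagonal orbital this gives exactly three orbitals, and in particular $G$ is transitive on $V\Gamma$. I would then apply Lemma \ref{lemma:2homsimple}: either $\Gamma$ is already a disjoint union of complete graphs of equal order (so (i) holds), or $\Gamma$ is connected of diameter $2$.

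The main step is to handle this remaining case by proving either (ii) or that we have fallen back into (i). Suppose $G$ is imprimitive on $V\Gamma$ and preserves a non-trivial equivalence relation $\equiv$. The set of ordered pairs of distinct vertices related by $\equiv$ is a non-empty, proper, $G$-invariant subset of $V\Gamma\times V\Gamma$ minus the diagonal, hence a union of non-trivial orbitals; since there are only two of these, it equals exactly one of them. If it coincides with the adjacency orbital, then transitivity of $\equiv$ forces adjacency itself to be transitive on $V\Gamma$, so each connected component of $\Gamma$ is a clique; combined with the assumed connectedness this contradicts $\Gamma$ being non-complete. If instead it coincides with the non-adjacency orbital, the same argument applied to $\overline{\Gamma}$ shows that $\overline{\Gamma}$ is a disjoint union of complete graphs of the same order, so $\Gamma\cong K_{n[m]}$ and (i) holds.

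The step I expect to need the most care is the orbital-to-equivalence-relation translation: one has to be sure that \emph{every} $G$-invariant equivalence relation is a union of orbitals, and that the two alternatives (``equivalence = adjacency'' and ``equivalence = non-adjacency'') are genuinely the only ones, given that the universal relation corresponds to taking both non-trivial orbitals and is itself trivial. Once this is in place, exclusivity of (i) and (ii) is a quick check: if (i) holds then either $\Gamma$ is disconnected (diameter $\infty$), or $\Gamma=K_n$ (diameter $\leq 1$), or $\Gamma=K_{n[m]}$ with $n,m\geq 2$, and in the last case the parts form a non-trivial $G$-invariant equivalence relation, precluding the primitivity required by (ii).
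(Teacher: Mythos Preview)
Your proposal is correct and follows essentially the same orbital analysis as the paper's proof (given in the paragraph preceding the lemma). The only cosmetic difference is that you invoke Lemma~\ref{lemma:2homsimple} to first reduce to the connected diameter-$2$ case before running the imprimitivity argument, whereas the paper runs the imprimitivity argument directly and lets the adjacency-orbital case produce the disjoint-union-of-cliques conclusion rather than a contradiction; you also make the exclusivity of (i) and (ii) explicit, which the paper leaves to the reader.
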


Next we give a sufficient condition for the local action of an arc-transitive graph to be faithful; this will be useful in the locally connected case.

\begin{lemma}
\label{lemma:faithful}
 Let $\Gamma$ be a connected  $G$-arc-transitive graph. If there exists $x\in V\Gamma$ and $y\in\Gamma(x)$ such that the pointwise stabiliser of $\Gamma(x)\cap \Gamma(y)$ in $G_{x,y}$ also fixes $\Gamma(x)$ pointwise, then the action of $G_u$ on $\Gamma(u)$ is faithful for all $u\in V\Gamma$.
\end{lemma}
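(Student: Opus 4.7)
\medskip

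\noindent\textbf{Proof proposal for Lemma \ref{lemma:faithful}.}

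The plan is to show that the kernel $K_u$ of the action of $G_u$ on $\Gamma(u)$ is trivial for every $u\in V\Gamma$. Since $G$ acts vertex-transitively (being arc-transitive), the subgroups $K_u$ are all conjugate in $G$, so it suffices to show $K_x=1$ for the specific vertex $x$ given in the hypothesis. Moreover, $G\leq \Aut(\Gamma)$ acts faithfully on $V\Gamma$, so it will be enough to verify that $K_x$ fixes every vertex of $\Gamma$.

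The key step is to show $K_x=K_y$ for every edge $\{x,y\}$ of $\Gamma$. By $G$-arc-transitivity, there exists $h\in G$ with $(x,y)h=(y,x)$. Given $g\in K_x$, I would consider $g':=h^{-1}gh$. A direct computation using $zh^{-1}\in \Gamma(x)$ for $z\in \Gamma(y)$ shows that $g'$ fixes $\Gamma(y)$ pointwise; in particular $g'\in G_{x,y}$ and $g'$ fixes $\Gamma(x)\cap \Gamma(y)\subseteq \Gamma(y)$ pointwise. The hypothesis then forces $g'$ to fix $\Gamma(x)$ pointwise, so $g'\in K_x$. This proves $K_y=(K_x)^h\leq K_x$. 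Running the same argument with $h^{-1}$ in place of $h$ (which maps $(y,x)$ to $(x,y)$, and for which the hypothesis applies at the arc $(y,x)$ by $G$-arc-transitivity) yields $K_x\leq K_y$, hence $K_x=K_y$.

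With this in hand, the rest is a routine connectivity argument: iterating along any path from $x$ shows $K_x=K_u$ for every $u\in V\Gamma$, so in particular $K_x\leq G_u$ for all $u$. Hence $K_x$ lies in the kernel of the $G$-action on $V\Gamma$, which is trivial, giving $K_x=1$ as required. I anticipate the only subtle point is verifying carefully that $g'=h^{-1}gh$ both lies in $G_{x,y}$ and acts trivially on $\Gamma(x)\cap \Gamma(y)$, so that the hypothesis is applicable; everything else is immediate from $G$-arc-transitivity and connectivity.
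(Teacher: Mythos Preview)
Your argument is correct and rests on the same two ingredients as the paper's proof: use $G$-arc-transitivity to transfer the hypothesis to every arc, and then propagate along paths by connectivity to conclude that $K_x$ fixes every vertex.

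The paper's execution is slightly more direct than yours. Rather than conjugating by an arc-reversing element $h$, it observes that if $g$ fixes $\{u_i\}\cup\Gamma(u_i)$ pointwise, then in particular $g\in G_{u_{i+1},u_i}$ and $g$ fixes $\Gamma(u_{i+1})\cap\Gamma(u_i)\subseteq\Gamma(u_i)$ pointwise, so the transferred hypothesis (at the arc $(u_{i+1},u_i)$) immediately gives that $g$ fixes $\Gamma(u_{i+1})$ pointwise. This yields $K_{u_i}\leq K_{u_{i+1}}$ in one step, with no conjugation by $h$ needed; your route through $g'=h^{-1}gh$ reaches the same conclusion but via a small detour. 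Both arguments are valid, and the only point worth tightening in your write-up is to make explicit that the equality $K_u=K_v$ holds for \emph{every} edge (not just the distinguished one), which you already flag follows from arc-transitivity.
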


\begin{proof}
Suppose that there exists $x\in V\Gamma$ and $y\in\Gamma(x)$ such that the pointwise stabiliser of $\Gamma(x)\cap \Gamma(y)$ in $G_{x,y}$ also fixes $\Gamma(x)$ pointwise. Since $G$ acts transitively on the arcs of $\Gamma$, it follows that for any $u\in V\Gamma$ and $v\in \Gamma(u)$,  if $h\in G_{u,v}$ fixes $\Gamma(u)\cap \Gamma(v)$ pointwise, then $h$ fixes $\Gamma(u)$ pointwise. 
  Suppose that $g\in G_u$ fixes $\Gamma(u)$ pointwise. Let $w\in V\Gamma$. There is a path $(u_0,\ldots,u_\ell)$ in $\Gamma$ with $u_0=u$ and $u_\ell=w$. If $g$ fixes $\{u_i\}\cup \Gamma(u_i)$ pointwise for some integer $i\geq 0$, then $g$ fixes $u_{i+1}$, $u_i$ and $\Gamma(u_{i+1})\cap \Gamma(u_{i})$ pointwise, so, by the above observation, $g$ fixes  $\{u_{i+1}\}\cup\Gamma(u_{i+1})$ pointwise. By induction, $g$ fixes $u_\ell=w$. Thus $g=1$.
\end{proof}

The proof of the following is routine.

\begin{lemma}
\label{lemma:notinduced}
Let $\Gamma$ be a locally $(t+1)\cdot K_s$ graph for positive integers $t$ and $s$. Then no induced subgraph of $\Gamma$ is isomorphic to the complete graph $K_4$ with one edge removed. 
\end{lemma}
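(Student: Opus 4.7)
The plan is to argue by contradiction, exploiting the fact that in any disjoint union of complete graphs, the adjacency relation (together with equality) is an equivalence relation on the vertex set.

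Suppose for contradiction that $\Gamma$ contains an induced subgraph on four vertices $\{a,b,c,d\}$ isomorphic to $K_4$ with one edge removed. By relabeling, we may assume the missing edge is $\{c,d\}$, so that $a,b,c,d$ are pairwise adjacent except that $c \not\sim d$. In particular, $b, c, d$ all lie in $\Gamma(a)$, and the induced subgraph on $\Gamma(a)$ is isomorphic to $(t+1)\cdot K_s$.

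Now $\{b,c\}$ and $\{b,d\}$ are edges of $\Gamma$, hence also edges of the induced subgraph on $\Gamma(a)$. Since $(t+1)\cdot K_s$ is a disjoint union of cliques, its connected components are the $K_s$-blocks, and any two vertices in the same block are adjacent. Therefore $b$ and $c$ lie in a common $K_s$-component of $\Gamma(a)$, and likewise $b$ and $d$ lie in a common $K_s$-component of $\Gamma(a)$. Since these two components both contain $b$, they coincide, so $c$ and $d$ lie in the same $K_s$-component. Hence $c \sim d$ in $\Gamma(a)$, and thus in $\Gamma$, contradicting $c \not\sim d$.

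There is no real obstacle here; the only thing to be careful about is to work with the \emph{induced} subgraph on $\{a,b,c,d\}$ so that the non-adjacency $c \not\sim d$ is guaranteed in $\Gamma$ itself (not merely in the purported induced copy). Once this is set up, the proof is a one-line transitivity argument in the local graph at $a$.
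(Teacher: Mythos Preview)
Your proof is correct and is precisely the routine verification the paper has in mind; indeed, the paper does not spell out a proof at all, merely stating that the result is routine. Your transitivity argument in the local graph at $a$ is exactly the intended one-line justification.
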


The next result is a well-known property of quadratic spaces; see the proof of~\cite[Proposition~2.5.3]{KleLie1990}, for example.

\begin{lemma}
\label{lemma:quadratic}
Let $(V,Q)$ be a quadratic space, and let $f$ be  the bilinear form associated with $Q$. For any  non-zero singular vector $v$, there exists $w\in V$ such that $Q(w)=0$ and $f(v,w)=1$.
\end{lemma}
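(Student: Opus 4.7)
The plan is to exploit the standard polarization identity $Q(u+\lambda v) = Q(u) + \lambda^2 Q(v) + \lambda f(u,v)$ together with the fact that singular vectors $v$ satisfy $f(v,v) = 2Q(v) = 0$ (which is also automatic in characteristic two, where $f$ is alternating). The search is naturally restricted to the affine line through a single auxiliary vector $u$: if we can produce $u$ with $f(v,u) = 1$, then vectors of the form $w = u + \lambda v$ automatically satisfy $f(v,w) = 1 + \lambda f(v,v) = 1$, so only the condition $Q(w) = 0$ needs to be engineered.

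First I would argue that such a $u$ exists. Because $v$ is non-zero and singular, non-degeneracy of $Q$ forces $v$ to lie outside the radical of $f$: if on the contrary $f(v,\cdot) \equiv 0$, then combined with $Q(v) = 0$ this would contradict non-degeneracy. Hence there is some $u_0 \in V$ with $f(v,u_0) \neq 0$, and setting $u := f(v,u_0)^{-1}u_0$ gives $f(v,u) = 1$.

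Next I would solve the scalar equation for $\lambda$. The polarization identity gives
\[
Q(u + \lambda v) = Q(u) + \lambda^2 Q(v) + \lambda f(u,v) = Q(u) + \lambda,
\]
using $Q(v) = 0$ and $f(u,v) = f(v,u) = 1$. Setting $\lambda = -Q(u)$ and $w := u - Q(u)\,v$ therefore yields both $Q(w) = 0$ and $f(v,w) = 1$, as required.

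The only mildly delicate point is the first step, namely the reduction from ``$Q$ non-degenerate'' to ``non-zero singular vectors pair non-trivially under $f$''. In odd characteristic this is trivial because $f$ itself is non-degenerate, but in characteristic two the radical of $f$ may be non-zero (e.g.\ on a $(2m{+}1)$-dimensional orthogonal space); what matters is precisely that no non-zero element of the radical is singular, which is exactly the content of non-degeneracy of $Q$ as used in \cite{KleLie1990}. Once this is stated explicitly, the rest is the short computation above.
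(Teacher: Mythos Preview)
Your argument is correct and is exactly the standard proof; the paper itself does not spell out a proof at all but merely refers the reader to \cite[Proposition~2.5.3]{KleLie1990}, so you have in fact supplied the details the paper omits. Your care in handling the characteristic~$2$ case---observing that non-degeneracy of $Q$ (rather than of $f$) is what guarantees a non-zero singular $v$ lies outside the radical of $f$---is precisely the point that needs attention, and the rest is the routine computation $Q(u+\lambda v)=Q(u)+\lambda$ you give.
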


We finish this section with a result of Tutte; see~\cite[Lemma 4.1.3]{GodRoy2001}  for a proof.

\begin{lemma}
\label{lemma:girth}
An $s$-arc-transitive graph with valency at least $3$ has girth at least $2s-2$.
\end{lemma}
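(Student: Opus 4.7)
The plan is to argue by contradiction: suppose $\Gamma$ is $s$-arc-transitive with valency $d\geq 3$ but girth $g\leq 2s-3$; in particular, $s\geq 3$. I will exhibit two $s$-arcs that cannot lie in the same $\Aut(\Gamma)$-orbit.

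Fix a shortest cycle $C=(V_0,V_1,\ldots,V_{g-1},V_0)$ and index the $V_i$ modulo $g$. I will construct two $s$-arcs agreeing on their first $s$ vertices: $\alpha_1$ follows $C$, so its $i$-th vertex is $V_{i\bmod g}$ (this is a legal $s$-arc because $g\geq 3$, so no immediate backtracks arise); and $\alpha_2$ coincides with $\alpha_1$ at positions $0,1,\ldots,s-1$ but terminates at a vertex $w\in \Gamma(V_{s-1})\setminus\{V_{s-2},V_s\}$. Note $V_{s-2}\neq V_s$ because $g\nmid 2$, so valency $\geq 3$ allows such a choice. Moreover, $w$ must lie off $C$: if $w=V_j$ for some $j$ not congruent to $s-2$ or $s$, then the chord $\{V_{s-1},V_j\}$ together with the shorter arc of $C$ between these two vertices gives a cycle of length at most $1+g/2<g$, contradicting the definition of $g$.

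By $s$-arc-transitivity, some $\sigma\in\Aut(\Gamma)$ sends $\alpha_1$ to $\alpha_2$; hence $\sigma$ fixes $V_i$ for $0\leq i\leq s-1$ and sends $V_{s\bmod g}$ to $w$. Split into two cases. If $s\geq g$, then $V_{s\bmod g}\in\{V_0,\ldots,V_{s-1}\}$ is fixed by $\sigma$, so $V_{s\bmod g}=w$, contradicting $w\notin V(C)$. If $s<g$, then because $g\leq 2s-3$ the shortest path in $\Gamma$ from $V_0$ to $V_{s-1}$ has length $g-s+1$ and is unique: any two distinct such paths yield a cycle of length at most $2(g-s+1)<g$, violating the girth. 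Since $\sigma$ fixes $V_0$ and preserves distance, $d(V_0,w)=d(V_0,V_s)=g-s$; appending the edge $wV_{s-1}$ produces a path of length $g-s+1$ from $V_0$ to $V_{s-1}$, hence a shortest one. Uniqueness forces it to coincide with the short arc of $C$, namely $V_0\text{--}V_{g-1}\text{--}\cdots\text{--}V_s\text{--}V_{s-1}$, and so $w=V_s$, contradicting $w\neq V_s$.

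The main obstacle is the careful split into the wrap-around regime $s\geq g$, where the contradiction is essentially combinatorial (the vertex-repetition patterns of $\alpha_1$ and $\alpha_2$ disagree), and the non-wrap regime $s<g$, where one must use distances together with uniqueness of shortest paths. The threshold $g\leq 2s-3$ enters precisely at the uniqueness step: it is what guarantees $2(g-s+1)<g$. Valency at least $3$ is used crucially to supply a third neighbour $w$ of $V_{s-1}$, and the girth hypothesis is what forces that neighbour to lie outside $C$.
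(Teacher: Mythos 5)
Your proof is correct. The paper does not actually prove this lemma---it simply cites \cite[Lemma 4.1.3]{GodRoy2001}---and your argument is essentially that classical Tutte-style proof: an $s$-arc running along a girth cycle versus one deviating at its final vertex, with the bound $g\leq 2s-3$ forcing unique short geodesics; the only points left implicit (that $d_\Gamma(V_0,V_{s-1})=g-s+1$ and $d_\Gamma(V_0,V_s)=g-s$, i.e.\ that distances between vertices of a shortest cycle are attained along the cycle, and that appending the edge $\{w,V_{s-1}\}$ to a shortest $V_0$--$w$ path gives a path rather than merely a walk) follow at once from the same two-paths-yield-a-short-cycle observation you already invoke for uniqueness.
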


\subsection{Classification theorems}
\label{s:results}

To begin, we state the classification of the finite homogeneous graphs; this was obtained independently by Gardiner~\cite{Gar1976} (using  the  work of Sheehan~\cite{She1974}) and Gol'Fand and Klin~\cite{GolKli1978}. In fact, this classification  immediately implies  that the locally finite homogeneous graphs are known (as Gardiner notes in \cite{Gar1978}) since such  graphs are either disjoint unions of complete graphs with the same order, or finite  with diameter $2$ (see Lemma~\ref{lemma:2homsimplelf}). However, we will only state the classification in the finite case  for simplicity. 

\begin{thm}[\cite{Gar1976,GolKli1978}]
\label{thm:hom}
A finite graph $\Gamma$ is homogeneous if and only if $\Gamma$ is listed below.
\begin{itemize}
\item[(i)] $(t+1)\cdot K_s$ where $t\geq 0$ and $s\geq 1$.
\item[(ii)] $K_{m[r]}$ where $m\geq 2$ and $r\geq 2$.
\item[(iii)] $C_5$ or $K_3\Box K_3$. 
\end{itemize}
\end{thm}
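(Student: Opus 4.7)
The plan is to verify the ``if'' direction by exhibiting explicit automorphism groups, and prove the ``only if'' direction by induction on $|V\Gamma|$, using Lemmas~\ref{lemma:2homsimple} and~\ref{lemma:2hom} together with the key observation that the local graph of a homogeneous graph is itself homogeneous.

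For the forward direction, the wreath products $S_s\wr S_{t+1}$ and $S_r\wr S_m$ act on $(t+1)\cdot K_s$ and $K_{m[r]}$ respectively with enough transitivity that any isomorphism between induced subgraphs clearly extends. For $C_5$ and $K_3\Box K_3$, a short direct enumeration of induced-subgraph isomorphism types suffices, using the dihedral action of $D_{10}$ and the wreath product action of $S_3\wr S_2$ of order $72$, respectively.

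For the ``only if'' direction, let $\Gamma$ be a finite homogeneous graph. If $\Gamma$ is disconnected then Lemma~\ref{lemma:2homsimple} gives case~(i); since $\overline{\Gamma}$ is also homogeneous, disconnectedness of $\overline{\Gamma}$ gives case~(ii). So I may assume both $\Gamma$ and $\overline{\Gamma}$ are connected of diameter~$2$, whence $\Aut(\Gamma)$ is primitive of rank~$3$ on $V\Gamma$ by Lemma~\ref{lemma:2hom}. I would then establish the inductive principle: any isomorphism $\phi:A\to B$ between induced subgraphs of $\Delta:=\Gamma(v)$ extends (together with $v\mapsto v$) to an isomorphism between induced subgraphs of $\Gamma$, which by homogeneity of $\Gamma$ is realised by some $g\in \Aut(\Gamma)_v$, whose restriction to $\Delta$ is an automorphism of $\Delta$ extending $\phi$. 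Hence $\Delta$ is homogeneous, and since $|V\Delta|<|V\Gamma|$, by induction $\Delta$ lies on the list~(i)--(iii).

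The bulk of the work is then a case analysis on the isomorphism type of $\Delta$. The trivial cases ($\Delta$ complete or edgeless) pin down $\Gamma$ using rank-$3$ primitivity and the diameter-$2$ hypothesis; notably $\Delta=2\cdot K_1$ with $\mu=1$ forces $\Gamma=C_5$. The cases $\Delta=(t+1)\cdot K_s$ with $s,t\geq 1$, or $\Delta=K_{m[r]}$ with $m,r\geq 2$, require using that $\mu$-graphs of $\Gamma$ are themselves homogeneous; careful counting of triangles, cliques and common neighbours (combined with the strong regularity of $\Gamma$ forced by rank $3$) leads either to $\Gamma=K_3\Box K_3$ (arising from $\Delta=2\cdot K_2$) or to parameter contradictions. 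Local graphs isomorphic to $C_5$ or $K_3\Box K_3$ are ruled out by exhibiting a small induced subgraph of $\Gamma$ whose automorphisms cannot all be realised by $\Aut(\Gamma)_v$. The main obstacle throughout is eliminating hypothetical strongly regular $\Gamma$ arising when $\Delta$ is strongly regular: here one must either construct explicit forbidden induced configurations or invoke parameter restrictions for strongly regular graphs, and the combinatorial bookkeeping across the various subcases (distinguishing $t$, $s$, $m$, $r$ small versus large) is the delicate part of the argument.
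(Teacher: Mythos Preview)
The paper does not prove Theorem~\ref{thm:hom}; it is stated as a classification result from the literature with citations to Gardiner and to Gol'fand--Klin, and the surrounding text explicitly notes that Gardiner's argument relies on prior work of Sheehan. There is therefore no ``paper's own proof'' to compare against, and your proposal should be read as an independent attempt at the classification.

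Your outline is the right shape and matches the strategy Gardiner uses: homogeneity passes to local graphs (your inductive principle is exactly the $k=\infty$ case of what this paper records as Lemma~\ref{lemma:localhom}), and one then analyses $\Gamma$ according to the isomorphism type of its local graph. The forward direction is fine. However, what you have written for the backward direction is a plan rather than a proof. The phrases ``careful counting of triangles, cliques and common neighbours \dots\ leads either to $\Gamma=K_3\Box K_3$ or to parameter contradictions'' and ``ruled out by exhibiting a small induced subgraph'' are precisely where the content of the theorem lives, and you have not supplied them. In particular, when $\Delta=(t+1)\cdot K_s$ with $t,s\geq 1$ (so $\Gamma$ is strongly regular with $\lambda=s-1$), excluding all parameter sets other than $(9,4,1,2)$ is genuinely nontrivial; Gardiner does not do this by strongly-regular parameter arithmetic alone but invokes Sheehan's structural results. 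If you intend a self-contained argument you will need either to reproduce that analysis or to find explicit forbidden configurations case by case, and your sketch does not indicate how either would go. As written, the proposal identifies the correct skeleton but leaves the decisive case analysis unperformed.
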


In the introduction, we stated two important results concerning $k$-homogeneous graphs: first, every  locally finite $5$-homogeneous graph is homogeneous~\cite{Cam1980}, and second, the only locally finite $4$-homogeneous graphs that are not $5$-homogeneous are the point graph of $Q_5^-(2)$ and its complement the Schl\"{a}fli graph~\cite{Buc1980}. We also alluded to the fact that  the locally finite $2$- and $3$-homogeneous  graphs are known. We now give some more details about these classifications.

By  Lemmas~\ref{lemma:2homsimplelf} and~\ref{lemma:2hom}, in order to classify the locally finite $2$-homogeneous graphs, it suffices to consider those finite graphs $\Gamma$ for which $\diam(\Gamma)=2$ and  $\Aut(\Gamma)$ is primitive of rank $3$. Using the CFSG, the finite primitive permutation groups of rank $3$ were classified in a series of papers (see \cite{LieSax1986}). A non-trivial orbital $X$ of such a group $G$ is the adjacency relation of a graph $\Gamma$ with $G\leq \Aut(\Gamma)$ precisely when $X$ is self-paired (i.e.,  symmetric), and this occurs precisely when $|G|$ is even.  Thus the  locally finite $2$-homogeneous graphs are known as an immediate consequence of the classification of the  finite primitive rank $3$ groups. 
Note that any  locally finite connected non-complete $2$-homogeneous graph  is finite of diameter $2$ and is therefore distance-transitive and strongly regular. 

The  locally finite $k$-homogeneous graphs for $k\geq 3$   are therefore also known, but these graphs were in fact enumerated (using the CFSG for $k\leq 4$) before 
 the classification of the finite primitive rank $3$ groups
was available. We now state Cameron and Macpherson's classification \cite{CamMac1985} of the  finite $3$-homogeneous graphs;  the locally finite classification then follows from Lemma~\ref{lemma:2homsimplelf}.

\begin{thm}[\cite{CamMac1985}]
\label{thm:3hom}
A finite graph $\Gamma$ is $3$-homogeneous if and only if $\Gamma$ or $\overline{\Gamma}$ is listed below.
\begin{itemize}
\item[(i)] $(t+1)\cdot K_s$ where $t\geq 0$ and $s\geq 1$.
\item[(ii)] $K_n\Box K_n$ where $n\geq 3$.
\item[(iii)]  $VO^\varepsilon_{2m}(2)$ where $m\geq 3$ and $\varepsilon\in\{+,-\}$.
\item[(iv)] The point graph of  $Q^-_5(q)$ where $q$ is a power of a prime.
\item[(v)] $C_5$, the  Clebsch graph, the Higman-Sims graph, or the McLaughlin graph.
\end{itemize}
\end{thm}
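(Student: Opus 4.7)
The plan is to combine Lemma~\ref{lemma:2hom} with the CFSG-based classification of finite primitive permutation groups of rank~$3$. A $3$-homogeneous graph is $2$-homogeneous, so by Lemma~\ref{lemma:2hom} either $\Gamma$ or $\overline{\Gamma}$ is a disjoint union of complete graphs of equal order (giving case~(i)), or $\Gamma$ is connected of diameter~$2$ and $G := \Aut(\Gamma)$ is primitive of rank~$3$ on $V\Gamma$. Since $3$-homogeneity is preserved by complementation and every non-complete finite $2$-homogeneous graph of diameter~$2$ is strongly regular (as the rank~$3$ group has an edge-orbital that is self-paired), I may assume that both $\Gamma$ and $\overline{\Gamma}$ are connected and strongly regular with parameters $(v,k,\lambda,\mu)$.

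Next, I would extract the combinatorial consequences of $3$-homogeneity. The four isomorphism types of a $3$-vertex graph force $G_{u,v}$ to act transitively on $\Gamma(u)\cap\Gamma(v)$ and on $V\Gamma \setminus (\{u,v\}\cup\Gamma(u)\cup\Gamma(v))$ for every pair of distinct vertices $u,v$, both adjacent and non-adjacent. Combining this transitivity with the swap of $u$ and $v$ (which must extend to an automorphism) shows in particular that the local graph $\Gamma(u)$ is $2$-homogeneous under $G_u$, and that each $\mu$-graph and $\lambda$-graph is $2$-homogeneous under the relevant stabiliser. Iterating Lemma~\ref{lemma:2hom} on the local graph then heavily restricts the feasible parameter tuples $(v,k,\lambda,\mu)$.

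I would then invoke the classification of finite primitive rank~$3$ permutation groups (culminating in Liebeck--Saxl~\cite{LieSax1986}) to enumerate the candidate pairs $(\Gamma,G)$. This list divides into three broad families by socle type: grid actions from the product action of a $2$-transitive group; affine actions with socle an elementary abelian $p$-group; and almost simple actions with socle from the classical, exceptional, alternating or sporadic groups. For each candidate I would test $3$-homogeneity directly by checking whether $G_{u,v}$ acts transitively on $\Gamma(u)\cap\Gamma(v)$ (and on its complement in $V\Gamma\setminus\{u,v\}$) for each orbital of $G$. The grid family produces $K_n\Box K_n$ (case~(ii)); the affine family yields $VO^\varepsilon_{2m}(2)$ along with $C_5$ and the Clebsch graph (cases~(iii) and~(v)); and the almost simple family retains the point graph of $Q_5^-(q)$ (case~(iv)) together with the Higman--Sims and McLaughlin graphs (case~(v)), all other rank~$3$ examples being ruled out by the local constraints.

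The main obstacle is the case-by-case analysis of the almost simple rank~$3$ groups. The classification supplies a lengthy list, dominated by the classical groups acting on singular points of symplectic, unitary or quadratic forms of various flavours, plus a handful of exceptional and sporadic rank~$3$ actions. For each such action one must either produce the automorphisms required to extend every $3$-vertex isomorphism -- typically via a Witt-style transitivity theorem for the underlying geometry combined with a stabiliser calculation -- or detect a $3$-homogeneity obstruction, most often by exhibiting two $\lambda$- or $\mu$-graphs that are abstractly isomorphic but lie in different $G_{u,v}$-orbits, or by computing the orbit lengths of $G_{u,v}$ on $\Gamma(u)\cap\Gamma(v)$ and noting that they do not match the pattern forced by the enhanced transitivity. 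The parameter and orbit bookkeeping across this list is where essentially all of the real work resides.
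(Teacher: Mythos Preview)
The paper does not prove Theorem~\ref{thm:3hom} at all: it is quoted as a known classification due to Cameron and Macpherson~\cite{CamMac1985}, and the only commentary the paper offers is an explanation of why its formulation matches~\cite[Corollary~1.2]{CamMac1985} (identifying the small-$m$ affine polar graphs with graphs already in the list, and noting the duality $Q_5^-(q)\simeq H_3(q^2)^{\mathrm{dual}}$). So there is no ``paper's own proof'' to compare against.

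Your proposal is a plausible sketch of how one would actually establish the classification from the rank~$3$ list, and indeed this is broadly the strategy Cameron and Macpherson use. But as a submission for \emph{this} paper it misses the point: the appropriate ``proof'' here is simply a citation, together with the reconciliation of the two statements. If you wanted to align with what the paper does, you would verify (i) that each graph listed is $3$-homogeneous (routine), and (ii) that your list coincides with Cameron--Macpherson's, handling the low-dimensional coincidences $VO^+_2(2)\simeq K_{2,2}$, $VO^-_2(2)\simeq\overline{K_4}$, $VO^+_4(2)\simeq K_4\times K_4$, $VO^-_4(2)\simeq\overline{\text{Clebsch}}$, and the isomorphism of the point graphs arising from $Q_5^-(q)$ and the dual of $H_3(q^2)$.
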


Our statement of Theorem \ref{thm:3hom} may appear to differ from  \cite[Corollary 1.2]{CamMac1985}, but it does describe the same set of graphs, and it is routine to verify that these graphs are indeed $3$-homogeneous.  In (iii), we impose the restriction $m\geq 3$ since   $VO^+_2(2)\simeq K_{2,2}$, $VO^-_2(2)\simeq \overline{K_4}$ and  $VO^+_4(2)\simeq K_4\times K_4$, all of which arise in (i) or (ii), and $VO^-_4(2)$ is isomorphic to  the complement of the Clebsch graph. The graphs we list in  (iv) are isomorphic to those of  \cite[Corollary 1.2]{CamMac1985}(iv) since    $Q_5^-(q)$ is isomorphic to the point-line dual of  $H_3(q^2)$.

Next we state the classification of the locally finite CH graphs. This classification implies, in particular, that the only infinite,  locally finite,  connected CH  graphs  are the halved graphs of the biregular tree $T_{t+1,s+1}$ for positive integers $t$ and $s$. Note that the halved graphs of $T_{t+1,2}$ are the regular tree $T_{t+1}$ and its line graph $L(T_{t+1})$.  Gardiner mistakenly  claimed in~\cite{Gar1978} that these are the only  infinite,  locally finite, connected CH graphs, but this was later corrected by Enomoto~\cite[Remark 3]{Eno1981} to include the  halved graphs of $T_{t+1,s+1}$ for $s\geq 2$. 

\begin{thm}[\cite{Gar1978,Eno1981}]
\label{thm:CH}
A locally finite connected graph $\Gamma$ is CH if and only if $\Gamma$ is listed below.
\begin{itemize}
\item[(i)] $K_{n}$ where $n\geq 1$ or $K_{m[r]}$ where $m\geq 2$ and $r\geq 2$.
\item[(ii)] $C_n$ where $n\geq 5$.
\item[(iii)] $K_n\Box K_n$ where $n\geq 3$.
\item[(iv)] $K_2\times K_{n}$ where $n\geq 4$.
\item[(v)] A  halved graph of the biregular tree $T_{t+1,s+1}$  where $t\geq 1$ and $s\geq 1$. 
\item[(vi)] The Petersen graph, or the folded $5$-cube $\Box_5$.
\end{itemize}
\end{thm}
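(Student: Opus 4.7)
The plan has two directions. For the forward direction, I would verify each listed graph is CH. The multipartite graphs in (i) are homogeneous by Theorem~\ref{thm:hom}, hence CH. For $C_n$ with $n\geq 5$, every connected induced subgraph is a path, and the dihedral group $D_{2n}$ acts transitively on paths of each length up to $n-1$, so CH follows. For $K_n\Box K_n$, $K_2\times K_n$, the Petersen graph and $\Box_5$, one enumerates the (few) isomorphism types of connected induced subgraphs and checks that $\Aut(\Gamma)$ is transitive on each. For the halved graphs of $T_{t+1,s+1}$, the automorphism group of the biregular tree is rich enough to map any finite rooted subtree to any isomorphic rooted subtree, and this transitivity descends to the halved graph.

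For the converse, let $\Gamma$ be a connected, locally finite CH graph. The key reduction is that each local graph $\Gamma(u)$ is itself CH: given an isomorphism $\varphi$ between connected induced subgraphs $A,B$ of $\Gamma(u)$, extend $\varphi$ by $u\mapsto u$ to an isomorphism between the connected subgraphs $A\cup\{u\}$ and $B\cup\{u\}$ of $\Gamma$, lift to an automorphism of $\Gamma$ fixing $u$, and restrict to $\Gamma(u)$. Since $\Gamma$ is locally finite, $\Gamma(u)$ is a finite CH graph, and I would induct on the valency so that the inductive hypothesis provides the isomorphism type of $\Gamma(u)$. A useful further consequence of CH is that $\Gamma$ is distance-transitive, because any two vertex-pairs at the same distance are joined by isomorphic geodesic paths which CH extends to automorphisms.

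The case analysis then proceeds by isomorphism type of $\Gamma(u)$. If $\Gamma(u)$ is complete, then $\Gamma=K_n$. If $\Gamma(u)\cong K_{m[r]}$, analysis of the $\mu$-graphs and of common neighbours of non-adjacent pairs forces $\Gamma=K_{m+1[r]}$ or a grid $K_n\Box K_n$. If $\Gamma(u)$ is a cycle, Lemma~\ref{lemma:2homsimplelf} together with rank-three analysis of $\Aut(\Gamma)_u$ restricts $\Gamma$ to the Petersen graph or $\Box_5$. In the locally disconnected case $\Gamma(u)\cong(t+1)\cdot K_s$, when $s\geq 2$ the structure of the triangles through a vertex yields $K_n\Box K_n$ in the finite case and a halved graph of $T_{t+1,s+1}$ in the infinite case; when $s=1$, $\Gamma$ is triangle-free and $2$-arc-transitive, and a girth/shortest-cycle argument combined with CH applied to longer paths forces $\Gamma$ to be $C_n$, $K_2\times K_n$, $K_{2[r]}$, or a halved graph of $T_{t+1,2}$.

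The main obstacle, I expect, is the locally disconnected case, in particular separating the infinite halved-tree examples from their close finite cousins. The strategy is to show that if $\Gamma$ has a cycle of length $g$, then CH applied to two different ways of embedding a path of length approximately $g/2$ produces a contradiction unless $g$ is very small and $\Gamma$ coincides with one of the short list of finite exceptions. Equivalently, the universal distance-transitive object with local graph $(t+1)\cdot K_s$ is the halved graph of $T_{t+1,s+1}$, and the only CH finite quotients of it are those in the list; carrying out this reduction rigorously requires matching $\mu$-graph data and intersection arrays across the inductive step.
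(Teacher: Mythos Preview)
The paper does not prove Theorem~\ref{thm:CH}; it is quoted as a classification due to Gardiner~\cite{Gar1978} and Enomoto~\cite{Eno1981} and is stated without proof. So there is nothing in the paper to compare your argument against directly. That said, your sketch has substantive gaps that would derail an attempt to reconstruct the original proof.

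First, your ``key reduction'' is weaker than what your own argument actually proves. If $A,B\subseteq\Gamma(u)$ are \emph{any} induced subgraphs (not just connected ones) and $\varphi:A\to B$ is an isomorphism, then $A\cup\{u\}$ and $B\cup\{u\}$ are connected in $\Gamma$, so CH lifts $\varphi$ to an automorphism fixing $u$. Hence $\Gamma(u)$ is fully \emph{homogeneous}, not merely CH. This is exactly Lemma~\ref{lemma:localhom}, and it means you should invoke the classification of finite homogeneous graphs (Theorem~\ref{thm:hom}) directly rather than set up an induction on valency. The local graph is therefore one of $(t{+}1)\cdot K_s$, $K_{m[r]}$, $C_5$, or $K_3\Box K_3$; there is no open-ended inductive list.

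Second, your case analysis misplaces the sporadic examples. The Petersen graph is locally $3\cdot K_1$ and $\Box_5$ is locally $5\cdot K_1$; both live in the triangle-free case $s=1$, not the ``$\Gamma(u)$ is a cycle'' case. The only cycle that is homogeneous is $C_5$, and a connected locally-$C_5$ graph is the icosahedron (Lemma~\ref{lemma:C5}), which is not even $4$-CH, so this branch is empty. You also omit the case $\Gamma(u)\cong K_3\Box K_3$, which must be eliminated (cf.\ Lemma~\ref{lemma:KnboxKn}). Finally, in the locally $(t{+}1)\cdot K_s$ case with $s\ge2$ you need to split on whether $c_2=1$ or $c_2>1$; the grid $K_n\Box K_n$ arises only when $c_2>1$, and the genuinely delicate part of the classification---the part Enomoto corrected---is showing that when $c_2=1$ the only CH examples are the halved biregular trees.
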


The  finite distance-transitive generalised quadrangles were classified by Buekenhout and Van Maldeghem~\cite{BueVan1994} using the CFSG. In \S\ref{s:GQ}, we will use the following consequence of their work.  

\begin{thm}[\cite{BueVan1994}]
\label{thm:GQ}
Let $\mathcal{Q}$ be a finite thick distance-transitive generalised quadrangle of order $(s,t)$ where $s$ divides $t$. Let $G:=\Aut(\mathcal{Q})$. Then one of the following holds.
\begin{itemize}
\item[(i)] $\mathcal{Q}$ is $W_3(q)$ for a prime power $q$. Here $(s,t)=(q,q)$ and $G=\PGaSp_4(q)$.
\item[(ii)] $\mathcal{Q}$ is $Q_4(q)$ for an odd prime power $q$. Here $(s,t)=(q,q)$ and $G=\PGammaO_5(q)$.
\item[(iii)]$\mathcal{Q}$ is $Q_5^-(q)$ for a prime power $q$. Here $(s,t)=(q,q^2)$ and $G=\PGammaO^-_6(q)$. 
\item[(iv)] $\mathcal{Q}$ is $H_4(q^2)$ for a prime power $q$. Here $(s,t)=(q^2,q^3)$ and $G=\PGaU_5(q)$.
\end{itemize}
\end{thm}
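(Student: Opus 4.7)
The plan is to appeal directly to the Buekenhout--Van Maldeghem classification \cite{BueVan1994} of finite thick distance-transitive generalised quadrangles, and then to apply the divisibility hypothesis $s\mid t$ to cut the list down. That classification, which depends on the CFSG, asserts that every such quadrangle is, up to duality, one of the classical Moufang quadrangles $W_3(q)$, $Q_4(q)$, $Q_5^-(q)$, $H_3(q^2)$, or $H_4(q^2)$, with orders $(q,q)$, $(q,q)$, $(q,q^2)$, $(q^2,q)$, and $(q^2,q^3)$ respectively. Taking duality into account one also encounters the quadrangle of order $(q^3,q^2)$ dual to $H_4(q^2)$; by contrast, $W_3(q)$ and $Q_4(q)$ are mutual duals (and are isomorphic for $q$ even, which is why case (ii) is restricted to $q$ odd), while $Q_5^-(q)$ is the dual of $H_3(q^2)$.

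The first step is the divisibility check. For every prime power $q\geq 2$ we have $q^2\nmid q$ and $q^3\nmid q^2$, so the quadrangles of orders $(q^2,q)$ and $(q^3,q^2)$---namely $H_3(q^2)$ and the dual of $H_4(q^2)$---violate $s\mid t$ and are eliminated. The four surviving candidates are precisely $W_3(q)$ of order $(q,q)$, $Q_4(q)$ of order $(q,q)$ with $q$ odd, $Q_5^-(q)$ of order $(q,q^2)$, and $H_4(q^2)$ of order $(q^2,q^3)$, matching cases (i)--(iv).

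The second step is to identify $G=\Aut(\mathcal{Q})$ in each surviving case. Each of these four quadrangles is constructed from the one- and two-dimensional totally singular subspaces of a non-degenerate symplectic, orthogonal, or unitary space on $V_d(q)$ or $V_d(q^2)$ as described in \S\ref{s:defn}, and its full collineation group is the projective semilinear stabiliser of the relevant form, giving $\PGaSp_4(q)$, $\PGammaO_5(q)$, $\PGammaO^-_6(q)$, and $\PGaU_5(q)$ respectively; this may be read off from \cite[\S 6.5]{BroCohNeu1989}. The only conceivable obstacle---ruling out sporadic or exceptional finite distance-transitive generalised quadrangles outside the classical families---is precisely the content of \cite{BueVan1994}, so no further argument is required here.
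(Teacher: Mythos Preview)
Your approach is the same as the paper's: quote the Buekenhout--Van Maldeghem list, then discard the candidates whose orders violate $s\mid t$. However, your statement of the classification from \cite{BueVan1994} is incomplete. The list there is not just the classical Moufang quadrangles up to duality: it also contains an exceptional distance-transitive generalised quadrangle of order $(3,5)$. The paper's own proof explicitly includes this case in the list drawn from \cite{BueVan1994} before applying the divisibility filter. Since $3\nmid 5$, the exceptional quadrangle is eliminated along with $H_3(q^2)$ and the dual of $H_4(q^2)$, so your conclusion is unaffected; but as written, your first paragraph misstates the cited theorem, and a reader checking \cite{BueVan1994} would notice the discrepancy. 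You should add the order-$(3,5)$ example to your list and then observe that it, too, fails $s\mid t$.

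Your second paragraph on automorphism groups goes beyond what the paper argues (the paper simply takes the identification of $G$ in each case as part of the cited classification), but what you say there is correct and harmless.
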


\begin{proof}
By \cite{BueVan1994}, $\mathcal{Q}$ is one of $W_3(q)$, $Q_4(q)$ for $q$ odd, $Q_5^-(q)$, $H_3(q^2)$, $H_4(q^2)$, the dual of $H_4(q^2)$,  or a generalised quadrangle with order $(3,5)$. Note that $H_3(q^2)$ has order $(q^2,q)$, and the dual of $H_4(q^2)$ has order $(q^3,q^2)$.  Since $\mathcal{Q}$ has order $(s,t)$ where $s$ divides $t$, one of (i)-(iv) holds. 
\end{proof}

The following is  a well-known consequence of the CFSG (see \cite[Theorem 4.11]{Cam1980}). 

\begin{thm}[CFSG]
\label{thm:4trans}
The only  finite $4$-transitive permutation groups of degree $n$ are $S_n$ for $n\geq 4$, $A_n$ for $n\geq 6$, and the Mathieu groups $\M_n$ for $n\in \{11,12,23,24\}$.
\end{thm}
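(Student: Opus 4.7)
The plan is to invoke the classification of finite $2$-transitive permutation groups (a well-known consequence of the CFSG) and sift the list. Since $4$-transitivity implies $2$-transitivity, every finite $4$-transitive group appears on that list, which divides into those of affine type (with an elementary abelian regular normal subgroup) and those of almost simple type.

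For the affine case, if $G \leq \mathrm{AGL}(V)$ is $k$-transitive on $V = V_d(p)$, then the stabiliser $H := G_0 \leq \GL(V)$ is $(k-1)$-transitive on $V \setminus \{0\}$. For $k=4$ this requires $H$ to be $3$-transitive on $V\setminus\{0\}$, which, together with the known classification of transitive linear groups, forces $p = 2$ and $d \leq 3$ (with $\mathrm{AGL}_3(2)$ itself only $3$-transitive, since the condition that a fourth vector equals $v + w$ is invariant under the two-point stabiliser of $v,w$) and permits no examples when $p$ is odd beyond small sporadic cases. A short check of the remaining small-degree affine groups shows each coincides with $S_n$ or $A_n$ for some $n \leq 5$.

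For the almost simple case, we use the explicit list of finite $2$-transitive almost simple groups. Beyond $S_n$ and $A_n$ in their natural actions, these comprise $\PSL_n(q)$ on projective points, $\PSU_3(q)$, the Suzuki groups, the Ree groups, certain sporadic $2$-transitive actions (of degrees $15, 22, 176$ and $276$), and the Mathieu groups in their various actions. For each family other than $S_n$, $A_n$ and the four Mathieu groups in the statement, a direct computation of the order of a point stabiliser shows the transitivity degree is at most $3$: $\PSL_2(q)$ is sharply $3$-transitive on the projective line, $\PSL_n(q)$ with $n \geq 3$ preserves collinearity of three projective points and so is only $2$-transitive, the unitary, Suzuki, and Ree families are $2$-transitive but not $3$-transitive, $\M_{11}$ on $12$ points is only $3$-transitive, and $\M_{22}$ on $22$ points is only $3$-transitive. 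The natural actions of $\M_{11}, \M_{12}, \M_{23}, \M_{24}$ on $11, 12, 23, 24$ points respectively are then verified to be $4$-transitive, with $\M_{12}$ and $\M_{24}$ in fact sharply $5$-transitive.

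The main obstacle is the length of the case analysis in the almost simple case: each family of $2$-transitive almost simple groups must be examined separately. Once the full list of such groups is available, however, each individual check reduces to a short computation of subgroup indices or a structural observation about a preserved invariant (such as collinearity in projective space).
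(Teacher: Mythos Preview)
The paper does not prove this theorem at all: it is stated as a well-known consequence of the CFSG with a reference to \cite[Theorem~4.11]{Cam1980}. Your outline---reduce to the classification of finite $2$-transitive groups and check each family for $4$-transitivity---is exactly the standard argument behind such citations, so you have in effect supplied what the paper deliberately omits.

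That said, your sketch has a few slips worth cleaning up. Your enumeration of the almost simple $2$-transitive groups is incomplete: you omit the actions of $\mathrm{Sp}_{2n}(2)$ on $2^{2n-1}\pm 2^{n-1}$ points and the exceptional action of $\PSL_2(11)$ on $11$ points (both only $2$-transitive, so the conclusion is unaffected, but a full proof must dispose of them). You write that $\PSL_2(q)$ is sharply $3$-transitive on the projective line; this holds only when $q$ is even, and in general it is $\mathrm{PGL}_2(q)$ that is sharply $3$-transitive. Finally, $\M_{24}$ is $5$-transitive but not \emph{sharply} so (the stabiliser of five points has order $48$); among the Mathieu groups only $\M_{12}$ is sharply $5$-transitive. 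None of these errors derails the argument, but they would need correcting in a formal write-up.
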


\section{Properties of $k$-CH graphs}
\label{s:kCH}

 We begin with a result that provides the basic approach for studying $k$-CH graphs in the locally connected case. Note that many of the arguments in this section apply to infinite graphs, including those that are not locally finite. Recall the definition of a $(G,k)$-CH graph from \S\ref{s:defn}.

\begin{lemma}
\label{lemma:localhom}
 If $\Gamma$ is a $(G,k)$-CH graph with non-zero valency for some $k\geq 2$, then for each $u\in V\Gamma$, the graph induced by $\Gamma(u)$ is $(G_u^{\Gamma(u)},k-1)$-homogeneous.
\end{lemma}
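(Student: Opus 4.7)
The plan is to reduce the statement about (not necessarily connected) induced subgraphs of the local graph $\Gamma(u)$ to a statement about \emph{connected} induced subgraphs of $\Gamma$, by adjoining the central vertex $u$.

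Concretely, fix $u\in V\Gamma$ and let $\phi\colon A\to B$ be an isomorphism between induced subgraphs of the local graph at $u$, with $|A|=|B|=m\leq k-1$. Since $u$ is adjacent in $\Gamma$ to every vertex of $A$ and of $B$, the sets $A\cup\{u\}$ and $B\cup\{u\}$ induce connected subgraphs of $\Gamma$, each of order $m+1\leq k$. Extend $\phi$ to a map $\tilde\phi\colon A\cup\{u\}\to B\cup\{u\}$ by sending $u$ to itself. The edges inside $A$ correspond to edges inside $B$ under $\phi$, and the edges joining $u$ to the vertices of $A$ all exist and map bijectively to the edges joining $u$ to the vertices of $B$, which also all exist; hence $\tilde\phi$ is an isomorphism of the corresponding induced subgraphs of $\Gamma$.

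Now apply the $(G,k)$-CH hypothesis: $\tilde\phi$ extends to some $g\in G$. Since $\tilde\phi$ fixes $u$, we have $g\in G_u$, so $g$ preserves $\Gamma(u)$ setwise. Restricting $g$ to $\Gamma(u)$ gives an element of $G_u^{\Gamma(u)}$ whose action on $A$ coincides with $\phi$. As $\phi$ was arbitrary, this shows the induced graph on $\Gamma(u)$ is $(G_u^{\Gamma(u)},k-1)$-homogeneous.

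There is no genuine obstacle here; the only point to verify carefully is that $\tilde\phi$ really is a graph isomorphism between the two induced subgraphs of $\Gamma$ (not merely of $\Gamma(u)$), which follows because $u$ is joined to every vertex of each of $A$ and $B$, and because $A$ and $B$ are subsets of $\Gamma(u)$ so no new adjacencies arise beyond those involving $u$. The use of non-zero valency is precisely to ensure that $\Gamma(u)$ is non-empty so that the statement has content.
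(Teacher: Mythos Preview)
Your proof is correct and follows essentially the same approach as the paper: adjoin $u$ to the two induced subgraphs of $\Gamma(u)$ to obtain connected induced subgraphs of $\Gamma$ of order at most $k$, extend the isomorphism by fixing $u$, and then invoke $(G,k)$-CH to obtain an element of $G_u$ whose restriction to $\Gamma(u)$ extends the original isomorphism. The paper's argument is identical in structure, with only notational differences.
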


\begin{proof}
Let $u\in V\Gamma$. Let $\Delta_1$ and $\Delta_2$ be induced subgraphs of  $\Gamma(u)$ of order at most $k-1$, and suppose that $\varphi:\Delta_1\to\Delta_2$ is a graph isomorphism. For each $i$, let $\Sigma_i$ denote the subgraph of $\Gamma$ induced by $V\Delta_i\cup \{u\}$. Define $\varphi^*:\Sigma_1\to\Sigma_2$ by $u\mapsto u$ and $v\mapsto v\varphi$ for all $v\in V\Delta_1$. Now $\varphi^*$ is an isomorphism between connected graphs of order at most $k$, so there exists $g\in G$ such that $v^g=v\varphi^*$ for all $v\in V\Sigma_1$. Since $g\in G_u$, it preserves $\Gamma(u)$ and therefore induces an automorphism of $\Gamma(u)$ that extends  $\varphi$, as desired.
\end{proof}

\begin{lemma}
\label{lemma:3CH}
If $\Gamma$ is a locally finite $3$-CH graph with non-zero valency, then $\Gamma$ is either locally $(t+1)\cdot K_s$ for some $t\geq 0$ and $s\geq 1$, or locally a graph with diameter $2$.
\end{lemma}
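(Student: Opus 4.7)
The plan is direct and relies on stringing together Lemmas~\ref{lemma:localhom} and~\ref{lemma:2homsimplelf}. First I would set $G = \Aut(\Gamma)$, so that $\Gamma$ is $(G,3)$-CH, and fix an arbitrary vertex $u \in V\Gamma$. Since $\Gamma$ has non-zero valency, the neighbourhood $\Gamma(u)$ is non-empty, and by local finiteness it is a finite graph.

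Next, I would apply Lemma~\ref{lemma:localhom} with $k=3$ to conclude that $\Gamma(u)$ is $(G_u^{\Gamma(u)},2)$-homogeneous, and in particular $2$-homogeneous as an abstract graph. Being finite, it falls under the purview of Lemma~\ref{lemma:2homsimplelf} (which, although stated for locally finite $2$-homogeneous graphs, applies trivially to finite ones): either $\Gamma(u)$ is a disjoint union of finite complete graphs all of the same order, giving $\Gamma(u) \cong (t+1)\cdot K_s$ for some $t\geq 0$ and $s\geq 1$, or $\Gamma(u)$ has diameter $2$.

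Finally I would note that since $\Gamma$ is $3$-CH it is in particular $1$-CH, hence vertex-transitive, so all local graphs $\Gamma(u)$ are pairwise isomorphic and the same alternative holds at every vertex, giving the stated dichotomy for $\Gamma$ itself.

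There is essentially no obstacle here; the only minor point requiring care is that Lemma~\ref{lemma:localhom} produces $(G_u^{\Gamma(u)},2)$-homogeneity rather than $2$-homogeneity of the abstract graph, but the former is a stronger condition and implies the latter, so Lemma~\ref{lemma:2homsimplelf} applies without difficulty.
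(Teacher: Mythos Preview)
Your proposal is correct and follows essentially the same approach as the paper, which simply writes ``Apply Lemmas~\ref{lemma:2homsimple} and~\ref{lemma:localhom}.'' You have merely spelled out the details the paper leaves implicit (the finiteness of $\Gamma(u)$, the vertex-transitivity ensuring all local graphs coincide) and cited Lemma~\ref{lemma:2homsimplelf} in place of Lemma~\ref{lemma:2homsimple}, which is immaterial since the local graph is finite.
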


\begin{proof}
Apply Lemmas~\ref{lemma:2homsimple} and~\ref{lemma:localhom}.
\end{proof}

 Our next result provides a method for  determining  whether a $(k-1)$-CH graph is $k$-CH. This requires some additional terminology: a graph $\Gamma$ is $(G,\Delta)$\textit{-homogeneous} if $G\leq \Aut(\Gamma)$ and $\Delta$ is a finite graph such that any isomorphism between induced subgraphs of $\Gamma$ that are isomorphic to $\Delta$ extends to an automorphism of $\Gamma$. Note that  $\Gamma$ is $(G,k)$-CH if and only if $\Gamma$ is $(G,\Delta)$-homogeneous for all connected graphs $\Delta$ of order at most $k$.
 
\begin{lemma}
\label{lemma:detkCH}
Let $\Gamma$ be a graph, let $\Delta$ be an induced subgraph of $\Gamma$ of order $k\geq 2$, and let $\Sigma$ be an induced subgraph of $\Delta$ of order $k-1$.
 Let $u_0$ be the unique vertex in $V\Delta\setminus V\Sigma$,   
 and let $X:=\{u\in V\Gamma\setminus V\Sigma:\Gamma(u)\cap V\Sigma=\Gamma(u_0)\cap V\Sigma\}$. If $\Gamma$ is $(G,\Sigma)$-homogeneous, then the following are equivalent.
\begin{itemize}
\item[(i)] $\Gamma$ is $(G,\Delta)$-homogeneous.
\item[(ii)] 
The pointwise stabiliser in $G$ of $V\Sigma$ 
is transitive on $X$.
\end{itemize}
\end{lemma}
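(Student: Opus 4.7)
The plan is to prove the two directions separately, with the key observation being that for any $u\in X$, the bijection from $V\Sigma\cup\{u\}$ to $V\Delta$ that fixes $V\Sigma$ pointwise and sends $u$ to $u_0$ is a graph isomorphism between the induced subgraphs, because $u$ and $u_0$ have the same neighbours in $V\Sigma$ by definition of $X$.

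For (i)$\Rightarrow$(ii): take any $u\in X$, apply the key observation, and use $(G,\Delta)$-homogeneity to extend the resulting isomorphism to an element $g\in G$. Then $g$ fixes $V\Sigma$ pointwise and sends $u$ to $u_0$; since $u_0\in X$, this shows that the pointwise stabiliser of $V\Sigma$ in $G$ acts transitively on $X$.

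For (ii)$\Rightarrow$(i): I would first prove the auxiliary subclaim that for every induced subgraph $\Delta'$ of $\Gamma$ isomorphic to $\Delta$ and every isomorphism $\psi\colon\Delta\to\Delta'$, there exists $g\in G$ with $vg=v\psi$ for all $v\in V\Delta$. This subclaim is enough: given any isomorphism $\varphi\colon \Delta_1\to\Delta_2$ between induced subgraphs of $\Gamma$ isomorphic to $\Delta$, fix an isomorphism $\psi_1\colon\Delta\to\Delta_1$, apply the subclaim to $\psi_1$ and to $\psi_1\varphi\colon\Delta\to\Delta_2$ to obtain $g_1,g_2\in G$, and then $g_1^{-1}g_2$ extends $\varphi$. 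To prove the subclaim, set $\Sigma':=\psi(\Sigma)$ and use $(G,\Sigma)$-homogeneity to extend $\psi|_\Sigma\colon\Sigma\to\Sigma'$ to some $g_\Sigma\in G$. Since $g_\Sigma$ agrees with $\psi$ on $V\Sigma$, a direct computation gives
\[
\Gamma(u_0g_\Sigma)\cap V\Sigma' \;=\; \psi(\Gamma(u_0)\cap V\Sigma) \;=\; \Gamma(\psi(u_0))\cap V\Sigma',
\]
and both $u_0 g_\Sigma$ and $\psi(u_0)$ lie outside $V\Sigma'$. Applying $g_\Sigma^{-1}$ translates this equality to $V\Sigma$ and shows that $\psi(u_0)g_\Sigma^{-1}\in X$. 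By (ii) there is some $h\in G$ fixing $V\Sigma$ pointwise with $\psi(u_0)g_\Sigma^{-1}h=u_0$, and then $g:=h^{-1}g_\Sigma$ sends $u_0$ to $\psi(u_0)$ and agrees with $g_\Sigma$ (hence with $\psi$) on $V\Sigma$, so $g$ extends $\psi$.

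The main obstacle is just careful bookkeeping. In particular, the substantive content of (ii) is exactly that, up to the $G$-action fixing $V\Sigma$, there is only one way to \emph{complete} the fixed copy $\Sigma$ of $\Sigma$ in $\Gamma$ to a copy of $\Delta$; the subclaim reduction in the $(\mathrm{ii})\Rightarrow(\mathrm{i})$ direction is the organizational idea that lets this local information be transported between arbitrary copies of $\Delta$ in $\Gamma$.
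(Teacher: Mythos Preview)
Your proof is correct and follows essentially the same approach as the paper's. The only organisational differences are cosmetic: for (i)$\Rightarrow$(ii) the paper argues by contrapositive (taking $u_1,u_2\in X$ in distinct $P$-orbits and exhibiting a non-extendable isomorphism), while you argue directly; and for (ii)$\Rightarrow$(i) the paper treats a general isomorphism $\varphi:\Delta_1\to\Delta_2$ in one pass by mapping both sides onto the fixed $\Sigma$ and applying transitivity on $X$ once, whereas you factor this through your subclaim and apply it twice --- but unwinding your reduction yields exactly the paper's computation.
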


\begin{proof}
Let $P$ be the pointwise stabiliser in $G$ of $V\Sigma$. First suppose that (ii) does not hold. Let $u_1$ and $u_2$ be elements of $X$ in different orbits of $P$. Let $\Delta_i$ be the graph induced by $V\Sigma\cup \{u_i\}$ for each $i$. Then $\Delta_1\simeq \Delta_2\simeq  \Delta$, and there is an isomorphism $\varphi:\Delta_1\to\Delta_2$  that fixes $V\Sigma$ pointwise and maps $u_1$ to $u_2$. Since  $\varphi$ cannot be extended to $G$, (i) does not hold.

Conversely, suppose that (ii) holds. 
Let $\varphi:\Delta_1\to\Delta_2$ be an isomorphism between induced subgraphs $\Delta_1$ and $\Delta_2$ of $\Gamma$ such that $\Delta_1$ is isomorphic to $\Delta$. There exists an isomorphism $\varphi_1:\Delta_1\to \Delta$. Hence $\varphi_2:=\varphi^{-1}\varphi_1:\Delta_2\to \Delta$ is also an isomorphism. Fix $i\in \{1,2\}$. Let $\Sigma_i:=\Sigma\varphi_i^{-1}$ and $w_i:=u_0\varphi_i^{-1}$, so that $V\Delta_i=V\Sigma_i\cup \{w_i\}$, and let $\varphi_i':=\varphi_i|_{V\Sigma_i}$. Now $\varphi_i':\Sigma_i\to \Sigma$ is an isomorphism of induced subgraphs of $\Gamma$ of order $k-1$, so there exists $g_i\in G$ that extends $\varphi_i'$. Let $u_i:=w_i^{g_i}$. 
 Now $u_i\in X$ since $\Gamma(u_0)\cap V\Sigma=(\Gamma(w_i)\cap V\Sigma_i)\varphi_i'=(\Gamma(w_i)\cap V\Sigma_i)^{g_i}=\Gamma(u_i)\cap V\Sigma$. By assumption, there exists $g\in P$ such that $u_1^g=u_2$. Further, $g$ extends the isomorphism $g_1^{-1}\varphi g_2:\Delta_1^{g_1}\to \Delta_2^{g_2}$ since $(V\Delta_i)^{g_i}=V\Sigma\cup\{u_i\}$. Thus $g_1gg_2^{-1}$ extends $\varphi$, and (i) holds.
\end{proof}

Note that for any connected graph $\Delta$ of order $k\geq 2$, we can always find a connected induced subgraph $\Sigma$ of $\Delta$ of order $k-1$:   choose $u,v\in V\Delta$ such that $d_\Delta(u,v)=\diam(\Delta)$ and take $\Sigma$ to be the graph induced by $V\Delta\setminus \{u\}$. 

The next two results  will be instrumental in our proof of Theorem \ref{thm:connected}.

\begin{lemma}
\label{lemma:x}
Let $\Gamma$ be a $(G,4)$-CH graph. If there exists $u\in V\Gamma$, $v\in\Gamma(u)$ and $w\in \Gamma_2(u)\cap \Gamma(v)$ such that $G_{u,v,w}$ fixes   some $x\in \Gamma(u)\cap\Gamma_2(v)$, then the following hold.
\begin{itemize}
\item[(i)] The graph induced by $\Gamma(u)\cap\Gamma_2(v)$ is either edgeless or  complete.
\item[(ii)] 
$\Gamma(u)\cap\Gamma_2(v)\cap\Gamma(w)$ is either $\{x\}$ or $(\Gamma(u)\cap\Gamma_2(v))\setminus \{x\}$.
\end{itemize}
\end{lemma}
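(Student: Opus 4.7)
The plan is to prove (ii) first by a direct application of $4$-connected-homogeneity to the $4$-vertex subgraphs that extend the path $(u,v,w)$, and then to derive (i) from (ii) by a second $4$-CH argument together with the $3$-CH transitivity of $G_{u,v}$ on $X:=\Gamma(u)\cap\Gamma_2(v)$. For (ii), I would first observe that for any $y\in X$, the induced subgraph on $\{u,v,w,y\}$ has forced edges $\{u,v\}$, $\{v,w\}$, $\{u,y\}$ and forced non-edges $\{u,w\}$ (since $d_\Gamma(u,w)=2$) and $\{v,y\}$ (since $y\in\Gamma_2(v)$); in particular this subgraph is connected of order $4$, and it is a $4$-cycle if $w\sim y$ and a path of length~$3$ otherwise. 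Now take any $y\in X\setminus\{x\}$ and suppose that $w\sim x$ holds if and only if $w\sim y$. Then the identity on $\{u,v,w\}$ extended by $x\mapsto y$ is an isomorphism between the connected $4$-vertex induced subgraphs $\{u,v,w,x\}$ and $\{u,v,w,y\}$; by $4$-CH it extends to an element of $G_{u,v,w}$ that maps $x$ to $y\neq x$, contradicting the hypothesis. Hence $w\sim x$ if and only if $w\not\sim y$ for every $y\in X\setminus\{x\}$, which is precisely~(ii).

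For (i), part (ii) ensures that the induced subgraph on $\{u,v,w,y\}$ has the same isomorphism type for every $y\in X\setminus\{x\}$, so for any $y,y'\in X\setminus\{x\}$ the identity on $\{u,v,w\}$ extended by $y\mapsto y'$ is an isomorphism of the corresponding connected $4$-vertex subgraphs, and $4$-CH supplies an element of $G_{u,v,w}$ mapping $y$ to $y'$. Hence $G_{u,v,w}$ acts transitively on $X\setminus\{x\}$; since it also fixes $x$ and preserves adjacency, either every vertex of $X\setminus\{x\}$ is adjacent to $x$, or none is. A $3$-CH argument applied to the paths $(v,u,x_1)$ and $(v,u,x_2)$ for $x_1,x_2\in X$ yields that $G_{u,v}$ is transitive on $X$, and this transitivity transports the adjacency dichotomy from $x$ to every vertex of $X$, so the induced subgraph on $X$ is either complete or edgeless.

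The argument is a careful bookkeeping exercise: the main point to get right is the enumeration of the two possible isomorphism types of the $4$-vertex subgraph $\{u,v,w,y\}$ and the verification that the candidate isomorphisms fixing $\{u,v,w\}$ pointwise are genuine graph isomorphisms. Once those are in place, (ii) and (i) each follow from a single application of $4$-CH, together with a short $3$-CH coda to extend the dichotomy from $x$ across~$X$.
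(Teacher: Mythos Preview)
Your proof is correct and follows essentially the same approach as the paper: both arguments use $4$-CH on the connected $4$-vertex subgraphs $\{u,v,w,y\}$ (which are $4$-cycles or $3$-paths according to whether $w\sim y$) to show that $G_{u,v,w}$ is transitive on each of $X\cap\Gamma(w)$ and $X\setminus\Gamma(w)$, combine this with the fixed point $x$ to obtain (ii), and then use transitivity on $X\setminus\{x\}$ together with the $3$-CH transitivity of $G_{u,v}$ on $X$ to obtain (i). The only cosmetic difference is that you prove (ii) first by a direct contradiction, whereas the paper phrases the same step as ``$G_{u,v,w}$ has at most two orbits on $X$'' and routes through $G_{u,v,w}\leq G_{u,v,x}$.
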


\begin{proof}
Let $X:=\Gamma(u)\cap\Gamma_2(v)$. If $y,z\in X\cap \Gamma(w)$, then there is an isomorphism  between the connected graphs induced by $\{u,v,w,y\}$ and $\{u,v,w,z\}$ that maps $y$ to $z$ and fixes $u$, $v$ and $w$, so $G_{u,v,w}$ acts transitively on $X\cap \Gamma(w)$. Similarly,  $G_{u,v,w}$ acts transitively on  $X\setminus \Gamma(w)$. Thus $G_{u,v,w}$ has at most two orbits on $X$. By assumption, $G_{u,v,w}\leq G_{u,v,x}$, so $G_{u,v,x}$ has at most two orbits on $X$. If $G_{u,v,x}$ is transitive on $X$, then $X=\{x\}$,  
  so (i) and (ii) hold. Otherwise, $G_{u,v,x}$ and $G_{u,v,w}$ have the same orbits on $X$, namely $\{x\}$ and $X\setminus \{x\}$, so (ii) holds. Further, since $G_{u,v,x}$ is transitive on $X\setminus \{x\}$, either   $x$ has no neighbours in $X\setminus \{x\}$, or $x$ is adjacent to every vertex in $X\setminus \{x\}$. Since $G_{u,v}$ acts transitively on $X$, it follows that (i) holds. 
\end{proof}    

The following property will enable us to state a useful consequence of Lemma~\ref{lemma:x}.

\begin{definition}
\label{defn:x}
 A  non-complete graph $\Gamma$ has the \textit{unique $x$ property} if for some  $u\in V\Gamma$, $v\in \Gamma(u)$ and $w\in \Gamma_2(u)\cap \Gamma(v)$, there exists a unique $x\in \Gamma(u)\cap \Gamma_2(v)$ such that $\Gamma(u)\cap\Gamma(v)\cap \Gamma(w)=\Gamma(u)\cap \Gamma(v)\cap \Gamma(x)$.
\end{definition}

\begin{lemma}
\label{lemma:xplus}
 Let $\Gamma$ be a graph in which   $\Gamma(u')\cap\Gamma_2(v')$ is neither edgeless nor complete for some $u'\in V\Gamma$ and $v'\in\Gamma(u')$. If $\Gamma$ has the unique $x$ property,  then $\Gamma$ is not $4$-CH.
\end{lemma}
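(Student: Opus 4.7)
The plan is to argue by contradiction: suppose $\Gamma$ is $4$-CH, and set $G:=\Aut(\Gamma)$. The strategy is to feed the unique $x$ property directly into Lemma~\ref{lemma:x} to constrain $\Gamma(u)\cap\Gamma_2(v)$ for one particular arc $(u,v)$, and then transfer that constraint to the arc $(u',v')$ using arc-transitivity.

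First I would invoke the unique $x$ property to obtain $u\in V\Gamma$, $v\in\Gamma(u)$, $w\in\Gamma_2(u)\cap\Gamma(v)$, and a \emph{unique} $x\in\Gamma(u)\cap\Gamma_2(v)$ satisfying $\Gamma(u)\cap\Gamma(v)\cap\Gamma(w)=\Gamma(u)\cap\Gamma(v)\cap\Gamma(x)$. I would then verify that $G_{u,v,w}$ fixes $x$. Indeed, for any $h\in G_{u,v,w}$, the vertex $x^h$ still lies in $\Gamma(u)\cap\Gamma_2(v)$ since $h$ fixes $u$ and $v$, and applying $h$ to both sides of the defining equality of $x$, while using that $h$ fixes $u$, $v$ and $w$, gives $\Gamma(u)\cap\Gamma(v)\cap\Gamma(x^h)=\Gamma(u)\cap\Gamma(v)\cap\Gamma(w)$; the uniqueness clause then forces $x^h=x$. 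This places us exactly in the hypothesis of Lemma~\ref{lemma:x}, whose part~(i) tells us that the induced subgraph on $\Gamma(u)\cap\Gamma_2(v)$ is either edgeless or complete.

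Finally, every $4$-CH graph is in particular $2$-CH and hence $G$-arc-transitive, so some $g\in G$ maps the arc $(u',v')$ to $(u,v)$. The element $g$ then restricts to an isomorphism of induced subgraphs between $\Gamma(u')\cap\Gamma_2(v')$ and $\Gamma(u)\cap\Gamma_2(v)$. By hypothesis the former is neither edgeless nor complete, whereas we have just shown the latter to be one of the two, a contradiction.

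I do not anticipate any real obstacle here; once Lemma~\ref{lemma:x} is at hand, the argument is essentially formal. The only point requiring a small verification is that $x^h$ inherits the property singling out $x$, and this is immediate from $h$ fixing $u$, $v$ and $w$ together with the uniqueness of $x$.
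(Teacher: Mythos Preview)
Your proof is correct and follows essentially the same approach as the paper: both show that $G_{u,v,w}$ fixes $x$ by the uniqueness clause, invoke Lemma~\ref{lemma:x}(i), and use arc-transitivity (from $2$-CH) to link the arc $(u,v)$ with $(u',v')$. The only cosmetic difference is that the paper transfers the ``neither edgeless nor complete'' condition from $(u',v')$ to $(u,v)$ before applying Lemma~\ref{lemma:x}, whereas you apply Lemma~\ref{lemma:x} first and then transfer; the logic is identical.
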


\begin{proof}
Suppose that $\Gamma$ is $4$-CH and has the unique $x$ property. There exist $u\in V\Gamma$, $v\in \Gamma(u)$, $w\in \Gamma_2(u)\cap \Gamma(v)$ and a unique $x\in \Gamma(u)\cap \Gamma_2(v)$ such that $\Gamma(u)\cap\Gamma(v)\cap \Gamma(w)=\Gamma(u)\cap \Gamma(v)\cap \Gamma(x)$. By assumption, $\Gamma(u')\cap\Gamma_2(v')$ is neither edgeless nor complete for some $u'\in V\Gamma$ and $v'\in\Gamma(u')$, and since $\Gamma$ is $2$-CH, it follows that  $\Gamma(u)\cap \Gamma_2(v)$ is neither edgeless nor complete. If $g\in \Aut(\Gamma)_{u,v,w}$, then $x^g\in \Gamma(u)\cap\Gamma_2(v)$ and $\Gamma(u)\cap\Gamma(v)\cap\Gamma(w)=\Gamma(u)\cap\Gamma(v)\cap\Gamma(x^g)$,  so $x^g=x$, but this is impossible by Lemma~\ref{lemma:x}.
\end{proof}

When the local graph of $\Gamma$ has diameter $2$, Lemma~\ref{lemma:xplus} says the following: if the $\mu$-graph in $\Gamma(v)$ of some $u,w\in \Gamma(v)$  is also the $\mu$-graph in $\Gamma(u)$ of $v$ and exactly one other vertex $x\in \Gamma(u)$,  then either $\Gamma(u)\cap\Gamma_2(v)$ is edgeless or complete, or $\Gamma$ is not $4$-CH.

One of the immediate consequences of the definition of $2$-homogeneity is that every connected non-complete $2$-homogeneous graph has diameter $2$ (see Lemma \ref{lemma:2homsimple}). However, this need not be the case for $4$-CH graphs: using Lemma~\ref{lemma:detkCH}, it is routine to verify that   the $n$-cube is a $4$-CH graph with diameter~$n$. We now establish some sufficient conditions for a connected $4$-CH graph to have diameter $2$. 

\begin{lemma}
\label{lemma:1fordiam2}
Let $\Gamma$ be a connected   $4$-CH graph. If there exists $u\in V\Gamma$, $v\in \Gamma(u)$ and $w\in \Gamma_2(u)\cap \Gamma(v)$ such that $\Gamma(u)\cap\Gamma_2(v)\cap\Gamma(w)\neq \varnothing$ and the graph induced by $\Gamma(u)\cap\Gamma_2(v)$ is connected, then $\diam(\Gamma)=2$.
\end{lemma}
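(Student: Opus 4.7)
The plan is to proceed by contradiction: assume $\diam(\Gamma) \geq 3$ and derive a clash with $4$-CH. The strategy is to exhibit two induced copies of $P_4$ whose endpoints lie at incompatible distances in $\Gamma$; these cannot coexist in a $4$-CH graph since automorphisms preserve distance. Set $A := \Gamma(u) \cap \Gamma_2(v)$, which is connected and non-empty (it contains the given $x \in A \cap \Gamma(w)$).

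My first step is to locate a vertex $y \in \Gamma_3(u) \cap \Gamma(w)$. Since $\Gamma$ is $3$-CH, any two ordered $2$-arcs $(u,v',w')$ with $u \not\sim w'$ lie in the same $\Aut(\Gamma)_u$-orbit, so $\Aut(\Gamma)_u$ acts transitively on $\Gamma_2(u)$. Taking any geodesic of length $3$ starting at $u$ (which exists since $\diam(\Gamma) \geq 3$) and transporting its penultimate vertex onto $w$ via such an element of $\Aut(\Gamma)_u$ produces the desired $y$. A routine check then confirms that $\{u,v,w,y\}$ induces a $P_4$ along $u-v-w-y$, whose endpoints lie at distance $3$ in $\Gamma$.

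I then split on whether $A \subseteq \Gamma(w)$. If $A \not\subseteq \Gamma(w)$, the connectedness of $A$ produces an edge $x_1 x_2$ with $x_1 \in A \cap \Gamma(w)$ and $x_2 \in A \setminus \Gamma(w)$, and the induced subgraph on $\{v,w,x_1,x_2\}$ is a $P_4$ along $v-w-x_1-x_2$ whose endpoints $v, x_2$ lie at distance $2$ (since $x_2 \in \Gamma_2(v)$). The obvious isomorphism of this $P_4$ with $u-v-w-y$ must extend to $\Aut(\Gamma)$ by $4$-CH, forcing $d(v,x_2) = d(u,y)$, i.e.\ $2=3$, which is absurd. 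If instead $A \subseteq \Gamma(w)$, then the property ``$\Gamma(a) \cap \Gamma_2(b) \subseteq \Gamma(c)$'' transfers by $3$-CH from $(u,v,w)$ to every other ordered $2$-arc $(a,b,c)$ with $a \not\sim c$. Applied to $(v,w,y)$ (noting $v \not\sim y$ from the previous step), this yields $\Gamma(v) \cap \Gamma_2(w) \subseteq \Gamma(y)$; since $u$ lies in the left-hand side, this forces $u \sim y$, contradicting $y \in \Gamma_3(u)$.

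The main obstacle is recognising the correct dichotomy on $A$. The connectedness hypothesis is used precisely to straddle $A \cap \Gamma(w)$ and $A \setminus \Gamma(w)$ by an edge, which enables the distance-preserving extension from $4$-CH to produce the $2 = 3$ clash. The alternative $A \subseteq \Gamma(w)$ then resolves itself via the stronger transitivity on $2$-arcs provided by $3$-CH, yielding an immediate distance obstruction at the $2$-arc $(v,w,y)$.
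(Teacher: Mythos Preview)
Your proof is correct and follows essentially the same approach as the paper's: assume $\diam(\Gamma)\geq 3$, use connectedness of $A=\Gamma(u)\cap\Gamma_2(v)$ together with the $4$-CH action on induced $P_4$'s to force $A\subseteq\Gamma(w)$, and then obtain a contradiction by transporting this containment along a $2$-geodesic via $3$-CH. The only cosmetic difference is in the second step: the paper swaps $u$ and $w$ (fixing $v$) to get $\Gamma(u)\cap\Gamma_2(v)=\Gamma(w)\cap\Gamma_2(v)$ and then finds a $3$-geodesic $(u,v,w,y')$ with $y'\notin\Gamma(u)$, whereas you transfer the containment to the $2$-geodesic $(v,w,y)$ and read off $u\in\Gamma(y)$ directly; both are equally valid.
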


\begin{proof}
Let $G:=\Aut(\Gamma)$.
Suppose for a contradiction that $\Gamma$ contains a $3$-geodesic. By assumption, there exists $x\in \Gamma(u)\cap\Gamma_2(v)\cap\Gamma(w)$. Now $G$ acts transitively on the set of $2$-geodesics in $\Gamma$, and $(v,w,x)$ is a $2$-geodesic, so 
 there exists $y\in V\Gamma$ such that $(v,w,x,y)$ is a $3$-geodesic. Suppose that $z$ is a neighbour of $x$ in $ \Gamma(u)\cap\Gamma_2(v)$. If $z$ is not adjacent to $w$, then the subgraphs induced by $\{v,w,x,y\}$ and $\{v,w,x,z\}$ are isomorphic, so   there exists $g\in G_{v,w,x}$ such that $y^g=z$, but $d_\Gamma(v,y)=3$ while $d_\Gamma(v,z)=2$, a contradiction. Since the graph induced by  $\Gamma(u)\cap\Gamma_2(v)$ is connected, it follows that $w$ is adjacent to every vertex in $\Gamma(u)\cap \Gamma_2(v)$. Thus $\Gamma(u)\cap \Gamma_2(v)\subseteq \Gamma(w)\cap \Gamma_2(v)$. Since $(u,v,w)$ is a $2$-geodesic,  there exists $g\in G_v$ such that $u^g=w$ and $w^g=u$, and it follows that $\Gamma(u)\cap \Gamma_2(v)=\Gamma(w)\cap \Gamma_2(v)$. Further, there exists $y'\in V\Gamma$ such that $(u,v,w,y')$ is a $3$-geodesic, but then $y'\in (\Gamma(w)\cap\Gamma_2(v))\setminus \Gamma(u)$, a contradiction. 
\end{proof}

 Note that a graph $\Gamma$ has an induced $4$-cycle if and only if there exists $u\in V\Gamma$, $v\in \Gamma(u)$ and $w\in \Gamma_2(u)\cap \Gamma(v)$ such that $\Gamma(u)\cap\Gamma_2(v)\cap\Gamma(w)\neq \varnothing$ (as in the statement of Lemma~\ref{lemma:1fordiam2}), and this occurs precisely when  some $\mu$-graph of $\Gamma$ is not complete.

\begin{lemma}
\label{lemma:2fordiam2}
Let $\Gamma$ be a    $3$-CH graph where $\Gamma(u)\cap\Gamma_2(v)\cap\Gamma(w)=\varnothing$ for some $u\in V\Gamma$, $v\in \Gamma(u)$ and $w\in \Gamma_2(u)\cap\Gamma(v)$. Then every $\mu$-graph of $\Gamma(u)$ is complete.
\end{lemma}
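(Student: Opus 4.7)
The plan is to exploit the fact that a $3$-CH graph is transitive on the set of its $2$-geodesics. Indeed, any $2$-geodesic $(x,y,z)$ of $\Gamma$ is a connected induced subgraph of order~$3$ (a path of length~$2$), so by $3$-connected-homogeneity the natural vertex bijection between any two such triples, sending middle vertex to middle vertex, extends to an automorphism of $\Gamma$. Consequently, the hypothesis $\Gamma(u)\cap\Gamma_2(v)\cap\Gamma(w)=\varnothing$ is not tied to the specific triple in the statement: it holds for \emph{every} $2$-geodesic of $\Gamma$.

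Now fix any $u\in V\Gamma$ and consider an arbitrary $\mu$-graph of $\Gamma(u)$, that is, the subgraph induced on $\Gamma(u)\cap\Gamma(a)\cap\Gamma(b)$ for some $a,b\in \Gamma(u)$ with $d_{\Gamma(u)}(a,b)=2$. Since $\Gamma(u)$ is an induced subgraph of $\Gamma$, the vertices $a$ and $b$ are non-adjacent in $\Gamma$. The set $\Gamma(u)\cap\Gamma(a)\cap\Gamma(b)$ is non-empty (a vertex of $\Gamma(u)$ realising $d_{\Gamma(u)}(a,b)=2$ lies in it), so pick $c$ in it; it suffices to show that every other element $c'$ of this set is adjacent to $c$ in $\Gamma$.

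Suppose for contradiction that $c'\ne c$ and $c\not\sim c'$. Then $c$ and $c'$ are distinct non-adjacent vertices of $\Gamma(u)$ sharing the common neighbour $u$, whence $d_\Gamma(c,c')=2$. On the other hand, $(a,c,b)$ is a $2$-geodesic of $\Gamma$ because $a\sim c$, $c\sim b$ and $a\not\sim b$, so the transitivity observation of the first paragraph applied to this triple yields $\Gamma(a)\cap\Gamma_2(c)\cap\Gamma(b)=\varnothing$. But $c'\in\Gamma(a)\cap\Gamma(b)$ and $c'\in\Gamma_2(c)$, contradicting this equality. Hence $c\sim c'$ and the $\mu$-graph under consideration is complete.

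The only genuine step is the passage from \textquotedblleft the vanishing condition holds for one $2$-geodesic\textquotedblright\ to \textquotedblleft it holds for every $2$-geodesic\textquotedblright, which is precisely where $3$-connected-homogeneity is invoked; everything else is bookkeeping with distances in $\Gamma$ and in the induced subgraph $\Gamma(u)$, so I do not foresee a serious obstacle.
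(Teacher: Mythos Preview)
Your proof is correct and follows essentially the same approach as the paper's: both use $3$-connected-homogeneity to propagate the vanishing hypothesis from the given $2$-geodesic to all $2$-geodesics, and then deduce completeness of the relevant $\mu$-graphs. The paper first shows that the $\mu$-graph $\Gamma(u)\cap\Gamma(w)$ of $\Gamma$ itself is complete (using transitivity of $\Aut(\Gamma)_{u,w}$ on it) and then restricts to the local graph, whereas you work directly inside $\Gamma(u)$ by applying the transferred hypothesis to the $2$-geodesic $(a,c,b)$; this is a cosmetic reorganisation rather than a different argument.
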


\begin{proof}
 By assumption, $v$ is adjacent to every vertex in $\Gamma(u)\cap\Gamma(w)\setminus \{v\}$, but $\Aut(\Gamma)_{u,w}$ is transitive on $\Gamma(u)\cap\Gamma(w)$, so the graph induced by $\Gamma(u)\cap\Gamma(w)$ is complete. It follows that every $\mu$-graph of $\Gamma$ is complete. Let $\Sigma$ be the graph induced by $\Gamma(u)$, and let $x\in V\Sigma$ and $y\in \Sigma_2(x)$. Now $\Gamma(x)\cap\Gamma(y)$ induces a complete graph, so $\Sigma(x)\cap\Sigma(y)$ does as well.
\end{proof}

\begin{lemma}
\label{lemma:diam2}
Let $\Gamma$ be a connected $4$-CH locally $\Sigma$ graph, where $\Sigma$ is a connected graph for which there exists $v\in V\Sigma$ and $x\in \Sigma_2(v)$ such that the graph induced by $\Sigma_2(v)$ is connected and the graph induced by $\Sigma(v)\cap\Sigma(x)$ is not complete. Then $\diam(\Gamma)=2$.
\end{lemma}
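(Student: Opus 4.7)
First I would aim to apply Lemma~\ref{lemma:1fordiam2}, so the task is to produce an arc $(u',v')$ of $\Gamma$ and a vertex $w'\in\Gamma_2(u')\cap\Gamma(v')$ such that $\Gamma(u')\cap\Gamma_2(v')\cap\Gamma(w')\neq\varnothing$ and $\Gamma(u')\cap\Gamma_2(v')$ induces a connected subgraph. As a preliminary reduction, since $\Gamma$ is $4$-CH with non-zero valency, Lemma~\ref{lemma:3CH} says that $\Sigma$ is either of the form $(t+1)\cdot K_s$ or has diameter~$2$. The connectedness of $\Sigma$ rules out $t\geq 1$, while the existence of $x\in\Sigma_2(v)$ rules out $t=0$; hence $\Sigma$ has diameter~$2$. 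In particular, in any local graph of $\Gamma$, the non-neighbours (other than itself) of any fixed vertex are exactly the vertices at distance~$2$ from it.

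Next I would exploit the non-completeness of $\Sigma(v)\cap\Sigma(x)$. Fix any $u\in V\Gamma$ and identify $\Gamma(u)$ with $\Sigma$, so that $v,x\in\Gamma(u)$ with $v\not\sim x$ in $\Gamma$, and choose $y,z\in\Sigma(v)\cap\Sigma(x)$ with $y\not\sim z$. Then $(y,v,z,x)$ is an induced $4$-cycle in $\Gamma$: the four edges all come from $\Sigma$, and the two diagonals $v\not\sim x$, $y\not\sim z$ fail by construction. Setting $(u',v',w'):=(y,v,z)$, one checks $v'\in\Gamma(u')$ and $z\in\Gamma_2(y)\cap\Gamma(v)$ (since $y\not\sim z$ and both lie in $\Gamma(v)$), and similarly $x\in\Gamma_2(v)$; moreover $x\in\Gamma(y)\cap\Gamma(z)$, so $x\in\Gamma(u')\cap\Gamma_2(v')\cap\Gamma(w')$ and the first hypothesis of Lemma~\ref{lemma:1fordiam2} holds.

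For the connectedness condition, the local graph at $y$ is isomorphic to $\Sigma$ via an isomorphism carrying $v$ to some $v''\in V\Sigma$. Under this identification, $\Gamma(y)\cap \Gamma_2(v)$ corresponds to the set of non-neighbours of $v''$ in $\Sigma$, which, because $\Sigma$ has diameter~$2$, equals $\Sigma_2(v'')$. By Lemma~\ref{lemma:localhom}, $\Sigma$ is $3$-homogeneous and in particular vertex-transitive, so $\Sigma_2(v'')\cong\Sigma_2(v)$, which is connected by hypothesis. Lemma~\ref{lemma:1fordiam2} then yields $\diam(\Gamma)=2$.

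The main bookkeeping task is keeping the two copies of $\Sigma$ straight: one as the local graph at $u$, where the induced $4$-cycle is extracted from the non-complete $\mu$-graph $\Sigma(v)\cap\Sigma(x)$, and the other as the local graph at $y$, where vertex-transitivity of $\Sigma$ transfers connectedness of $\Sigma_2(v)$ to $\Gamma(y)\cap\Gamma_2(v)$. Once $\Sigma$ is pinned down to have diameter~$2$, each transfer is routine.
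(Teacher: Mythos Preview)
Your proof is correct and, like the paper's, reduces to an application of Lemma~\ref{lemma:1fordiam2} after observing via Lemma~\ref{lemma:3CH} that $\Sigma$ has diameter~$2$. The route to the hypotheses of Lemma~\ref{lemma:1fordiam2} is, however, slightly different. The paper fixes $u\in V\Gamma$, views $v$ as a vertex of $\Gamma(u)$, chooses any $w\in\Gamma_2(u)\cap\Gamma(v)$, and invokes Lemma~\ref{lemma:2fordiam2} in contrapositive form to obtain $\Gamma(u)\cap\Gamma_2(v)\cap\Gamma(w)\neq\varnothing$; connectedness of $\Gamma(u)\cap\Gamma_2(v)$ is then immediate because this set is precisely $\Sigma_2(v)$. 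You instead exhibit an explicit induced $4$-cycle inside $\Gamma(u)$ and take $(y,v,z)$ as your $2$-geodesic: this gives the non-emptiness condition for free (witnessed by $x$) and bypasses Lemma~\ref{lemma:2fordiam2}, but it means you must verify connectedness at the local graph of $y$ rather than $u$, which costs the extra appeal to vertex-transitivity of $\Sigma$ via Lemma~\ref{lemma:localhom}. Both arguments are short; the paper's choice of base vertex makes the connectedness step automatic, while yours trades that convenience for a more hands-on witness to the induced $4$-cycle.
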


\begin{proof}
Let $u\in V\Gamma$, and view $v$ as a vertex in  $\Gamma(u)$. There exists $w\in \Gamma_2(u)\cap\Gamma(v)$. Some $\mu$-graph of $\Sigma$ is not complete by assumption, so Lemma~\ref{lemma:2fordiam2} implies that  $\Gamma(u)\cap\Gamma_2(v)\cap\Gamma(w)\neq \varnothing$. Since $\Sigma$ has diameter $2$ by Lemma \ref{lemma:3CH}, the graphs induced by $\Sigma_2(v)$ and $\Gamma(u)\cap\Gamma_2(v)$ are isomorphic, so $\Gamma(u)\cap\Gamma_2(v)$ induces a connected graph. Thus $\diam(\Gamma)=2$ by Lemma~\ref{lemma:1fordiam2}.
\end{proof}

\begin{remark}
Lemma~\ref{lemma:diam2} does not hold for $3$-CH graphs: using Lemma~\ref{lemma:detkCH}, it is routine to verify that the halved $n$-cube  is a $3$-CH graph with diameter $\lfloor n/2\rfloor$ whose local graph   satisfies the conditions of Lemma~\ref{lemma:diam2} for $n\geq 4$.
\end{remark}

If $\Gamma$ is a locally finite connected $4$-CH graph whose local graph $\Sigma$ satisfies the conditions of Lemma~\ref{lemma:diam2}, then $\Gamma$ is a finite  $2$-homogeneous graph and therefore known (see \S\ref{s:results}). In fact, it turns out that  most  finite $3$-homogeneous graphs satisfy the conditions on $\Sigma$ in Lemma~\ref{lemma:diam2}, so we could prove Theorem~\ref{thm:connected} using a case-by-case analysis of the finite  $2$-homogeneous graphs. However,  there are many more families of  finite $2$-homogeneous graphs than finite $3$-homogeneous graphs (see \cite{LieSax1986}), and 
we prefer   the more direct and elementary  approach provided by Lemma~\ref{lemma:x}.

Next we have some results that will be useful in the locally disconnected case when  $c_2=1$. Note that in both of these results, we are not necessarily assuming that $\diam(\Gamma)$ is finite, but since $k$ is an integer, the parameter $m$ is also an integer.
 
 \begin{lemma}
 \label{lemma:ctrick}
 Let $\Gamma$ be a connected $k$-CH graph  that is locally $(t+1)\cdot K_s$ for positive integers $t$ and $s$.  Let $m:=\min(\diam(\Gamma),k-1)$ and suppose that  $c_i(\Gamma)=1$ for some $1<  i< m$ (so $k\geq 4$). Let $u\in V\Gamma$, $v\in \Gamma_i(u)$ and $y\in \Gamma_{i-1}(u)\cap \Gamma(v)$. Then $\Gamma_i(u)\cap \Gamma(v)=\Gamma(v)\cap \Gamma(y)$.
\end{lemma}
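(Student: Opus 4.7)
The plan is to prove the two set inclusions separately. The forward inclusion $\Gamma(v) \cap \Gamma(y) \subseteq \Gamma_i(u) \cap \Gamma(v)$ should be a distance-bookkeeping exercise that does not require $k$-CH. Specifically, for $w$ in the left-hand side, the adjacency $w \sim v \in \Gamma_i(u)$ forces $d_\Gamma(u,w) \geq i-1$, while $w \sim y \in \Gamma_{i-1}(u)$ forces $d_\Gamma(u,w) \leq i$. The only remaining case $d_\Gamma(u,w) = i-1$ would place $w$ in $\Gamma_{i-1}(u) \cap \Gamma(v)$, which by the hypothesis $c_i(\Gamma)=1$ would force $w = y$, contradicting $w \in \Gamma(y)$.

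For the reverse inclusion $\Gamma_i(u) \cap \Gamma(v) \subseteq \Gamma(y)$, I would argue by contradiction. Suppose some $w \in \Gamma_i(u) \cap \Gamma(v)$ is not adjacent to $y$, fix a geodesic $(u = u_0, u_1, \ldots, u_{i-1} = y)$ from $u$ to $y$, and consider the walk $\pi = (u_0, u_1, \ldots, u_{i-1}, v, w)$ of length $i+1$. The crucial step is to verify that $\pi$ is an induced path. Three types of potential chord must be ruled out: a chord $u_j v$ with $j \leq i-2$ would give $d_\Gamma(u,v) \leq j+1 < i$; a chord $u_j w$ with $j \leq i-2$ would give $d_\Gamma(u,w) \leq j+1 < i$; and the chord $u_{i-1} w = yw$ is excluded by our contrary hypothesis. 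The vertices are all distinct because $u_j \in \Gamma_j(u)$ for $j < i$ while $v, w \in \Gamma_i(u)$ and $w \neq v$.

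To finish, I would exploit $k$-CH together with the hypotheses $i+2 \leq k$ (from $i < m \leq k-1$) and $i+1 \leq m \leq \diam(\Gamma)$. The latter guarantees the existence of a geodesic $(u = z_0, z_1, \ldots, z_{i+1})$ of length $i+1$ starting at $u$, which is itself an induced path on $i+2$ vertices. The vertex-by-vertex map from $\pi$ to this geodesic sending $u_j \mapsto z_j$ (with $v \mapsto z_i$ and $w \mapsto z_{i+1}$) is then an isomorphism between connected induced subgraphs of order at most $k$, and by $k$-CH it extends to some $g \in \Aut(\Gamma)_u$ with $w^g = z_{i+1}$. Distance-preservation then yields $d_\Gamma(u,w) = d_\Gamma(u, z_{i+1}) = i+1$, contradicting $d_\Gamma(u,w) = i$. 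The main obstacle is the chord-exclusion step, which requires keeping careful track of distances; the concluding $k$-CH manoeuvre is routine once $\pi$ has been identified as an induced path. Interestingly, the local $(t+1)\cdot K_s$ structure does not appear to be essential for this argument, although the hypothesis $c_i(\Gamma) = 1$ is used decisively in both directions.
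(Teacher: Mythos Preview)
Your proof is correct and follows essentially the same line as the paper's. The only cosmetic difference is in the comparison step for the reverse inclusion: the paper first uses $(i+1)$-CH to produce a vertex $w\in\Gamma_{i+1}(u)\cap\Gamma(v)$ extending the \emph{same} geodesic $(u_0,\ldots,u_{i-1},v)$, and then maps the offending path $(u_0,\ldots,u_{i-1},v,x)$ onto $(u_0,\ldots,u_{i-1},v,w)$ by an automorphism fixing $u_0,\ldots,u_{i-1},v$ pointwise, whereas you map onto an arbitrary $(i{+}1)$-geodesic from $u$. Both yield the same distance contradiction. Your observation that the locally $(t{+}1)\cdot K_s$ hypothesis is not actually needed is accurate; the paper invokes the clique structure for the easy inclusion, but your distance-only argument there works just as well. (One small overstatement: $c_i=1$ is used only in the forward inclusion of your argument, not in both.)
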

 
 \begin{proof}
  There exists a unique clique $\mathcal{C}$ of size $s+1$ containing $v$ and $y$, and $\Gamma_{i-1}(u)\cap\Gamma(v)=\{y\}$ since $c_i=1$, so $\Gamma(v)\cap\Gamma(y)=\mathcal{C}\setminus \{y,v\}\subseteq \Gamma_i(u)\cap\Gamma(v)$.  There exists a path $(u_0,\ldots,u_i)$ where $u_0=u$, $u_{i-1}=y$ and $u_i=v$, and since $\diam(\Gamma)>i$ and $\Gamma$ is $(i+1)$-CH, there exists $w\in\Gamma_{i+1}(u)\cap\Gamma(v)$. If there exists $x\in \Gamma_i(u)\cap\Gamma(v)\setminus \Gamma(y)$, then  $\{u_0,\ldots,u_i,x\}$ and $\{u_0,\dots,u_i,w\}$ induce isomorphic subgraphs of $\Gamma$ with order $i+2$, and $i+2\leq k$, so there exists $g\in \Aut(\Gamma)_{u_0,\ldots,u_i}$ such that $x^g=w$,   but $d_\Gamma(u,x)=i$ while $d_\Gamma(u,w)=i+1$,   
  a contradiction. Thus $\Gamma_i(u)\cap \Gamma(v)=\Gamma(v)\cap \Gamma(y)$. 
  \end{proof}

\begin{lemma}
\label{lemma:c2=1}
Let $\Gamma$ be a connected $k$-CH graph  that is locally $(t+1)\cdot K_s$ for positive integers $t$ and $s$ where $k\geq 3$ and $c_2(\Gamma)=1$. Let $m:=\min(\diam(\Gamma),k-1)$. Then the following hold.
\begin{itemize}
\item[(i)]  $c_i(\Gamma)=1$ and $a_i(\Gamma)=s-1$ for $1\leq i<m$.
\item[(ii)] For $u,v\in V\Gamma$ such that $d_\Gamma(u,v)=m$, the set $\Gamma_{m-1}(u)\cap\Gamma(v)$ induces $c_m(\Gamma)\cdot K_1$. 
\item[(iii)] If $\diam(\Gamma)\leq k-1$, then $\Gamma$ is distance-transitive. 
\item[(iv)] If $\diam(\Gamma)<k-1$, then either  $c_m(\Gamma)=1$, or $s=1$ and $c_m(\Gamma)=t+1$.
\item[(v)] If   $3\leq \diam(\Gamma)<k-2$, then $s=1$, and either $\Gamma\simeq C_{2m+1}$ or  $c_m(\Gamma)=t+1$.
\end{itemize}
\end{lemma}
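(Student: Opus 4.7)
The plan is to establish (i) by strong induction on $i$, derive (ii) as a boundary variant of the same argument, obtain (iii) from the unique-geodesic picture that (i) produces, and then handle (iv) and (v) by exploiting the progressively larger ``budget'' in the $k$-CH property as $\diam(\Gamma)$ drops further below $k-1$.

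The structural input throughout is the near/far clique decomposition furnished by Lemma~\ref{lemma:ctrick}. Whenever $w\in\Gamma_j(u)$ with $c_j=1$ and $1\leq j<m$, the local graph $\Gamma(w)\cong(t+1)\cdot K_s$ splits into a unique \emph{near clique} consisting of the unique predecessor $u_{j-1}$ together with $s-1$ further vertices of $\Gamma_j(u)$, and $t$ \emph{far cliques} of $s$ vertices each, all lying in $\Gamma_{j+1}(u)$; this immediately yields $a_j=s-1$, so the substance of (i) is that $c_i=1$. A preliminary observation from the decomposition is that two distinct vertices $y,y'$ at distance $i$ from $u$ are adjacent exactly when they are ``siblings'' (share the unique predecessor): if $y\sim y'$ both at distance $i$ then $y'$ must lie in the near clique of $\Gamma(y)$, forcing $x=x'$.

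For the inductive step of (i), assume $c_j=1$ for all $j\leq i$ and suppose $v\in\Gamma_{i+1}(u)$ has distinct predecessors $y,y'\in\Gamma_i(u)\cap\Gamma(v)$. If $y\sim y'$, they are siblings, so $y'$ sits in the near clique of $\Gamma(y)$ while $v$ sits in a far clique, contradicting $v\sim y'$. If $y\not\sim y'$, then $c_2=1$ makes $v$ the unique common neighbour of $y,y'$, so the unique predecessors $x,x'$ must differ (else $x$ and $v$ would be two common neighbours). The two unique geodesics to $y,y'$ thus diverge at some $u_J$ with $J\leq i-2$; iterating the adjacent/non-adjacent dichotomy to $u_{J+1},u_{J+1}'$ (inside a connected induced subgraph drawn from the two diverging geodesics together with $v$, kept within the $k$-vertex budget) produces the desired contradiction with $c_2=1$. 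Part (ii) is the adjacent sub-case of this argument transplanted to the boundary distance $m$: adjacent $y,y'\in\Gamma_{m-1}(u)\cap\Gamma(v)$ would place $y'$ in the near clique of $\Gamma(y)$ and $v$ in a far clique, contradicting $y'\in\Gamma(v)$.

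Part (iii) is immediate from (i): every geodesic of length $i\leq m$ is an induced path on $i+1\leq m+1\leq k$ vertices, and $k$-CH extends the natural bijection between any two such geodesics to an automorphism carrying endpoint-pair to endpoint-pair. For (iv), the budget $m+2\leq k$ lets us compare two predecessors of a distance-$m$ vertex $v$ within a single connected induced subgraph: when $s\geq 2$, a $k$-CH automorphism swapping $y\leftrightarrow y'$ while fixing $v$ interacts with the $s-1$ siblings of $v$ in the line through $v$ and $y$ (forced into $\Gamma_m(u)$ by (i), (ii) and the far-clique structure) to produce two common neighbours of some distance-$2$ pair, contradicting $c_2=1$ and forcing $c_m=1$; when $s=1$ (so $\Gamma$ has girth $\geq 5$), the pairwise non-adjacent predecessors form an $\Aut(\Gamma)_v$-orbit inside $\Gamma(v)$ which, by a transitivity argument using the ``no triangle through $v$'' structure, must be either a single vertex or the whole of $\Gamma(v)$, giving $c_m\in\{1,t+1\}$. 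Part (v), with the extra budget $m+3\leq k$, refines the comparison to eliminate $s\geq 2$ altogether; among the remaining $s=1$ cases, $c_m=1$ forces the unique geodesic from $u$ to its (necessarily unique) antipode to close into a cycle of length $2m+1$ with $t=1$, while the alternative is $c_m=t+1$. The main obstacle is locating the $c_2=1$ violation cleanly in the non-adjacent case of (i), since one must track which chords may or may not appear in the ``hexagonal'' configuration of two unique geodesics sharing a common endpoint-pair while keeping all induced subgraphs within the $k$-vertex budget.
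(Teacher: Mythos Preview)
Your proposal has a genuine gap in part (i), and this propagates into (iv) and (v). You try to rule out a second predecessor $y'\in\Gamma_i(u)\cap\Gamma(v)$ by tracing the two unique geodesics back to their branch point $u_J$ and ``iterating the dichotomy'' to manufacture a $c_2=1$ violation. But the non-adjacent case does not close this way: nothing in the local structure together with $c_2=1$ alone forbids $c_{i+1}>1$ (the point graph of a thick generalised hexagon is locally $(t+1)\cdot K_s$ with $c_1=c_2=1$ yet $c_3=t+1$), so a direct appeal to the $k$-CH hypothesis is unavoidable, and your sketch neither names the isomorphism of connected induced subgraphs you intend to extend nor bounds the order of your ``hexagonal'' configuration (two geodesics diverging at $u_J$ together with $v$ have $2i-J+2$ vertices, which need not be at most $k$).

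The paper's mechanism is different and is the same device throughout (i), (iv) and (v): compare a ``problematic'' neighbour of $v$ to one at a different distance from $u$ via two isomorphic path-shaped induced subgraphs. In (i), once you know (via $c_2=1$ and Lemma~\ref{lemma:ctrick}) that a second predecessor $x$ of $v=u_i$ is adjacent to neither $u_{i-1}$ nor $u_{i-2}$, the set $\{u_0,\ldots,u_i,x\}$ induces a path graph of order $i+2\leq k$; so does $\{u_0,\ldots,u_i,w\}$ for any $w\in\Gamma_{i+1}(u)\cap\Gamma(v)$ (which exists since $i<m\leq\diam(\Gamma)$). Then $k$-CH yields $g\in G_{u_0,\ldots,u_i}$ with $x^g=w$, contradicting $d_\Gamma(u,x)\neq d_\Gamma(u,w)$. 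In (iv), the same comparison (now with $m+2\leq k$) shows that if $c_m>1$ then $\Gamma_m(u)\cap\Gamma(v)\subseteq\Gamma(y)\cap\Gamma(x)=\{v\}$, forcing $a_m=0$ and hence $s=1$, $c_m=t+1$; no swapping automorphism is needed or produced. In (v), assuming $c_m=1$, one extends the geodesic by $x\in(\Gamma_m(u)\cap\Gamma(v))\setminus\Gamma(y)$ and its unique predecessor $z$, and then compares the path graph $\{u_0,\ldots,u_m,x,z\}$ against $\{u_0,\ldots,u_m,x,w\}$ for a suitable $w\in\Gamma(x)$ to force $s=t=1$; your claim that $c_m=1$ already gives a unique antipode is unjustified.
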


\begin{proof}
Let $G:=\Aut(\Gamma)$. Since $\Gamma$ is $(m+1)$-CH, the parameters $a_i$ and $c_i$ are defined for $1\leq i\leq m$, and (iii) holds. 
By assumption,  $a_1=s-1$ and $c_1=c_2=1$. 

First we prove that (i) holds. Suppose for a contradiction that $c_{i-1}=1$ for some $2<i<m$ (so $k\geq 5$) but $c_i\neq 1$.   Then there exists $u\in V\Gamma$, $v\in\Gamma_i(u)$,   $w\in\Gamma_{i+1}(u)\cap\Gamma(v)$, distinct $x,y\in \Gamma_{i-1}(u)\cap\Gamma(v)$, and a path $(u_0,\ldots,u_i)$ where $u_0=u$, $u_{i-1}=y$ and $u_i=v$. Since $c_2=1$, $x$ is not adjacent to $u_{i-2}$. Further, $x$ is not adjacent to $y$ since  $ \Gamma_{i-1}(u)\cap\Gamma(y)=\Gamma(y)\cap\Gamma(u_{i-2})$ by Lemma~\ref{lemma:ctrick}. Hence    $\{u_0,\ldots,u_i,x\}$ and $\{u_0,\dots,u_i,w\}$ induce   path graphs with order $i+2$, and $i+2\leq k$, so there exists $g\in G_{u_0,\ldots,u_i}$ such that $x^g=w$,  a contradiction. Thus $c_i=1$ for $1\leq i<m$. In particular, $a_i=s-1$ for $1< i<m$ by Lemma~\ref{lemma:ctrick}. Since $a_1=s-1$,  (i) holds. 

For the remainder of the proof, let $(u_0,u_1,\ldots,u_m)$ be a geodesic in $\Gamma$. Let $u:=u_0$, $v:=u_m$ and $y:=u_{m-1}$. Note that $y\in \Gamma_{m-1}(u)\cap \Gamma(v)$.

First we claim that $y$ has no neighbours in $\Gamma_{m-1}(u)\cap \Gamma(v)$, in which case (ii) holds since $|\Gamma_{m-1}(u)\cap \Gamma(v)|=c_m$. If $m=2$, then the claim is trivial since $c_2=1$. Suppose instead that  $m\geq 3$. By (i), $c_{m-1}=1$, so $\Gamma_{m-1}(u)\cap \Gamma(y)=\Gamma(y)\cap \Gamma(u_{m-2})$  by Lemma~\ref{lemma:ctrick}.   Suppose for a contradiction that $x$ is a neighbour of $y$ in $\Gamma_{m-1}(u)\cap \Gamma(v)$. Now   $x$ is adjacent to $u_{m-2}$, but then  $u_{m-2}$ and $v$ are vertices at distance   $2$ in $\Gamma$  with  common neighbours  $y$ and $x$,  contradicting our assumption that $c_2=1$. Thus the claim holds.

Next we prove that (iv) holds.  Suppose that $\diam(\Gamma)<k-1$ and $c_m>1$. Now there exists $x\in \Gamma_{m-1}(u)\cap\Gamma(v)$ such that $x\neq y$. Observe that $\{u_0,\ldots,u_m,x\}$ induces a  path graph by (ii) and the fact that $c_2=1$.  If there exists $w\in (\Gamma_m(u)\cap \Gamma(v))\setminus \Gamma(y)$, then  $\{u_0,\dots,u_m,w\}$ also induces a path graph, so there exists $g\in G_{u_0,\ldots,u_m}$ such that $x^g=w$,  a contradiction.  Thus $\Gamma_m(u)\cap \Gamma(v)\subseteq \Gamma(y)$.  Similarly, $\Gamma_m(u)\cap \Gamma(v) \subseteq \Gamma(x)$, but $\Gamma(y)\cap \Gamma(x)=\{v\}$ since  $d_\Gamma(x,y)=2$,  so   $\Gamma_m(u)\cap \Gamma(v)=\varnothing$.  It then follows from (ii) that  $s=1$ and $c_m=t+1$, so (iv) holds.

Finally, we prove that (v) holds.
Suppose that  $3\leq \diam(\Gamma)<k-2$. If $s=1$ and $c_m=t+1$, then (v) holds, so we may assume otherwise. Then $c_m=1$ by (iv).  Now there exists $x\in (\Gamma(v)\cap\Gamma_{m}(u))\setminus \Gamma(y)$. Let $z$ be the unique vertex in $\Gamma_{m-1}(u)\cap\Gamma(x)$. Since $c_m=1$ and $c_2=1$, the set $\{y,v,x,z\}$ induces a path graph. By Lemma~\ref{lemma:ctrick}, $\Gamma_2(v)\cap\Gamma(u_{m-2})=\Gamma(u_{m-2})\cap\Gamma(y)$, so $z$ is not adjacent to $u_{m-2}$. Thus $\{u_0,\ldots,u_m,x,z\}$ induces a path graph. If $s\geq 2$, then there exists $w\in \Gamma(x)\cap\Gamma(z)$, and $w\in \Gamma_m(u)$, but $w$ is not adjacent to $v$ or $y$ since $c_2=1$, so $\{u_0,\ldots,u_m,x,w\}$ also induces a path graph, in which case there exists $g\in G_{u_0,\ldots,u_m,x}$ such that $z^g=w$,  a contradiction. Thus $s=1$. If $t\geq 2$, then there exists $w\in \Gamma(x)\setminus \{v,z\}$. Again  $w\in \Gamma_m(u)$ and $w$ is not adjacent to $v$ or $y$, so  $\{u_0,\ldots,u_m,x,w\}$ induces a path graph, a contradiction as above. Hence $t=1$, so  $\Gamma\simeq C_n$ for some $n$. Since $\Gamma$ has diameter $m$ and $c_i=1$ for $1\leq i\leq m$, it follows  that $n=2m+1$.
\end{proof}

\begin{remark}
One consequence of the classification of the locally finite CH graphs~\cite{Gar1978,Eno1981} (see Theorem~\ref{thm:CH}) is that the only locally finite, connected,  locally disconnected CH graphs with girth $3$ and $c_2=1$ are halved graphs of the biregular tree $T_{t+1,s+1}$. In particular, no such graph is finite. For graphs with diameter at least $3$, these facts can be deduced directly from Lemma~\ref{lemma:c2=1}.
\end{remark}

\begin{lemma}
\label{lemma:cyclebig}
Let $\Gamma$ be a connected $k$-CH graph  that is locally $(t+1)\cdot K_s$  where $t\geq 1$, $s\geq 2$,  $c_2(\Gamma)=1$ and $\diam(\Gamma)\geq 3$. If  some induced subgraph of $\Gamma$ is isomorphic to $C_r$ for some $r> 3$, then  $r\geq k+2$.
\end{lemma}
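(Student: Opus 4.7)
I would argue by induction on $r$. The base case $r=4$ is immediate: an induced $4$-cycle would exhibit two non-adjacent vertices with two common neighbours, contradicting $c_2(\Gamma)=1$.

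For the inductive step, fix $5\leq r\leq k+1$, assume $\Gamma$ contains no induced cycle of length $r'$ for $4\leq r'<r$, and suppose for a contradiction that $C=(u_0,u_1,\ldots,u_{r-1})$ is an induced $r$-cycle. Write $\ell=\lfloor r/2\rfloor$. My first step would be to show, by induction on $j$, that $d_\Gamma(u_0,u_j)=j$ for $0\leq j\leq \ell$: any strict shortcut of length $j'<j$ from $u_0$ to $u_j$, concatenated with the corresponding arc of $C$, would yield a closed walk of length $j+j'<r$, from which one extracts an induced cycle of length between $4$ and $r-1$ by iteratively splitting at chords (using $c_2=1$ to control the shortest ones), contradicting the inductive hypothesis on $r$. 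Consequently $\diam(\Gamma)\geq \ell$.

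The argument then splits by the parity of $r$. If $r=2\ell$, both $u_{\ell-1}$ and $u_{\ell+1}$ lie in $\Gamma_{\ell-1}(u_0)\cap\Gamma(u_\ell)$, so $c_\ell\geq 2$. But from $r\leq k+1$ one gets $\ell\leq k-2$, so either $\ell<\min(\diam(\Gamma),k-1)$ — in which case Lemma~\ref{lemma:c2=1}(i) gives $c_\ell=1$ — or $\diam(\Gamma)=\ell$, in which case Lemma~\ref{lemma:c2=1}(iv) (with $s\geq 2$ excluding the alternative $s=1$) again gives $c_\ell=1$; either way, contradiction. If $r=2\ell+1$, the vertex $u_{\ell+1}$ belongs to $\Gamma_\ell(u_0)\cap\Gamma(u_\ell)$ but is non-adjacent to $u_{\ell-1}$ (as $C$ is induced and $r\geq 5$). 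When $\diam(\Gamma)>\ell$, Lemma~\ref{lemma:ctrick} applied with $i=\ell$, $u=u_0$, $v=u_\ell$, $y=u_{\ell-1}$ forces $\Gamma_\ell(u_0)\cap\Gamma(u_\ell)=\Gamma(u_\ell)\cap\Gamma(u_{\ell-1})$, so $u_{\ell+1}\sim u_{\ell-1}$ — a contradiction. When $\diam(\Gamma)=\ell$, necessarily $\ell\geq 3$ (since $\diam(\Gamma)\geq 3$), and Lemma~\ref{lemma:c2=1}(v) applies (since the bound $\ell\leq k-2$ refines to $\ell<k-2$ once $\ell\geq 3$ and $r\leq k+1$); it rules out $s=1$ and $\Gamma\cong C_{2\ell+1}$ (whose valency $2$ contradicts $(t+1)s\geq 4$), leaving $c_\ell=t+1\geq 2$, which contradicts Lemma~\ref{lemma:c2=1}(iv).

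The main obstacle is the first step — verifying that the cycle realises genuine distances in $\Gamma$; this requires the standard but delicate technique of extracting an induced cycle from a closed walk by iterated chord-splitting, leveraging $c_2=1$ and the inductive hypothesis on smaller $r$ to guarantee that some splitting eventually yields an induced cycle of length in the forbidden range $\{4,\ldots,r-1\}$. Once these distances are in hand, the parity case analysis is essentially mechanical: Lemmas~\ref{lemma:c2=1} and~\ref{lemma:ctrick} convert the combinatorial hypothesis $c_2=1$ into rigid constraints on $c_\ell$ and on the structure of $\Gamma(u_\ell)$ that are flatly incompatible with the existence of the cycle $C$.
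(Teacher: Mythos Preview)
Your endgame---the parity split and the appeals to Lemmas~\ref{lemma:ctrick} and~\ref{lemma:c2=1}---is essentially right (though your reading of Lemma~\ref{lemma:c2=1}(v) is garbled: that lemma \emph{concludes} $s=1$, so you get an immediate contradiction with $s\geq 2$; there is no need to chase the $c_\ell=t+1$ branch). The real issue is the step you yourself flag as the main obstacle.

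Your proposed induction on $r$, with distances established by ``iterated chord-splitting'' of a closed walk, is not a complete argument. In an arbitrary graph, a closed walk of length $<r$ can decompose entirely into triangles, and nothing you have written rules this out here. One can push the analysis through using $c_2=1$ and the local clique structure to show that the two internally disjoint paths (the arc of $C$ and the geodesic) cannot be bridged solely by triangle-producing chords, but this is genuine case-work that you have not done, and it is not ``standard''. More to the point, it is unnecessary: the paper bypasses both the induction on $r$ and the chord-splitting altogether. It first observes (from Lemma~\ref{lemma:c2=1}(iv),(v) with $s\geq 2$) that $m:=\min(\diam(\Gamma),k-1)$ satisfies $2m+1\geq k+2$, so one may assume $n:=\lfloor(r-1)/2\rfloor<m$. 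Then for $2\leq i<n<m$ one has $c_i=1$ by Lemma~\ref{lemma:c2=1}(i), and Lemma~\ref{lemma:ctrick} directly forces $u_{i+1}\in\Gamma_{i+1}(u_0)$, step by step---no outer induction on $r$, no shortcut-extraction. The very lemmas you invoke at the contradiction stage already do the distance computation for you; using them there is where your argument should start.
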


\begin{proof}
 Let $\Delta$ be an induced subgraph of $\Gamma$ that is isomorphic to $C_r$ where $r> 3$. Since $c_2(\Gamma)=1$, $r\geq 5$. If $k\leq 3$, then $r\geq k+2$, as desired, so we assume that $k\geq 4$.
 
Write $r=2n+1$ or $2n+2$ where $n\geq 2$.  Label the vertices of $V\Delta$ as follows: choose $u\in V\Delta$, and write $\Delta_i(u)=\{x_i,y_i\}$ for $1\leq i\leq n$, where $x_i$ is adjacent to $x_{i+1}$  for $1\leq i\leq n-1$, and therefore $y_i$ is adjacent to $y_{i+1}$  for $1\leq i\leq n-1$. If $r=2n+1$, then $x_n$ is adjacent to $y_n$, while if  $r=2n+2$, then   $\Delta_{n+1}(u)=\{z\}$ where $z$ is adjacent to $x_n$ and $y_n$. 

Let $m:=\min(\diam(\Gamma),k-1)$, and recall  that $m\geq 3$. Since $s\geq 2$ and $\diam(\Gamma)\geq 3$, Lemma~\ref{lemma:c2=1}(iv) and (v) imply that either $m\geq k-1$, or $m=k-2$ and  $c_m(\Gamma)=1$ (in which case $k\geq 5$). In particular,  
$2m+1\geq k+2$.  Hence if $n\geq m$, then $r\geq 2m+1\geq k+2$, as desired, so we assume instead that $n<m$.

 Observe that $x_2,y_2\in \Gamma_2(u)$ since $V\Delta$ induces $C_r$ where $r>3$.  Suppose that  $x_i\in \Gamma_i(u)$ for some $2\leq i<n$. Now $x_{i-1}\in \Gamma_{i-1}(u)\cap \Gamma(x_i)$. Further, $c_i(\Gamma)=1$ by Lemma~\ref{lemma:c2=1}(i),  so $x_{i+1}\notin \Gamma_{i-1}(u)$, and if  $x_{i+1}\in \Gamma_i(u)$, then  $x_{i+1}\in \Gamma_i(u)\cap\Gamma(x_i)=\Gamma(x_i)\cap \Gamma(x_{i-1})$ by Lemma~\ref{lemma:ctrick}, a contradiction since $V\Delta$ induces $C_r$. Thus $x_{i+1}\in \Gamma_{i+1}(u)$. It follows that  $x_j,y_j\in \Gamma_j(u)$ for $1\leq j\leq n$. 
 By Lemma~\ref{lemma:c2=1}(i),  $c_n(\Gamma)=1$. If $r=2n+1$, then $y_n$ is adjacent to $x_n$, but then $y_n$ is adjacent to $x_{n-1}$ by Lemma~\ref{lemma:ctrick}, a contradiction. Thus $r=2n+2$, and by a similar argument, $z\notin \Gamma_n(u)$. Since $c_n(\Gamma)=1$, it follows that $z\in \Gamma_{n+1}(u)$.
 In particular, $c_{n+1}(\Gamma)\geq 2$. If $n+1<m$, then 
$c_{n+1}(\Gamma)=1$ by Lemma~\ref{lemma:c2=1}(i), a contradiction. Thus $m=n+1$. We saw above that either $m\geq k-1$, or $m=k-2$ and  $c_m(\Gamma)=1$. Since $c_m(\Gamma)\neq 1$, we conclude that $m\geq k-1$. Thus $r=2n+2=2m\geq 2(k-1)\geq k+2$, as desired.
\end{proof}

\section{Families of graphs with the unique $x$ property}
\label{s:uniquex}

In this section, we give five different sets of combinatorial conditions on the local structure of a graph $\Gamma$ which guarantee that $\Gamma$ has the unique $x$ property  (see   Definition~\ref{defn:x}). These will be used to prove Theorem~\ref{thm:connected} in conjunction with Lemma~\ref{lemma:xplus}. Recall that, by our definition, all strongly regular graphs are finite non-complete graphs. In particular, a strongly regular graph with $\mu>0$ is connected with diameter  $2$.

\begin{lemma}
\label{lemma:2K1}
 Let $\Gamma$ be a  locally $\Sigma$ graph where $\Sigma$ is a strongly regular graph with $\mu(\Sigma)>0$ in which every $\mu$-graph is  $2\cdot K_1$. Then $\Gamma$ has the unique $x$ property.
\end{lemma}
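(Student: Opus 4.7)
My plan is to verify Definition~\ref{defn:x} by applying the hypothesis on $\mu$-graphs of $\Sigma$ twice: once viewed inside the local graph at $v$, and once viewed inside the local graph at $u$.

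First, I would fix $u\in V\Gamma$, identify $\Gamma(u)$ with $\Sigma$, and pick any $v\in \Gamma(u)$; then $\Gamma(v)$ is also a copy of $\Sigma$. Because $\mu(\Sigma)>0$ forces $\Sigma$ to be non-complete, the vertex $u$ has at least one non-neighbour in the copy of $\Sigma$ induced on $\Gamma(v)$; calling such a non-neighbour $w$, we have $w\in \Gamma(v)\setminus(\Gamma(u)\cup\{u\})$, so $w\in \Gamma_2(u)\cap \Gamma(v)$ as required by Definition~\ref{defn:x}.

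Next, I would compute $\Gamma(u)\cap\Gamma(v)\cap\Gamma(w)$ by working inside $\Gamma(v)$. The vertices $u$ and $w$ are distinct and non-adjacent in $\Gamma(v)\cong \Sigma$; since $\Sigma$ has diameter $2$, they are at distance exactly $2$ there, and the subgraph of $\Gamma(v)$ induced on their common neighbourhood---which is precisely $\Gamma(u)\cap\Gamma(v)\cap\Gamma(w)$---is therefore a $\mu$-graph of $\Sigma$. By hypothesis this set equals $\{a,b\}$ for some non-adjacent pair $a,b\in \Gamma(u)\cap \Gamma(v)$.

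Finally, I would locate $x$ by working inside $\Gamma(u)\cong\Sigma$. A vertex $x\in\Gamma(u)\cap\Gamma_2(v)$ is exactly a non-neighbour of $v$ in $\Sigma$, and then $\Gamma(u)\cap\Gamma(v)\cap\Gamma(x)=\Sigma(v)\cap \Sigma(x)$ is again a $\mu$-graph of $\Sigma$. The condition that this set equal $\{a,b\}$ is equivalent to $x\in\Sigma(a)\cap \Sigma(b)\setminus\{v\}$. Since $a,b$ are non-adjacent, $\Sigma(a)\cap \Sigma(b)$ is itself a $\mu$-graph of $\Sigma$, hence a $2\cdot K_1$; it already contains $v$, so it equals $\{v,x\}$ for a unique vertex $x$ non-adjacent to $v$, which automatically lies in $\Sigma_2(v)$. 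This yields the unique $x$ demanded by Definition~\ref{defn:x}. No step carries serious difficulty; the point requiring the most care is keeping track of which pair of non-adjacent vertices plays the role of the distance-$2$ pair in each application of the hypothesis---$\{u,w\}$ in the copy of $\Sigma$ on $\Gamma(v)$, and then successively $\{v,x\}$ and $\{a,b\}$ in the copy on $\Gamma(u)$.
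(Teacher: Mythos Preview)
Your proof is correct and follows essentially the same approach as the paper's: compute the $\mu$-graph $\{a,b\}$ of $u,w$ inside $\Gamma(v)$, then locate $x$ as the unique element of the $\mu$-graph $\Sigma(a)\cap\Sigma(b)=\{v,x\}$ inside $\Gamma(u)$, and check that $\Sigma(v)\cap\Sigma(x)=\{a,b\}$. You are slightly more explicit than the paper in justifying the existence of $w$ and in framing uniqueness via the equivalence, but the argument is the same.
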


\begin{proof}
Let $u\in V\Gamma$, $v\in\Gamma(u)$ and $w\in \Gamma_2(u)\cap \Gamma(v)$.  Since the graph induced by $\Gamma(v)$ has diameter $2$, it follows that the graph $\Delta$ induced by $\Gamma(u)\cap\Gamma(v)\cap\Gamma(w)$ is a $\mu$-graph of $\Gamma(v)$ and is therefore isomorphic to $2\cdot K_1$. Let $\Sigma$ denote the graph induced by $\Gamma(u)$.

Let $V\Delta=\{y,z\}$, and note that $d_\Sigma(y,z)=2$. Now $v\in \Sigma(y)\cap\Sigma(z)$ and $\Sigma(y)\cap\Sigma(z)\simeq 2\cdot K_1$, so $\Sigma(y)\cap\Sigma(z)=\{v,x\}$ for some $x\in \Sigma_2(v)$. Further, $\Sigma(v)\cap\Sigma(x)$ is also isomorphic to $2\cdot K_1$, so  $\Sigma(v)\cap\Sigma(x)=\{y,z\}$. Thus $x\in \Gamma(u)\cap\Gamma_2(v)$ and $ V\Delta=\{y,z\}=\Gamma(u)\cap\Gamma(v)\cap\Gamma(x)$, and $x$ is the unique such vertex.
\end{proof}

\begin{lemma}
\label{lemma:k2=1}
 Let $\Gamma$, $\Sigma$ and $\Delta$ be graphs   with the following properties.
\begin{itemize}
\item[(i)] $\Gamma$ is locally $\Sigma$, and every $\mu$-graph of $\Sigma$ is isomorphic to $\Delta$.
 \item[(ii)] $\Sigma$ is strongly regular with $\mu(\Sigma)>0$.
\item[(iii)] $\Delta$ is regular with diameter $2$ and  $k_2(\Delta)=1$.
\end{itemize}
Then $\Gamma$ has the unique $x$ property.
\end{lemma}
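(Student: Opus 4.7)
The plan is to translate the unique $x$ property into the local graphs of $\Gamma$ and to exploit the cocktail-party structure of $\Delta$. Fix $u \in V\Gamma$, $v \in \Gamma(u)$ and $w \in \Gamma_2(u) \cap \Gamma(v)$, and set $M := \Gamma(u) \cap \Gamma(v) \cap \Gamma(w)$. The subgraph $\Sigma_v$ induced on $\Gamma(v)$ is a copy of $\Sigma$, and hence has diameter $2$ (since $\Sigma$ is strongly regular with $\mu>0$). As $u$ and $w$ are non-adjacent in $\Sigma_v$, the set $M$ is a $\mu$-graph of $\Sigma_v$, and so $M \simeq \Delta$. A short count shows that any regular graph of diameter $2$ with $k_2 = 1$ is the cocktail-party graph $K_{m[2]}$ for some $m \geq 2$: its order equals its valency plus $2$, so its complement is a perfect matching. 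Thus $V\Delta$ partitions into $m$ mutually non-adjacent \emph{partner pairs}, and any two vertices lying in different pairs are adjacent.

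To produce $x$, pick a partner pair $\{y, y'\} \subseteq M$ and pass to the local graph $\Sigma_u$ induced on $\Gamma(u)$. Since $y, y' \in \Gamma(u) \cap \Gamma(v)$ are non-adjacent, they lie at distance $2$ in $\Sigma_u$, so $N := \Gamma(u) \cap \Gamma(y) \cap \Gamma(y')$ is a $\mu$-graph of $\Sigma_u$, isomorphic to $\Delta$, and contains $v$. Let $x$ be the unique partner of $v$ in $N \simeq K_{m[2]}$; then $x \in \Gamma(u)$ is adjacent to both $y$ and $y'$ and non-adjacent to $v$, so $x \in \Gamma(u) \cap \Gamma_2(v)$.

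The main step, and what I expect to be the chief obstacle, is verifying $\Gamma(u) \cap \Gamma(v) \cap \Gamma(x) = M$. The strategy is to pin down $N$ explicitly. For each remaining partner pair $\{y_i, y_i'\}$ of $M$ (i.e.\ $\{y_i, y_i'\} \neq \{y, y'\}$), both $y_i$ and $y_i'$ are adjacent inside $M$ (and hence in $\Gamma$) to $y$ and to $y'$, and they lie in $\Gamma(u)$, so $y_i, y_i' \in N$. These $2(m-1)$ vertices, together with $v$ and $x$, give $2m$ distinct elements that exhaust $N \simeq K_{m[2]}$. Inside $N$ the vertex $x$ is adjacent to every element except its partner $v$, so $x$ is adjacent in $\Sigma_u$ to each $y_i$ and $y_i'$. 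Combined with $x$ being adjacent to $y, y'$ and with $M \subseteq \Gamma(u) \cap \Gamma(v)$, this yields $M \subseteq \Gamma(u) \cap \Gamma(v) \cap \Gamma(x)$; since the right-hand side is itself a $\mu$-graph of $\Sigma_u$ and has size $\mu(\Sigma) = |M|$, equality follows.

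For uniqueness, suppose $x_1, x_2 \in \Gamma(u) \cap \Gamma_2(v)$ both satisfy $\Gamma(u) \cap \Gamma(v) \cap \Gamma(x_i) = M$. Fixing any partner pair $\{y, y'\}$ in $M$, each $x_i$ belongs to $\Gamma(u) \cap \Gamma(y) \cap \Gamma(y')$ and is non-adjacent to $v$, so both $x_1$ and $x_2$ are non-neighbours of $v$ inside the $\mu$-graph $\Gamma(u) \cap \Gamma(y) \cap \Gamma(y') \simeq K_{m[2]}$; since $v$ has a unique non-neighbour there, we conclude $x_1 = x_2$.
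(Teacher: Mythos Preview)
Your proof is correct and follows essentially the same approach as the paper's: both select a pair $\{y,y'\}$ at distance $2$ in $\Delta$, define $x$ as the unique antipode of $v$ in the $\mu$-graph $\Gamma(u)\cap\Gamma(y)\cap\Gamma(y')\simeq\Delta$, and then verify $\Gamma(u)\cap\Gamma(v)\cap\Gamma(x)=M$ by a size comparison. The only cosmetic difference is that you make the cocktail-party structure $\Delta\simeq K_{m[2]}$ explicit, whereas the paper argues directly from the hypotheses (regular, diameter $2$, $k_2=1$) via the inclusions $\Delta(y),\ \Sigma(v)\cap\Sigma(y)\cap\Sigma(x)\subseteq\Sigma(v)\cap\Sigma(y)\cap\Sigma(y')$ and equality of cardinalities.
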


\begin{proof}
 Let $u\in V\Gamma$, $v\in\Gamma(u)$ and $w\in \Gamma_2(u)\cap \Gamma(v)$.  The graph $\Gamma(v)$ has diameter $2$, so $\Gamma(u)\cap\Gamma(v)\cap\Gamma(w)$ is a $\mu$-graph of $\Gamma(v)$ and is therefore isomorphic to $\Delta$. For the remainder of the proof, we  write $\Sigma$ for the graph induced by $\Gamma(u)$ and $\Delta$ for the graph induced by $\Gamma(u)\cap\Gamma(v)\cap\Gamma(w)$.

Let $y\in V\Delta$. Since $k_2(\Delta)=1$, there exists a unique $z\in \Delta_2(y)$. Now $\Sigma(y)\cap\Sigma(z)\simeq  \Delta$  and $v\in \Sigma(y)\cap\Sigma(z)$, so there is a unique $x$ at distance $2$ from $v$ in $ \Sigma(y)\cap\Sigma(z)$. Then $\Sigma(v)\cap\Sigma(x)\simeq \Delta$, and $y$ and  $z$ are at distance $2$ in $\Sigma(v)\cap\Sigma(x)$. Note that $\Delta$ is finite since $\Sigma$ is finite by definition. Since $\Delta$ is regular,
$$|\Sigma(v)\cap\Sigma(y)\cap\Sigma(x)|=|\Delta(y)|=|\Sigma(v)\cap\Sigma(y)\cap\Sigma(z)|.$$ 
Further, since $k_2(\Delta)=1$, both $\Sigma(v)\cap\Sigma(y)\cap\Sigma(x)$ and $\Delta(y)$ 
are subsets of $\Sigma(v)\cap\Sigma(y)\cap\Sigma(z)$. Thus $\Delta(y)=\Sigma(v)\cap\Sigma(y)\cap\Sigma(x)$. It follows that   
$$V\Delta=\{y,z\}\cup\Delta(y)=\Sigma(v)\cap\Sigma(x)=\Gamma(u)\cap\Gamma(v)\cap\Gamma(x).$$ 
It remains to show that $x$ is the unique vertex in $\Gamma(u)\cap\Gamma_2(v)$ such that $V\Delta=\Gamma(u)\cap\Gamma(v)\cap\Gamma(x)$. If $x'$ is such a vertex, then $x'\in\Sigma_2(v)\cap\Sigma(y)\cap\Sigma(z)=\{x\}$.
\end{proof}

\begin{lemma}
\label{lemma:polar}
 Let $\Gamma$, $\Sigma$ and $\Delta$ be  graphs  with the following properties.
\begin{itemize}
\item[(i)] $\Gamma$ is locally $\Sigma$, and every $\mu$-graph of $\Sigma$ is isomorphic to $\Delta$.
 \item[(ii)] $\Sigma$ is strongly regular with  $\lambda(\Sigma)=\mu(\Sigma)-2$.
\item[(iii)] $\Delta$ has diameter $3$ with $k_1(\Delta)=k_2(\Delta)$ and $k_3(\Delta)=1$.
\item[(iv)] For any distinct non-adjacent $y,z\in V\Sigma$, if $v$ and $x$ are vertices at distance $3$ in the graph induced by $\Sigma(y)\cap\Sigma(z)$, then $y$ and $z$ are at distance $3$ in the graph induced by $\Sigma(v)\cap\Sigma(x)$.
\end{itemize}
Then $\Gamma$ has the unique $x$ property.
\end{lemma}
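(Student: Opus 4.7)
I would follow the template of Lemmas~\ref{lemma:2K1} and~\ref{lemma:k2=1}, specialised to the antipodal structure on $\Delta$ forced by~(iii). Pick $u\in V\Gamma$, $v\in \Gamma(u)$ and $w\in\Gamma_2(u)\cap\Gamma(v)$; such a $w$ exists because $\Sigma$ is strongly regular and hence connected of diameter~$2$. Set $M:=\Gamma(u)\cap\Gamma(v)\cap\Gamma(w)$, so that $M$ is a $\mu$-graph of $\Gamma(v)$ and is isomorphic to $\Delta$ by~(i); write $\Sigma=\Gamma(u)$, viewed as an induced subgraph of $\Gamma$, so that $M\subseteq\Sigma(v)$. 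Since $\Delta$ has diameter~$3$ with $k_3(\Delta)=1$, each vertex of $\Delta$ has a unique antipode at distance~$3$, and $V\Delta$ partitions into $k_1(\Delta)+1$ antipodal pairs.

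Choose $y,z\in M$ with $d_M(y,z)=3$. Then $y,z$ are non-adjacent in $\Sigma$ and $d_\Sigma(y,z)=2$ by~(ii). The $\mu$-graph $\Sigma(y)\cap\Sigma(z)$ is isomorphic to $\Delta$ by~(i) and contains~$v$; let $x$ be the unique vertex of $\Sigma(y)\cap\Sigma(z)$ at distance~$3$ from~$v$, which exists by $k_3(\Delta)=1$. Then $x\in\Sigma_2(v)$, so $x\in\Gamma(u)\cap\Gamma_2(v)$. Applying~(iv) to the non-adjacent pair $\{y,z\}$ together with the distance-$3$ pair $\{v,x\}$ inside $\Sigma(y)\cap\Sigma(z)$ yields that $y$ and $z$ are at distance~$3$ in $N:=\Sigma(v)\cap\Sigma(x)$, which is a $\mu$-graph of $\Sigma$ isomorphic to $\Delta$ by~(i).

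The main obstacle is the equality $M=N$ as subsets of $V\Sigma$, for then $\Gamma(u)\cap\Gamma(v)\cap\Gamma(x)=M$. Both $M$ and $N$ have cardinality $|V\Delta|=2k_1(\Delta)+2$ and share the antipodal pair $\{y,z\}$. I would proceed by running the construction of $x$ over every antipodal pair $\{y',z'\}$ of $M$: for each such pair, let $x'$ be the antipode of $v$ in $\Sigma(y')\cap\Sigma(z')$, so that~(iv) places $\{y',z'\}$ as an antipodal pair in $\Sigma(v)\cap\Sigma(x')$. The heart of the argument is to show that $x'=x$ regardless of the antipodal pair of~$M$ chosen, from which each antipodal pair of $M$ lies in $N$, giving $M\subseteq N$ and hence $M=N$ by cardinality. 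To force this invariance, the parametric identity $\lambda(\Sigma)=\mu(\Sigma)-2$ from~(ii) should play a crucial role: for every $y\in M$, both $M\cap\Sigma(y)$ and $N\cap\Sigma(y)$ have exactly $k_1(\Delta)$ elements and sit inside the $\lambda$-set $\Sigma(v)\cap\Sigma(y)$ of size $2k_1(\Delta)$, so a double-counting across the antipodal decomposition of $\Delta$ combined with~(iv) should force $M\cap\Sigma(y)=N\cap\Sigma(y)$ for every such~$y$.

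Uniqueness of $x$ is then straightforward from~(iv): any $x'\in\Gamma(u)\cap\Gamma_2(v)$ with $\Sigma(v)\cap\Sigma(x')=M$ would have $\{y,z\}$ as an antipodal pair in $\Sigma(v)\cap\Sigma(x')$, and applying~(iv) with the roles of $(y,z)$ and $(v,x)$ interchanged forces $v$ and $x'$ to be at distance~$3$ in $\Sigma(y)\cap\Sigma(z)$, giving $x'=x$ via $k_3(\Delta)=1$. I expect the main technical hurdle to be the delicate combinatorial interplay of~(iv), the identity $\lambda=\mu-2$, and the antipodal structure of~$\Delta$ required to establish $M=N$; the other steps are routine applications of the setup.
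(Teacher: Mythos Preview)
Your setup, choice of $x$, and uniqueness argument are all correct; indeed your uniqueness argument via~(iv) is cleaner than the paper's, which instead uses the inequality $\lambda(\Sigma)<\mu(\Sigma)$ to derive a contradiction from $x'\in\Sigma(x)$.

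The gap is in your proof that $M=N$. You propose to iterate the construction of $x$ over \emph{all} antipodal pairs $\{y',z'\}$ of $M$ and then prove the resulting $x'$ is always the same; you acknowledge this as the main technical hurdle but do not actually carry it out, and the ``double-counting across the antipodal decomposition'' remains a hope rather than an argument. In fact this detour is unnecessary: the paper works with just the single pair $\{y,z\}$ already chosen. The key observation you are missing is that once (iv) has placed $y$ and $z$ at distance~$3$ in $N$, both $\Delta(y)$ and $N(y)$ are subsets not merely of $\Sigma(v)\cap\Sigma(y)$ (as you note), but of the smaller set $\Sigma(v)\cap\Sigma(y)\cap\Sigma_2(z)$, since in each of $\Delta$ and $N$ the neighbours of $y$ lie at distance~$2$ from its antipode $z$. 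Now $|\Sigma(v)\cap\Sigma(y)\cap\Sigma(z)|=k_1(\Delta)$ (this is the neighbourhood of $v$ in the $\mu$-graph $\Sigma(y)\cap\Sigma(z)$), so $|\Sigma(v)\cap\Sigma(y)\cap\Sigma_2(z)|=\lambda(\Sigma)-k_1(\Delta)=k_1(\Delta)$ exactly. Hence $\Delta(y)=N(y)$; by symmetry $\Delta(z)=N(z)$; and since $V\Delta=\{y,z\}\cup\Delta(y)\cup\Delta(z)$ and $N=\{y,z\}\cup N(y)\cup N(z)$, you are done. The identity $\lambda=\mu-2$ enters precisely here, to make $\Sigma(v)\cap\Sigma(y)\cap\Sigma_2(z)$ have size exactly $k_1(\Delta)$.
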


\begin{proof}
Let $u\in V\Gamma$, $v\in\Gamma(u)$ and $w\in \Gamma_2(u)\cap \Gamma(v)$.  The graph $\Gamma(v)$ has diameter $2$, so $\Gamma(u)\cap\Gamma(v)\cap\Gamma(w)$ is a $\mu$-graph of $\Gamma(v)$ and is therefore isomorphic to $ \Delta$. For the remainder of the proof, we write $\Sigma$ for the graph induced by $\Gamma(u)$ and $\Delta$ for the graph induced by $\Gamma(u)\cap\Gamma(v)\cap\Gamma(w)$.

Let $y\in V\Delta$. Since $k_3(\Delta)=1$, there exists a unique $z\in \Delta_3(y)$.  Now $\Sigma(y)\cap\Sigma(z)\simeq  \Delta$  and $v\in \Sigma(y)\cap\Sigma(z)$, so there is a unique $x$ at distance $3$ from $v$ in the graph induced by $ \Sigma(y)\cap\Sigma(z)$. Then $\Sigma(v)\cap\Sigma(x)\simeq \Delta$, and $y$ and  $z$ are at distance $3$ in the graph induced by $\Sigma(v)\cap\Sigma(x)$ by assumption.

We claim that $\Delta(y)=\Sigma(v)\cap\Sigma(y)\cap\Sigma(x)$. Since $v$ and $y$ are adjacent,  $|\Sigma(v)\cap\Sigma(y)|=\lambda(\Sigma)$. Now $2+2|\Delta(y)|=|V\Delta|=\mu(\Sigma)$ 
and $\lambda(\Sigma)=\mu(\Sigma)-2$, so  $|\Delta(y)|=\lambda(\Sigma)/2$. Since $\Delta$ is regular, $$|\Sigma(v)\cap\Sigma(y)\cap\Sigma(x)|=\lambda(\Sigma)/2=|\Sigma(v)\cap\Sigma(y)\cap\Sigma(z)|.$$
Since $\Sigma$ has diameter $2$, any vertex in $\Sigma(v)\cap \Sigma(y)$ lies in $\Sigma(z)$ or $\Sigma_2(z)$. Thus
 $$|\Sigma(v)\cap\Sigma(y)\cap\Sigma_2(z)|=\lambda(\Sigma)/2.$$
Now $\Sigma(v)\cap\Sigma(y)\cap\Sigma(x)$ and $\Delta(y)$ are both subsets of $\Sigma(v)\cap\Sigma(y)\cap\Sigma_2(z)$, so the claim follows.

By exchanging the roles of $y$ and $z$ in the above proof, we also obtain $\Delta(z)=\Sigma(v)\cap\Sigma(z)\cap\Sigma(x)$. Thus 
$$V\Delta=\{y,z\}\cup\Delta(y)\cup\Delta(z)=\Sigma(v)\cap\Sigma(x)=\Gamma(u)\cap\Gamma(v)\cap\Gamma(x).$$ 
It remains to show that $x$ is the unique vertex in $\Gamma(u)\cap\Gamma_2(v)$ such that $V\Delta=\Gamma(u)\cap\Gamma(v)\cap\Gamma(x)$. If $x'$ is another such vertex, then $V\Delta\subseteq \Sigma(x')$, so $x'\in \Sigma(y)\cap\Sigma(z)\setminus (\{x,v\}\cup \Sigma(v))$, in which case $x'\in \Sigma(x)$, but then $|\Sigma(x)\cap\Sigma(x')|=\lambda(\Sigma)<\mu(\Sigma)=|V\Delta|$, contradicting $V\Delta\subseteq \Sigma(x)\cap\Sigma(x')$.
\end{proof}

\begin{lemma}
\label{lemma:polarcomp}
 Let $\Gamma$, $\Sigma$ and $\Delta$  be  graphs  with the following properties.
\begin{itemize}
\item[(i)] $\Gamma$ is locally $\Sigma$, and every $\mu$-graph of $\Sigma$ is isomorphic to $\Delta$. 
\item[(ii)] $\Sigma$ is strongly regular with $\mu(\Sigma)>0$. 
\item[(iii)] $\Delta$ is regular and for any $y\in V\Delta$, there exists a unique $z\in \Delta_2(y)$ such that $\Delta(y)=\Delta(z)$.
\item[(iv)] $\Sigma$ has valency $\mu(\Sigma)+2\lambda(\Sigma)-2k_1(\Delta)$.
 \end{itemize}
Then $\Gamma$ has the unique $x$ property.
\end{lemma}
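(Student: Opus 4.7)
The plan is to adapt the template of Lemma~\ref{lemma:polar}. Fix $u \in V\Gamma$, $v \in \Gamma(u)$, and $w \in \Gamma_2(u) \cap \Gamma(v)$, and write $\Sigma$ for the graph induced on $\Gamma(u)$. Because $\Gamma$ is locally $\Sigma$ and $\Sigma$ is strongly regular with $\mu(\Sigma)>0$, the set $\Delta^* := \Gamma(u) \cap \Gamma(v) \cap \Gamma(w)$ is a $\mu$-graph of $\Gamma(v) \cong \Sigma$ and is therefore isomorphic to $\Delta$ by (i). To produce a candidate for $x$, pick any $y \in V\Delta^*$ and let $z$ be its unique twin in $\Delta^*$ supplied by (iii); then $y$ and $z$ are non-adjacent in $\Sigma$ with $\Delta^*(y) = \Delta^*(z)$. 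By (i), $M := \Sigma(y) \cap \Sigma(z)$ is a $\mu$-graph of $\Sigma$ isomorphic to $\Delta$ and containing $v$. Applying (iii) inside $M$ gives a unique twin $x$ of $v$ in $M$; this $x$ lies in $V\Sigma \cap \Sigma_2(v) = \Gamma(u) \cap \Gamma_2(v)$.

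The main claim is that $\Sigma(v) \cap \Sigma(x) = \Delta^*$; both sides have $\mu(\Sigma)$ vertices, so once enough elements are matched, equality will follow from size. The easy portion combines the two twin relations. Set $N := M \cap \Sigma(v)$. Every vertex of $\Delta^*(y) = \Delta^*(z)$ is adjacent to $v$, $y$, and $z$ in $\Sigma$, so $\Delta^*(y) \subseteq N$; since $|\Delta^*(y)| = k_1(\Delta) = |N|$, in fact $\Delta^*(y) = N$. The twin property of $\{v,x\}$ in $M$ gives $N = M \cap \Sigma(x)$, and hence $\{y,z\} \cup N \subseteq \Sigma(v) \cap \Sigma(x) \cap \Delta^*$. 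Counting inside the $\mu$-graph $\Sigma(v) \cap \Sigma(x) \cong \Delta$ shows $\Sigma(v) \cap \Sigma(x) \cap \Sigma(y) = N = \Sigma(v) \cap \Sigma(x) \cap \Sigma(z)$, so $\{y,z\}$ is itself a twin pair inside $\Sigma(v) \cap \Sigma(x)$.

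The main obstacle is matching the remaining $\mu(\Sigma) - 2 - k_1(\Delta)$ vertices on each side of the claim. This is where hypothesis (iv) is essential: using the valency condition, I intend to double-count edges from $\Delta^*$ to the rest of $\Sigma$, showing that each vertex of $\Delta^*$ has exactly $\mu(\Sigma) - 2 - k_1(\Delta)$ neighbours in $\Sigma_2(v) \cap \Sigma_2(x)$, and to match this against the analogous count inside $\Sigma(v) \cap \Sigma(x)$; the twin involutions on $\Delta^*$ and on $\Sigma(v) \cap \Sigma(x)$ should then pair up the leftover vertices on each side, forcing the two remainders to coincide. Uniqueness of $x$ is comparatively routine: if $x' \in \Gamma(u) \cap \Gamma_2(v)$ also satisfies $\Gamma(u) \cap \Gamma(v) \cap \Gamma(x') = \Delta^*$, then $y, z \in \Sigma(v) \cap \Sigma(x')$ forces $x' \in M$, and the containment $N = \Delta^*(y) \subseteq \Sigma(x')$ together with $|M \cap \Sigma(x')| = k_1(\Delta) = |N|$ gives $M \cap \Sigma(x') = N = M \cap \Sigma(v)$; so $x'$ is a twin of $v$ in $M$, and (iii) forces $x' = x$.
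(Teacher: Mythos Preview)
Your setup matches the paper's: you correctly build $x$ as the twin of $v$ inside $M=\Sigma(y)\cap\Sigma(z)$, correctly deduce $\Delta^*(y)=N=M\cap\Sigma(v)=M\cap\Sigma(x)$, and your uniqueness argument is essentially the paper's. The gap is the main claim $\Sigma(v)\cap\Sigma(x)=\Delta^*$, which you only sketch. Your proposed mechanism---counting neighbours of $\Delta^*$ in $\Sigma_2(v)\cap\Sigma_2(x)$ and then invoking ``twin involutions on $\Delta^*$ and on $\Sigma(v)\cap\Sigma(x)$ \dots\ forcing the two remainders to coincide''---does not work as stated. Knowing that both remainders have size $\mu(\Sigma)-2-k_1(\Delta)$ and that each carries a twin involution gives no reason for them to be the \emph{same} subset of $\Sigma(v)$; two equinumerous subsets of $\Sigma(v)$, each closed under its own involution, need not coincide.

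The paper closes this gap by finding a common container. Writing $\Pi$ for the graph induced on $\Sigma(v)$ and setting $Z:=V\Pi\setminus(\{y,z\}\cup\Pi(y)\cup\Pi(z))$, one checks that both remainders $X:=(\Sigma(v)\cap\Sigma(x))\setminus(\{y,z\}\cup N)$ and $Y:=V\Delta^*\setminus(\{y,z\}\cup N)$ lie in $Z$; this uses the equalities $\Theta(y)=\Theta(z)=\Delta^*(y)=\Delta^*(z)=N=\Pi(y)\cap\Pi(z)$ that you already have. Now hypothesis~(iv) is used, not via edge double-counting, but as an inclusion--exclusion in $\Pi$: since $|\Pi(y)|=|\Pi(z)|=\lambda(\Sigma)$ and $|\Pi(y)\cap\Pi(z)|=k_1(\Delta)$, one gets
\[
|Z|=|V\Pi|-\bigl(2+2\lambda(\Sigma)-k_1(\Delta)\bigr)=\mu(\Sigma)-2-k_1(\Delta)=|X|=|Y|,
\]
so $X=Z=Y$ and hence $\Sigma(v)\cap\Sigma(x)=\Delta^*$. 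This sandwiching argument is the missing idea; once you have it, the rest of your write-up is fine.
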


\begin{proof}
Let $u\in V\Gamma$, $v\in\Gamma(u)$ and $w\in \Gamma_2(u)\cap \Gamma(v)$.  The graph $\Gamma(v)$ has diameter $2$, so $\Gamma(u)\cap\Gamma(v)\cap\Gamma(w)$ is a $\mu$-graph of $\Gamma(v)$ and is therefore isomorphic to $ \Delta$. For the remainder of the proof, we write $\Sigma$ for the graph induced by $\Gamma(u)$; $\Delta$ for the graph induced by $\Gamma(u)\cap\Gamma(v)\cap\Gamma(w)$; and $\Pi$ for the graph induced by $\Sigma(v)$.

Let $y\in V\Delta$. By assumption there exists a unique $z\in \Delta_2(y)$ such that $\Delta(y)=\Delta(z)$. Now the graph $\Lambda$ induced by $\Sigma(y)\cap\Sigma(z)$ is isomorphic to $\Delta$, and $v\in \Sigma(y)\cap\Sigma(z)$, so there exists a unique $x\in \Lambda_2(v)$ such that $\Lambda(v)=\Lambda(x)$. Then the graph $\Theta$ induced by $\Sigma(v)\cap \Sigma(x)$ is isomorphic to $ \Delta$, and $\Delta(y)\subseteq \Lambda(v)\subseteq \Theta(y)\cap \Theta(z)$, so $\Delta(y)=\Lambda(v)=\Theta(y)=\Theta(z)$. In particular, $\Pi(y)\cap \Pi(z)=\Sigma(v)\cap \Sigma(y)\cap\Sigma(z)=\Lambda(v)=\Delta(y)$. Let $X:=V\Theta\setminus (\{y,z\}\cup \Theta(y))$, $Y:=V\Delta\setminus (\{y,z\}\cup \Delta(y))$ and $ Z:=V\Pi\setminus (\{y,z\}\cup \Pi(y)\cup\Pi(z))$. Note that $X,Y\subseteq Z$.  Now
$|X|=\mu(\Sigma)-(k_1(\Delta)+2)=|Y|$
and 
$|Z|=|V\Pi|-(2\lambda(\Sigma)-k_1(\Delta)+2)$,
but $|V\Pi|=\mu(\Sigma)+2\lambda(\Sigma)-2k_1(\Delta)$ by assumption, so $|X|=|Y|=|Z|$. Thus $X=Z=Y$, so $V\Delta=V\Theta=\Gamma(u)\cap\Gamma(v)\cap\Gamma(x)$. It remains to show that $x$ is the unique vertex in $\Gamma(u)\cap\Gamma_2(v)$ such that $V\Delta=\Gamma(u)\cap\Gamma(v)\cap\Gamma(x)$. If $x'$ is such a vertex, then $V\Delta\subseteq \Sigma(x')$, so $x'\in \Lambda_2(v)$ and $\Lambda(v)=\Delta(y)\subseteq \Lambda(x')$. Since $\Lambda$ is regular,   $\Lambda(v)=\Lambda(x')$, but $x$ is the unique such vertex by assumption. Thus $x=x'$.
\end{proof}

We now wish to show that the unique $x$ property holds in graphs that are locally $\Sigma$, where the complement graph $\overline{\Sigma}$ is the point graph of a thick generalised quadrangle (see \S\ref{s:defn}). To simplify the proof, we first establish some elementary properties of $\Sigma$. 
 
\begin{lemma}
\label{lemma:compGQ}
Let $\Sigma$ be the complement of the point graph of a thick generalised quadrangle of order $(s,t)$. 
Then $\Sigma$ is strongly regular with parameters $((st+1)(s+1),s^2t,s^2t-st-s+t,(s-1)st)$ and $\Sigma_2(v)$ induces $K_{(t+1)[s]}$ for all $v\in V\Sigma$. Further, the following hold.
\begin{itemize}
\item[(i)] Any distinct non-adjacent $v,x\in V\Sigma$ lie in a unique independent set $I(v,x)$ of $\Sigma$ with size $s+1$, and if $I$ is an independent set of $\Sigma$ containing $v$ and $x$, then $I\subseteq I(v,x)$. 
\item[(ii)] Let $I$ and $J$ be independent sets of $\Sigma$ with size $s$. Let $c(I):=\bigcap_{a\in I}\Sigma(a)$, and let $x_I$ denote the unique vertex in $V\Sigma$ for which $I\cup \{x_I\}$ is independent. Then $\Sigma_2(x_I)=c(I)\cup I$. If $I\cap J\neq \varnothing$, then either $I\cup \{x_I\}=J\cup \{x_J\}$, or $c(I)\cap c(J)\neq \varnothing$.     
\item[(iii)] Let $\Delta$ be a $\mu$-graph of $\Sigma$. There exists a partition $\mathcal{P}$ of $V\Delta$ into $s-1$ parts such that each  $P\in\mathcal{P}$ induces  $K_{t[s]}$. If $I$ is a maximal independent set of  $P\in\mathcal{P}$, then $\bigcap_{a\in I}\Delta(a)\subseteq P$.
\end{itemize}
\end{lemma}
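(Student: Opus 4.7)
The plan is to translate every statement about $\Sigma$ into a statement about collinearity in the underlying generalised quadrangle $\mathcal{Q}$ of order $(s,t)$ and then apply the GQ Axiom together with the absence of triangles in the collinearity graph of $\mathcal{Q}$ (a standard consequence of the GQ Axiom). The parameter count follows by taking the complement of the well-known strongly regular parameters $((s+1)(st+1),\,s(t+1),\,s-1,\,t+1)$ of the point graph of $\mathcal{Q}$. To identify $\Sigma_2(v)$, I would note that $\Sigma_2(v)$ is the set of points collinear with $v$ in $\mathcal{Q}$ (and distinct from $v$), which is the disjoint union of the $t+1$ punctured lines through $v$, and that two such points are adjacent in $\Sigma$ precisely when they lie on different lines through $v$, since otherwise they would form a triangle with $v$ in the collinearity graph. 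This immediately gives the $K_{(t+1)[s]}$ structure.

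For (i), independent sets of $\Sigma$ are sets of pairwise collinear points of $\mathcal{Q}$, and the no-triangle property forces any such set to lie on a single line of $\mathcal{Q}$. Hence any distinct non-adjacent $v,x$ determine the unique line $I(v,x)$ through them, and every independent set containing $v,x$ is contained in $I(v,x)$.

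For (ii), the key input is the GQ Axiom: $I$ of size $s$ lies in a unique line $\ell_I$, and $x_I$ is the one remaining point of $\ell_I$. Any $y\notin\ell_I$ is collinear with exactly one point of $\ell_I$, so $y\in c(I)$ iff that point is $x_I$, iff $y\in\Sigma_2(x_I)$; after checking the elements of $\ell_I$ directly, this yields $\Sigma_2(x_I)=c(I)\cup I$. For the second claim, if $|I\cap J|\geq 2$ then two points force $\ell_I=\ell_J$ and hence $I\cup\{x_I\}=J\cup\{x_J\}$; otherwise $I\cap J=\{p\}$, $\ell_I\neq\ell_J$, and $x_I\neq x_J$ must be non-collinear (else $p,x_I,x_J$ would be a triangle). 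Since non-collinear pairs in $\mathcal{Q}$ have $\mu(\Pi)=t+1$ common neighbours in the point graph $\Pi$, and $p$ is one of them, there remain $t\geq 1$ further common neighbours, and a routine application of the GQ Axiom to each of $\ell_I$ and $\ell_J$ shows each such neighbour lies in $c(I)\cap c(J)$.

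For (iii), a $\mu$-graph $\Delta$ of $\Sigma$ comes from a non-adjacent pair $v,w\in V\Sigma$, i.e.\ from a pair of collinear points lying on a unique line $\ell$. The GQ Axiom says each $y\in V\Delta$ is collinear with a unique point of $\ell$, and $y\in V\Delta$ iff this point lies in $\ell\setminus\{v,w\}$; this naturally partitions $V\Delta$ into $s-1$ fibres $P_p$ indexed by $p\in\ell\setminus\{v,w\}$. Each $P_p=\Sigma_2(p)\setminus(\ell\setminus\{p\})$ is $K_{(t+1)[s]}$ with the part coming from $\ell$ deleted, hence $K_{t[s]}$. A maximal independent set $I$ of $P_p$ must be $m\setminus\{p\}$ for some line $m$ through $p$ with $m\neq\ell$, so $x_I=p$, and (ii) gives $\bigcap_{a\in I}\Delta(a)=V\Delta\cap c(I)\subseteq\Sigma_2(p)\cap V\Delta=P_p$. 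The main obstacle is the bookkeeping in (ii): verifying that the $t$ common neighbours of $x_I$ and $x_J$ other than $p$ avoid $I\cup J$ and genuinely lie in $c(I)\cap c(J)$; once the GQ Axiom is applied to $\ell_I$ and $\ell_J$ this becomes straightforward, and the rest of the lemma reduces to the same kind of axiom-chasing.
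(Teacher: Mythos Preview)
Your proposal is correct and follows essentially the same approach as the paper: translate everything into the point graph of the generalised quadrangle and apply the GQ Axiom (together with the no-triangle property) to read off each claim. The only minor variations are in the second assertion of~(ii), where you count the $t+1$ common neighbours of the non-collinear pair $x_I,x_J$ whereas the paper constructs one such neighbour explicitly via an auxiliary line through $x_J$, and in the last assertion of~(iii), where you invoke the first part of~(ii) to get $\bigcap_{a\in I}\Delta(a)\subseteq\Sigma_2(p)\cap V\Delta=P_p$ while the paper argues directly that each $u\in V\Delta\setminus P_y$ is collinear with some $a\in I$; both differences are cosmetic.
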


\begin{proof}
 The point graph of a thick generalised quadrangle of order $(s,t)$ is a strongly regular graph with parameters $((st+1)(s+1),s(t+1),s-1,t+1)$, so $\Sigma$ is strongly regular with parameters $((st+1)(s+1),s^2t,s^2t-st-s+t,(s-1)st)$. In particular, $\Sigma$ has diameter $2$ since $s\geq 2$. Let $\mathcal{L}$ be the set of maximal cliques of $\overline{\Sigma}$. Now $\mathcal{L}$ is also the set of maximal independent sets of $\Sigma$.  By the GQ Axiom,  we may view $\mathcal{L}$ as the line set of the generalised quadrangle, and $\Sigma_2(v)$ induces $K_{(t+1)[s]}$ for all $v\in V\Sigma$. It follows that (i) holds.

Let $I$ and $J$ be independent sets of $\Sigma$ with size $s$, and let $c(I)$ and $x_I$ be as defined in (ii). Now $I\subseteq \Sigma_2(x_I)$ and $\Sigma_2(x_I)\setminus I\subseteq c(I)$. If there exists $y\in c(I)\setminus \Sigma_2(x_I)$, then $y$ is adjacent in $\Sigma$ to every vertex in $I\cup \{x_I\}$, contradicting the GQ Axiom. Thus $\Sigma_2(x_I)=c(I)\cup I$. Suppose that $v\in I\cap J$ and $I\cup \{x_I\}\neq J\cup \{x_J\}$. Then  $\ell_I:=I\cup \{x_I\}$ and $\ell_J:=J\cup \{x_J\}$ are distinct lines that contain $v$ and therefore  intersect in $v$. There exists $\ell\in \mathcal{L}$ such that $x_J\in \ell$ and $\ell\neq \ell_J$. By the GQ Axiom, $x_I\notin \ell$, and there exists $m\in \mathcal{L}$ such that $x_I\in m$ and $m$ intersects $\ell$ at a vertex $y$, and  $y\in (\Sigma_2(x_I)\cap \Sigma_2(x_J))\setminus (I\cup J)=c(I)\cap c(J)$.
Thus (ii) holds.

Let $\Delta$ be a $\mu$-graph of $\Sigma$. Then $\Delta$ is the graph induced by $\Sigma(v)\cap \Sigma(x)$ for some $v\in V\Sigma$ and $x\in \Sigma_2(v)$. Now there exists $\ell\in\mathcal{L}$ such that $v,x\in \ell$. Let $Y:=\ell\setminus \{v,x\}$, and for $y\in Y$,  let $P_y$ be the set of $u\in V\Sigma\setminus \ell$ such that $u$ is collinear with $y$. By the GQ Axiom, each $P_y$ induces (in $\Sigma$) the graph $K_{t[s]}$, and  the set $\{P_y: y \in Y\}$ is a partition of $V\Delta$ with $s-1$ parts. Let $I$ be a maximal independent set of $P_y$ for some $y\in Y$. Now $I\cup \{y\}\in \mathcal{L}$, so by the GQ Axiom, each $u\in V\Delta\setminus P_y$ is not collinear with $y$ and is therefore collinear with some vertex in $I$. It follows that $\bigcap_{a\in I}\Delta(a)\subseteq P_y$, and (iii) holds.
\end{proof}

\begin{lemma}
\label{lemma:localGQ}
 Let $\Gamma$, $\Sigma$ and $\Delta$ be  graphs  with the following properties.
\begin{itemize}
\item[(i)] $\Gamma$ is locally $\Sigma$, and every $\mu$-graph of $\Sigma$ is isomorphic to $\Delta$.
\item[(ii)] $\Sigma$ is the complement of the  point graph of a thick generalised quadrangle.
\end{itemize}
Then $\Gamma$ has the unique $x$ property.
\end{lemma}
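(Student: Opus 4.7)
The plan is to construct the vertex $x$ explicitly by exploiting the line structure of the generalised quadrangle on $V\Sigma$, where $\Sigma=\Gamma(u)$, and then to verify the required property via repeated application of Lemma~\ref{lemma:compGQ}. Fix $u\in V\Gamma$, $v\in\Gamma(u)$ and $w\in\Gamma_2(u)\cap\Gamma(v)$, and set $A:=\Gamma(u)\cap\Gamma(v)\cap\Gamma(w)$. Since $\Gamma(v)\cong\Sigma$ has diameter~$2$, the set $A$ is a $\mu$-graph of $\Gamma(v)$, so by Lemma~\ref{lemma:compGQ}(iii) it admits a partition $\{P_1,\ldots,P_{s-1}\}$ with each $P_i$ inducing $K_{t[s]}$; these parts correspond to the $s-1$ points other than $u$ and $w$ on the unique line of the generalised quadrangle on $V\Gamma(v)$ through $u$ and $w$.

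For each $i$, pick a maximal independent set $I_i$ of $P_i$ of size $s$. Because $A$ is an induced subgraph of $\Gamma$, $I_i$ is also independent in $\Sigma$, so by Lemma~\ref{lemma:compGQ}(i) it extends uniquely to a line $I_i\cup\{q_i\}$ of $\Sigma$. Using the GQ axiom applied to this line inside $\Sigma$ I would establish four facts in turn. First, $q_i$ depends only on $P_i$: any second maximal independent set $I_i'$ of $P_i$ lies in a different multipartite class of $K_{t[s]}$ and so is $\Sigma$-adjacent to every vertex of $I_i$, forcing $I_i'\cup\{q_i\}$ to be the unique line of $\Sigma$ through $I_i'$. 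Second, $q_i\in\Sigma_2(v)$, since $v$ is $\Sigma$-adjacent to every vertex of $I_i$ and must therefore be the unique collinear partner of $q_i$ on $I_i\cup\{q_i\}$. Third, in $\Sigma$ the vertex $q_i$ is adjacent to every vertex of $\bigcup_{j\neq i}P_j$ and non-adjacent to every vertex of $P_i$: for $a\in P_j$ with $j\neq i$ the $\mu$-graph structure of $\Gamma(v)$ forces $a$ to be $\Sigma$-non-adjacent to exactly one vertex of $I_i$, so the GQ axiom on $I_i\cup\{q_i\}$ yields $a\sim q_i$, while for $a\in P_i$ the same axiom combined with the $\Sigma$-adjacency of $a$ to the vertices of $I_i$ outside $a$'s multipartite class gives $a\not\sim q_i$. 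Fourth, $q_i\not\sim q_j$ in $\Sigma$ for $i\neq j$, obtained by applying the third fact with $I_j$ in place of $I_i$ and then the GQ axiom on $I_j\cup\{q_j\}$. Thus $\{v,q_1,\ldots,q_{s-1}\}$ is a size-$s$ independent set of $\Sigma$ and, by Lemma~\ref{lemma:compGQ}(i), extends to a unique line $\ell=\{v,q_1,\ldots,q_{s-1},x\}$ of $\Sigma$; I define $x$ to be this final vertex.

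To verify $\Sigma(v)\cap\Sigma(x)=A$, observe that $x\in\Sigma_2(v)$ by construction, and that for each $a\in P_i$ the adjacencies above force the unique point of $\ell$ collinear with $a$ to be $q_i$, so $a\sim x$ in $\Sigma$; therefore $A\subseteq\Sigma(v)\cap\Sigma(x)$, and equality follows from $|A|=(s-1)st=\mu(\Sigma)=|\Sigma(v)\cap\Sigma(x)|$. For uniqueness, if $x'\in\Sigma_2(v)$ also satisfies $\Sigma(v)\cap\Sigma(x')=A$, then $x'$ is a common $\Sigma$-neighbour of every vertex of $A$; a case analysis on the position of $x'$ relative to $\ell$---whether $x'=q_i$ for some $i$, or $x'$ lies off $\ell$ and is collinear in the GQ of $\Sigma$ with $v$, with $x$, or with some $q_i$---together with the GQ axiom and Lemma~\ref{lemma:compGQ}(iii) will show that $\bigcap_{a\in A}\Sigma(a)=\{v,x\}$, so $x'=x$. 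The main obstacle is the third fact above: the edges of $A$ between distinct parts $P_i$ and $P_j$ are not determined by the abstract graph $\Delta$ but by the ambient GQ configuration, so the argument must shuttle between the two copies of $\Sigma$ at $u$ and at $v$, transferring adjacency counts inside $\Gamma(v)$ into collinearity statements on the $\Sigma$-line $I_i\cup\{q_i\}$ via the GQ axiom.
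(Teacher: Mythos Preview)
Your approach is correct and follows the same overall strategy as the paper: both construct $x$ as the remaining vertex on the $\Sigma$-line through $v$ and $s-1$ auxiliary vertices, one for each part $P_i$ of the partition of $A$ from Lemma~\ref{lemma:compGQ}(iii). The paper obtains these auxiliary vertices as the $x_{I(P)}$ by first proving a technical claim---for each $P$ there is an independent $s$-set $I(P)\ni v$ with $c(I(P))=P$---and then invoking Lemma~\ref{lemma:compGQ}(ii) to force all the lines $I(P)\cup\{x_{I(P)}\}$ to coincide. Your route is more direct: you take $I_i\subseteq P_i$, complete it in $\Sigma$ to the line $I_i\cup\{q_i\}$, and verify the needed adjacencies of $q_i$ by applying the GQ axiom alternately in $\Gamma(v)$ (to see that each $a\in P_j$, $j\neq i$, is collinear with exactly one vertex of $I_i$) and in $\Sigma$ (to transfer this to $a\sim q_i$). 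This avoids Lemma~\ref{lemma:compGQ}(ii) at the cost of the explicit two-GQ shuttling you flag; the auxiliary vertices end up being the same in both proofs, since your $q_i$ equals the paper's $x_{I(P_i)}$.

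One correction to the uniqueness sketch: the assertion $\bigcap_{a\in A}\Sigma(a)=\{v,x\}$ is stronger than the unique $x$ property demands and need not hold. You only need uniqueness among $x'\in\Sigma_2(v)$, and your case analysis does deliver this once completed: if $x'$ is off $\ell$ and collinear (in the GQ on $\Sigma$) with $x$ or with some $q_i$, then $x'\in\Sigma(v)$, contradicting $x'\in\Sigma_2(v)$; if $x'=q_i$, then fact~3 gives $P_i\not\subseteq\Sigma(x')$; and if $x'$ is off $\ell$ and collinear with $v$, then $x'\sim x$ in $\Sigma$, and here Lemma~\ref{lemma:compGQ}(iii) alone does not suffice---you need the count the paper uses: $x$ and $x'$ lie in distinct parts of the $K_{(t+1)[s]}$ on $\Sigma_2(v)$, so they share at least $|A|+(t-1)s=s(st-1)$ neighbours in $\Sigma$, exceeding $\lambda(\Sigma)=s^2t-st-s+t$ since $s\geq2$.
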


\begin{proof}
Let $u\in V\Gamma$, $v\in \Gamma(u)$ and $w\in \Gamma_2(u)\cap\Gamma(v)$. The graph $\Gamma(v)$ has diameter $2$ since the generalised quadrangle is thick, so $\Gamma(u)\cap\Gamma(v)\cap\Gamma(w)$ is a $\mu$-graph of $\Gamma(v)$ and is therefore isomorphic to $ \Delta$. For the remainder of the proof, we write $\Sigma$ for the graph induced by $\Gamma(u)$ and $\Delta$ for the graph induced by $\Gamma(u)\cap\Gamma(v)\cap\Gamma(w)$. 
 
 Since the generalised quadrangle is thick, it has order  $(s,t)$ for some $s\geq 2$ and $t\geq 2$.  By Lemma~\ref{lemma:compGQ}, the graph $\Sigma_2(y)$ induces $K_{(t+1)[s]}$ for all $y\in V\Sigma$, and  any distinct non-adjacent  $y,z\in V\Sigma$ lie in a unique independent set of $\Sigma$ with size $s+1$; this we denote by $I(y,z)$.  For  an independent set $I$ of $\Sigma$ with size $s$, let $c(I):=\bigcap_{a\in I}\Sigma(a)$, let $x_I$ denote the unique element of $V\Sigma$ for which $I\cup \{x_I\}$ is independent, and recall that $\Sigma_2(x_I)=c(I)\cup I$ by Lemma~\ref{lemma:compGQ}(ii). In particular, $c(I)$ induces $K_{t[s]}$.
By Lemma~\ref{lemma:compGQ}(iii), there exists a partition $\mathcal{P}$ of $V\Delta$ into $s-1$ parts such that each $P\in\mathcal{P}$ induces the graph $K_{t[s]}$. 

Let $P\in \mathcal{P}$. We claim that there exists an independent set $I(P)$ with size $s$ such that $v\in I(P)$ and $P=c(I(P))$. Choose a maximal independent set $J$ of $P$ and distinct $y,z\in J$, which exist since $s\geq 2$. Let $\Lambda$ be the graph induced by $\Sigma(y)\cap\Sigma(z)$, which is isomorphic to $\Delta$. Now $v\in V\Lambda$, so  $v$ lies in an independent set $I(P)$ of $\Lambda$ with size $s$ for which $X:=c(I(P))\cap V\Lambda$ induces $K_{(t-1)[s]}$ by Lemma~\ref{lemma:compGQ}(iii). Further,  $c(I(P))$ induces $ K_{t[s]}$, so $I:=c(I(P))\setminus V\Lambda$ is an independent set with size $s$. Since $y,z\in I$, we conclude that $I\subseteq I(y,z)$. Similarly, $J\subseteq I(y,z)$. If $I\neq J$, then  $I(y,z)=J\cup I$, but $x_J\in I(y,z)$ and $J\cup I\subseteq \Sigma(v)$, so $v\in \Sigma(x_J)\cap c(J)$, contradicting  $\Sigma_2(x_J)=c(J)\cup J$. Thus $I=J$. Now $J \cup X\cup I(P)=c((I(P))\cup I(P)=\Sigma_2(x_{I(P)})$. Since $\Sigma_2(x_{I(P)})$ induces $K_{(t+1)[s]}$, it follows that $X\subseteq c(J)$.  Now $X$ and $c(J)\cap P$ are subsets of $c(J)\cap \Sigma(v)$ with size $(t-1)s$. Since $v\in c(J)$ and $c(J)$ induces $ K_{t[s]}$, it follows that $X=c(J)\cap P$. Thus $c(I(P))=I\cup X=J\cup (c(J)\cap P)=P$, as desired.

If $I(P)\cup \{x_{I(P)}\}\neq I(Q)\cup \{x_{I(Q)}\}$ for some $P,Q\in \mathcal{P}$, then since $v\in I(P)\cap I(Q)$, Lemma~\ref{lemma:compGQ}(ii) implies that $c(I(P))\cap c(I(Q))\neq \varnothing$, but then $P\cap Q\neq \varnothing$, so $P=Q$, a contradiction. Thus $I(P)\cup \{x_{I(P)}\}= I(Q)\cup \{x_{I(Q)}\}$ for all $P,Q\in \mathcal{P}$. In particular, if $x_{I(P)}=x_{I(Q)}$ for some $P,Q\in \mathcal{P}$, then $I(P)=I(Q)$, so $P=c(I(P))=c(I(Q))=Q$. Hence $\{x_{I(P)} : P \in \mathcal{P}\}$ is an independent set with size $s-1$, and $\bigcap_{P\in\mathcal{P}} I(P)=\{v,x\}$ for some $x\in \Sigma_2(v)$. Now $V\Delta=\bigcup_{P\in\mathcal{P}} P\subseteq\Sigma(v)\cap\Sigma(x)$, so $V\Delta=\Gamma(u)\cap\Gamma(v)\cap\Gamma(x)$. Suppose that $V\Delta=\Gamma(u)\cap\Gamma(v)\cap\Gamma(x')$ for some $x'\in \Sigma_2(v)$ where $x\neq x'$.
If $x$ is not adjacent to $x'$, then $\{v,x,x'\}$ is an independent set, so $x'=x_{I(P)}$ for some $P\in \mathcal{P}$, but then $\Sigma_2(x')=P\cup I(P)$, contradicting $P\subseteq V\Delta\subseteq \Sigma(x')$. Otherwise, $x$ is adjacent to $x'$, so $x$ and $x'$ have at least $|V\Delta|+(t-1)s$ common neighbours in $\Sigma$,  so $s(st-1)\leq \lambda(\Sigma)=s^2t-st-s+t$, a contradiction since $s\geq 2$.
\end{proof}

 We have seen that the complement $\Sigma$ of the point graph of a thick generalised quadrangle of order $(s,t)$ satisfies the following property: 
\begin{itemize}
\item[($\dag$)] $\Sigma$ is a strongly regular graph with $\mu(\Sigma)=(s-1)st$ for some $s\geq 2$ and $t\geq 2$ such that $\Sigma_2(v)$ induces $K_{(t+1)[s]}$ for all $v\in V\Sigma$.
\end{itemize}
Now suppose that $\Sigma$ is any graph that satisfies ($\dag$). Then $\overline{\Sigma}$ is locally $(t+1)\cdot K_s$, so $\lambda(\overline{\Sigma})=s-1$ and $b_1(\overline{\Sigma})=st$, and since $\mu(\Sigma)=(s-1)st$, we conclude that    $k_2(\overline{\Sigma})=s^2t$. Since $\overline{\Sigma}$ is strongly regular, $\mu(\overline{\Sigma}) k_2(\overline{\Sigma})=b_1(\overline{\Sigma})k_1(\overline{\Sigma})$, so   $\overline{\Sigma}$ has parameters $((st+1)(s+1),s(t+1),s-1,t+1)$, in which case   $\overline{\Sigma}$ is the point graph of a  generalised quadrangle of order $(s,t)$ by~\cite[Lemma 1.15.1]{BroCohNeu1989}. Thus we could replace assumption (ii) of Lemma~\ref{lemma:localGQ} with ($\dag$), which  is more in the spirit of this section, but we choose instead to give the more direct statement.

\section{Locally connected graphs}
\label{s:LC}

In this section, we prove Theorem \ref{thm:connected} as follows. By Lemma~\ref{lemma:localhom}, a locally connected $4$-CH graph is locally a connected $3$-homogeneous graph. For each of the $3$-homogeneous graphs $\Sigma$ described in Theorem \ref{thm:3hom}(ii)-(v), employing  a case-by-case analysis, we determine whether there are any $4$-CH graphs  that are locally $\Sigma$. In most cases, we show that there are no such graphs  using either Lemma \ref{lemma:xplus} and the results of \S\ref{s:uniquex}, or Lemmas \ref{lemma:x} and \ref{lemma:diam2} together with other methods. Then, in \S\ref{s:proof}, we combine all of these results to obtain the desired proof. 

\subsection{Locally the grid graph or its complement}
\label{s:grid}

Recall that the grid graph $K_n\Box K_n$ is a strongly regular graph with vertex set $VK_n\times VK_n$, where distinct  vertices $(u_1,u_2)$ and $(v_1,v_2)$ are adjacent whenever $u_1=v_1$ or $u_2=v_2$. Its complement is $K_n\times K_n$.

\begin{lemma}
\label{lemma:KnboxKn}
Let $\Gamma$ be a  locally $K_n\square K_n$ graph where $n\geq 3$. Then $\Gamma$ is not $4$-CH.
\end{lemma}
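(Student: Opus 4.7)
The plan is to apply Lemma~\ref{lemma:xplus}, so I need to verify its two hypotheses: that $\Gamma$ has the unique $x$ property, and that $\Gamma(u')\cap\Gamma_2(v')$ is neither edgeless nor complete for some edge $\{u',v'\}$.

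First I would recall the elementary parameters of $\Sigma:=K_n\Box K_n$. It is strongly regular with parameters $(n^2,2(n-1),n-2,2)$; in particular $\mu(\Sigma)=2$, so every $\mu$-graph of $\Sigma$ is $2\cdot K_1$. This is exactly the hypothesis of Lemma~\ref{lemma:2K1}, which then delivers the unique $x$ property for $\Gamma$.

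Next, I would identify $\Gamma(u')\cap\Gamma_2(v')$ for an arbitrary edge $\{u',v'\}$ of $\Gamma$. Since $\Gamma$ is locally $\Sigma$, this set corresponds to $\Sigma_2(v')$ inside $\Sigma=\Gamma(u')$. A direct coordinate computation in $K_n\Box K_n$ shows that, for $v'=(1,1)$, the set $\Sigma_2(v')=\{(i,j):i\neq 1,\ j\neq 1\}$ induces a copy of $K_{n-1}\Box K_{n-1}$. For $n\geq 3$, this graph has valency $2(n-2)\geq 2$ so it is not edgeless, and it contains non-adjacent pairs (e.g.\ $(2,2)$ and $(3,3)$ when $n\geq 3$, or just two non-adjacent corners when $n=3$ where it is $C_4$), so it is not complete.

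With both hypotheses of Lemma~\ref{lemma:xplus} verified, the conclusion that $\Gamma$ is not $4$-CH follows immediately. The only place that even needs care is the smallest case $n=3$, where $K_2\Box K_2\cong C_4$, and one must check that a $4$-cycle is neither edgeless nor complete; this is obvious. There is no real obstacle in this proof — the work was entirely absorbed into Lemmas~\ref{lemma:2K1} and~\ref{lemma:xplus}, so all that remains is the structural observation that $\mu(K_n\Box K_n)=2$ and that the second neighbourhood of a vertex in a grid is itself a (smaller) grid.
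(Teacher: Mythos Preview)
Your proposal is correct and follows essentially the same argument as the paper's own proof: apply Lemma~\ref{lemma:2K1} using $\mu(K_n\Box K_n)=2$ to obtain the unique $x$ property, observe that $\Gamma(u)\cap\Gamma_2(v)\simeq K_{n-1}\Box K_{n-1}$ is neither edgeless nor complete for $n\geq 3$, and conclude via Lemma~\ref{lemma:xplus}.
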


\begin{proof}
Every $\mu$-graph of $K_n\square K_n$ is isomorphic to $2\cdot K_1$, so $\Gamma$ has the unique $x$ property by Lemma~\ref{lemma:2K1}. 
For $u\in V\Gamma$ and $v\in \Gamma(u)$, the graph $\Gamma(u)\cap\Gamma_2(v)\simeq K_{n-1}\square K_{n-1}$, and this graph is neither edgeless nor complete, so $\Gamma$ is not $4$-CH  by Lemma~\ref{lemma:xplus}.
\end{proof}

\begin{lemma}
\label{lemma:KnxKnfirst}
Let $\Gamma$ be a  locally $K_n\times K_n$ graph where $n\geq 3$. Let $u,v,w\in V\Gamma$ be such that $v\in\Gamma(u)$ and $w\in \Gamma_2(u)\cap \Gamma(v)$. Then there exists  $x\in \Gamma(u)\cap\Gamma_2(v)$ such that $\Aut(\Gamma)_{u,v,w}\leq \Aut(\Gamma)_{x}$. 
\end{lemma}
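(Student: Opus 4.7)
The plan is to exhibit a canonical vertex $x \in \Gamma(u) \cap \Gamma_2(v)$ that is determined entirely by the triple $(u,v,w)$ through the local structure of $\Gamma$ at $u$; every element of $\Aut(\Gamma)_{u,v,w}$ will then automatically fix $x$. Writing $\Sigma := \Gamma(u)$ and identifying $\Sigma$ with $K_n \times K_n$ on the vertex set $\{1,\ldots,n\}^2$ with $v = (1,1)$, the neighbourhood $\Sigma(v)$ is the induced subgraph on $\{2,\ldots,n\}^2$, again isomorphic to $K_{n-1} \times K_{n-1}$, and $\Sigma_2(v) = \{(1,j) : j \geq 2\} \cup \{(i,1) : i \geq 2\}$ has $2(n-1)$ vertices. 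The candidate $x$ will be read off from the set $K := \Gamma(u) \cap \Gamma(v) \cap \Gamma(w)$ regarded as an induced subgraph of $\Sigma(v)$.

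The combinatorial heart of the argument is to prove that $K = \Sigma(v) \cap \Sigma(x)$ for a unique $x \in \Sigma_2(v)$. Since $\Gamma(v) \simeq K_n \times K_n$ and $u, w$ are non-adjacent vertices of $\Gamma(v)$, the set $K$ is a $\mu$-graph of $\Gamma(v)$ and so induces a copy of $K_{n-1} \times K_{n-2}$ of order $(n-1)(n-2)$. For $n \geq 4$, I would observe that $K_{n-1} \times K_{n-2}$ has exactly $n-2$ independent sets of size $n-1$ (its ``columns''), and that these are pairwise disjoint and partition $V(K)$. Embedded in $\Sigma(v) \simeq K_{n-1} \times K_{n-1}$, each such set must be one of the $2(n-1)$ rows or columns of $\Sigma(v)$; since any row and any column of $\Sigma(v)$ meet in exactly one vertex, these $n-2$ disjoint sets must all be rows, or all be columns, of $\Sigma(v)$. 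Hence $K$ is the complement in $\Sigma(v)$ of a single row or a single column, which is precisely $\Sigma(v) \cap \Sigma(x)$ for a unique vertex $x \in \Sigma_2(v)$, via the formulae $\Sigma(v) \cap \Sigma((1,j_0)) = \{2,\ldots,n\} \times (\{2,\ldots,n\} \setminus \{j_0\})$ and $\Sigma(v) \cap \Sigma((i_0,1)) = (\{2,\ldots,n\} \setminus \{i_0\}) \times \{2,\ldots,n\}$. The case $n = 3$ needs a direct check: here $\Sigma(v) \simeq 2 \cdot K_2$ and $K \simeq 2 \cdot K_1$, and the four induced $2 \cdot K_1$ subgraphs of $\Sigma(v)$ are visibly in bijection with the four vertices of $\Sigma_2(v)$ via $x \mapsto \Sigma(v) \cap \Sigma(x)$.

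Granted the combinatorial claim, the conclusion is immediate: every $g \in \Aut(\Gamma)_{u,v,w}$ fixes $u$, $v$ and $w$, so preserves $\Sigma = \Gamma(u)$, $\Sigma(v) = \Gamma(u) \cap \Gamma(v)$, $\Sigma_2(v) = \Gamma(u) \cap \Gamma_2(v)$ and $K$; by the uniqueness of $x$, it follows that $g$ fixes $x$. The main obstacle is the combinatorial claim itself, and in particular ruling out ``exotic'' induced copies of $K_{n-1} \times K_{n-2}$ in $\Sigma(v)$ not arising from deleting a single row or column. The independent-set argument above handles this cleanly for $n \geq 4$, whereas the symmetry collapse at $n = 3$---where the row/column asymmetry of $K$ disappears---forces the base case to be dealt with by a small enumeration.
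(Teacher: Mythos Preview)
Your proposal is correct and follows essentially the same approach as the paper. Both arguments pin down $x$ by exploiting the unique row/column structure of independent sets of size $n-1$ in $\Sigma(v)\simeq K_{n-1}\times K_{n-1}$; the paper phrases this via the complement $X:=\Sigma(v)\setminus K$ and the unique $x\in\Sigma_2(v)$ with $X\cup\{x\}$ an independent $n$-set, while you phrase it dually as the unique $x\in\Sigma_2(v)$ with $K=\Sigma(v)\cap\Sigma(x)$, but the combinatorics is identical.
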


\begin{proof}
 By assumption, $\Gamma(v)\simeq K_n\times K_n$, so  $\Gamma(u)\cap\Gamma(v)\simeq K_{n-1}\times K_{n-1}$. Let $\Delta$ be the graph induced by $\Gamma(u)\cap\Gamma(v)\cap\Gamma(w)$. Since  $u$ and $w$ are non-adjacent vertices in $\Gamma(v)$, it follows that  $\Delta\simeq K_{n-1}\times K_{n-2}$.
 In particular, there is a unique partition $\mathcal{P}$ of the vertices of $\Delta$  into $n-2$  independent sets of size $n-1$. 
Moreover, there is a unique way to extend $\mathcal{P}$ to a partition of $\Gamma(u)\cap\Gamma(v)$ into $n-1$ independent sets of size $n-1$. Thus $X:=\Gamma(u)\cap\Gamma(v)\cap\Gamma_2(w)$ is an independent set of size $n-1$.
Since $\Gamma(u)\simeq K_n\times K_n$ and $v\in \Gamma(u)$, there exists a unique vertex $x\in \Gamma(u)\cap\Gamma_2(v)$ such that $\{x\}\cup X$ is an independent set (of size $n$). If $g\in \Aut(\Gamma)_{u,v,w}$, then
 $x^g\in \Gamma(u)\cap\Gamma_2(v)$ and $\{x^g\}\cup X=(\{x\}\cup X)^g$ is  an independent set, so $x^g=x$, as desired.
\end{proof}

\begin{lemma}
\label{lemma:KnxKn}
Let $\Gamma$ be a  locally $K_n\times K_n$ graph where $n\geq 3$. Then $\Gamma$ is not $4$-CH.
\end{lemma}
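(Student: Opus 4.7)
The plan is to combine Lemma~\ref{lemma:KnxKnfirst} with Lemma~\ref{lemma:x}(i) to derive a contradiction. Assume for contradiction that $\Gamma$ is $4$-CH. By Remark~\ref{remark:connected} we may take $\Gamma$ to be connected. Since the local graph $K_n\times K_n$ contains non-adjacent pairs for $n\geq 3$, any two non-adjacent neighbours of a fixed vertex furnish $u,v,w\in V\Gamma$ with $v\in\Gamma(u)$ and $w\in\Gamma_2(u)\cap\Gamma(v)$. Applying Lemma~\ref{lemma:KnxKnfirst} to this configuration produces $x\in\Gamma(u)\cap\Gamma_2(v)$ fixed by $\Aut(\Gamma)_{u,v,w}$, whereupon Lemma~\ref{lemma:x}(i) forces the induced subgraph on $\Gamma(u)\cap\Gamma_2(v)$ to be either edgeless or complete.

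The next step is to identify this induced subgraph concretely. Since $K_n\times K_n$ is strongly regular (hence has diameter $2$) for $n\geq 3$, the set $\Gamma(u)\cap\Gamma_2(v)$ coincides with the set of vertices of the local graph $\Gamma(u)\simeq K_n\times K_n$ that are distinct from $v$ and non-adjacent to $v$ inside $\Gamma(u)$. Identifying $\Gamma(u)$ with $VK_n\times VK_n$ and writing $v=(v_1,v_2)$, this non-neighbourhood equals
\[
\{(v_1,a): a\neq v_2\}\,\cup\,\{(b,v_2): b\neq v_1\},
\]
which partitions into two sets of size $n-1$. Vertices sharing a part share a coordinate with $v$, so they are non-adjacent in $K_n\times K_n$; any mixed pair $(v_1,a)$, $(b,v_2)$ with $a\neq v_2$ and $b\neq v_1$ differs in both coordinates, so it is adjacent. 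Hence $\Gamma(u)\cap\Gamma_2(v)\simeq K_{n-1,n-1}$.

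For $n\geq 3$, the graph $K_{n-1,n-1}$ is neither edgeless (the parts have size at least $2$ with all cross-edges present) nor complete (vertices in the same part are non-adjacent), contradicting the consequence of Lemma~\ref{lemma:x}(i) derived above. Therefore $\Gamma$ is not $4$-CH. The only substantive content in the argument is the combinatorial identification of $\Gamma(u)\cap\Gamma_2(v)$ as $K_{n-1,n-1}$; once this is in place, Lemmas~\ref{lemma:KnxKnfirst} and~\ref{lemma:x} deliver the conclusion immediately, and no serious obstacle is anticipated.
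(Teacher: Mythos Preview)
Your argument is correct and follows essentially the same route as the paper: apply Lemma~\ref{lemma:KnxKnfirst} to obtain a vertex $x\in\Gamma(u)\cap\Gamma_2(v)$ fixed by $\Aut(\Gamma)_{u,v,w}$, invoke Lemma~\ref{lemma:x}(i), and then observe that $\Gamma(u)\cap\Gamma_2(v)\simeq K_{n-1,n-1}$ is neither edgeless nor complete for $n\geq 3$. The reduction to the connected case via Remark~\ref{remark:connected} is harmless but unnecessary, since neither Lemma~\ref{lemma:KnxKnfirst} nor Lemma~\ref{lemma:x} requires it.
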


\begin{proof}
For $u\in V\Gamma$ and $v\in \Gamma(u)$, the graph $\Gamma(u)\cap\Gamma_2(v)\simeq K_{n-1,n-1}$, and this graph is neither edgeless nor complete, so $\Gamma$ is not $4$-CH  by Lemmas~\ref{lemma:KnxKnfirst} and~\ref{lemma:x}.
\end{proof}

Note that for $n\geq 3$, the graph  $K_{n+1}\times K_{n+1}$  is locally $K_n\times K_n$ and $3$-homogeneous by Theorem~\ref{thm:3hom}, but it is not $4$-CH  by Lemma~\ref{lemma:KnxKn}.

\subsection{Locally the affine polar graph or its complement}
\label{s:affine}

Recall that the affine polar graph $VO^\varepsilon_{2m}(2)$ has vertex set $V_{2m}(2)$,  and  vectors $u$ and $v$ are adjacent whenever $Q(u+v)=0$, where $(V_{2m}(2),Q)$ is  a quadratic space with type  $\varepsilon$ (see \S\ref{s:defn} for the definition of a quadratic space).
The graph $VO^+_{2m}(2)$ is strongly regular with parameters 
$$(2^{2m}, (2^m-1)(2^{m-1}+1),2(2^{m-1}-1)(2^{m-2}+1),2^{m-1}(2^{m-1}+1)),$$
and the  graph $VO^-_{2m}(2)$ is strongly regular with parameters
$$(2^{2m}, (2^m+1)(2^{m-1}-1),2(2^{m-1}+1)(2^{m-2}-1),2^{m-1}(2^{m-1}-1))$$
(see~\cite[\S8 and Appendix~$C.12^\pm$]{Hub1975}).
In order to simplify the proofs in this section, we define a standard basis for a quadratic space. Let $f$ be the bilinear form associated with $Q$, and note that $f$ is alternating. By \cite[Proposition 2.5.3]{KleLie1990}, if 
 $\varepsilon=+$, then $V_{2m}(2)$ has a basis  $\{e_1,\ldots,e_m,f_1,\ldots,f_m\}$ where  $Q(e_i)=Q(f_i)=f(e_i,e_j)=f(f_i,f_j)=0$ and $f(e_i,f_j)=\delta_{i,j}$ for all $i,j$. Further, if $\varepsilon=-$, then 
 $V_{2m}(2)$ has a basis $\{e_1,\ldots,e_{m-1},f_1,\ldots,f_{m-1},x,y\}$ where  $Q(e_i)=Q(f_i)=f(e_i,e_j)=f(f_i,f_j)=f(e_i,x)=f(f_i,x)=f(e_i,y)=f(f_i,y)=0$ and $f(e_i,f_j)=\delta_{i,j}$ for all $i,j$, and $Q(x)=1$, $f(x,y)=1$ and $Q(y)=1$;  we then define $e_m:=x$ and $f_m:=y$. In either case, we refer to $\{e_1,\ldots,e_m,f_1,\ldots,f_m\}$ as a \textit{standard basis} of $V$.

\begin{lemma}
 \label{mu polar}
 For $m\geq 3$ and $\varepsilon\in\{+,-\}$, the graph  $\Sigma:=VO^\varepsilon_{2m}(2)$ is strongly regular with $\lambda(\Sigma)=\mu(\Sigma)-2$, and every $\mu$-graph of $\Sigma$ is isomorphic to a  graph $\Delta$ with diameter $3$ such that $k_1(\Delta)=k_2(\Delta)$ and $k_3(\Delta)=1$.
\end{lemma}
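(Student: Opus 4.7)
The plan is to view $\Sigma := VO^\varepsilon_{2m}(2)$ as the Cayley graph on the additive group of $V = V_{2m}(2)$ with connection set the non-zero singular vectors, so that every $\mu$-graph is isomorphic to the graph $\Delta$ induced on
\[M(v) := \{w \in V : Q(w) = 0,\ f(w,v) = 1\}\]
for any chosen non-singular vector $v$ (where $f$ is the bilinear form associated with $Q$), with $w_1,w_2 \in M(v)$ adjacent iff $f(w_1,w_2)=0$, using the identity $Q(w_1+w_2) = Q(w_1)+Q(w_2)+f(w_1,w_2)$. The equation $\lambda = \mu - 2$ is then a short verification from the given parameter formulae.

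The backbone of the argument is the map $\phi\colon w \mapsto w+v$ on $M(v)$. First I will check that $\phi$ is a fixed-point-free involution on $M(v)$ and a graph automorphism of $\Delta$, and that $w \not\sim \phi(w)$, since $f(w,w+v) = f(w,v) = 1$. Next, for any $w'' \in M(v)\setminus\{w,\phi(w)\}$, the values $f(w'',w)$ and $f(w'',w+v)$ differ by $f(w'',v) = 1$, so $w''$ is adjacent to exactly one of $w$ and $\phi(w)$. This yields $|\Delta(w)| + |\Delta(\phi(w))| = \mu - 2 = \lambda$, and since $\phi$ is an automorphism, each is $\lambda/2$, giving $k_1(\Delta) = \lambda/2$. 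The same dichotomy also shows $w$ and $\phi(w)$ share no common neighbour in $\Delta$, so $d_\Delta(w,\phi(w)) \geq 3$.

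The crucial remaining step is to show every other non-adjacent pair $w_1,w_2 \in M(v)$ (that is, $f(w_1,w_2) = 1$ with $w_2 \neq w_1+v$) has a common neighbour. Here $\{v,w_1,w_2\}$ is linearly independent, and a direct computation shows $y := v+w_1+w_2$ lies in $\langle w_1,w_2\rangle^\perp$ and satisfies $Q(y)=0$. Since $f$ is non-degenerate on $V$ and $w_1,w_2$ are linearly independent, $\langle w_1,w_2\rangle^\perp$ is a non-degenerate quadratic space of even dimension $2m-2$. Applying Lemma~\ref{lemma:quadratic} to $y$ in this space yields $w'' \in \langle w_1,w_2\rangle^\perp$ with $Q(w'') = 0$ and $f(y,w'') = 1$, which simplifies via $f(w_i,w'') = 0$ to $f(v,w'') = 1$; then $w''$ is the required common neighbour (and $w'' \neq w_i$ because $w_i \notin \langle w_1,w_2\rangle^\perp$).

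Putting the pieces together gives diameter $3$: for any neighbour $w_1$ of $w$ (which exists as $\lambda/2 \geq 1$ when $m \geq 3$), the pair $(w_1, \phi(w))$ is non-adjacent but not a $\phi$-pair, so the previous step produces a common neighbour $w_2$ of $w_1$ and $\phi(w)$, giving a $3$-path $w \to w_1 \to w_2 \to \phi(w)$. Hence $\Delta_3(w) = \{\phi(w)\}$, so $k_3(\Delta) = 1$, and $k_2(\Delta) = |V\Delta| - 1 - k_1(\Delta) - k_3(\Delta) = (\mu - 2) - \lambda/2 = \lambda/2 = k_1(\Delta)$. I expect the only conceptual hurdle to be recognising the pairing $w \leftrightarrow w+v$ as the structural feature that controls $\Delta$; once this is in hand, everything reduces to short bilinear-form calculations and a single invocation of Lemma~\ref{lemma:quadratic}, with no case split between $\varepsilon = +$ and $\varepsilon = -$ required.
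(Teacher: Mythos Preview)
Your argument is correct and rests on the same structural insight as the paper's proof: the pairing $w\leftrightarrow w+v$ on the $\mu$-graph. The paper, however, packages this differently. It fixes a standard basis, takes $v=e_1+f_1$, and writes $V\Delta$ explicitly as $X\cup Y$ with $X=\{e_1+w:w\in I\}$ and $Y=\{f_1+w:w\in I\}$ for $I$ the singular vectors of $\langle e_1,f_1\rangle^\perp$; your involution $\phi$ is exactly the bijection $X\leftrightarrow Y$. The paper then invokes the $3$-homogeneity of $\Sigma$ (Theorem~\ref{thm:3hom}) both to conclude that all $\mu$-graphs are isomorphic and to get vertex-transitivity of $\Delta$, so it only needs to analyse the single vertex $e_1$ and applies Lemma~\ref{lemma:quadratic} once inside the fixed subspace $\langle e_1,f_1\rangle^\perp$. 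Your route is coordinate-free and more self-contained: you extract regularity of $\Delta$ directly from $\phi$ being a graph automorphism, and you apply Lemma~\ref{lemma:quadratic} inside the varying complement $\langle w_1,w_2\rangle^\perp$ (non-degenerate because $f(w_1,w_2)=1$). The only point you leave implicit is why all $\mu$-graphs are isomorphic to a \emph{single} $\Delta$, as required for the hypothesis of Lemma~\ref{lemma:polar}(i); this follows either from Theorem~\ref{thm:3hom} as in the paper, or from Witt's lemma (transitivity of the orthogonal group on non-singular vectors).
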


\begin{proof}
Let $(V,Q)$ be the  quadratic space associated with $\Sigma$, and let $\{e_1,\ldots,e_m,f_1,\ldots,f_m\}$ be a standard basis of $V$. As noted above, the  graph $\Sigma$ is strongly regular with  $\lambda(\Sigma)=\mu(\Sigma)-2$, and
since $\Sigma$ is $3$-CH by Theorem~\ref{thm:3hom}, every $\mu$-graph of $\Sigma$ is isomorphic to the  graph $\Delta$ induced by $\Sigma(0)\cap \Sigma(e_1+f_1)$, and  $\Delta$ is vertex-transitive. Thus $k_i(\Delta)$ is defined for all $i$. Let $W$ be the span of $\{e_2,\ldots,e_m,f_2,\ldots,f_m\}$, and let $I:=\{w\in W : Q(w)=0\}$. Note that $I$ contains non-zero vectors since $m\geq 3$.  Define
\begin{align*}
 X&:=\{e_1+w : w \in I\},\\
Y&:=\{f_1+w : w \in I\}.
\end{align*}
Now $V\Delta=X\cup Y$. Further, $\Delta(e_1)=X\setminus \{e_1\}$ and $\Delta(f_1)=Y\setminus \{f_1\}$. If $w\in I\setminus \{0\}$, 
then there exists $w'\in I\setminus \{0\}$ such that $f(w,w')=1$ by Lemma~\ref{lemma:quadratic}, in which case $f_1+w$ and $e_1+w'$ are adjacent. It follows that $\Delta_2(e_1)=Y\setminus \{f_1\}$ and $\Delta_3(e_1)=\{f_1\}$. Thus $\Delta$ has diameter $3$, $k_1(\Delta)=k_2(\Delta)$ and $k_3(\Delta)=1$.
\end{proof}

\begin{lemma}
\label{form}
 Let $\Sigma$ be  $VO^\varepsilon_{2m}(2)$ where $m\geq 3$ and $\varepsilon\in\{+,-\}$. If  $y$ and $z$ are distinct non-adjacent neighbours of $v\in V\Sigma$, then $\Sigma(v)\cap\Sigma(y)\cap\Sigma(z)\cap \Sigma(v+y+z)=\varnothing$.
\end{lemma}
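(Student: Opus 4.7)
The plan is to suppose for contradiction that there exists $u \in \Sigma(v)\cap\Sigma(y)\cap\Sigma(z)\cap \Sigma(v+y+z)$ and translate each adjacency condition into a condition on the quadratic form $Q$ and its associated bilinear form $f$ (recall $f$ is alternating since we are in characteristic $2$, and $Q(p+q) = Q(p) + Q(q) + f(p,q)$). I would introduce the shifted vectors $a := v+y$, $b := v+z$ and $w := v+u$, so that the adjacency conditions $y,z,u \in \Sigma(v)$ become $Q(a) = Q(b) = Q(w) = 0$, while the non-adjacency of $y$ and $z$ becomes $Q(a+b) = Q(y+z) \neq 0$.

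Next, I would turn the remaining adjacencies of $u$ into linear conditions on $w$. Since $u \in \Sigma(y)$ means $Q(u+y) = 0$, i.e.\ $Q(w+a) = 0$, and $Q(w) = Q(a) = 0$, the identity $Q(w+a) = Q(w) + Q(a) + f(w,a)$ forces $f(w,a) = 0$. Likewise, $u \in \Sigma(z)$ forces $f(w,b) = 0$.

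The final step is to exploit the remaining hypothesis $u \in \Sigma(v+y+z)$, which reads $Q(w+a+b)=0$. Expanding via bilinearity,
\[
Q(w+a+b) \;=\; Q(w) + Q(a+b) + f(w,a+b) \;=\; 0 + Q(a+b) + f(w,a) + f(w,b) \;=\; Q(a+b),
\]
and this is nonzero by the non-adjacency of $y$ and $z$, contradicting $Q(w+a+b) = 0$.

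I do not expect a genuine obstacle; the entire argument is a bookkeeping exercise with the bilinear form, and in particular the standard-basis description of $\Sigma$ given just before the lemma is not needed here. The only subtlety is to correctly translate the adjacency relation of $VO^\varepsilon_{2m}(2)$ (which is defined in terms of $Q$ applied to sums of vectors) into constraints on $w$, and to use characteristic $2$ so that $v+y+z+u = w+a+b$ and $-a-b = a+b$.
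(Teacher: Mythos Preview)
Your proof is correct and follows essentially the same approach as the paper: both argue by direct expansion of $Q(u+v+y+z)$ using the associated bilinear form and the adjacency hypotheses. The only cosmetic difference is that you first translate by $v$ (introducing $a,b,w$), which amounts to reducing to the case $v=0$ and makes the bookkeeping slightly cleaner, whereas the paper expands directly in the original variables.
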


\begin{proof}
Let $(V,Q)$ be the quadratic space associated with $\Sigma$, and let $f$ be the bilinear form that is  associated with $Q$.  If $u\in \Sigma(v)\cap\Sigma(y)\cap\Sigma(z)$, then 
 \begin{align*}
 Q(u+v+y+z)&=1+f(u+v,y+z)\\
 &=1+f(u,y)+f(u,z)+f(v,y)+f(v,z)\\
 &=1+Q(u)+Q(y)+Q(u)+Q(z)+Q(v)+Q(y)+Q(v)+Q(z)\\
 &=1.
\end{align*}
Thus  $\Sigma(v)\cap\Sigma(y)\cap\Sigma(z)\cap \Sigma(v+y+z)=\varnothing$.
\end{proof}

\begin{lemma}
\label{dist 3 polar}
 Let $\Sigma$ be  $VO^\varepsilon_{2m}(2)$ where $m\geq 3$ and $\varepsilon\in\{+,-\}$.  For any distinct non-adjacent $y,z\in V\Sigma$, if $v$ and $x$ are vertices at distance $3$ in the graph induced by $\Sigma(y)\cap\Sigma(z)$, then $y$ and $z$ are at distance $3$ in the graph induced by $\Sigma(v)\cap\Sigma(x)$.
\end{lemma}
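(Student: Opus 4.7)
The plan is to use the vector-space structure of $\Sigma$ to identify explicitly the distance-$3$ partner of $v$ in the $\mu$-graph $\Delta$ induced by $\Sigma(y)\cap\Sigma(z)$, and then close the argument by symmetry. By Lemma~\ref{mu polar}, $\Delta$ and the $\mu$-graph $\Delta'$ induced by $\Sigma(v)\cap\Sigma(x)$ both have diameter $3$, and each vertex in these graphs has a unique vertex at distance $3$ from it.

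First I would show that $x':=v+y+z$ is at distance $3$ from $v$ in $\Delta$. A direct check over $\mathbb{F}_2$ using $Q(v+y)=Q(v+z)=0$ gives $Q(x'+y)=Q(v+z)=0$ and $Q(x'+z)=Q(v+y)=0$, while $x'\notin\{v,y,z\}$ and $Q(v+x')=Q(y+z)\neq 0$; so $x'\in V\Delta$ and $v,x'$ are non-adjacent in $\Delta$. Any common neighbour of $v$ and $x'$ in $\Delta$ would lie in $\Sigma(v)\cap\Sigma(y)\cap\Sigma(z)\cap\Sigma(v+y+z)$, which is empty by Lemma~\ref{form} applied at base vertex $v$ with non-adjacent neighbours $y,z$. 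Hence $d_\Delta(v,x')\geq 3$, and equality follows from the diameter bound. Uniqueness of the distance-$3$ partner then forces $x=x'$, yielding the symmetric identity $v+x=y+z$.

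To conclude I would rerun the same argument in $\Delta'$ with the roles of $(y,z)$ and $(v,x)$ swapped. Since $v,x$ are distinct, non-adjacent vertices of $\Sigma$ both adjacent to $y$, Lemma~\ref{form} applied at $y$ gives $\Sigma(y)\cap\Sigma(v)\cap\Sigma(x)\cap\Sigma(y+v+x)=\varnothing$, and $y+v+x=z$ by the identity just derived. Hence any common neighbour of $y$ and $z$ in $\Delta'$ would lie in this empty intersection, so $d_{\Delta'}(y,z)\geq 3$, and the diameter bound then forces $d_{\Delta'}(y,z)=3$. I do not expect any real obstacle here: the argument is completely symmetric once the identity $v+x=y+z$ is unlocked, and the only bookkeeping is verifying the \emph{distinct, non-adjacent neighbours} hypothesis of Lemma~\ref{form} at each application.
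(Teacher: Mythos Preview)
Your proof is correct and follows essentially the same route as the paper: identify the distance-$3$ partner of $v$ in $\Delta$ as $v+y+z$ via the characteristic-$2$ identities and Lemma~\ref{form}, deduce $x=v+y+z$ from uniqueness, and then re-apply Lemma~\ref{form} (now at $y$ with non-adjacent neighbours $v,x$) together with the diameter bound from Lemma~\ref{mu polar} to conclude $d_{\Delta'}(y,z)=3$. The paper's write-up is terser but the logic is identical.
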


\begin{proof}
Let $\Delta$ be the graph induced by $\Sigma(y)\cap\Sigma(z)$.
 Suppose that $v$ and $x$ are vertices at distance $3$ in $\Delta$.  Now $Q(v+y+z+y)=Q(v+z)=0$ since $v\in\Sigma(z)$, so $v+y+z\in \Sigma(y)$. Similarly, $v+y+z\in \Sigma(z)$ and $v+y+z\notin\Sigma(v)$. Thus  $v+y+z\in V\Delta\setminus \Delta(v)$.  By Lemma~\ref{mu polar}, $\diam(\Delta)=3$ and $k_3(\Delta)=1$, so  $\Delta_3(v)=\{x\}$. Since $v+y+z$  does not lie in $\Delta_2(v)$ by Lemma~\ref{form}, it follows that $x=v+y+z$. Let $\Lambda$ be the graph induced by $\Sigma(v)\cap\Sigma(x)$. Now $y,z\in V\Lambda$ and $v+x+z=y$. By Lemma~\ref{form}, $z$ and $y$ have no common neighbours in $V\Lambda$, so $z\in \Lambda_3(y)$.
\end{proof}

\begin{lemma}
\label{lemma:VO}
 Let $\Gamma$ be a locally $VO^\varepsilon_{2m}(2)$ graph where $m\geq 3$ and $\varepsilon\in\{+,-\}$. Then $\Gamma$ is not $4$-CH.
\end{lemma}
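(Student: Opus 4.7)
The plan is to show that $\Gamma$ satisfies the hypotheses of Lemma~\ref{lemma:polar}, so that $\Gamma$ has the unique $x$ property, and then invoke Lemma~\ref{lemma:xplus}. Set $\Sigma:=VO^\varepsilon_{2m}(2)$ and let $\Delta$ be the common $\mu$-graph of $\Sigma$ (well-defined since $\Sigma$ is $3$-homogeneous by Theorem~\ref{thm:3hom} and hence every $\mu$-graph of $\Sigma$ is isomorphic to a single graph). Hypothesis~(i) of Lemma~\ref{lemma:polar} is then the assumption of the lemma we are proving. Hypotheses~(ii) and~(iii) are exactly what Lemma~\ref{mu polar} delivers: $\Sigma$ is strongly regular with $\lambda(\Sigma)=\mu(\Sigma)-2$, and $\Delta$ has diameter $3$ with $k_1(\Delta)=k_2(\Delta)$ and $k_3(\Delta)=1$. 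Hypothesis~(iv)---the distance-$3$ swap---is precisely the statement of Lemma~\ref{dist 3 polar}. Hence Lemma~\ref{lemma:polar} applies and $\Gamma$ has the unique $x$ property.

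To apply Lemma~\ref{lemma:xplus} it remains to exhibit some $u\in V\Gamma$ and $v\in\Gamma(u)$ for which $\Gamma(u)\cap\Gamma_2(v)$ is neither edgeless nor complete. Since $\Sigma$ is strongly regular it has diameter $2$, so identifying $\Gamma(u)$ with $\Sigma$ we have $\Gamma(u)\cap\Gamma_2(v)=\Sigma_2(v)$ (as an induced subgraph). By strong regularity, each vertex of $\Sigma_2(v)$ has exactly $\mu(\Sigma)$ neighbours in $\Sigma(v)$, and therefore $k(\Sigma)-\mu(\Sigma)$ neighbours inside $\Sigma_2(v)$; so $\Sigma_2(v)$ is regular of valency $k(\Sigma)-\mu(\Sigma)$. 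A quick parameter check using the parameters of $VO^\varepsilon_{2m}(2)$ recorded at the start of \S\ref{s:affine} gives $k(\Sigma)-\mu(\Sigma)=2^{2m-2}-1$ in both the $+$ and $-$ cases, and this integer lies strictly between $0$ and $|\Sigma_2(v)|-1=k_2(\Sigma)-1$ whenever $m\geq 3$. Thus $\Sigma_2(v)$, and hence $\Gamma(u)\cap\Gamma_2(v)$, has both an edge and a non-edge, so Lemma~\ref{lemma:xplus} forces $\Gamma$ not to be $4$-CH.

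The essential work has already been packaged into Lemmas~\ref{lemma:polar}, \ref{mu polar} and~\ref{dist 3 polar}, so the only part that is not pure bookkeeping is the verification that $\Sigma_2(v)$ is neither a clique nor a coclique; this reduces to the short parameter identity $k(\Sigma)-\mu(\Sigma)=2^{2m-2}-1$ above. I expect no real obstacle---the only point one must be slightly careful about is that the two sign cases $\varepsilon\in\{+,-\}$ really do yield the same value of $k-\mu$, which is what allows a single uniform argument.
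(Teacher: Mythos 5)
Your proposal is correct and follows essentially the same route as the paper: Lemmas~\ref{mu polar} and~\ref{dist 3 polar} verify the hypotheses of Lemma~\ref{lemma:polar}, giving the unique $x$ property, and Lemma~\ref{lemma:xplus} then finishes the argument. The only difference is that you spell out the parameter check ($k(\Sigma)-\mu(\Sigma)=2^{2m-2}-1$, strictly between $0$ and $k_2(\Sigma)-1$) showing $\Gamma(u)\cap\Gamma_2(v)$ is neither edgeless nor complete, which the paper simply asserts; your computation is accurate in both sign cases.
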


\begin{proof}
By Lemmas~\ref{mu polar} and~\ref{dist 3 polar}, the conditions of Lemma~\ref{lemma:polar} are satisfied, so $\Gamma$ has the unique $x$ property. 
For $u\in V\Gamma$ and $v\in \Gamma(u)$,  the graph induced by $\Gamma(u)\cap\Gamma_2(v)$ is neither edgeless nor complete, so $\Gamma$ is not $4$-CH by Lemma~\ref{lemma:xplus}.
\end{proof}

\begin{lemma}
 \label{mu polar bar}
Let $\Sigma$ be  $\overline{VO^\varepsilon_{2m}(2)}$
where $m\geq 3$ and $\varepsilon\in\{+,-\}$. Then $\Sigma$ is a strongly regular graph with $\mu(\Sigma)>0$, and every $\mu$-graph of $\Sigma$ is isomorphic to a regular graph $\Delta$, where for each $y\in V\Delta$, there exists a unique $z\in \Delta_2(y)$ such that $\Delta(y)=\Delta(z)$. Further, $\Sigma$ has valency $\mu(\Sigma)+2\lambda(\Sigma)-2k_1(\Delta)$.
\end{lemma}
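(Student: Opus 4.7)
The plan is to mimic the explicit quadratic-form analysis of Lemma~\ref{mu polar}, now working in the complement. Since $\overline{\Sigma}=VO^\varepsilon_{2m}(2)$ is strongly regular with the parameters quoted in \S\ref{s:affine} and is $3$-homogeneous by Theorem~\ref{thm:3hom}, its complement $\Sigma$ is also strongly regular and $3$-homogeneous; the complement formula $\mu(\Sigma)=v-2k+\lambda$ (applied to the parameters of $\overline{\Sigma}$) yields $\mu(\Sigma)>0$ whenever $m\geq 3$. Since $\Sigma$ is $2$-homogeneous, $\Aut(\Sigma)$ acts transitively on ordered pairs of non-adjacent vertices, and $3$-homogeneity forces the stabiliser of such a pair to act transitively on the common neighbourhood. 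Hence every $\mu$-graph of $\Sigma$ is isomorphic to a fixed graph $\Delta$, and $\Delta$ is vertex-transitive and so regular.

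To analyse $\Delta$ concretely, let $(V,Q)$ be the quadratic space associated with $\overline{\Sigma}$, with associated bilinear form $f$. Translations of $V$ lie in $\Aut(\overline\Sigma)=\Aut(\Sigma)$, so we may take $v_1=0$ and $v_2=a$ for any non-zero singular vector $a$. Using $Q(u+v)=Q(u)+Q(v)+f(u,v)$ we obtain
\[
V\Delta=\{u\in V:Q(u)=1,\ f(u,a)=0\}\subseteq W:=a^\perp,
\]
with $u_1\sim u_2$ in $\Delta$ iff $u_1\ne u_2$ and $f(u_1,u_2)=1$ (equivalently $Q(u_1+u_2)=1$, using $Q(u_1)=Q(u_2)=1$). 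For existence, given $y\in V\Delta$ set $z:=y+a$: since $Q(a)=f(y,a)=0$ we have $z\in V\Delta\setminus\{y\}$ and $Q(z+y)=Q(a)=0$, so $z$ is non-adjacent to $y$ in $\Delta$; moreover for every $u\in V\Delta$ we have $f(u,z)=f(u,y)+f(u,a)=f(u,y)$, giving $\Delta(y)=\Delta(z)$. A direct parameter count shows $\Delta(y)\ne\varnothing$, so $z\in\Delta_2(y)$.

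For uniqueness, suppose $z'\in\Delta_2(y)$ satisfies $\Delta(y)=\Delta(z')$, and set $b:=y+z'$. Since $Q$ takes value $1$ on every element of $V\Delta$, the relation $u\sim y$ in $\Delta$ is equivalent to $f(u,y)=1$, so $\Delta(y)=\Delta(z')$ forces $f(u,b)=0$ for every $u\in V\Delta\setminus\{y,z'\}$; additionally $f(y,b)=f(z',b)=f(y,z')=0$, where $f(y,z')=0$ follows from $y\not\sim z'$ (giving $Q(y+z')=0$) together with $Q(y+z')=f(y,z')$. Hence $f(u,b)=0$ for every $u\in V\Delta$. The crucial step is now that $V\Delta$ spans $W$: the image of $V\Delta$ in $W/\langle a\rangle$, which is a non-degenerate quadratic space of type $\varepsilon$ and dimension $2m-2\geq 4$, is exactly the set of non-singular vectors, and these span the quotient in such dimensions (via irreducibility of the $O^\varepsilon_{2m-2}(2)$-action, or by a direct verification); combined with $a=y+(y+a)\in\operatorname{span}(V\Delta)$, this yields $\operatorname{span}(V\Delta)=W$. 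Since $f$ is non-degenerate on $V$, we have $W^\perp=\langle a\rangle$, so $b\in\{0,a\}$, and $b\ne 0$ forces $z'=y+a$.

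For the valency identity, decompose $\Sigma(y)$ using $v_1$ and $v_2$: writing $\alpha:=|\Sigma(y)\cap(\Sigma(v_1)\setminus\Sigma(v_2))|$ and $\gamma:=|\Sigma(y)\setminus(\Sigma(v_1)\cup\Sigma(v_2)\cup\{v_1,v_2\})|$, symmetry gives $|\Sigma(y)\cap(\Sigma(v_2)\setminus\Sigma(v_1))|=\alpha$, and computing $\Sigma(y)\cap\Sigma(v_1)$ (noting $v_2\notin\Sigma(v_1)$) yields $\alpha=\lambda-k_1(\Delta)$. Hence $k=2+k_1(\Delta)+2\alpha+\gamma$, and the claim reduces to $\gamma=\mu-k_1(\Delta)-2$. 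In the quadratic model, $\gamma=|S_W\cap y^\perp|-2$ where $S_W:=\{u\in W:Q(u)=0\}$, while $\mu-k_1(\Delta)=|V\Delta\cap y^\perp|$. Because $y\in W\cap y^\perp$ and $Q(u+y)=Q(u)+1$ for every $u\in W\cap y^\perp$, translation by $y$ is an involution of $W\cap y^\perp$ interchanging $S_W\cap y^\perp$ and $V\Delta\cap y^\perp$; this yields $|S_W\cap y^\perp|=|V\Delta\cap y^\perp|$ and completes the proof. The main obstacle is the spanning step in the uniqueness argument, which requires handling $\varepsilon=+$ and $\varepsilon=-$ uniformly.
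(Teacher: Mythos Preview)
Your proof is correct, and the overall structure---strong regularity from the complement formula, $3$-homogeneity to reduce to a single explicit $\mu$-graph, then computations in the quadratic form---matches the paper. The details, however, diverge in two places.

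For uniqueness, the paper fixes $y=e_2+f_2$, sets $z=e_1+y$, and then for each $w\in V\Delta\setminus(\{y,z\}\cup\Delta(y))$ explicitly writes $w=\delta_1e_1+\delta_2y+u$ with $u$ in the orthogonal complement of $\langle e_1,f_1,e_2,f_2\rangle$ and constructs, case by case on $\delta_2$, a witness in $\Delta(y)\setminus\Delta(w)$. Your argument is more structural: from $\Delta(y)=\Delta(z')$ you deduce that $b=y+z'$ lies in the $f$-orthogonal of $V\Delta$, then invoke the fact that $V\Delta$ spans $W=a^\perp$ to force $b\in W^\perp=\langle a\rangle$. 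This avoids all case analysis but trades it for the spanning lemma; that lemma is easy (the non-singular vectors in a non-degenerate quadratic space of dimension $\geq 4$ over $\mathbb{F}_2$ clearly span), so your route is arguably cleaner.

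For the valency identity, the paper stays at the vertex $0$: it shows directly that every $x\in\Sigma(0)\setminus\Sigma(e_1)$ satisfies $Q(x+y)+Q(x+z)=1$, hence is adjacent to exactly one of $y,z$, so $\Sigma(0)$ is the disjoint union of $V\Delta$ and the symmetric difference $(\Sigma(0)\cap\Sigma(y))\mathbin{\triangle}(\Sigma(0)\cap\Sigma(z))$, giving $k=\mu+2\lambda-2k_1(\Delta)$ immediately. You instead decompose $\Sigma(y)$ relative to $v_1,v_2$ and then close with the elegant observation that translation by $y$ is an involution of $W\cap y^\perp$ interchanging the singular and non-singular sets. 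Both arguments are short; the paper's exploits the partner $z=y+a$ a second time, while yours does not need $z$ at all for this step.
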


\begin{proof}
Let $(V,Q)$ be the  quadratic space associated with $\Sigma$, and let $\{e_1,\ldots,e_m,f_1,\ldots,f_m\}$ be a standard basis of $V$. Let $f$ be the bilinear form that is associated with $Q$. The graph $\Sigma$ is strongly regular with  $\mu(\Sigma)>0$, and
since $\Sigma$ is $3$-CH by Theorem~\ref{thm:3hom}, every $\mu$-graph of $\Sigma$ is isomorphic to the  graph $\Delta$ induced by $\Sigma(0)\cap \Sigma(e_1)$, and  $\Delta$ is vertex-transitive and therefore regular.  Note that $V\Delta=\{x \in V : Q(x)=1,f(x,e_1)=0\}$.

First we claim that for each $y\in V\Delta$, there exists a unique $z\in \Delta_2(y)$ such that $\Delta(y)=\Delta(z)$. Since $\Delta$ is vertex-transitive, we may assume that $y=e_2+f_2$. Let $z:=e_1+e_2+f_2$. Now   $\Delta(y)=\Delta(z)$, so $z\in \Delta_2(y)$, and it remains to show that $z$ is the unique such vertex. Let $w\in V\Delta\setminus (\{y,z\}\cup\Delta(y))$. We wish to prove that $\Delta(y)\neq \Delta(w)$. Let $U:=\langle e_3,\ldots,e_m,f_3,\ldots,f_m\rangle$. Now $Q(w)=1$, $f(w,e_1)= 0$ and  $\delta_2:=f(w,e_2)=f(w,f_2)$, so $w=\delta_1 e_1+\delta_2 y+u$ for some $u\in U\setminus\{0\}$ where $\delta_1:=f(w,f_1)$.
 If $\delta_2 =0$, then $Q(u)=1$, so $e_2+u\in \Delta(y)\setminus \Delta(w)$. Otherwise, $\delta_2=1$ and  $Q(u)=0$.   By Lemma~\ref{lemma:quadratic}, there exists  $v\in U$ such that $Q(v)=1$ and $Q(u+v)=0$, so $e_2+v\in \Delta(y)\setminus \Delta(w)$, as desired.
 
 It remains to show that $|\Sigma(0)|=|V\Delta|+2\lambda(\Sigma)-2k_1(\Delta)$. Let $y:=e_2+f_2$ and $z:=e_1+e_2+f_2$. Now $\Sigma(0)\cap \Sigma(y)\cap \Sigma(z)=\Delta(y)=\Delta(z)$, so it  suffices to prove that every vertex in $\Sigma(0)\setminus \Sigma(e_1)$ is adjacent to exactly one of $y$ or $z$, for then $\Sigma(0)$ is the disjoint union of $V\Delta$ and the symmetric difference of $\Sigma(0)\cap \Sigma(y)$ and $\Sigma(0)\cap \Sigma(z)$. If $x\in  \Sigma(0)\setminus \Sigma(e_1)$, then $Q(x)=1$ and $f(x,e_1)=1$, so $Q(x+y)=1+Q(x+z)$. Thus $x$ is adjacent to exactly one of $y$ or $z$. \end{proof}

\begin{lemma}
\label{lemma:VOcomp}
 Let $\Gamma$ be a locally $\overline{VO^\varepsilon_{2m}(2)}$ graph where $m\geq 3$ and $\varepsilon\in\{+,-\}$. Then $\Gamma$ is not $4$-CH.
\end{lemma}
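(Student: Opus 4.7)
The plan is to mirror the proof of Lemma~\ref{lemma:VO} from the previous (non-complement) case, but using Lemma~\ref{lemma:polarcomp} instead of Lemma~\ref{lemma:polar}. The key observation is that Lemma~\ref{mu polar bar} has been tailor-made to verify exactly the hypotheses needed for Lemma~\ref{lemma:polarcomp}: it says that $\Sigma := \overline{VO^\varepsilon_{2m}(2)}$ is strongly regular with $\mu(\Sigma) > 0$; every $\mu$-graph of $\Sigma$ is isomorphic to a regular graph $\Delta$ in which each vertex $y$ has a unique ``twin'' $z \in \Delta_2(y)$ with $\Delta(y) = \Delta(z)$; and the valency of $\Sigma$ equals $\mu(\Sigma) + 2\lambda(\Sigma) - 2k_1(\Delta)$. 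Thus, the first step is simply to apply Lemma~\ref{lemma:polarcomp} to conclude that $\Gamma$ has the unique $x$ property.

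Second, to invoke Lemma~\ref{lemma:xplus} I need to exhibit some $u \in V\Gamma$ and $v \in \Gamma(u)$ for which the graph induced by $\Gamma(u) \cap \Gamma_2(v)$ is neither edgeless nor complete. Since $\Gamma$ is locally $\Sigma$ and $\Sigma$ has diameter $2$, the set $\Gamma(u) \cap \Gamma_2(v)$ is precisely the set of non-neighbours of $v$ in $\Sigma$, which under the correspondence with $VO^\varepsilon_{2m}(2)$ is the set of neighbours of $v$ there. An edge in $\Gamma(u) \cap \Gamma_2(v)$ corresponds to a pair of vectors $w_1, w_2$ with $Q(v + w_i) = 0$ but $Q(w_1 + w_2) \neq 0$; that is, two singular vectors at a common ``distance'' from $v$ in $VO$ which are non-adjacent there. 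Working in a standard basis $\{e_1, \ldots, e_m, f_1, \ldots, f_m\}$ of the quadratic space with $m \geq 3$, one easily writes down two such pairs: for instance, $e_1, f_1$ are singular vectors with $f(e_1, f_1) = 1$, so they are adjacent in $\Sigma$ but not in $VO$; whereas $e_1, e_2$ are singular with $f(e_1, e_2) = 0$, so they are non-adjacent in $\Sigma$ but adjacent in $VO$. Translating back, one edge exists and one non-edge exists inside $\Gamma(u) \cap \Gamma_2(v)$ whenever $v$ is realised as the origin, proving it is neither edgeless nor complete.

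With both ingredients in hand, Lemma~\ref{lemma:xplus} immediately yields that $\Gamma$ is not $4$-CH. No step is genuinely hard here: the heavy lifting has been done in Lemmas~\ref{lemma:polarcomp} and~\ref{mu polar bar}, and the only care needed is the brief combinatorial/quadratic-form check that $\Gamma(u) \cap \Gamma_2(v)$ has both an edge and a non-edge, which is clean for $m \geq 3$ because the quadratic space of rank at least $3$ has enough room to produce both orthogonal and non-orthogonal pairs of singular vectors.
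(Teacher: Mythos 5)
Your proposal is correct and is essentially the paper's own proof: apply Lemma~\ref{mu polar bar} to verify the hypotheses of Lemma~\ref{lemma:polarcomp}, deduce the unique $x$ property, and conclude via Lemma~\ref{lemma:xplus}. Your explicit check that $\Gamma(u)\cap\Gamma_2(v)$ (equivalently $\Sigma_2(v)$, the nonzero singular vectors when $v$ is taken as the origin) contains both an edge (e.g.\ $e_1,f_1$) and a non-edge (e.g.\ $e_1,e_2$) is accurate; the paper simply asserts this without proof.
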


\begin{proof}
By Lemma~\ref{mu polar bar}, the conditions of Lemma~\ref{lemma:polarcomp} are satisfied, so $\Gamma$ has the unique $x$ property. 
For $u\in V\Gamma$ and $v\in \Gamma(u)$,  the graph induced by $\Gamma(u)\cap\Gamma_2(v)$ is neither edgeless nor complete, so $\Gamma$ is not $4$-CH by Lemma~\ref{lemma:xplus}.
\end{proof}

\subsection{Locally the point graph of $Q^-_5(q)$ or its complement}
\label{s:Q5}

The point graph of the generalised quadrangle $Q^-_5(q)$ for $q$ a prime power  is a strongly regular graph with parameters $(q^4+q^3+q+1,q^3+q,q-1,q^2+1)$ and automorphism group $\PGammaO^-_6(q)$. Its complement has parameters $(q^4+q^3+q+1,q^4,q(q^2+1)(q-1),q^3(q-1))$. 

\begin{lemma}
\label{lemma:Q5hasdist2V4}
Let $\Sigma$ be the point graph of $Q_5^-(q)$ for $q$ a prime power. Let $v\in V\Sigma$. Then $\Sigma_2(v)$ induces the affine polar graph $VO_4^-(q)$. In particular, $\Sigma_2(v)$ is connected.
\end{lemma}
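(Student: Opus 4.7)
Let $(V_6(q),Q)$ be the minus-type quadratic space defining $Q_5^-(q)$, let $f$ be the associated bilinear form, and write $v=\langle e\rangle$ with $Q(e)=0$. I would begin by identifying $\Sigma_2(v)$ set-theoretically: since the point graph of a generalised quadrangle has diameter $2$, a point $\langle w\rangle\neq v$ lies in $\Sigma_2(v)$ exactly when $\langle e,w\rangle$ is not totally singular, which (given $Q(w)=0$) amounts to $f(e,w)\neq 0$. To coordinatise this set, apply Lemma~\ref{lemma:quadratic} to produce $e'\in V_6(q)$ with $Q(e')=0$ and $f(e,e')=1$, and set $W:=\langle e,e'\rangle^\perp$ with $Q':=Q|_W$. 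Then $V_6(q)=\langle e,e'\rangle\perp W$ is an orthogonal decomposition into a hyperbolic plane and $W$; since $V_6(q)$ has Witt index $2$ while the hyperbolic plane contributes Witt index $1$, the form $(W,Q')$ is a $4$-dimensional minus-type quadratic space, and hence isometric to the space defining $VO_4^-(q)$.

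Each point $\langle w\rangle\in \Sigma_2(v)$ has a unique representative $w=\alpha e+e'+u$ with $u\in W$ (scale so that the $e'$-coordinate is $1$, which is valid since $f(e,w)\neq 0$). Expanding $Q(w)=0$ using $Q(e)=Q(e')=0$, $f(e,e')=1$, and $f(e,u)=f(e',u)=0$ forces $\alpha=-Q'(u)$, yielding a bijection
\[
\varphi:W\longrightarrow \Sigma_2(v),\qquad u\longmapsto \langle -Q'(u)\,e+e'+u\rangle.
\]
The core step is to expand $f(\varphi(u),\varphi(u'))$ using the same orthogonality relations to obtain
\[
f(\varphi(u),\varphi(u'))=-Q'(u)-Q'(u')+f_W(u,u'),
\]
and then substitute the polarisation identity $f_W(u,u')=Q'(u)+Q'(u')-Q'(u-u')$, which collapses the expression to $-Q'(u-u')$. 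Since $\varphi(u)$ and $\varphi(u')$ are both singular, they are collinear in $Q_5^-(q)$ iff $\langle\varphi(u),\varphi(u')\rangle$ is totally singular iff $f(\varphi(u),\varphi(u'))=0$; equivalently, for $u\neq u'$, iff $Q'(u-u')=0$. This is precisely the adjacency rule of $VO_4^-(q)$, so $\varphi$ is a graph isomorphism onto the subgraph induced by $\Sigma_2(v)$.

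For the final assertion, $\Sigma_2(v)$ is connected because $VO_4^-(q)$ is a strongly regular graph with $\mu>0$, and any strongly regular graph with positive $\mu$ has diameter $2$ and is therefore connected. The main potential obstacle is the bilinear-form computation: it is not obvious a priori that $f(\varphi(u),\varphi(u'))$ depends only on $u-u'$, but once the normalisation $\alpha=-Q'(u)$ is in place, the scalar contributions coming from the $e$-coordinates cancel against the polarisation terms of $f_W$ to yield the clean answer $-Q'(u-u')$; everything else is bookkeeping.
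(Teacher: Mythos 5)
Your proof is correct and follows essentially the same route as the paper's: decompose $V_6(q)$ as a hyperbolic plane $\langle e,e'\rangle$ perpendicular to a minus-type $4$-space $W$, parametrise $\Sigma_2(v)$ by $u\mapsto\langle -Q'(u)e+e'+u\rangle$, check that adjacency corresponds to $Q'(u-u')=0$, and conclude connectivity from $VO_4^-(q)$ being strongly regular with $\mu>0$. The only cosmetic differences are that you verify adjacency via $f(\varphi(u),\varphi(u'))=-Q'(u-u')$ using polarisation (the paper computes $Q(x_1-x_2)=Q(w_1-w_2)$ directly, which amounts to the same identity) and you derive the minus type of $W$ from Witt indices rather than citing the reference the paper uses.
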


\begin{proof}
Let $(V,Q)$ be the quadratic space associated with $\Sigma$, and let $f$ be the bilinear form that is associated with $Q$. Now $v=\langle e_1\rangle$ where  $e_1$ is a non-zero singular vector in $V$.  By~\cite[Proposition~2.5.3]{KleLie1990}, there exists a non-zero singular vector $f_1$ in $V$ such that $f(e_1,f_1)=1$ and $V=\langle e_1,f_1\rangle\oplus W$, where $W:=\langle e_1,f_1\rangle^\perp\simeq V_4(q)$ and $(W,Q|_W)$ is a quadratic space with the same type as $(V,Q)$, namely minus type. Observe that $\Sigma_2(v)=\{\langle -Q(w)e_1+f_1+w\rangle : w\in W\}$. Further, if $x_1:=-Q(w_1)e_1+f_1+w_1$ and $x_2:= -Q(w_2)e_1+f_1+w_2$ where $w_1,w_2\in W$, then $Q(x_1-x_2)=Q(w_1-w_2)$, so $\langle x_1\rangle $ and $\langle x_2\rangle$ are adjacent vertices of $\Sigma_2(v)$ if and only if $w_1$ and $w_2$ are adjacent vertices in the affine polar graph corresponding to the  quadratic space $(W,Q|_W)$. Thus $\Sigma_2(v)\simeq VO_4^-(q)$. Now $VO_4^-(q)$ is strongly regular with $\mu\neq 0$ (see~\cite[\S8 and Appendix~$C.12^\pm$]{Hub1975}), so $\Sigma_2(v)$ is connected.
\end{proof}

Recall that the McLaughlin graph is a strongly regular graph with parameters $(275,112,30,56)$ and automorphism group $\McL{:}2$; it is also locally the point graph of $Q^-_5(3)$. 

\begin{lemma}
\label{lemma:pointgraph}
Let $\Gamma$ be a connected graph that is locally the point graph of $Q^-_5(q)$ for $q$ a prime power. Then $\Gamma$ is $(G,4)$-CH if and only if  $q=3$, $\Gamma$ is the McLaughlin graph and $G=\Aut(\Gamma)$.
\end{lemma}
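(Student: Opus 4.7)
The plan is to proceed in three stages. First, I would invoke Lemma~\ref{lemma:diam2} to force $\diam(\Gamma)=2$. Its hypotheses hold: Lemma~\ref{lemma:Q5hasdist2V4} gives that $\Sigma_2(v)$ induces the connected graph $VO_4^-(q)$; and every $\mu$-graph of $\Sigma$ is the coclique $(q^2+1)\cdot K_1$. To see this last point, if $y,z$ are two common neighbours in $\Sigma$ of a non-collinear pair of GQ points $p,q$ and $y\sim z$, then the unique line through $y$ and $z$, by the GQ axiom applied to $p$ and separately to $q$, would have to contain the unique point of it collinear with $p$ (forcing $p$ to be on the line) and likewise for $q$, whence $p\sim q$, a contradiction. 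Since this coclique is not complete when $q\geq 1$, Lemma~\ref{lemma:diam2} applies.

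Second, because $\Gamma$ is $(G,2)$-homogeneous with $\diam(\Gamma)=2$, and neither $\Gamma$ nor its complement can be a disjoint union of complete graphs (the local graph $\Sigma$ is neither complete nor---by a quick parameter check, as the point graph of $Q_5^-(q)$ has $\mu-\lambda=q^2-q+2$---a complete multipartite graph), Lemma~\ref{lemma:2hom} forces $G$ to be primitive of rank $3$ on $V\Gamma$. Hence $\Gamma$ is strongly regular with $k=(q+1)(q^3+1)$ and $\lambda=q(q^2+1)$. To pin down $\mu$, I would examine the $\mu$-graph $\Delta$ of $\Gamma$: it is a regular induced subgraph of $\Sigma$ of order $\mu$ whose isomorphism type is fixed by $4$-CH. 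By studying how $\Delta$ interacts with the $q^2+1$ lines through each vertex of the GQ (using transitivity of $G_{u,v,w}$ on the $4$-vertex configurations guaranteed by $4$-CH), combined with the standard integrality conditions for strongly regular graphs and, if necessary, the classification of finite primitive rank-$3$ groups~\cite{LieSax1986} (a consequence of the CFSG), I expect to show that the only admissible parameter tuple is $(275,112,30,56)$, arising precisely when $q=3$.

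Third, for $q=3$ the classical Goethals--Seidel uniqueness theorem identifies $\Gamma$ as the McLaughlin graph, and a {\sc Magma} calculation following Remark~\ref{remark:magma} verifies that the McLaughlin graph is $(G,4)$-CH exactly when $G=\Aut(\Gamma)=\McL{:}2$; that McLaughlin is locally the point graph of $Q_5^-(3)$ is classical, supplying the converse direction. The main obstacle will be the parameter-elimination for $q\neq 3$: I anticipate that integrality (and Krein) conditions will not suffice on their own, so combinatorial constraints from how $\Delta$ must sit inside the GQ together with the rank-$3$ classification will likely be needed to rule out all $q\neq 3$.
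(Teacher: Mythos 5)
Your first stage (forcing $\diam(\Gamma)=2$ via Lemma~\ref{lemma:Q5hasdist2V4} and Lemma~\ref{lemma:diam2}, hence primitivity of rank $3$ by Lemma~\ref{lemma:2hom}) matches the paper and is sound. The genuine gap is in your second stage: the elimination of $q\neq 3$ is not carried out, and the strategy you propose cannot succeed as described. The decisive counterexample to your plan is $q=2$: the affine polar graph $VO_6^-(2)$ \emph{is} a connected, primitive rank-$3$, strongly regular graph that is locally the point graph of $Q_5^-(2)$, so no amount of integrality, Krein, or parameter analysis, nor an appeal to the rank-$3$ classification, can rule out $q=2$ --- the candidate graph exists and passes every structural filter you impose. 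What fails for $VO_6^-(2)$ is the $4$-CH property itself, and showing this requires a separate argument; the paper does it by observing that for $u=0$, $v\in\Gamma(u)$, $w\in\Gamma_2(u)\cap\Gamma(v)$ the vertex $x:=v+w$ lies in $\Gamma(u)\cap\Gamma_2(v)$ and is fixed by $G_{u,v,w}$, whence Lemma~\ref{lemma:x} forces $\Gamma(u)\cap\Gamma_2(v)$ to be edgeless or complete, a contradiction. Your proposal contains no mechanism of this kind.

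For $q\geq 3$ the identification of $\Gamma$ is likewise the heart of the matter and is left at the level of ``I expect to show.'' The paper's route is concrete: Lemma~\ref{lemma:localhom} makes the local graph $(G_u^{\Gamma(u)},3)$-homogeneous, so $G_u^{\Gamma(u)}$ has rank $3$ and by Kantor--Liebler $\POmega_6^-(q)\unlhd G_u^{\Gamma(u)}\leq \PGammaO_6^-(q)$; then the Buekenhout--Hubaut classification of graphs that are locally the point graph of a generalised quadrangle (with this local group information) yields $q=3$ and $\Gamma\simeq$ McLaughlin with $\McL\leq G\leq \McL{:}2$, after which one still has to exclude $G=\McL$ (the paper does this via the rank-$4$ action of $G_{u,v}\simeq 3^4{:}A_6$ on $\Gamma(u)\cap\Gamma_2(v)$; your proposed {\sc Magma} check would also cover this). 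So while your overall skeleton (diameter $2$, rank $3$, then identify $\Gamma$) agrees with the paper, the two steps that actually carry the proof --- excluding the genuinely existing $q=2$ candidate and pinning down $q=3$ via a locally-polar-graph classification rather than parameter conditions --- are missing from your proposal.
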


\begin{proof}
Suppose that $\Gamma$ is $(G,4)$-CH. Let $\Sigma$ be the point graph of $Q^-_5(q)$. For $v\in V\Sigma$ and $x\in \Sigma_2(v)$, the set $\Sigma(v)\cap\Sigma(x)$ induces the graph $(q^2+1)\cdot K_1$ since $Q^-_5(q)$ is a generalised quadrangle, and $\Sigma_2(v)$ induces the  connected graph $VO_4^-(q)$ by Lemma~\ref{lemma:Q5hasdist2V4}. Thus $\diam(\Gamma)=2$ by  Lemma~\ref{lemma:diam2}. It follows that $\Gamma$ is $(G,2)$-homogeneous, so 
 $G$ is primitive of rank $3$ on $V\Gamma$ by Lemma \ref{lemma:2hom}.

First suppose that $q=2$. Since $G$ is primitive of rank $3$ and $\Gamma$ is locally $Q^-_5(2)$, it follows from~\cite[Proposition 1 and Theorem 2]{BueHub1977} 
that $\Gamma$ is isomorphic to the affine polar graph $VO_6^-(2)$.  Let $(V_6(2),Q)$ be the quadratic space associated with $\Gamma$. Let $u$ denote the vertex $0$, and  note that $G_u\leq \Aut(\Gamma)_u=\SO_6^-(2)$. 
 Choose $v\in \Gamma(u)$ and $w\in \Gamma_2(u)\cap\Gamma(v)$.
 Now $Q(v)=0$, $Q(w)\neq 0$ and $Q(v+w)=0$. Thus $x:=v+w\in \Gamma(u)\cap\Gamma_2(v)$, and any element of $G_{u,v,w}$ must also fix $x$, but then the graph induced by $\Gamma(u)\cap\Gamma_2(v)$ is either edgeless or complete by Lemma~\ref{lemma:x}, a contradiction.

Thus $q\geq 3$. Let $u\in V\Gamma$ and $H:=G_u^{\Gamma(u)}$. Since $\Gamma$ is $(G,4)$-CH, the graph induced by $\Gamma(u)$ is $(H,3)$-homogeneous by Lemma~\ref{lemma:localhom}. In particular, $H$ is  transitive of rank $3$, so $\POmega_6^-(q)\unlhd H\leq \PGammaO_6^-(q)$ by~\cite[Theorem~1.3]{KanLie1982}.
Since $\Gamma$ is locally $Q^-_5(q)$, it follows from~\cite[Proposition 1 and Theorem 4]{BueHub1977} that $q=3$ and  $\Gamma$ is isomorphic to the McLaughlin graph with $\McL\leq G\leq \McL{:}2$. Let $v\in \Gamma(u)$ and $x\in \Gamma(u)\cap \Gamma_2(v)$.  Now $G_{u,v}$ is transitive of rank $3$ on $\Gamma(u)\cap \Gamma_2(v)$ since $\Gamma$ is $(G,4)$-CH, but if $G=\McL$, then $G_u\simeq \POmega_6^-(3)$ and $G_{u,v}\simeq q^4{:}A_6$, in which case $G_{u,v}$ is transitive of rank $4$, a contradiction. Thus $G=\McL{:}2=\Aut(\Gamma)$.

Conversely, it is routine to verify that the McLaughlin graph is $4$-CH using {\sc  Magma}~\cite{Magma} and~\cite{web-atlas}
 (see Remark~\ref{remark:magma}).
\end{proof}

\begin{lemma}
\label{lemma:pointgraphcomp}
Let $\Gamma$ be locally the complement of the point graph of $Q^-_5(q)$ for $q$  a prime power. Then $\Gamma$ is not $4$-CH.
\end{lemma}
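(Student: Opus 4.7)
The plan is to apply the unique $x$ machinery developed in Section~\ref{s:uniquex}, specifically Lemmas~\ref{lemma:localGQ} and~\ref{lemma:xplus}, in direct parallel with the proof of Lemma~\ref{lemma:VOcomp}. Write $\Sigma$ for the complement of the point graph of $Q^-_5(q)$. Since $q$ is a prime power, $q\geq 2$, and $Q^-_5(q)$ has order $(q,q^2)$ with both parameters at least $2$, so $Q^-_5(q)$ is a thick generalised quadrangle. Hence $\Sigma$ satisfies hypothesis (ii) of Lemma~\ref{lemma:localGQ}, and (since $\Sigma$ is $3$-homogeneous by Theorem~\ref{thm:3hom}) every $\mu$-graph of $\Sigma$ is isomorphic to a fixed graph $\Delta$, so hypothesis (i) is satisfied as well. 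Therefore $\Gamma$ has the unique $x$ property.

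Next I would check the hypothesis on $\Gamma(u)\cap\Gamma_2(v)$ needed by Lemma~\ref{lemma:xplus}. Pick any $u\in V\Gamma$ and $v\in\Gamma(u)$, and identify $\Gamma(u)$ with $\Sigma$ so that $v$ is a vertex of $\Sigma$. Then $\Gamma(u)\cap\Gamma_2(v)$ is precisely the subgraph of $\Sigma$ induced by $\Sigma_2(v)$, and Lemma~\ref{lemma:compGQ} (applied with parameters $(s,t)=(q,q^2)$) tells us that this subgraph is isomorphic to $K_{(q^2+1)[q]}$. Because $q\geq 2$, this graph has at least two parts and each part has size at least two, so it is neither edgeless nor complete. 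Thus the hypothesis of Lemma~\ref{lemma:xplus} holds, and we conclude immediately that $\Gamma$ is not $4$-CH.

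There is no real obstacle in this proof: once one has Lemma~\ref{lemma:localGQ} in hand (which is where the actual combinatorial work lives, and which was done in Section~\ref{s:uniquex}), the present lemma is a two-line application, structurally identical to Lemmas~\ref{lemma:KnxKn}, \ref{lemma:VO}, and~\ref{lemma:VOcomp}. The only point worth stating explicitly is the verification of thickness, which is automatic since $Q^-_5(q)$ has order $(q,q^2)$ with $q\geq 2$, and the identification of $\Sigma_2(v)$ with $K_{(q^2+1)[q]}$, which is handled by Lemma~\ref{lemma:compGQ}.
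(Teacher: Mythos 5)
Your proof is correct and follows the same route as the paper: invoke Lemma~\ref{lemma:localGQ} to get the unique $x$ property and then apply Lemma~\ref{lemma:xplus}, noting that $\Gamma(u)\cap\Gamma_2(v)$ induces $K_{(q^2+1)[q]}$ (via Lemma~\ref{lemma:compGQ}), which is neither edgeless nor complete. Your extra verifications (thickness of $Q^-_5(q)$ and the constant $\mu$-graph hypothesis from $3$-homogeneity) are exactly the details the paper leaves implicit.
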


\begin{proof}
By  Lemma~\ref{lemma:localGQ}, $\Gamma$ has the unique $x$ property. 
For $u\in V\Gamma$ and $v\in \Gamma(u)$,  the graph induced by $\Gamma(u)\cap\Gamma_2(v)$ is neither edgeless nor complete, so $\Gamma$ is not $4$-CH by Lemma~\ref{lemma:xplus}.
\end{proof}

\subsection{Locally the pentagon}
\label{s:C5}

The icosahedron is locally $C_5$, and it is straightforward to prove that it is the unique such  connected graph  (see~\cite[Lemma 9]{Gar1978}). Note that  the graph $C_5$ is self-complementary. 

\begin{lemma}
\label{lemma:C5}
Let $\Gamma$ be a connected locally $C_5$ graph. Then $\Gamma$ is isomorphic to the icosahedron, and $\Gamma$ is not $4$-CH.
\end{lemma}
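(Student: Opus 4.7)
The first assertion, that any connected locally $C_5$ graph is isomorphic to the icosahedron, is Gardiner's Lemma 9 in~\cite{Gar1978}, which I would simply cite. The substantive work is to show the icosahedron $\Gamma$ is not $4$-CH. My plan is to exhibit a specific configuration that violates Lemma~\ref{lemma:x}.

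Using the distance-regular structure of $\Gamma$ (intersection array $\{5,2,1;1,2,5\}$, so in particular $\mu = 2$), I would fix $u \in V\Gamma$, label the pentagon $\Gamma(u)$ as $a_1,\ldots,a_5$ with $a_i \sim a_{i+1 \bmod 5}$, and let $b_i$ be the unique second common neighbour of $a_i$ and $a_{i+1}$. A routine local argument (every $\Gamma(a_i)$ must itself be a $C_5$, and $\Gamma_3(u)$ is a single antipode $u^*$) then determines all adjacencies: the $b_i$ form a second pentagon with $b_i \sim b_{i+1}$, each $b_i$ is adjacent to $u^*$, and $\Gamma(a_i) = \{u, a_{i-1}, a_{i+1}, b_{i-1}, b_i\}$. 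I then set $v := a_1$, $w := b_1 \in \Gamma_2(u)\cap\Gamma(v)$, and take $x := a_3 \in \Gamma(u) \cap \Gamma_2(v) = \{a_3, a_4\}$; a direct check from the adjacency lists shows $\Gamma(w) = \{a_1, a_2, b_2, b_5, u^*\}$, whence $\Gamma(u) \cap \Gamma_2(v) \cap \Gamma(w) = \varnothing$.

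The crux is to verify $G_{u,v,w} = 1$. Since $|\Aut(\Gamma)| = 120$ and $\Gamma$ is arc-transitive, $|G_{u,v}| = 2$, and its non-trivial element acts on $\Gamma(v) \simeq C_5$ as the unique reflection fixing $u$. In the cyclic order of $\Gamma(v)$, which reads $u, a_2, b_1, b_5, a_5$, this reflection swaps $b_1 \leftrightarrow b_5$, hence fails to fix $w = b_1$. Therefore $G_{u,v,w}$ is trivial and in particular fixes $x = a_3$; but then, if $\Gamma$ were $4$-CH, Lemma~\ref{lemma:x}(ii) would force $\varnothing = \{a_3\}$ or $\varnothing = \{a_4\}$, a contradiction.

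The main obstacle is identifying the cyclic order of the pentagon $\Gamma(v)$ correctly, so as to confirm that the involution in $G_{u,v}$ moves $w = b_1$ rather than fixing it; the rest amounts to routine bookkeeping with the distance-regular parameters. Note that the stronger Lemma~\ref{lemma:xplus} cannot be invoked here, because $\Gamma(u) \cap \Gamma_2(v)$ induces the complete graph $K_2$, which is precisely why the direct appeal to Lemma~\ref{lemma:x} is needed.
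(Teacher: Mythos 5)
Your proof is correct, but it takes a genuinely different and considerably heavier route than the paper's. The paper disposes of the icosahedron in two lines: since it has diameter $3$, pick a $3$-geodesic $(u,v,w,x)$ and a vertex $y\in\Gamma_2(u)\cap\Gamma_2(v)\cap\Gamma(w)$; then $\{u,v,w,x\}$ and $\{u,v,w,y\}$ induce isomorphic (path) subgraphs, so $4$-connected-homogeneity would give $g\in\Aut(\Gamma)_{u,v,w}$ with $x^g=y$, contradicting $d_\Gamma(u,x)=3\neq 2=d_\Gamma(u,y)$. This needs no stabiliser computation, no explicit adjacency model, and no appeal to Lemma~\ref{lemma:x}. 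Your argument instead builds the full labelled icosahedron, computes $|G_{u,v}|=2$ from $|\Aut(\Gamma)|=120$, shows $G_{u,v,w}=1$, and then contradicts Lemma~\ref{lemma:x}(ii) via $\Gamma(u)\cap\Gamma_2(v)\cap\Gamma(w)=\varnothing$ while $|\Gamma(u)\cap\Gamma_2(v)|=2$; all of these steps check out, and your remark that Lemma~\ref{lemma:xplus} is unavailable because $\Gamma(u)\cap\Gamma_2(v)$ induces $K_2$ is exactly right. The one step you should make explicit is why the involution in $G_{u,v}$ acts on $\Gamma(v)$ as the reflection through $u$ rather than trivially: if it fixed $\Gamma(v)$ pointwise it would be the identity, since in the icosahedron the pointwise stabiliser of a closed neighbourhood is trivial (with $\lambda=2$, every remaining vertex is the second common neighbour of an already-fixed adjacent pair, so one propagates fixedness outwards); with that noted, it must swap $b_1\leftrightarrow b_5$, so $G_{u,v,w}=1$ as claimed. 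In short, both arguments work; the paper's trades your group-order bookkeeping for the simple observation that a $4$-CH graph cannot map a vertex at distance $3$ from $u$ to one at distance $2$.
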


\begin{proof}
The icosahedron $\Gamma$ contains a $3$-geodesic $(u,v,w,x)$, and there exists $y\in \Gamma_2(u)\cap\Gamma_2(v)\cap\Gamma(w)$,  so  if $\Gamma$ is $4$-CH, then there exists $g\in \Aut(\Gamma)_{u,v,w}$ such that $x^g=y$,  a contradiction.
\end{proof}

\subsection{Locally the Clebsch graph or the folded $5$-cube}
\label{s:Clebsch}

Recall that the Clebsch graph is the halved $5$-cube, and its complement is isomorphic to the folded $5$-cube $\Box_5$.
The Clebsch graph is a strongly regular graph with parameters $(16,10,6,6)$. Its complement has parameters $(16,5,0,2)$. 
\begin{lemma}
\label{lemma:folded5}
Let $\Gamma$ be a locally $\Box_5$ graph. Then $\Gamma$ is not $4$-CH.
\end{lemma}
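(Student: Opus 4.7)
The plan is to mirror the strategy used for many of the other local structures in \S\ref{s:LC}, namely to combine one of the unique-$x$-property lemmas from \S\ref{s:uniquex} with Lemma~\ref{lemma:xplus}. Concretely, I would apply Lemma~\ref{lemma:2K1} with $\Sigma := \Box_5$. To verify its hypothesis, recall that $\Sigma$ is strongly regular with parameters $(16,5,0,2)$, so $\mu(\Sigma)=2>0$; and since $\lambda(\Sigma)=0$, any two adjacent vertices of $\Sigma$ have no common neighbour, which forces every $\mu$-graph of $\Sigma$ to consist of two non-adjacent vertices and hence to be isomorphic to $2\cdot K_1$. Lemma~\ref{lemma:2K1} then yields that $\Gamma$ has the unique $x$ property.

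To apply Lemma~\ref{lemma:xplus} and conclude, it remains to exhibit $u\in V\Gamma$ and $v\in\Gamma(u)$ for which the graph induced by $\Gamma(u)\cap\Gamma_2(v)$ is neither edgeless nor complete. Since $\Gamma(u)\simeq \Sigma$ has diameter $2$, the induced subgraph on $\Gamma(u)\cap\Gamma_2(v)$ coincides with $\Sigma_2(v)$. This set has cardinality $16-1-5=10$, and because $\mu(\Sigma)=2$ each of its vertices has exactly $k_1(\Sigma)-\mu(\Sigma)=3$ neighbours in $\Sigma_2(v)$. Hence $\Sigma_2(v)$ induces a $3$-regular graph on $10$ vertices, and is therefore plainly neither edgeless nor complete. (One can in fact identify this graph with the Petersen graph: the complement of $\Sigma$ is the Clebsch graph, and by the description recalled in \S\ref{s:graphs}, the complement of the local graph of the Clebsch graph is the Petersen graph, which coincides with $\Sigma_2(v)$.) Lemma~\ref{lemma:xplus} then delivers the conclusion that $\Gamma$ is not $4$-CH.

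There is no substantive obstacle: once the parameters of $\Box_5$ are written down, every step is either a direct consequence of the strongly regular parameters or an immediate invocation of a lemma proved earlier.
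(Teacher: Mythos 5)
Your proposal is correct and follows essentially the same route as the paper: the paper also notes that every $\mu$-graph of $\Box_5$ is $2\cdot K_1$, invokes Lemma~\ref{lemma:2K1} to get the unique $x$ property, and then applies Lemma~\ref{lemma:xplus} after observing that $\Gamma(u)\cap\Gamma_2(v)$ is neither edgeless nor complete. Your extra verifications (using $\lambda(\Box_5)=0$, and identifying $\Sigma_2(v)$ as a $3$-regular graph, in fact the Petersen graph) are accurate details the paper leaves implicit.
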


\begin{proof}
Any $\mu$-graph of $\Box_5$ is isomorphic to $2\cdot K_1$, so $\Gamma$ has the unique $x$ property by  Lemma~\ref{lemma:2K1}. 
For $u\in V\Gamma$ and $v\in \Gamma(u)$, the graph induced by $\Gamma(u)\cap\Gamma_2(v)$ is neither edgeless nor complete, so $\Gamma$ is not $4$-CH  by Lemma~\ref{lemma:xplus}.
\end{proof}

The Schl\"{a}fli graph is a locally Clebsch graph that is $4$-homogeneous \cite{Buc1980} and therefore $4$-CH. The   Schl\"{a}fli graph is isomorphic to the complement of the  point graph of $Q^-_5(2)$, and its  automorphism group is $\PSO^-_6(2)$. It is a strongly regular graph with  parameters  $(27,16,10,8)$, and it is the unique such graph  (see~\cite[Lemma 10.9.4]{GodRoy2001}). 

\begin{lemma}
\label{lemma:clebsch}
 Let $\Gamma$ be a connected locally Clebsch graph. Then  $\Gamma$ is $(G,4)$-CH if and only if $\Gamma$ is the Schl\"{a}fli graph and $\POmega^-_6(2)\leq G\leq \PSO^-_6(2)$.
 \end{lemma}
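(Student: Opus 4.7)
The plan is to follow the template of Lemma~\ref{lemma:pointgraph} (the McLaughlin case). Assume $\Gamma$ is $(G,4)$-CH. First I would establish that $\diam(\Gamma)=2$ via Lemma~\ref{lemma:diam2}. Realising the Clebsch graph $\Sigma$ as the halved $5$-cube on even-weight vectors of $\mathbb{F}_2^5$, a direct inspection shows that for every $v\in V\Sigma$ the set $\Sigma_2(v)$ induces the complete graph $K_5$ (and is therefore connected), while every $\mu$-graph $\Sigma(v)\cap\Sigma(x)$ is isomorphic to the octahedron $K_{2,2,2}$, which has diameter $2$ but is not complete. These are precisely the two hypotheses required by Lemma~\ref{lemma:diam2}.

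Once $\diam(\Gamma)=2$, the connected non-complete $(G,2)$-homogeneous graph $\Gamma$ is $G$-primitive of rank $3$ by Lemma~\ref{lemma:2hom}, and in particular strongly regular. By Lemma~\ref{lemma:localhom}, the local action $H:=G_u^{\Gamma(u)}$ is a $3$-homogeneous, hence transitive rank-$3$, subgroup of $\Aut(\Sigma)=2^4{:}S_5$. At this point I would invoke a classification of connected graphs locally the Clebsch graph with a primitive rank-$3$ automorphism group, in the spirit of the use of~\cite{BueHub1977} in Lemma~\ref{lemma:pointgraph}; equivalently, one can combine the forced local parameters $k=16,\lambda=10$ with the observation that every $\mu$-graph of $\Gamma$ is a $6$-regular induced subgraph of $\Sigma$ in which each vertex has precisely one non-neighbour (forcing $\mu(\Gamma)=8$ and $|V\Gamma|=27$), and then appeal to the uniqueness of the strongly regular graph with parameters $(27,16,10,8)$ to identify $\Gamma$ as the Schl\"{a}fli graph.

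Since $\Aut(\Gamma)=\PSO_6^-(2)$, and the primitive rank-$3$ subgroups of $\PSO_6^-(2)$ on the $27$ points are exactly those containing the socle $\POmega_6^-(2)$, we conclude $\POmega_6^-(2)\leq G\leq \PSO_6^-(2)$. For the converse direction I would verify via Lemma~\ref{lemma:detkCH} and a \textsc{Magma} computation as in Remark~\ref{remark:magma}, using standard atlas generators for $\POmega_6^-(2)$, that the Schl\"{a}fli graph is indeed $(G,4)$-CH for every such $G$; the case $G=\PSO_6^-(2)$ is also immediate from the fact that the Schl\"{a}fli graph is $4$-homogeneous~\cite{Buc1980}.

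The main obstacle will be the identification of $\Gamma$ as the Schl\"{a}fli graph: one must either locate a ready-made classification result on locally Clebsch graphs with primitive rank-$3$ action on vertices, or carry out the parameter computation above to pin down $(v,k,\lambda,\mu)=(27,16,10,8)$ directly from the local structure. All remaining steps are routine transfers of the McLaughlin argument in Lemma~\ref{lemma:pointgraph}.
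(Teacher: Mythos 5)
Your outer framework matches the paper's: Lemma~\ref{lemma:diam2} with $\Sigma_2(v)\simeq K_5$ and octahedral $\mu$-graphs gives $\diam(\Gamma)=2$, the final identification uses the uniqueness of the strongly regular graph with parameters $(27,16,10,8)$, rank $3$ on $27$ vertices forces $\POmega^-_6(2)\leq G\leq \PSO^-_6(2)$, and the converse is a routine machine check. But the heart of the lemma is precisely the step you label ``the observation'': that in a $\mu$-graph of $\Gamma$ every vertex has exactly one non-neighbour, i.e.\ $c_2(\Gamma)=8$. This is not an observation, and as stated your argument never uses the $4$-CH hypothesis to obtain it. What is elementary is only that each $v\in\Gamma(u)\cap\Gamma(w)$ has exactly $6$ neighbours there (an octahedron, being a $\mu$-graph of the Clebsch graph $\Gamma(v)$); its non-neighbours in $\Gamma(u)\cap\Gamma(w)$ lie in the $K_5$ induced by $\Gamma(u)\cap\Gamma_2(v)$, so a priori $c_2(\Gamma)$ could be anything from $7$ to $12$, and could even vary with the pair $(u,w)$. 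The paper closes this gap with the unique-$x$ machinery: since $K_{3[2]}$ is regular of diameter $2$ with $k_2=1$, Lemma~\ref{lemma:k2=1} produces a unique $x\in\Gamma(u)\cap\Gamma_2(v)$ with $\Gamma(u)\cap\Gamma(v)\cap\Gamma(w)=\Gamma(u)\cap\Gamma(v)\cap\Gamma(x)$; then Lemma~\ref{lemma:x} --- which genuinely needs $4$-connected-homogeneity --- forces $\Gamma(u)\cap\Gamma_2(v)\cap\Gamma(w)$ to be $\{x\}$ or its complement in the $K_5$, so $c_2\in\{8,11\}$; finally the count $c_2k_2=b_1k_1=5\cdot16=80$ rules out $11$ and gives $k_2=10$, hence $(27,16,10,8)$. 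Without some such argument your parameter computation does not get off the ground.

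Your fallback, a ready-made classification of locally Clebsch graphs with primitive rank-$3$ group ``in the spirit of \cite{BueHub1977}'', is also not available as cited: Buekenhout--Hubaut classify graphs that are locally the point graph of a polar space (this is why it applies in Lemma~\ref{lemma:pointgraph}, where the local graph is the point graph of $Q^-_5(q)$), but the Clebsch graph is not such a point graph, so that result says nothing here. So the identification of $\Gamma$ as the Schl\"afli graph, which you yourself flag as the main obstacle, is exactly where your proposal has a genuine gap; the paper's route through Lemmas~\ref{lemma:k2=1} and~\ref{lemma:x} plus the standard strongly regular counting is what you would need to supply. (Minor point for the converse: $4$-homogeneity of the Schl\"afli graph only handles $G=\Aut(\Gamma)$; for $G=\POmega^-_6(2)$ one does need the computational check, as you indicate.)
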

 
\begin{proof}
 Suppose that $\Gamma$ is $(G,4)$-CH. Any $\mu$-graph of the Clebsch graph $\Sigma$ is isomorphic to $K_{3[2]}$, and    $\Sigma_2(v)$ induces $K_5$ for all $v\in V\Sigma$,  
  so $\diam(\Gamma)=2$ by Lemma~\ref{lemma:diam2}. Let $u\in V\Gamma$, $v\in\Gamma(u)$ and $w\in\Gamma_2(u)\cap\Gamma(v)$.  Since  $K_{3[2]}$ is regular with diameter 2 and $k_2=1$, Lemma~\ref{lemma:k2=1} implies that there exists a unique $x\in \Gamma(u)\cap\Gamma_2(v)$ such that $\Gamma(u)\cap\Gamma(v)\cap\Gamma(w)=\Gamma(u)\cap\Gamma(v)\cap\Gamma(x)$.
 Now $\Gamma(u)\cap\Gamma_2(v)\cap\Gamma(w)$ is either $\{x\}$ or $\Gamma(u)\cap\Gamma_2(v)\setminus \{x\}$ by Lemma~\ref{lemma:x}, so $c_2(\Gamma)$ is either $1+6+1=8$ or $1+6+4=11$. However, $c_2(\Gamma)k_2(\Gamma)=b_1(\Gamma)k_1(\Gamma)=5\cdot 16=80$, so  $c_2(\Gamma)=8$ and $k_2(\Gamma)=10$. Now $\Gamma$ is a strongly regular graph with parameters $(27,16,10,8)$, so $\Gamma$ is isomorphic to the Schl\"{a}fli graph, and $\POmega^-_6(2)\leq G\leq \PSO^-_6(2)$ since $G$ has rank $3$ on $V\Gamma$. 

Conversely, if $\Gamma$ is the Schl\"{a}fli graph and $\POmega^-_6(2)\leq G\leq \PSO^-_6(2)$, then it is routine to verify that $\Gamma$ is $(G,4)$-CH.
\end{proof}

\subsection{Locally the Higman-Sims graph or its complement}
\label{s:HS}

Recall that the Higman-Sims graph is a strongly regular graph with parameters $(100,22,0,6)$. Its automorphism group is  $\HS{:}2$, and the stabiliser of a vertex is $\M_{22}{:}2$.  The  complement of the Higman-Sims graph has parameters $(100,77,60,56)$.

\begin{lemma}
\label{lemma:HS}
Let $\Gamma$ be a connected locally $\Sigma$  graph where $\Sigma$ is the Higman-Sims graph or its complement. Then $\Gamma$ is not $4$-CH.
 \end{lemma}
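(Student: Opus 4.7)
The plan is to handle the two graphs $\Sigma\in\{\HS,\overline{\HS}\}$ separately; in both cases we suppose for contradiction that $\Gamma$ is $4$-CH.

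When $\Sigma=\HS$, the idea is to verify the unique $x$ property for $\Gamma$ and then apply Lemma~\ref{lemma:xplus}. Using the standard description of $\HS$ via the Steiner system $S(3,6,22)$, I would first observe that for each $v\in V\HS$, the map $x\mapsto \HS(v)\cap \HS(x)$ is an injection from $\HS_2(v)$ into the $6$-subsets of $\HS(v)\simeq 22\cdot K_1$, with image the $77$ hexads. Now take $u\in V\Gamma$, $v\in\Gamma(u)$ and $w\in\Gamma_2(u)\cap\Gamma(v)$, set $\Delta:=\Gamma(u)\cap\Gamma(v)\cap\Gamma(w)$, and note that $|\Delta|=6$ since $\mu(\HS)=6$. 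The subtle step is to identify $\Delta$ as a hexad of the Steiner structure on $\Gamma(u)\cap\Gamma(v)$ viewed from inside $\Gamma(u)$; from $\Gamma(v)$ we get for free only that $\Delta$ is a hexad with respect to the Steiner structure viewed from inside $\Gamma(v)$. I would argue by orbit sizes: $G_{u,v}$ is transitive on the $77$ elements of $\Gamma(v)\cap\Gamma_2(u)$ (by $3$-CH), the assignment $w'\mapsto \Gamma(u)\cap\Gamma(v)\cap\Gamma(w')$ is injective by applying the injectivity above inside $\Gamma(v)$, and so the $G_{u,v}$-orbit of $\Delta$ on $6$-subsets of $\Gamma(u)\cap\Gamma(v)$ has size exactly $77$. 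Since the induced action of $G_{u,v}$ on $\Gamma(u)\cap\Gamma(v)\simeq 22\cdot K_1$ contains $\M_{22}=\HS_v$ and the unique $\M_{22}$-orbit on $6$-subsets of $22$ points of size $77$ is the hexad orbit, $\Delta$ is a hexad from both perspectives, yielding a unique $x\in\Gamma(u)\cap\Gamma_2(v)$ with $\Gamma(u)\cap\Gamma(v)\cap\Gamma(x)=\Delta$. Finally $\Gamma(u)\cap\Gamma_2(v)\simeq \HS_2(v)$ is $16$-regular on $77$ vertices, hence neither edgeless nor complete, so Lemma~\ref{lemma:xplus} gives the contradiction.

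When $\Sigma=\overline{\HS}$, Lemma~\ref{lemma:xplus} is unavailable because $\Gamma(u)\cap\Gamma_2(v)\simeq \overline{\HS}_2(v)= K_{22}$: the $\HS$-neighbours of $v$ form $22\cdot K_1$ in $\HS$ and hence $K_{22}$ in $\overline{\HS}$. I would instead apply Lemma~\ref{lemma:diam2}: $K_{22}$ is connected, and the $\overline{\HS}$-$\mu$-graph on $56$ vertices is not complete because its complement in $\HS$ is a $10$-regular subgraph and hence has edges. Thus $\diam(\Gamma)=2$ and $\Gamma$ is strongly regular with $k=100$ and $\lambda=77$. A local computation in $\Gamma(v)\simeq \overline{\HS}$ (using $\mu(\overline{\HS})=56$) shows the $\Gamma$-$\mu$-graph is $56$-regular, whence $\mu(\Gamma)\geq 57$; the identity $(|V\Gamma|-101)\mu(\Gamma)=100\cdot 22=2200$ combined with $57\leq \mu(\Gamma)\leq 100$ and $\mu(\Gamma)\mid 2200$ forces $\mu(\Gamma)\in\{88,100\}$. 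I would rule out $\mu(\Gamma)=88$ by a multiplicity calculation (eigenvalues $1$ and $-12$, with the multiplicity of $-12$ computing to $225/13\notin\mathbb{Z}$), and $\mu(\Gamma)=100=k$ by observing that this forces $\Gamma\simeq K_{n[r]}$ with $r=23$ and $n-1=100/23\notin\mathbb{Z}$.

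The main obstacle is the bridging step in the first case: identifying the a priori externally-defined set $\Delta$ as a hexad in the Steiner structure at $v$ inherited from $\Gamma(u)$. Reconciling the two Steiner structures via the uniqueness of the size-$77$ $\M_{22}$-orbit on $6$-subsets of $22$ points is the most delicate part.
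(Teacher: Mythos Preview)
Your argument is correct in both cases, and it differs genuinely from the paper's proof. The paper treats the two local graphs uniformly: it first establishes that $G_u$ acts faithfully on $\Gamma(u)$ (via Lemma~\ref{lemma:faithful} and a parameter count in $\Sigma_2(v)$), so $G_{u,v}\simeq\M_{22}$ or $\M_{22}{:}2$; then Atlas data force $G_{u,v,w}$ and $G_{u,v,y}$ (with $y\in\Gamma(u)\cap\Gamma_2(v)$) to be conjugate maximal subgroups of index $b_1$, producing an $x$ fixed by $G_{u,v,w}$. Lemma~\ref{lemma:x} then finishes the $\HS$ case immediately, and for $\overline{\HS}$ gives $c_2\in\{58,78\}$, neither dividing $2200$. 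Your route for $\Sigma=\HS$ is instead combinatorial: you extract the unique~$x$ directly from the $S(3,6,22)$ structure and invoke Lemma~\ref{lemma:xplus}. The one delicate step is the orbit-size identification; it is cleanest to observe that since $G_{u,v}$ acts on the $22$ points through $\M_{22}$ or $\M_{22}{:}2$ and the latter orbit size~$77$ cannot split as two equal $\M_{22}$-orbits (as $77$ is odd), the image of your injective equivariant map is forced to be the hexads from $\Gamma(u)$'s side as well. For $\Sigma=\overline{\HS}$ you bypass Lemma~\ref{lemma:x} entirely, using Lemma~\ref{lemma:diam2} plus strongly regular parameter and eigenvalue constraints; note that in fact $\mu(\Gamma)\le 57+|\Gamma(u)\cap\Gamma_2(v)|=79$, which already excludes $88$ and $100$ without the multiplicity check. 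The paper's approach is more uniform and avoids the Steiner-system bookkeeping, while yours avoids the faithfulness argument and the Atlas lookup in the $\HS$ case.
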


\begin{proof}
Let $G:=\Aut(\Gamma)$ and $u\in V\Gamma$, and let $H:=G_u^{\Gamma(u)}$. Suppose for a contradiction that $\Gamma$ is $4$-CH. The graph $\Sigma$ induced by $\Gamma(u)$ is $(H,3)$-homogeneous by Lemma~\ref{lemma:localhom}. In particular, $H$ is transitive of rank~$3$, so $\HS\leq H\leq \HS{:}2$. Let $v\in \Gamma(u)$, and note that $\Sigma(v)=\Gamma(u)\cap \Gamma(v)$ and $\Sigma_2(v)=\Gamma(u)\cap\Gamma_2(v)$.

First we claim that $G_u$ acts faithfully on $V\Sigma$. Suppose that $g\in G_{u,v}$ fixes $\Sigma(v)$ pointwise. By Lemma~\ref{lemma:faithful}, it suffices to show that $g$ fixes $\Sigma_2(v)$ pointwise. Let $x\in \Sigma_2(v)$,  and suppose for a contradiction that $x\neq x^g$. Note that $x^g\in \Sigma_2(v)$. Now $x$ and $x^g$ have  $\mu(\Sigma)$ common neighbours in $\Sigma(v)$ since $g$ fixes $\Sigma(v)$ pointwise. If $\Sigma$ is the Higman-Sims graph, then $x$ and $x^g$ are not adjacent since $\lambda(\Sigma)=0$, but $\Sigma_2(v)$ induces a strongly regular graph with parameters $(77,16,0,4)$ by~\cite{HigSim1968}, so $x$ and $x^g$ have $6+4$ common neighbours, contradicting $\mu(\Sigma)=6$. Hence $\Sigma$ is the complement of the Higman-Sims graph. Now  $\Sigma_2(v)$ induces $K_{22}$, so  $x$ and $x^g$ have $56+20$ common neighbours,  contradicting $\lambda(\Sigma)=60$. 

Thus $G_u$ is isomorphic to $\HS$ or $\HS{:}2$, and $G_{u,v}$ is isomorphic to $\M_{22}$ or $\M_{22}{:}2$ respectively. Note that $b_1(\Gamma)=|\Sigma_2(v)|$, so $b_1(\Gamma)= 77$ when $\Sigma$ is the Higman-Sims graph and $22$ otherwise.  Let $w\in \Gamma_2(u)\cap\Gamma(v)$ and $y\in \Gamma(u)\cap\Gamma_2(v)$. Since $G_{u,v}$ acts transitively on $\Gamma_2(u)\cap\Gamma(v)$ and $\Gamma(u)\cap\Gamma_2(v)$, the stabilisers $G_{u,v,w}$ and $G_{u,v,y}$ have index $b_1(\Gamma)$ in $G_{u,v}$. By~\cite{Atlas}, 
 $G_{u,v}$ has   unique classes of maximal subgroups of index $22$ and $77$, and if $M$ is a maximal subgroup of $G_{u,v}$ of index at most $77$ not lying in either of these classes, then $G_{u,v}=\M_{22}{:}2$ and $M=\M_{22}$. We conclude that $G_{u,v,w}$ and $G_{u,v,y}$ are maximal subgroups of $G_{u,v}$ and therefore  conjugate in $G_{u,v}$. 

Thus there exists $x\in \Gamma(u)\cap\Gamma_2(v)$ such that $G_{u,v,w}=G_{u,v,x}$. By Lemma~\ref{lemma:x}, the graph induced by $\Gamma(u)\cap\Gamma_2(v)$ is either edgeless or complete, so $\Sigma$ is the complement of the Higman-Sims graph. Now $|\Gamma(u)\cap\Gamma_2(v)\cap\Gamma(w)|=1$ or 21 by Lemma~\ref{lemma:x}, so $c_2(\Gamma)=1+56+1$ or $1+56+21$.  However, $c_2(\Gamma)k_2(\Gamma)=b_1(\Gamma)k_1(\Gamma)=22\cdot 100$, a contradiction.
\end{proof}

\subsection{Locally the McLaughlin graph or its complement}
\label{s:McL}

Recall that the McLaughlin graph is a strongly regular graph with parameters $(275,112,30,56)$. Its  automorphism group is $\McL{:}2$, and the stabiliser of a vertex is $\PSU_4(3){:}2$. The complement of the  McLaughlin graph has parameters $(275,162,105,81)$. 

\begin{lemma}
\label{lemma:McL}
 Let $\Gamma$ be a  connected locally $\Sigma$  graph where $\Sigma$ is the McLaughlin graph or its complement. Then $\Gamma$ is not $4$-CH.
\end{lemma}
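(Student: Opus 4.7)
The plan is to mirror the strategy used in the proof of Lemma~\ref{lemma:HS} for the Higman-Sims graph, substituting the McLaughlin parameters throughout. Suppose for a contradiction that $\Gamma$ is a connected $4$-CH graph locally $\Sigma$, where $\Sigma$ is either the McLaughlin graph or its complement. Set $G:=\Aut(\Gamma)$, fix $u\in V\Gamma$, and let $H:=G_u^{\Gamma(u)}$. By Lemma~\ref{lemma:localhom}, $\Sigma$ is $(H,3)$-homogeneous, so $H$ is transitive of rank~$3$ on $V\Sigma$, and the classification of finite primitive rank~$3$ groups (or direct inspection of the known overgroups of $\McL$) forces $\McL\unlhd H\leq \McL{:}2$.

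Next I would establish that $G_u$ acts faithfully on $\Gamma(u)$ via Lemma~\ref{lemma:faithful}. For $v\in\Gamma(u)$, let $g\in G_{u,v}$ fix $\Sigma(v)=\Gamma(u)\cap\Gamma(v)$ pointwise, and suppose $x\in\Sigma_2(v)$ with $x\neq x^g$. Then $x$ and $x^g$ share all $\mu(\Sigma)$ common neighbours in $\Sigma(v)$, and this must be compared against the actual number of common neighbours, which one computes using the fact that $\Sigma_2(v)$ induces a known strongly regular graph (the so-called second subconstituent). When $\Sigma$ is the McLaughlin graph, $\Sigma_2(v)$ induces a strongly regular graph on $162$ vertices, and the parameters together with $\mu(\Sigma)=56$ and $\lambda(\Sigma)=30$ force a contradiction whether $x$ and $x^g$ are adjacent or not; when $\Sigma$ is the complement, the analogous parameter count with $\mu(\Sigma)=81$ and $\lambda(\Sigma)=105$ again yields a contradiction. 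Thus $G_u\in\{\McL,\McL{:}2\}$ and correspondingly $G_{u,v}\in\{\PSU_4(3),\PSU_4(3){:}2\}$.

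Now pick $w\in\Gamma_2(u)\cap\Gamma(v)$ and $y\in\Gamma(u)\cap\Gamma_2(v)$. Since $\Gamma$ is $4$-CH, the stabiliser $G_{u,v}$ is transitive on each of these sets, so both $G_{u,v,w}$ and $G_{u,v,y}$ have index $b_1(\Gamma)=|\Sigma_2(v)|$ in $G_{u,v}$, namely $162$ if $\Sigma$ is the McLaughlin graph and $112$ otherwise. Using the Atlas~\cite{Atlas} list of maximal subgroups of $\PSU_4(3)$ and $\PSU_4(3){:}2$, I would verify that subgroups of these indices exist in a single conjugacy class (or that any two such subgroups are conjugate by analysing index-$162$ and index-$112$ maximal subgroups and their extensions), so that $G_{u,v,w}$ and $G_{u,v,y}$ are conjugate in $G_{u,v}$. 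Hence there exists $x\in\Gamma(u)\cap\Gamma_2(v)$ with $G_{u,v,w}\leq G_{u,v,x}$, and Lemma~\ref{lemma:x} then implies that $\Gamma(u)\cap\Gamma_2(v)$ induces an edgeless or a complete graph, and that $|\Gamma(u)\cap\Gamma_2(v)\cap\Gamma(w)|\in\{1,b_1(\Gamma)-1\}$.

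The final step is a divisibility check: $c_2(\Gamma)k_2(\Gamma)=b_1(\Gamma)k_1(\Gamma)$ with $k_1(\Gamma)=|V\Sigma|=275$ and $b_1(\Gamma)\in\{112,162\}$, while $c_2(\Gamma)\in\{1+\mu(\Sigma)+1,\,1+\mu(\Sigma)+(b_1(\Gamma)-1)\}$ depending on whether $\Gamma(u)\cap\Gamma_2(v)$ is edgeless or complete. Substituting the McLaughlin values ($\mu(\Sigma)=56$ or $81$) should produce no integer $k_2(\Gamma)$ in either case, yielding the desired contradiction. The main obstacle is the bookkeeping in the maximal-subgroup step: one must confirm from the Atlas that the relevant small-index subgroups of $\PSU_4(3)$ and its extension fall into a single $G_{u,v}$-conjugacy class, since otherwise the conjugacy of $G_{u,v,w}$ and $G_{u,v,y}$ does not follow. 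If uniqueness fails for some index, a direct \textsc{Magma} computation (as in Remark~\ref{remark:magma}) on the permutation representations of $\McL$ and $\McL{:}2$ would provide a backup.
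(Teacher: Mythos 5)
Your overall frame (rank-3 local action, faithfulness via Lemma~\ref{lemma:faithful} and the second-subconstituent parameters, identification of $G_{u,v}$ with $\PSU_4(3)$ or $\PSU_4(3){:}2_3$, and equal indices $b_1(\Gamma)$ for $G_{u,v,w}$ and $G_{u,v,y}$) matches the paper. But the step you yourself flag as the ``main obstacle'' is exactly where the argument breaks, and in the harder of the two cases. When $\Sigma$ is the complement of the McLaughlin graph ($b_1=112$) the Atlas does give a unique class of index-$112$ maximals, the conjugacy argument goes through, and Lemma~\ref{lemma:x}(i) already yields a contradiction because $\Sigma_2(v)$ induces a strongly regular graph with parameters $(112,81,60,54)$, which is neither edgeless nor complete --- no divisibility check is needed there. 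When $\Sigma$ is the McLaughlin graph ($b_1=162$), however, uniqueness fails: $\PSU_4(3)$ has \emph{two} conjugacy classes of maximal subgroups of index $162$ (two classes of $\PSL_3(4)$), and in $\PSU_4(3){:}2_3$ the index-$162$ maximals are $\PSL_3(4){:}2_1$ and $\PSL_3(4){:}2_3$, again two classes. So you cannot conclude that $G_{u,v,w}$ and $G_{u,v,y}$ are conjugate, and your proposed backup --- a {\sc Magma} computation with $\McL$ and $\McL{:}2$ --- is not a fix, because $\Gamma$ and $G$ are hypothetical objects (the unknown is the putative locally-McLaughlin $4$-CH graph, not the group), so there is nothing concrete to compute with.

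The paper closes this case by a genuinely different argument that your plan is missing. Set $X_1:=\Gamma(u)\cap\Gamma_2(v)\cap\Gamma(w)$ and $X_2:=(\Gamma(u)\cap\Gamma_2(v))\setminus\Gamma(w)$. Since $\Gamma$ is $4$-CH, $G_{u,v,w}\simeq\PSL_3(4)$ (or an extension by $2$) acts transitively on each $X_i$, and since by Lemma~\ref{lemma:x} the point stabiliser $G_{u,v,w,x}$ is proper, each nonzero $|X_i|$ equals $[G_{u,v,w}:M][M:G_{u,v,w,x}]$ for some maximal $M$; the maximal subgroup indices of $\PSL_3(4)$ and its relevant extensions force each $|X_i|$ to be $2$ or divisible by $21$, $56$ or $120$. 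With $|X_1|+|X_2|=162$ this pins down $\{|X_1|,|X_2|\}=\{42,120\}$, then $c_2(\Gamma)=1+56+42=99$ (the other option being excluded by $c_2k_2=162\cdot 275$), $k_2(\Gamma)=450$, $\diam(\Gamma)=2$ by Lemma~\ref{lemma:diam2}, and the parameter set $(726,275,112,99)$ is infeasible because $X^2-13X-176$ has no integer roots. Note also that your final dichotomy $c_2\in\{1+\mu+1,\,1+\mu+(b_1-1)\}$ would only be available \emph{after} a successful conjugacy step, at which point Lemma~\ref{lemma:x}(i) already gives the contradiction in both cases (the second subconstituents are srg$(162,56,10,24)$ and srg$(112,81,60,54)$), so the divisibility computation you propose is never the operative tool; what is actually needed for the McLaughlin case is the orbit-size counting and strong-regularity feasibility argument above.
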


\begin{proof}
Let $G:=\Aut(\Gamma)$ and $u\in V\Gamma$, and let $H:=G_u^{\Gamma(u)}$. Suppose for a contradiction that $\Gamma$ is $4$-CH. The graph $\Sigma$ induced by $\Gamma(u)$ is $(H,3)$-homogeneous by Lemma~\ref{lemma:localhom}. In particular, $H$ is transitive of rank~$3$, so $\McL\leq H\leq \McL{:}2$. Let $v\in \Gamma(u)$, and note that $\Sigma(v)=\Gamma(u)\cap \Gamma(v)$ and $\Sigma_2(v)=\Gamma(u)\cap\Gamma_2(v)$.

First we claim that $G_u$ acts faithfully on $V\Sigma$. Suppose that $g\in G_{u,v}$ fixes $\Sigma(v)$ pointwise. By Lemma~\ref{lemma:faithful}, it suffices to show that $g$ fixes $\Sigma_2(v)$ pointwise. Let $x\in \Sigma_2(v)$,  and suppose for a contradiction that $x\neq x^g$. Note that $x^g\in \Sigma_2(v)$. Now $x$ and $x^g$ have $\mu(\Sigma)$ common neighbours in $\Sigma(v)$. If $\Sigma$ is the McLaughlin graph, then $x$ and $x^g$ are not adjacent since $\lambda(\Sigma)<\mu(\Sigma)$, but $\Sigma_2(v)$ induces a strongly regular graph with parameters $(162,56,10,24)$, so $x$ and $x^g$ have $56+24$ common neighbours, contradicting $\mu(\Sigma)=56$. Hence $\Sigma$ is the complement of the McLaughlin graph. Now  $\Sigma_2(v)$ induces a strongly regular graph with parameters $(112,81,60,54)$, so either $x$ and $x^g$ are adjacent with $81+60$ common neighbours, or $x$ and $x^g$ are not adjacent with $81+54$ common neighbours, both of which are contradictions since $\lambda(\Sigma)=105$ and $\mu(\Sigma)=81$. 

Thus $G_u$ is isomorphic to $\McL$ or $\McL{:}2$, and $G_{u,v}$ is isomorphic to $\PSU_4(3)$ or $\PSU_4(3){:}2_3$ (in the notation of~\cite{Atlas}) respectively. Note that $b_1(\Gamma)=|\Sigma_2(v)|$, so $b_1(\Gamma)= 162$ when $\Sigma$ is the McLaughlin graph and $112$ otherwise. Let $w\in \Gamma_2(u)\cap\Gamma(v)$ and $y\in \Gamma(u)\cap\Gamma_2(v)$. Since $G_{u,v}$ acts transitively on $\Gamma_2(u)\cap\Gamma(v)$ and $\Gamma(u)\cap\Gamma_2(v)$, the stabilisers  $G_{u,v,w}$ and $G_{u,v,y}$ have index $b_1(\Gamma)$ in $G_{u,v}$.

Suppose that $\Sigma$ is the complement of the McLaughlin graph. By~\cite{Atlas}, $G_{u,v}$ has a    unique class of maximal subgroups of index $112$, and if $M$ is a maximal subgroup of $G_{u,v}$ of index at most $112$ not lying in this class, then $G_{u,v}=\PSU_4(3){:}2_3$ and $M=\PSU_4(3)$. We  conclude that $G_{u,v,w}$ and $G_{u,v,y}$ are maximal subgroups of $G_{u,v}$ and therefore  conjugate in $G_{u,v}$. Thus there exists $x\in \Gamma(u)\cap\Gamma_2(v)$ such that $G_{u,v,w}=G_{u,v,x}$, but then the graph induced by $\Gamma(u)\cap\Gamma_2(v)$ is either  edgeless  or complete by Lemma~\ref{lemma:x}, a contradiction.

Hence $\Sigma$ is the McLaughlin graph. By~\cite{Atlas},   $G_{u,v,w}$ is a maximal subgroup of $G_{u,v}$ of index 162, and either $G_{u,v}=\PSU_4(3)$ and $G_{u,v,w}\simeq \PSL_3(4)$ (there are two classes), or $G_{u,v}=\PSU_4(3){:}2_3$ and $G_{u,v,w}$ is isomorphic to   $\PSL_3(4){:}2_1$ or $\PSL_3(4){:}2_3$. 
Let  $X_1:=\Gamma(u)\cap\Gamma_2(v)\cap\Gamma(w)$ and $X_2:=(\Gamma(u)\cap\Gamma_2(v))\setminus\Gamma(w) $. Fix $i\in \{1,2\}$.  We claim that either $|X_i|=2$, or $|X_i|$ is divisible by one of $21$, $56$ or $120$. Clearly the claim holds if $|X_i|=0$, so we may choose $x\in X_i$. Now $G_{u,v,w,x}$ is a proper subgroup of $G_{u,v,w}$ by Lemma~\ref{lemma:x} since the graph induced by $\Gamma(u)\cap\Gamma_2(v)$ is neither complete nor edgeless, so $G_{u,v,w,x}$ is contained in a maximal subgroup $M$ of $G_{u,v,w}$. Further, $G_{u,v,w}$ acts transitively on $X_i$ since $\Gamma$ is $4$-CH, so  $[G_{u,v,w}:M][M:G_{u,v,w,x}]= |X_i|$. Recall that $|X_i|\leq 162$.  If $G_{u,v,w}=\PSL_3(4)$, then   $[G_{u,v,w}:M]$ is $21$, $56$ or $120$ by~\cite{Atlas}, so the claim holds. Otherwise, $G_{u,v,w}$ is $\PSL_3(4){:}2_1$ or $\PSL_3(4){:}2_3$. Then $[G_{u,v,w}:M]$ is $2$,  $56$, $105$ or $120$ by~\cite{Atlas}, so the claim holds unless $[G_{u,v,w}:M]=2$, in which case $M=\PSL_3(4)$. If $|X_i|=2$, then the claim holds; otherwise $G_{u,v,w,x}$ is a proper subgroup of $M$, so $[M:G_{u,v,w,x}]$ is divisible by $21$, $56$ or $120$, proving the claim. 

Since $|X_1|+|X_2|=162$, it follows from the claim that each of $|X_1|$ and $|X_2|$ is divisible by one of $21$, $56$ or $120$. Observe that if  $21a+56b+120c=162$ for some non-negative integers $a,b,c$, then $a=2$, $b=0$ and $c=1$. Thus $\{|X_1|,|X_2|\}=\{42,120\}$. Since $c_2(\Gamma)=1+56+|X_1|$ and $c_2(\Gamma)k_2(\Gamma)=b_1(\Gamma)k_1(\Gamma)=162\cdot 275$, it follows that $|X_1|=42$ and $|X_2|=120$. Hence $c_2(\Gamma)=99$ and $k_2(\Gamma)=450$.   The graph $\Gamma$ has diameter 2 by Lemma~\ref{lemma:diam2}, so it is  strongly regular with parameters $(726,275,112,99)$. But then the polynomial $X^2-13X-176$ has integer roots by~\cite[Theorem 1.3.1]{BroCohNeu1989}, a contradiction.
\end{proof}

\subsection{Proof of Theorem~\ref{thm:connected}}
\label{s:proof}
Let $\Gamma$ be a locally finite, connected, locally connected $4$-CH graph. In particular, $\Gamma$ has non-zero valency, so   by Lemma~\ref{lemma:localhom}, $\Gamma$ is locally  a finite connected $3$-homogeneous graph $\Sigma$, and $\Sigma$ is  described in Theorem \ref{thm:3hom}. If Theorem \ref{thm:3hom}(i) holds, then $\Sigma$ is either $K_s$ for some $s\geq 1$, or $K_{(t+1)[s]}$ for some $t\geq 1$ and $s\geq 2$. In the former case, $\Gamma\simeq K_{s+1}$, and in the latter, $\Gamma\simeq K_{(t+2)[s]}$ (see, for example,~\cite[Lemma~7]{Gar1978}), so Theorem~\ref{thm:connected}(i) holds. Otherwise, one of Theorem \ref{thm:3hom}(ii)--(v) holds.
 By Lemmas~\ref{lemma:KnboxKn},  \ref{lemma:KnxKn}, \ref{lemma:VO}, \ref{lemma:VOcomp},  \ref{lemma:pointgraph}, \ref{lemma:pointgraphcomp}, 
\ref{lemma:C5}, \ref{lemma:folded5}, \ref{lemma:clebsch}, \ref{lemma:HS} and \ref{lemma:McL}, $\Gamma$ is the Schl\"{a}fli graph or the McLaughlin graph.  The Schl\"{a}fli graph is $4$-homogeneous~\cite{Buc1980}, but it is not $5$-CH, or else its local graph, the Clebsch graph, is $4$-homogeneous by Lemma~\ref{lemma:localhom}, a contradiction. Similarly, the McLaughlin graph is $4$-CH by Lemma~\ref{lemma:pointgraph}, but it is not $5$-CH, or else its local graph, the point graph of $Q^-_5(3)$, is $4$-homogeneous, a contradiction. Thus one of Theorem~\ref{thm:connected}(ii) or~(iii) holds.

\section{Locally disconnected graphs with girth $3$ and $c_2>1$}
\label{s:GQ}

In this section, we prove Theorem~\ref{thm:girth3quad}, which describes the locally finite locally disconnected $4$-CH graphs with girth $3$ and $c_2>1$. Recall that a locally finite $3$-CH graph $\Gamma$ is locally disconnected with girth $3$ if and only if $\Gamma$ is locally $(t+1)\cdot K_s$ for some integers $t\geq 1$ and $s\geq 2$ (see Lemma~\ref{lemma:3CH}).

The arguments in the next two lemmas are similar to those in the proof of~\cite[Lemma~6]{Gar1978}.

\begin{lemma}
\label{lemma:girth3c2>1finite}
 Let $\Gamma$ be a connected $4$-CH graph that is locally $(t+1)\cdot K_s$ where $t\geq 1$,  $s\geq 2$ and $c_2(\Gamma)>1$. Then $\diam(\Gamma)=2$ and $c_2(\Gamma)=t+1$. In particular,   $\Gamma$ is the point graph of a finite distance-transitive generalised quadrangle of order $(s,t)$.
\end{lemma}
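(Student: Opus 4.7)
The overall strategy is to progressively extract structure from $\Gamma$ using the rigid local graph $(t{+}1)\cdot K_s$ together with the $4$-CH hypothesis. My first observation is purely local: for any $2$-arc $(u,v,w)$, the vertices $u$ and $w$ lie in distinct $K_s$-components of $\Gamma(v)$ (since $u\not\sim w$), so no vertex of $\Gamma(v)$ is adjacent to both, i.e.\ $\Gamma(u)\cap\Gamma(v)\cap\Gamma(w)=\varnothing$. Applied to every $v \in \Gamma(u)\cap\Gamma(w)$, this forces the $\mu$-graph $\Gamma(u)\cap\Gamma(w)$ to be an independent set whose vertices sit in $c_2$ distinct $K_s$-cliques of $\Gamma(u)$; hence $c_2 \leq t+1$.

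To prove $\diam(\Gamma)=2$ I would adapt the argument of Lemma~\ref{lemma:1fordiam2} to the disconnected set $\Gamma(u)\cap\Gamma_2(v)$. Since $c_2 > 1$, any $2$-arc $(u,v,w)$ admits $v' \in (\Gamma(u)\cap\Gamma(w))\setminus\{v\}$, so $v' \in \Gamma(u)\cap\Gamma_2(v)\cap\Gamma(w)$ and $(v,w,v')$ is a $2$-geodesic. If $\Gamma$ contained a $3$-geodesic, $2$-geodesic transitivity (a consequence of $3$-CH) would produce a $3$-geodesic $(v,w,v',y)$. Using $s \geq 2$, I can pick $z \neq v'$ in the $K_s$-clique of $\Gamma(u)$ containing $v'$; then $z \sim v'$, $z \in \Gamma(u)\cap\Gamma_2(v)$, and $z \not\sim w$ (otherwise $v'$ and $z$ would be two common neighbours of $u,w$ in the same $K_s$-clique, contradicting paragraph one). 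Both $\{v,w,v',z\}$ and $\{v,w,v',y\}$ induce $P_4$, so $4$-CH yields $g \in \Aut(\Gamma)_{v,w,v'}$ with $z^g = y$, contradicting $d_\Gamma(v,z)=2 \neq 3 = d_\Gamma(v,y)$.

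For the central identity $c_2=t+1$ I would fix a $2$-arc $(u,v,w)$ and analyse $S := \{y\in\Gamma(w) : y\not\sim u \text{ and } y\not\sim v\}$. Using paragraph one I decompose $S$ inside $\Gamma(w) \simeq (t{+}1)\cdot K_s$: the $K_s$-clique $L$ of $\Gamma(w)$ containing $v$ consists of $v$ together with the $s-1$ common neighbours of $v,w$, and $L\cap\Gamma(u) = \{v\}$ because $\Gamma(u)\cap\Gamma(v)\cap\Gamma(w) = \varnothing$, so $L\cap S = \varnothing$; each of the other $c_2-1$ $K_s$-cliques of $\Gamma(w)$ that meets $\Gamma(u)$ contributes a $K_{s-1}$ to $S$ (delete its unique common neighbour with $u$); and each of the remaining $t+1-c_2$ $K_s$-cliques contributes a full $K_s$. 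Hence $S$ induces $(c_2-1)\cdot K_{s-1} \sqcup (t+1-c_2)\cdot K_s$. On the other hand, for every $y \in S$ the subgraph $\{u,v,w,y\}$ induces the $P_4$ $u-v-w-y$, and the identity on $\{u,v,w\}$ together with $y_1\mapsto y_2$ is a graph isomorphism between any two such $P_4$s, so $4$-CH makes $\Aut(\Gamma)_{u,v,w}$ transitive on $S$. Since $s\geq 2$, the $S$-degree of a vertex in a $K_{s-1}$-component is $s-2$ while the $S$-degree in a $K_s$-component is $s-1$, so transitivity forces exactly one of the two clique families in $S$ to vanish. The hypothesis $c_2 > 1$ rules out $c_2 - 1 = 0$, leaving $t+1-c_2 = 0$, i.e.\ $c_2 = t+1$.

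Combining these three steps, $\Gamma$ is a finite strongly regular graph with parameters $((s+1)(st+1),\,(t+1)s,\,s-1,\,t+1)$ whose maximal cliques have size $s+1$ and each of whose vertices lies in $t+1$ maximal cliques, so by \cite[Lemma~1.15.1]{BroCohNeu1989} it is the point graph of a generalised quadrangle of order $(s,t)$; distance-transitivity follows from $\Gamma$ being arc-transitive with $\diam(\Gamma)=2$. The $S$-decomposition in paragraph three is the main obstacle, since it is where the $4$-CH hypothesis (rather than just $3$-CH) is genuinely used; once that orbit-counting argument is in hand, the remaining steps are essentially bookkeeping.
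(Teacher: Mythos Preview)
Your proof is correct and follows essentially the same route as the paper's. Both arguments establish $\diam(\Gamma)=2$ by producing, from a putative $3$-geodesic, a second $P_4$ on the same three internal vertices whose endpoint lies at distance $2$ instead of $3$, and both pin down $c_2=t+1$ by showing that $\Gamma_2(u)\cap\Gamma_2(v)\cap\Gamma(w)$ decomposes as $(c_2-1)\cdot K_{s-1}\sqcup(t+1-c_2)\cdot K_s$ while $\Aut(\Gamma)_{u,v,w}$ acts transitively on it; the only cosmetic difference is that in the diameter step you pick your auxiliary vertex $z$ inside a clique of $\Gamma(u)$ whereas the paper picks it inside a clique of $\Gamma(y)$.
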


\begin{proof}
Let $G:=\Aut(\Gamma)$.  Suppose for a contradiction  that $\diam(\Gamma)>2$.
Now there exists a geodesic $(u,v,w,x)$ in $\Gamma$, and there exists  $y\in \Gamma(u)\cap\Gamma(w)\setminus\{v\}$. Note that $y$ is not adjacent to $v$ by Lemma~\ref{lemma:notinduced}. Let $\mathcal{C}_v$ be the clique of size $s$ in $\Gamma_2(u)\cap\Gamma(v)$ that contains $w$, and let $\mathcal{C}_y$ be the clique of size $s$ in $\Gamma_2(u)\cap\Gamma(y)$ that contains $w$. If there exists $z\in \mathcal{C}_v\cap\mathcal{C}_y$ besides $w$, then $\{v,y,w,z\}$ induces a complete graph with one edge removed, contradicting Lemma~\ref{lemma:notinduced}. Thus $\mathcal{C}_v\cap\mathcal{C}_y=\{w\}$. Since $s\geq 2$, there exists $z\in \mathcal{C}_y\setminus \{w\}$, and note that $z\notin\Gamma(v)$, or else $z\in \mathcal{C}_v$. Since $z\in \Gamma_2(u)$, the sets $\{u,v,w,z\}$ and $\{u,v,w,x\}$ induce path graphs, so there exists $g\in G_{u,v,w}$ with $x^g=z$, but $x\in \Gamma_3(u)$,  a contradiction.

Thus $\diam(\Gamma)=2$.
Let $u\in V\Gamma$ and $w\in \Gamma_2(u)$. 
It follows from Lemma~\ref{lemma:notinduced} that $\Gamma(u)\cap\Gamma(w)$ induces $c_2\cdot K_1$, so $c_2\leq t+1$. Suppose for a contradiction that $c_2<t+1$.  Let $v\in \Gamma(u)\cap\Gamma(w)$ and let $\Delta$ be the graph induced by $\Gamma_2(u)\cap\Gamma_2(v)\cap\Gamma(w)$. Now $\Delta$ is  a disjoint union of the graphs $(c_2-1)\cdot K_{s-1}$ and $(t+1-c_2)\cdot K_s$, so $\Delta$ is not a vertex-transitive graph, but $G_{u,v,w}$ acts transitively on $V\Delta$,  a contradiction. Thus $c_2=t+1$.

Now $\Gamma$ is a strongly regular graph with parameters $((st+1)(s+1),(t+1)s,s-1,t+1)$.  Since $\Gamma$ is  locally $(t+1)\cdot K_s$, it is the point graph of a generalised quadrangle  of order $(s,t)$ by \cite[Lemma 1.15.1]{BroCohNeu1989}, and this generalised quadrangle   is distance-transitive since $\Gamma$ is $3$-CH.
\end{proof}

 Before we prove Theorem~\ref{thm:girth3quad}, we establish some further restrictions on the structure of $k$-CH graphs that are locally $c_2\cdot K_s$ for $k\geq 3$.

\begin{lemma}
\label{lemma:sdividest}
Let $\Gamma$ be a $k$-CH graph that is locally $(t+1)\cdot K_s$ where $k\geq 3$, $t\geq 1$, $s\geq 1$ and $c_2(\Gamma)=t+1$. Let $m:=\min(t+1,k-1)$ and $u\in V\Gamma$. Then any $m$ pairwise non-adjacent neighbours of  $u$ have a common neighbour in $\Gamma_2(u)$, and $s^{m-2}$ divides $t$.
\end{lemma}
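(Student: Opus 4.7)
The plan is to combine the $k$-connected-homogeneity hypothesis with a double counting argument. First I would write $\Gamma(u)$ as a disjoint union of $t+1$ cliques $C_0,\ldots,C_t$ of size $s$, so that an $m$-subset of $\Gamma(u)$ is pairwise non-adjacent precisely when it picks one vertex from each of $m$ distinct cliques; this produces $\binom{t+1}{m}s^m$ such subsets, which is positive since $m\leq t+1$. I would then observe that for every $w\in\Gamma_2(u)$, the set $T(w):=\Gamma(u)\cap\Gamma(w)$ has size $c_2=t+1$ and must be independent, for otherwise Lemma~\ref{lemma:notinduced} would be violated (two adjacent vertices in $T(w)$ together with $u$ and $w$ would induce a copy of $K_4$ with one edge removed); hence $T(w)$ is a transversal of the cliques $C_0,\ldots,C_t$.

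Next I would compute $|\Gamma_2(u)|$ by counting pairs $(w,v)$ with $w\in\Gamma_2(u)$ and $v\in T(w)$ in two ways. Summing over $w$ gives $|\Gamma_2(u)|(t+1)$, while summing over $v\in\Gamma(u)$ gives $|\Gamma(u)|\cdot|\Gamma(v)\cap\Gamma_2(u)|$; since every neighbour of $v$ lies in $\{u\}\cup\Gamma(u)\cup\Gamma_2(u)$ and $|\Gamma(v)\cap\Gamma(u)|=s-1$, one obtains $|\Gamma(v)\cap\Gamma_2(u)|=(t+1)s-s=ts$, and hence $|\Gamma_2(u)|=s^2t$. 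The key input from $k$-connected-homogeneity is that, for any $m$-subset $S$ of pairwise non-adjacent vertices of $\Gamma(u)$, the induced subgraph on $\{u\}\cup S$ is a star $K_{1,m}$ on $m+1\leq k$ vertices, so any isomorphism between two such stars fixing $u$ extends to an element of $\Aut(\Gamma)$. Consequently the cardinality
\[
N_m:=\bigl|\Gamma_2(u)\cap\bigcap_{v\in S}\Gamma(v)\bigr|
\]
depends only on $m$, not on the choice of $S$.

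To finish, I would double count the pairs $(S,w)$ in which $S$ is an $m$-subset of pairwise non-adjacent vertices of $\Gamma(u)$ and $w\in\Gamma_2(u)$ is adjacent to every vertex of $S$. Summing over $S$ gives $N_m\binom{t+1}{m}s^m$, while summing over $w$ gives $|\Gamma_2(u)|\binom{t+1}{m}=s^2t\binom{t+1}{m}$, since every $m$-subset of the transversal $T(w)$ is automatically pairwise non-adjacent. Cancelling the nonzero factor $\binom{t+1}{m}$ yields $N_m=t/s^{m-2}$. Because the right-hand side $s^2t\binom{t+1}{m}$ is positive, $N_m>0$, establishing the existence of a common neighbour in $\Gamma_2(u)$; and because $N_m$ is a non-negative integer, $s^{m-2}$ must divide $t$. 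I do not anticipate any serious obstacle here: both conclusions fall out simultaneously from this single counting identity, and the hypothesis $m\leq k-1$ is exactly what allows $k$-CH to act on connected subgraphs of order $m+1$.
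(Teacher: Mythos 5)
Your proof is correct and follows essentially the same route as the paper: both arguments hinge on Lemma~\ref{lemma:notinduced} (so that $\Gamma(u)\cap\Gamma(w)$ is a transversal of the $t+1$ cliques), on extending isomorphisms of the stars $\{u\}\cup S$ of order $m+1\leq k$ to get a constant number of common neighbours, and on the count $|\Gamma_2(u)|=ts^2$. The only difference is presentational: the paper phrases the identity $ts^{2}=s^{m}N_m$ via a surjection $\Gamma_2(u)\to X$ with equal fibres, whereas you obtain it by double counting pairs $(S,w)$ with the binomial factor cancelling.
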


\begin{proof}
 Let $M$ be a subset of $\Gamma(u)$ that induces the graph  $m\cdot K_s$. Let $X$ be the set of subsets of $M$ that induce the graph $m\cdot K_1$. 
   By Lemma~\ref{lemma:notinduced} and our assumption that $c_2(\Gamma)=t+1$, if $w\in \Gamma_2(u)$, then $\Gamma(w)\cap M\in X$. Thus there is a map $\varphi:\Gamma_2(u)\to X$ defined by  $w\mapsto \Gamma(w)\cap M$ for all $w\in \Gamma_2(u)$. Choose $w\in \Gamma_2(u)$, and let $X_1:=\Gamma(w)\cap M$.  If $X_2\in X$, then since $m\leq k-1$, there exists $g\in \Aut(\Gamma)_u$ such that $X_1^g=X_2$, so $X_2=\varphi(w^g)$, and  $g$ maps $\varphi^{-1}(X_1)$ to $\varphi^{-1}(X_2)$. Thus $\varphi$ is a surjective map whose preimages all have the same size; in particular, $s^{m}$ divides $|\Gamma_2(u)|= ts(t+1)s/(t+1)=ts^2$. We have also shown that any $m$ pairwise non-adjacent neighbours of  $u$ have a common neighbour in $\Gamma_2(u)$. 
\end{proof}

\begin{proof}[Proof of Theorem $\ref{thm:girth3quad}$]
Recall that $\Gamma$ is a locally finite, connected, locally disconnected graph with girth $3$ for which $c_2>1$, and suppose that $\Gamma$ is $4$-CH. Now $\Gamma$ is locally $(t+1)\cdot K_s$ for some $t\geq 1$ and $s\geq 2$. Let $m:=\min(t+1,3)$. By Lemmas~\ref{lemma:girth3c2>1finite} and \ref{lemma:sdividest}, $\Gamma$ is the point graph of a distance-transitive generalised quadrangle $\mathcal{Q}$ of order $(s,t)$, where  $s^{m-2}$ divides $t$. If $t=1$, then  by~\cite[\S 6.5]{BroCohNeu1989}, $\Gamma\simeq L(K_{s+1,s+1})\simeq K_{s+1}\Box K_{s+1}$, so   Theorem~\ref{thm:girth3quad}(i) holds. Thus we may assume that $t\geq 2$. Then $m=3$, so $s$ divides $t$. By Theorem~\ref{thm:GQ}, $s$ is a power of a prime, and 
we may assume that $\mathcal{Q}$ is one of the following: $W_3(s)$, $H_4(s)$,  $Q_4(s)$ for $s$ odd, or $Q^-_5(s)$. Recall that the points of $\mathcal{Q}$ are the one-dimensional totally singular subspaces of a symplectic, unitary or quadratic space  on $V:=V_d(s)$, where $d$ is $4$, $5$, $5$ or $6$ respectively, and the lines of $\mathcal{Q}$ are the two-dimensional totally singular subspaces of $V$. Further, $\Aut(\Gamma)\leq \PGammaL_d(s)$. 

We claim that $s\in \{2,3,4\}$. Choose a line $\ell$ of $\mathcal{Q}$, and let $u_1$ and $u_2$ be distinct points on $\ell$.  For $i\in \{1,2\}$, there exists $v_i\in V$ such that $u_i=\langle v_i\rangle $. Let $u_3:=\langle v_1+v_2\rangle$, and note that $u_3\in \ell$.  Since $\Gamma$ is $4$-CH and any four points on $\ell$ induce $K_4$, it follows that $\GammaL_d(s)_{u_1,u_2,u_3}$ acts transitively on the points of $\ell\setminus \{u_1,u_2,u_3\}$. For $g\in \GammaL_d(s)_{u_1,u_2,u_3}$, there exists $\sigma\in\Aut(\mathbb{F}_s)$ such that $(\lambda x)^g=\lambda^\sigma x^g$ for all $x\in V$ and $\lambda\in\mathbb{F}_s$, and $v_1^g=\mu v_1$ for some $\mu \in \mathbb{F}_s$, so  $v_2^g=\mu v_2$, and  $\langle v_1+\lambda v_2\rangle^g=\langle v_1+\lambda^\sigma v_2\rangle$ for all $\lambda\in \mathbb{F}_s$. It follows that $\Aut(\mathbb{F}_s)$ acts transitively on $\mathbb{F}_s\setminus \{0,1\}$. Thus $s\in \{2,3,4\}$, proving the claim.

Suppose that either $\mathcal{Q}=W_3(s)$ where $s\in \{2,3,4\}$, or $\mathcal{Q}=H_4(4)$. Let $(V,f)$ be the corresponding symplectic or unitary space. By \cite[Propositions 2.3.2 and 2.4.1]{KleLie1990}, $V$ has a basis
 $\{e_1,e_2,f_1,f_2\}$ or $\{e_1,e_2,f_1,f_2,x\}$ where  $f(e_i,e_j)=f(f_i,f_j)=0$ and $f(e_i,f_j)=\delta_{i,j}$ for all $i,j$, and in the unitary case,  $f(x,x)=1$ and $f(e_i,x)=f(f_i,x)=0$ for all $i$. 
Note that $e_1+e_2+f_2$ is a singular vector in either case. Now $\langle e_2\rangle$, $\langle f_2\rangle$ and $\langle e_1+e_2+f_2\rangle $ are three pairwise non-adjacent neighbours of $\langle e_1\rangle$, so
they have a common neighbour $w\in \Gamma_2(\langle e_1\rangle)$  by Lemma~\ref{lemma:sdividest}. But $w=\langle y\rangle$ for some non-zero singular vector $y$, and $0=f(y,e_2)=f(y,f_2)=f(y,e_1+e_2+f_2)$, so $f(y,e_1)=0$, a contradiction.

Thus $\mathcal{Q}$ is  $Q_4(3)$ or $Q_5^-(s)$ for $s\in \{2,3,4\}$. In order to complete the proof of Theorem~\ref{thm:girth3quad}, we must  show that the point graph of $Q_5^-(2)$ is $5$-CH but not $6$-CH,  and also that  the  point graphs of $Q_4(3)$ and $Q_5^-(s)$ for $s\in \{3,4\}$ are $4$-CH but not $5$-CH. The point graph of  $Q_5^-(2)$ is not $6$-CH by Lemma~\ref{lemma:sdividest} since $Q_5^-(2)$
has order $(2,4)$, and it is routine to verify that it  is $5$-CH. Similarly, the point graph of  $Q_4(3)$ is not $5$-CH by Lemma~\ref{lemma:sdividest} since $Q_4(3)$ has order $(3,3)$, and it is routine to verify that it  is $4$-CH. It is also routine to verify that the point graph of $Q_5^-(s)$ is $4$-CH for $s\in \{3,4\}$, so it remains to show that this graph is not $5$-CH. 

Let $s\in \{3,4\}$. Let $(V,Q)$ be the quadratic space corresponding to $Q_5^-(s)$, and let $f$ be the bilinear form  associated with $Q$. By \cite[Proposition 2.5.3]{KleLie1990}, $V$ has a basis $\{e_1,e_2,f_1,f_2,x,y\}$ where  $Q(e_i)=Q(f_i)=f(e_i,x)=f(f_i,x)=f(e_i,y)=f(f_i,y)=f(e_i,e_j)=f(f_i,f_j)=0$ and $f(e_i,f_j)=\delta_{i,j}$ for all $i,j$, and $Q(x)=1$, $f(x,y)=1$ and $Q(y)=\alpha$ for some $\alpha\in\mathbb{F}_s$ such that the polynomial $X^2+X+\alpha$ is irreducible over $\mathbb{F}_s$. If $s=3$, then $\alpha=-1$, and if $s=4$, then $\alpha\in \mathbb{F}_4\setminus \{0,1\}$.  Now $\langle e_2\rangle$, $\langle f_2\rangle$, $\langle e_2-f_2+x\rangle$ and $\langle e_1+e_2-\alpha^2f_2+\alpha x\rangle $ are four pairwise non-adjacent neighbours of $\langle e_1\rangle$ in the point graph of $Q_5^-(s)$, so if this graph is $5$-CH, then  these four vertices  have a common neighbour $w$ whose distance from $\langle e_1\rangle$ is $2$ by Lemma~\ref{lemma:sdividest}. But $w=\langle z\rangle$ for some non-zero singular vector $z$, and 
 $0=f(z,e_2)=f(z,f_2)=f(z,e_2-f_2+x)=f(z,e_1+e_2-\alpha^2f_2+\alpha x)$, so $f(z,e_1)=0$, a contradiction.
\end{proof}
 
\section{Graphs with girth $4$}
\label{s:girth4}

In this section, we prove Theorem \ref{thm:girth4}. Note that any graph $\Gamma$  with girth $4$ and valency $n$ is locally $n\cdot K_1$, and if the parameter $c_2$ is defined for $\Gamma$, then $c_2>1$.  First we have some preliminary observations.

\begin{lemma}
\label{lemma:g4}
Let $\Gamma$ be a locally finite connected $5$-CH graph with girth $4$. \begin{itemize}
\item[(i)]  If $\diam(\Gamma)\geq 4$, then $c_2(\Gamma)=c_3(\Gamma)$.
\item[(ii)] If $\diam(\Gamma)\geq 3$ and $\Gamma$ has valency at least $4$, then $c_2(\Gamma)\geq 3$.
\end{itemize}
\end{lemma}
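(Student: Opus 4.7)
For (i), I will take a $4$-geodesic $(u_0,u_1,u_2,u_3,u_4)$ (which exists since $\diam(\Gamma)\geq 4$, and which induces $P_5$ because girth~$4$ together with the distance conditions forbids every chord), and compare $A:=\Gamma(u_1)\cap\Gamma(u_3)$ (of size $c_2$, since $d(u_1,u_3)=2$) with $B:=\Gamma_2(u_0)\cap\Gamma(u_3)$ (of size $c_3$). The inclusion $A\subseteq B$ is immediate from girth~$4$: any $w\in A$ satisfies $w\sim u_1\sim u_0$ and $w\not\sim u_0$ (else a triangle), giving $d(u_0,w)=2$. For the reverse inclusion, suppose $w\in B$ with $w\neq u_2$ and $w\not\in A$; then $w\not\sim u_2$ (no triangle $u_2u_3w$), $w\not\sim u_0$ (from $d(u_0,w)=2$), and the other distance conditions rule out all remaining chords, so $\{u_0,u_1,u_2,u_3,w\}$ also induces a $P_5$. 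The bijection fixing $u_0,u_1,u_2,u_3$ and sending $u_4\mapsto w$ is an isomorphism between two induced $P_5$'s; $5$-CH extends it to $g\in\Aut(\Gamma)$, but then $d(u_0,w)=d(u_0,u_4)=4$ contradicts $w\in\Gamma_2(u_0)$. Hence $A=B$ and $c_2=c_3$.

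For (ii), I will suppose $c_2=2$ for contradiction and fix a $3$-geodesic $(u_0,u_1,u_2,u_3)$, writing $\Gamma(u_0)\cap\Gamma(u_2)=\{u_1,v\}$ and $\Gamma(u_1)\cap\Gamma(u_3)=\{u_2,w\}$. Because $n\geq 4$, the ``extra-neighbour'' sets $X:=\Gamma(u_0)\setminus\{u_1,v\}$ and $Y:=\Gamma(u_3)\setminus\{u_2,w\}$ each have cardinality at least two, and for every $x\in X$ and $y\in Y$ the sets $\{x,u_0,u_1,u_2,u_3\}$ and $\{u_0,u_1,u_2,u_3,y\}$ induce copies of $P_5$ (all potential chords are forbidden by girth~$4$ together with the explicit description of the common-neighbour sets). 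Running $5$-CH on this family then pins the distance $d(u_0,y)$ to a single value $d\in\{2,3,4\}$ independent of $y\in Y$, and the same value equals $d(x,u_3)$ for $x\in X$. I plan to split on $d$: when $d\in\{3,4\}$ one has $c_3=2$, and a further $5$-CH comparison of the two $C_4$-with-pendant subgraphs through the geodesic forces the second common neighbour $z$ of $v$ and $u_3$ to coincide with $w$, producing an induced $K_{2,3}$ on $\{u_1,v,u_2,w,u_3\}$; when $d=2$ one deduces $\Gamma(u_3)\subseteq\Gamma_2(u_0)$, so $c_3=n$ and $\diam(\Gamma)=3$. In the first case, the $5$-CH-rigidity of this $K_{2,3}$ combined with an extra neighbour of $u_3$ supplied by $n\geq 4$ should force a third common neighbour of $u_1$ and $u_3$, contradicting $c_2=2$; in the second, I will run the intersection-array identities $k_ib_i=k_{i+1}c_{i+1}$ with $c_2=2$, $c_3=n$, $a_1=0$ and $n\geq 4$ and check that no feasible parameter set survives.

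The hardest step will be the diameter-$3$ case, since part (i) is unavailable there and all control must come from $5$-CH acting on five-vertex subgraphs inside the first three distance layers; tracking which of the auxiliary common neighbours $v$, $w$, $z$ (and their analogues for extras in $X$ and $Y$) coincide across subcases is the main bookkeeping drive of the argument.
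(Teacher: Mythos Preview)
Your argument for part (i) is correct and essentially identical to the paper's.

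For part (ii), your case $d\in\{3,4\}$ can be salvaged, though not in the way you describe. Once you have the induced $K_{2,3}$ on $\{u_1,v,u_3\}\cup\{u_2,w\}$, you are already done: $u_2$ and $w$ are non-adjacent (girth $4$) and have the three common neighbours $u_1,v,u_3$, contradicting $c_2=2$ directly. No further ``$5$-CH rigidity'' or extra neighbour of $u_3$ is needed, and your stated target (a third common neighbour of $u_1$ and $u_3$) is the wrong pair.

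Your case $d=2$, however, has a genuine gap. You propose to derive a contradiction purely from the intersection-array identities with $c_2=2$, $c_3=n$, $a_1=0$, $n\geq4$ and $\diam(\Gamma)=3$. This cannot work: the incidence graph of the unique $2\text{-}(7,4,2)$ design is a real graph with intersection array $\{4,3,2;1,2,4\}$, girth $4$, valency $4$, diameter $3$, $c_2=2$ and $c_3=4=n$. Its parameters are perfectly feasible, so no amount of array arithmetic will rule it out. The obstruction to this graph being $5$-CH is not numerical; you must use $5$-CH again inside this case, and your plan does not.

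The paper's argument sidesteps the case split entirely. With $c_2=2$, let $v$ be the second common neighbour of $u_1$ and $u_3$, let $w$ be the second common neighbour of $u_0$ and $v$, and let $x$ be the second common neighbour of $u_0$ and $u_2$. One checks $w\neq x$ (else $u_2$ and $v$ would have three common neighbours). Since the valency is at least $4$, there is a fourth vertex $y\in\Gamma(u_0)\setminus\{u_1,w,x\}$. Both $\{u_0,u_1,u_2,u_3,w\}$ and $\{u_0,u_1,u_2,u_3,y\}$ induce $P_5$, so $5$-CH gives $g\in\Aut(\Gamma)$ fixing each $u_i$ with $w^g=y$. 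But then $g$ fixes $\Gamma(u_1)\cap\Gamma(u_3)=\{u_2,v\}$, hence $v$, hence $\Gamma(u_0)\cap\Gamma(v)=\{u_1,w\}$, hence $w$ --- a contradiction. This single uniqueness chain handles all diameters at once and is exactly the missing idea in your $d=2$ case.
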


\begin{proof}
Let $G:=\Aut(\Gamma)$. Since $\Gamma$ is $5$-CH, the parameters $c_2$ and $c_3$ are defined. 

Suppose that $\diam(\Gamma)\geq 4$.  There exists a geodesic $(u_0,\ldots,u_{4})$ in $\Gamma$. Note that $c_2\leq c_3$ since $\Gamma(u_{1})\cap \Gamma(u_3)\subseteq \Gamma(u_3)\cap \Gamma_{2}(u_0)$.  If $c_2<c_3$, then there exists $v\in \Gamma_{2}(u_0)$ such that $v$ is adjacent to $u_3$ but not $u_{1}$, so there exists $g\in G$ such that $u_{4}^g=v$ and $u_j^g=u_j$ for $0\leq j\leq 3$, a contradiction. Thus (i) holds.

Suppose that $\diam(\Gamma)\geq 3$ and $\Gamma$ has valency at least $4$, but $c_2=2$.
 There exists a geodesic $(u_0,u_1,u_2,u_3)$ in $\Gamma$. Since $d_\Gamma(u_1,u_3)=2$ and $c_2=2$, there exists a unique $v\in \Gamma_2(u_0)\setminus\{u_2\} $ such that $v$ is adjacent to $u_1$ and $u_3$. Similarly, there exists a unique $w\in \Gamma(u_0)\setminus \{u_1\}$ such that $w$ is adjacent to $u_0$ and $v$, and there exists a unique $x\in \Gamma(u_0)\setminus \{u_1\}$ such that $x$ is adjacent to $u_0$ and $u_2$. Note that $x\neq w$, or else $u_2$ and $v$ are vertices at distance two with common neighbours $u_1$, $w$ and $u_3$, a contradiction. Since $\Gamma$ has valency at least $4$, there exists $y\in \Gamma(u_0)\setminus \{u_1,x,w\}$. Since $\Gamma$ is $5$-CH, there exists $g\in \Aut(\Gamma)$ such that $u_i^g=u_i$ for $0\leq i\leq 3$ and $w^g=y$. But then $v^g=v$, so $w^g=w$, a contradiction. Thus (ii) holds.
\end{proof}

\begin{thm}
\label{thm:girth4plus}
Let $\Gamma$ be a finite connected $5$-CH graph with girth $4$ and valency $n$. Then $\Gamma$ is one of the following.
\begin{itemize}
\item[(i)] $K_{n,n}$ where $n\geq 2$.
\item[(ii)]  $K_2\times K_{n+1}$  where $n\geq 3$.
\item[(iii)] The folded $5$-cube $\Box_5$. 
\end{itemize}
In particular, $\Gamma$ is CH.
\end{thm}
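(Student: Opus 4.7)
The plan is to split the analysis by the diameter of $\Gamma$. As a setup, observe that since $\Gamma$ is $5$-CH with girth $4$, it has $\lambda=0$, so by Lemma~\ref{lemma:3CH} it is locally $n\cdot K_1$. By Lemma~\ref{lemma:localhom} the local group $G_u^{\Gamma(u)}$ is $4$-homogeneous, and since $\Gamma(u)$ is edgeless this means it is $4$-transitive on the $n$ neighbours; by Theorem~\ref{thm:4trans} it must be $S_n$, $A_n$ (for $n\geq 6$), or one of the Mathieu groups $M_n$ for $n\in\{11,12,23,24\}$.

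In the case $\diam(\Gamma)=2$, $\Gamma$ is strongly regular with parameters $(v,n,0,c_2)$. If $c_2=n$, then any pair of non-adjacent vertices has identical neighbourhoods, so non-adjacency is an equivalence relation and $\Gamma$ is a complete multipartite graph; the condition $\lambda=0$ forces exactly two parts, giving $\Gamma=K_{n,n}$ and case~(i). If $c_2<n$, then $\Gamma$ is neither a disjoint union of complete graphs nor complete multipartite, so by Lemma~\ref{lemma:2hom} $\Aut(\Gamma)$ is primitive of rank $3$; I would then cross-reference the (short) list of  primitive rank-$3$ strongly regular graphs with $\lambda=0$ (namely $\Box_5$, the Gewirtz graph, the $M_{22}$ graph, and the Higman--Sims graph, together with $K_{n,n}$) against the constraint from Theorem~\ref{thm:4trans}. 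The local actions of Gewirtz ($A_6$ on $10$ points), of the $M_{22}$ graph (a subgroup of $\mathrm{AGL}_4(2)$ on $16$ points), and of Higman--Sims ($M_{22}$ on $22$ points) are none of them $4$-transitive, so these are excluded. Only $\Box_5$ remains, and a direct computation (Lemma~\ref{lemma:detkCH}) confirms it is $5$-CH, giving case~(iii).

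In the case $\diam(\Gamma)\geq 3$, I aim to show $\Gamma\simeq K_2\times K_{n+1}$. Fix a $3$-geodesic $(u_0,u_1,u_2,u_3)$, which exists and is unique up to automorphism by $4$-CH. For any $w\in\Gamma(u_3)\setminus\{u_2\}$, the $5$-vertex induced subgraph on $\{u_0,u_1,u_2,u_3,w\}$ is either a ``$C_4$ with a pendant'' (if $w\in\Gamma(u_1)$, forcing $w\in\Gamma(u_1)\cap\Gamma_2(u_0)$) or an induced $P_5$ (if $w\notin\Gamma(u_1)$, in which case $w$ can \emph{a priori} lie in either $\Gamma_2(u_0)$ or $\Gamma_3(u_0)$). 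Since $5$-CH forces $G_{u_0,u_1,u_2,u_3}$ to be transitive on each induced-graph-isomorphism class of such $w$, the distance $d(u_0,w)$ is determined by the isomorphism type of the $5$-subgraph. Hence the $P_5$-case cannot mix the two distance values: I would argue that only $w\in\Gamma_2(u_0)$ occurs, forcing $\Gamma(u_3)\subseteq\Gamma_2(u_0)$ and therefore $c_3=n$. Combined with $\lambda=0$ and the arc-transitivity of $\Gamma$, a further $5$-CH transitivity argument applied to suitable configurations involving two vertices of $\Gamma_3(u_0)$ (a ``Y''-shaped $5$-subgraph centred at $u_2$) shows $|\Gamma_3(u_0)|=1$, so $\Gamma$ is antipodal with antipode classes of size~$2$. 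The intersection array is then forced to be $\{n,n-1,1;1,n-1,n\}$, and distance-regular graphs with this array are known to be the unique graph $K_2\times K_{n+1}$, giving case~(ii).

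The main obstacle is the case $\diam(\Gamma)\geq 3$: specifically, ruling out the possibility that some neighbour of $u_3$ lies in $\Gamma_3(u_0)$ requires leveraging $5$-CH (and not merely $4$-CH) in a delicate way, via the $P_5$ configuration above and its interaction with the $C_4$-with-pendant configuration; the same $5$-CH analysis is also what lets me rule out $\diam(\Gamma)\geq 4$ (since once $\Gamma(u_3)\subseteq \Gamma_2(u_0)$ for every $u_3\in\Gamma_3(u_0)$, no path from $u_0$ exits $\Gamma_3(u_0)$). Finally, the ``in particular, CH'' clause is immediate from Theorem~\ref{thm:CH}, which lists $K_{n,n}$, $K_2\times K_{n+1}$ ($n\geq 3$), and $\Box_5$ as CH.
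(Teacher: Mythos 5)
Your treatment of the diameter-$2$ case is sound, and it is a genuinely different route from the paper's: where the paper feeds the $4$-transitive local action (Theorem~\ref{thm:4trans}) into Cameron's classification results in~\cite{Cam1975}, you instead quote the list of primitive rank-$3$ strongly regular graphs with $\lambda=0$ and eliminate all but $\Box_5$ by comparing vertex-stabiliser orders with $4$-transitivity; both routes rest on the CFSG, so nothing is lost there (though you should cite the rank-$3$ classification explicitly, and note that transitivity on non-adjacent pairs, needed for Lemma~\ref{lemma:2hom}, comes from $3$-CH together with $\diam(\Gamma)=2$). The problem is the case $\diam(\Gamma)\geq 3$, which is exactly the hard half of the theorem, and there your argument has a genuine gap.

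Concretely: the $5$-CH transitivity you invoke shows only that all $P_5$-type extensions $w$ of the fixed $3$-geodesic lie at one common distance from $u_0$ (this ``no mixing'' statement is precisely the content of the paper's Lemma~\ref{lemma:g4}(i)); it does not tell you \emph{which} distance that is. Your pivotal claims --- that this common distance is $2$ (forcing $c_3=n$ and $\diam(\Gamma)=3$) and, later, that $|\Gamma_3(u_0)|=1$ --- are asserted (``I would argue'', ``a further $5$-CH transitivity argument shows'') but never proved, and neither is routine. The scenario in which every $P_5$-type $w$ lies in $\Gamma_4(u_0)$ (equivalently $c_2=c_3$ and $a_3=0$, the combinatorial pattern of the $n$-cubes, folded cubes and Hadamard graphs) and the scenario in which they all lie in $\Gamma_3(u_0)$ (diameter $3$ with $c_3=c_2$ and $a_3=n-c_2>0$) are not excluded by any configuration you exhibit; note also that in this half you never use the $4$-transitivity of the local action at all. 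That hypothesis, via Cameron's Theorems 4.4 and 4.5 of~\cite{Cam1975}, is exactly what the paper needs in order to reduce to an explicit candidate list (cubes, folded cubes, incidence graphs of the $2$-$(7,4,2)$ and $2$-$(11,5,2)$ designs, Hadamard graphs), after which the elementary Lemma~\ref{lemma:g4} finishes the job. As written, your diameter-$\geq 3$ case is therefore a plan rather than a proof: to repair it you must either supply genuine arguments for the two asserted steps (and it is not evident that bare $5$-CH transitivity can deliver them), or follow the paper and invoke the classification of graphs whose local action is $4$-transitive.
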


\begin{proof}
Let  $G:=\Aut(\Gamma)$. Let $u\in V\Gamma$ and $H:=G_u^{\Gamma(u)}$. Since $\Gamma$ has girth $4$ and valency $n$, it is locally $n\cdot K_1$, and $c_2\geq 2$. If $\Gamma$ is one of the graphs listed in (i), (ii) or (iii), then $\Gamma$ is CH by Theorem~\ref{thm:CH}, and the proof is complete, so we assume otherwise. Let $r:=\min(n,4)$. Now $H$ is  an $r$-transitive permutation group of degree $n$, so by Theorem \ref{thm:4trans}, either $A_n\unlhd H$, or $H\simeq\M_n$ where $n\in \{11,12,23,24\}$.

First suppose that $A_n\unlhd H$. By~\cite[Theorem 4.5]{Cam1975},  $\Gamma$ is one of the following:  the $n$-cube $Q_n$ where $n\geq 4$ (since $Q_2\simeq K_{2,2}$ and $Q_3\simeq K_2\times K_4$), the folded $n$-cube $\Box_n$ where $n\geq 6$ (since $\Box_4\simeq K_{4,4}$), the incidence graph of the unique $2$-$(7,4,2)$ design, or the incidence graph of the unique $2$-$(11,5,2)$ design. In each case, $\diam(\Gamma)\geq 3$ and $n\geq 4$, but $c_2=2$, contradicting Lemma~\ref{lemma:g4}(ii). 

Thus $H\simeq \M_n$ where $n\in \{11,12,23,24\}$. In particular, $\Gamma$ does not have valency $2^m$ or $(m+1)(m^2+5m+5)$ for any positive integer $m$,  so~\cite[Theorem 4.4]{Cam1975} implies that either $\Gamma$ is a Hadamard graph, or $c_2=2$. If $\Gamma$ is a Hadamard graph, then it is distance-transitive with intersection array $\{2a,2a-1,a,1;1,a,2a-1,2a\}$ for some positive integer $a$ (see~\cite[\S 1.8]{BroCohNeu1989}),  but then $c_2\neq c_3$, contradicting Lemma~\ref{lemma:g4}(i). Thus $c_2=2$, so $\diam(\Gamma)=2$ by Lemma~\ref{lemma:g4}(ii). Now the polynomial $X^2+2X+(2-n)$ has integer roots by~\cite[Theorem 1.3.1]{BroCohNeu1989}, a contradiction. 
 \end{proof}
 
 \begin{proof}[Proof of Theorem $\ref{thm:girth4}$]
 By Remark~\ref{remark:connected}, we may assume that $\Gamma$ is connected. 
Then Theorem~\ref{thm:girth4plus} implies that (i) holds. Using Lemma~\ref{lemma:detkCH}, it is routine to verify that the $n$-cube is $4$-CH for $n\geq 4$, as is the folded $n$-cube  for $n\geq 6$, so (ii) holds.
 \end{proof}

 \section{Graphs with girth at least $5$}
\label{s:girth5}
 
In this section, we prove several results about the structure of finite $k$-CH graphs with girth at least $5$, and we then use these results to prove  Theorem \ref{thm:girth5}. We begin with a result about $7$-arc-transitive graphs, which requires the following standard definition.

Let $G$ be a group with a subgroup $H$ and element $a$ such that $a^2\in H$. Define $\Gamma(G,H,a)$ to be the graph whose vertices are the right cosets of $H$ in $G$, where two cosets $Hx$ and $Hy$ are adjacent whenever $xy^{-1}\in HaH$. The action of $G$ on $V\Gamma(G,H,a)$ by right multiplication induces an arc-transitive group of automorphisms. Conversely, if $\Gamma$ is a $G$-arc-transitive  graph with no isolated vertices, then for an arc $(u,v)$, there exists $g\in G$ such that $u^g=v$ and $v^g=u$, and  $\Gamma\simeq \Gamma(G,G_u,g)$ (see~\cite[Proposition~A.3.1]{BroCohNeu1989}).

Weiss~\cite{Wei1987} proved that there exists a finitely presented infinite group $R_{4,7}$  such that the automorphism group $G$ of any finite connected quartic $7$-arc-transitive graph   is a homomorphic image of $R_{4,7}$. Conder and Walker~\cite{ConWal1998} defined an equivalent presentation for $R_{4,7}$ in order to prove that there are infinitely many  finite connected quartic $7$-arc-transitive graphs. We use their presentation of $R_{4,7}$ to prove the following. 

\begin{prop}
\label{prop:7trans}
Let $\Gamma$ be a finite connected $7$-arc-transitive graph with valency $4$. Then $\Gamma$ is $6$-CH but not $7$-CH.
\end{prop}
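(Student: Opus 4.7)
The plan is to leverage the girth bound: since $\Gamma$ has valency $4 \geq 3$ and is 7-arc-transitive, Lemma \ref{lemma:girth} gives girth at least $12$, so every connected induced subgraph of $\Gamma$ of order at most $11$ is a tree. Writing $G := \Aut(\Gamma)$, I would use Lemma \ref{lemma:detkCH} inductively for $k = 2, 3, \ldots, 7$, starting from $(G, 2)$-CH (immediate from arc-transitivity). At each step, for a tree $\Delta$ of order $k$ obtained from a tree $\Sigma$ of order $k - 1$ by appending a leaf $u_0$ adjacent to $v_0 \in V\Sigma$, the girth bound forces the set $X$ of Lemma \ref{lemma:detkCH} to equal $\Gamma(v_0) \setminus V\Sigma$ (any $u \in \Gamma(v_0) \setminus V\Sigma$ adjacent to some $w \in V\Sigma \setminus \{v_0\}$ would yield a cycle of length at most $k \leq 7$). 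Thus the $(G, k)$-CH question reduces uniformly to asking whether $G_{V\Sigma}$ acts transitively on $\Gamma(v_0) \setminus V\Sigma$ for every such $\Sigma$ and every $v_0 \in V\Sigma$ of degree $d < 4$ in $\Sigma$.

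To establish 6-CH, I would combine Weiss's results on 7-arc-transitive graphs of valency $4$ with Conder and Walker's explicit presentation of the universal group $R_{4,7}$. Since $G$ is a finite quotient of $R_{4,7}$, the iterated stabilizer chain $G_{v_0} \geq G_{v_0, v_1} \geq \cdots \geq G_{v_0, v_1, \ldots, v_6}$ has the structure prescribed by $R_{4,7}$, and in particular the local action $L := G_{v_0}^{\Gamma(v_0)}$ is forced to be $S_4$. Writing $Y := \Gamma(v_0) \cap V\Sigma$, the image of $G_{V\Sigma}$ in $L$ lies in $L_Y \cong S_{4 - |Y|}$. The crucial point is that for every tree $\Sigma$ of order at most $5$, this image in fact equals $L_Y$: fixing vertices of $\Sigma$ at distance at least $2$ from $v_0$ imposes no extra constraint on the local action at $v_0$. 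This should be read off from the Conder-Walker presentation by tracking how pointwise tree-stabilizers embed in the characteristic chain of $G_{v_0}$. Since $L_Y$ is transitive on $\Gamma(v_0) \setminus Y$, 6-CH follows.

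For the complementary claim that $\Gamma$ is not 7-CH, I would pinpoint a tree of order $7$ on which the corresponding transitivity fails. The natural candidate is to take $\Sigma$ to be a 5-arc $(v_0, v_1, v_2, v_3, v_4, v_5)$ and attempt to attach a seventh vertex $u_0$ to an interior vertex, say $v_2$, of degree $2$ in $\Sigma$. The 7-CH requirement is that $G_{V\Sigma}$ act transitively on the 2-element set $\Gamma(v_2) \setminus \{v_1, v_3\}$, equivalently that the image of $G_{V\Sigma}$ in $L_{v_2}$ equal $(L_{v_2})_{v_1, v_3} \cong S_2$ rather than the identity. The Weiss bound $s \leq 7$---the very reason 8-arc-transitivity fails---constrains the pointwise stabilizer of a 5-arc in $R_{4,7}$ to lie deep in the characteristic series of $G_{v_2}$, and combined with the Conder-Walker relations this image should be trivial, so the transitivity fails and $\Gamma$ is not 7-CH.

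The hardest step is the explicit comparison between $G_{V\Sigma}$ and its local image: showing that this image is the full $L_Y$ for every tree $\Sigma$ of order at most $5$, but degenerates to a proper subgroup (indeed the trivial group) for the 5-arc with interior attachment at order $6$. Both claims reduce to unwinding the Conder-Walker presentation of $R_{4,7}$, and this accounting is precisely where the force of Weiss's bound $s \leq 7$ enters the argument.
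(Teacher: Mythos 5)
Your reduction is exactly the paper's: Lemma~\ref{lemma:girth} gives girth at least $12$, so connected induced subgraphs of order at most $7$ are trees, and for a tree $\Sigma$ of order at most $6$ the set $X$ of Lemma~\ref{lemma:detkCH} is $\Gamma(v_0)\setminus V\Sigma$; the proposition thereby becomes a family of transitivity statements about pointwise tree-stabilisers in a quotient of $R_{4,7}$. The genuine gap is that you never establish either of the two statements on which everything turns, and the justification offered in their place is not an argument. That the pointwise stabiliser of every tree of order at most $5$ acts transitively (you even claim: with full local image) on the remaining neighbours of each attachment vertex, and that the pointwise stabiliser of a $5$-arc induces nothing on the two off-arc neighbours of $v_2$, are precisely the content of the proposition; neither follows from Weiss's bound $s\leq 7$, which by itself says nothing about how a $5$-arc stabiliser acts at an interior vertex. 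Worse, the heuristic ``fixing vertices at distance at least $2$ from $v_0$ imposes no extra constraint'' would, if it proved anything, equally predict failure in order-$6$ configurations where transitivity does hold and is needed for $6$-CH (for instance, the pointwise stabiliser of a $4$-arc does interchange the two off-arc neighbours of an interior vertex), so the boundary between orders $6$ and $7$ cannot be located by soft structural reasoning. The paper locates it by explicit computation in the Conder--Walker presentation: for $6$-CH it exhibits, for each of the seven non-path trees of order at most $6$ in Figure~\ref{fig:6CH}, an explicit element of the relevant pointwise stabiliser (e.g.\ $r^{-1}s^{-1}t^{-1}uv$ or $(t^{-1}h^2)(t^{-1}uv)$) mapping $Hbhu^{-1}$ to $Hbhv$; and for the failure of $7$-CH it computes from \cite[\S 2]{ConWal1998} the order-$108$ pointwise stabiliser $T=\langle p,q,r,t^{-1}h^2,s^{-1}t^{-1}uv\rangle$ of the $5$-vertex subtree $V\Delta\setminus\{Hbhv,Hbhbhv\}$, identifies the order-$54$ subgroup $S=\langle p,q,r,t^{-1}h^2\rangle\simeq C_3^3{:}C_2$ fixing the order-$6$ subtree pointwise, and checks that $S$ fixes $Hbhbhv$, hence is intransitive on the relevant $3$-element set.

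Two further cautions. First, your candidate obstruction (a $5$-arc with the seventh vertex attached at the vertex $v_2$ at distance two from one end) is closely related to, and may well coincide with, the tree of Figure~\ref{fig:not7CH}, but you give no verification that this particular configuration fails; in the paper's verified obstruction the offending stabiliser is a nontrivial group of order $54$ that merely fails to be transitive, so ``the image should be trivial'' is a guess that must be replaced by a computation in the presentation (or a machine check in the spirit of Remark~\ref{remark:magma}), and if your configuration happened to be one of the transitive ones, the negative half of your proof would collapse. Second, for $6$-CH you only need the local image to be transitive on $\Gamma(v_0)\setminus Y$, not to equal the full stabiliser $L_Y$; asserting equality is both stronger than necessary and unproved.
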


\begin{proof}
Let $G:=\Aut(\Gamma)$. By~\cite{Wei1987,ConWal1998}, $G=\langle h,p,q,r,s,t,u,v,b\rangle$ where $h$ has order $4$, the elements $p,q,r,s,t,u$ and $hu$ have order $3$, the elements $v,b,uv$ and $huv$ are involutions, and the relations given for the definition of $R_{4,7}$ on~\cite[p.\ 622]{ConWal1998} are satisfied; we will use these relations throughout this proof without reference.  Further, $\Gamma\simeq \Gamma(G,H,b)$ where $H:=\langle h,p,q,r,s,t,u,v\rangle$. We may assume  that $\Gamma=\Gamma(G,H,b)$.  By~\cite[\S 2]{ConWal1998}, $H=\langle p,q,r,s,t\rangle{:}\langle h,u,v\rangle$  where $\langle p,q,r,s,t\rangle$ has order $3^5$ and $\langle h,u,v\rangle\simeq \GL_2(3)$, and the group $K:=\langle h^2,p,q,r,s,t,u,v\rangle=H\cap b^{-1}Hb$ is the stabiliser of the arc $(H,Hb)$ and has index $4$ in $H$. Thus the set of cosets of $K$ in $H$ is 
$$\{K,Kh^{-1}=Kh,Khv=Khu,Khu^{-1}\},$$ 
and for $x\in G$ and any transversal $T$ of $K$ in $H$, the neighbourhood $\Gamma(Hx)=\{Hbyx : y\in T\}$.   

 By Lemma~\ref{lemma:girth}, $\Gamma$ has girth at least $12$, so every connected  induced subgraph of $\Gamma$ of order at most $11$ is a tree. In particular, if $\Sigma$ is a connected induced subgraph of $\Gamma$ of order at most $10$, then any vertex in $V\Gamma\setminus V\Sigma$ is adjacent to at most one vertex in $V\Sigma$.

First we show that $\Gamma$ is not $7$-CH. Consider the induced subgraph $\Delta$ of $\Gamma$ in Figure~\ref{fig:not7CH}. (Note that $Hbhbh^{-1}$ is adjacent to $Hbh$ since $Hbh=Hbh^{-1}$.)
\begin{figure}
\includegraphics[page=3,height=2.8cm]{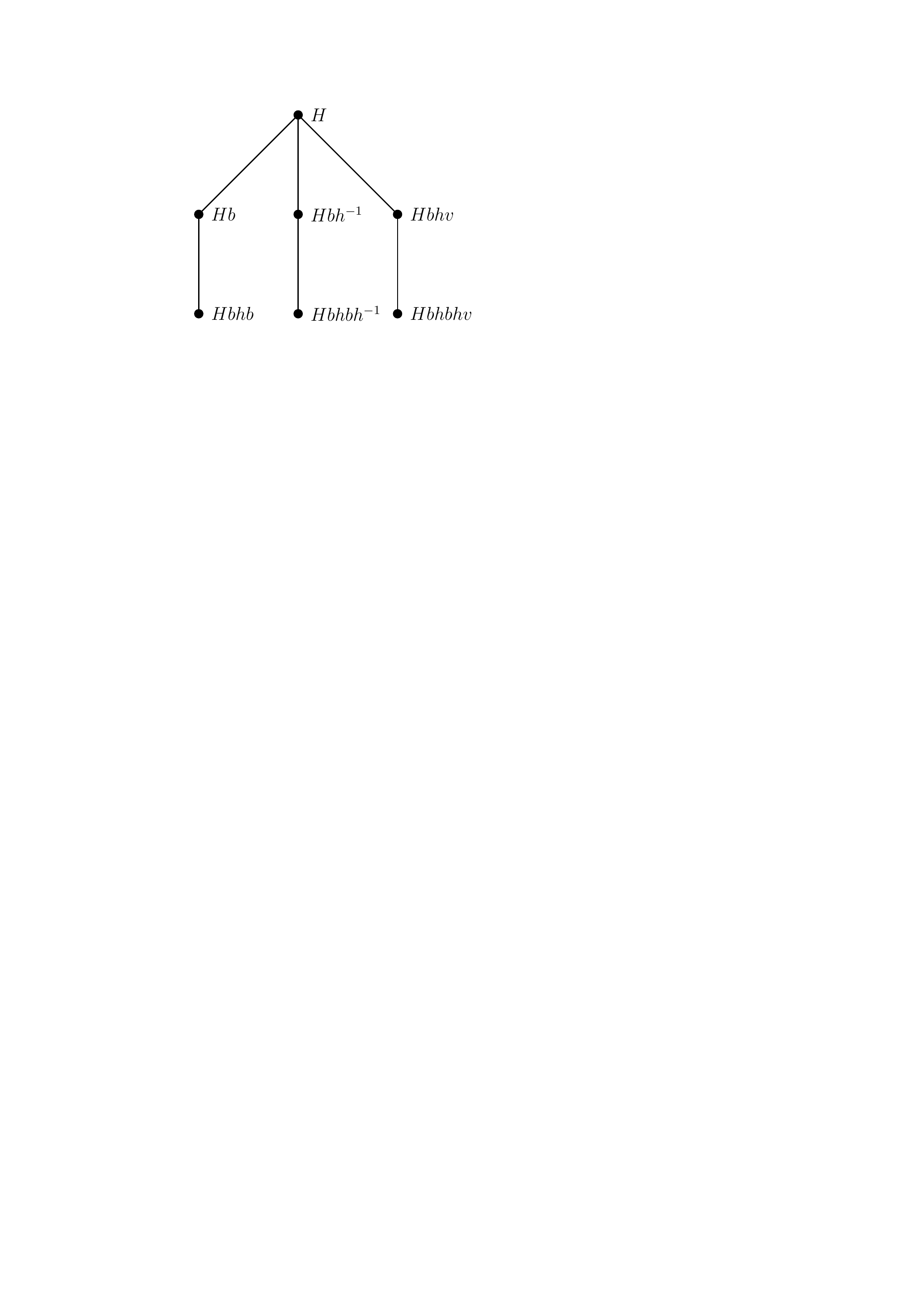}
\caption{An obstruction  for $7$-connected-homogeneity}
\label{fig:not7CH}
\end{figure}
Let $S$ be the pointwise stabiliser in $G$ of $V\Delta\setminus \{Hbhbhv\}$. To show that $\Gamma$ is not $7$-CH, it suffices to show that $S$ is not transitive on $\Gamma(Hbhv)\setminus \{H\}$. Let $T$ be the pointwise stabiliser of  $V\Delta\setminus \{Hbhv,Hbhbhv\}$. By~\cite[\S 2]{ConWal1998}, $T=\langle p,q,r,t^{-1}h^2,s^{-1}t^{-1}uv\rangle$ and $|T|=108$. Now $s^{-1}t^{-1}uv$ maps  $Hbhu^{-1}$ to $Hbhv$, so $T$ is transitive on $\Gamma(H)\setminus \{Hb,Hbh\}=\{Hbhv,Hbhu^{-1}\}$, while $S=T_{Hbhv}$, so $[T:S]=2$. Since $p$, $q$ and $r$ have odd order, they must be elements of $S$. It is routine to verify that the involution $t^{-1}h^2\in S$ and $\langle p,q,r,t^{-1}h^2\rangle\simeq C_3^3{:}C_2$, so $S=\langle p,q,r,t^{-1}h^2\rangle$. Now $S$ fixes the vertex $Hbhbhv\in \Gamma(Hbhv)\setminus \{H\}$, so $\Gamma$ is not $7$-CH.

Next we prove that $\Gamma$ is $6$-CH. Since $\Gamma$ is $7$-arc-transitive, we only need to consider those connected induced subgraphs of $\Gamma$ of order at most $6$ that are not path graphs; these are described in Figure~\ref{fig:6CH}.   
For  each  induced subgraph $\Delta$ of $\Gamma$ in Figure~\ref{fig:6CH},
\begin{figure}
\includegraphics[page=2,height=10cm]{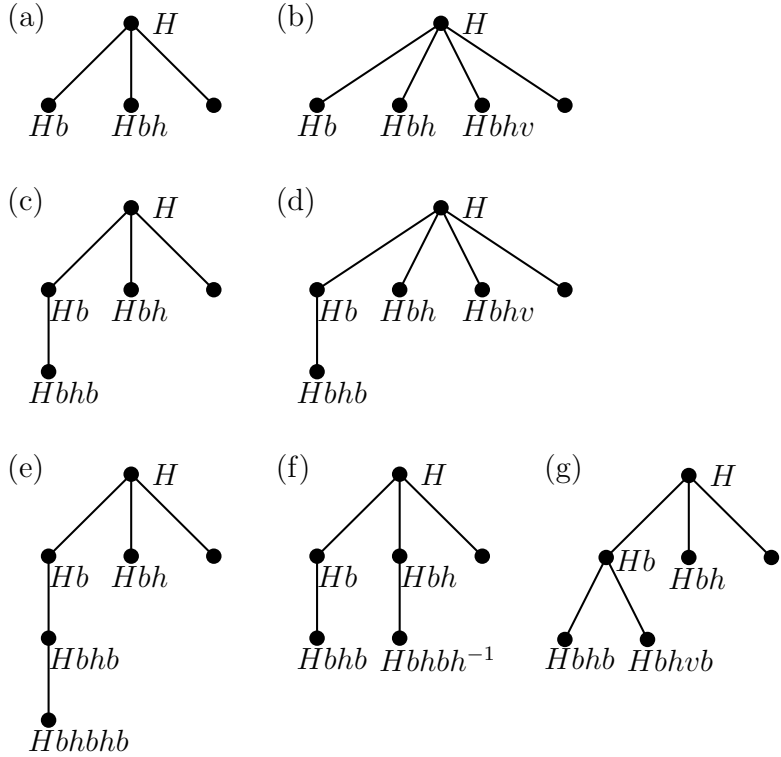}
\caption{The graphs for $6$-connected-homogeneity}
\label{fig:6CH}
\end{figure}
the vertices of some connected induced subgraph $\Sigma$  of order $|V\Delta|-1$ have been labelled with elements of $V\Gamma$. To show that $\Gamma$ is $6$-CH, by Lemma~\ref{lemma:detkCH}, it suffices to verify the following: for  each   $\Delta$  in Figure~\ref{fig:6CH}, the pointwise stabiliser $P$ in $G$ of $V\Sigma$ is transitive on $\Gamma(H)\setminus V\Sigma$. Cases (b) and (d) are trivial since $|\Gamma(H)\setminus V\Sigma|=1$. In all remaining cases, $\Gamma(H)\setminus V\Sigma=\{Hbhv,Hbhu^{-1}\}$. For cases (a), (c), (e) and (f),  the element $r^{-1}s^{-1}t^{-1}uv$ lies in $P$ and maps $Hbhu^{-1}$ to $Hbhv$. In case (g), the element $(t^{-1}h^2)(t^{-1}uv)$ lies in $P$ and maps $Hbhu^{-1}$ to $Hbhv$.
\end{proof}

\begin{remark}
If $\Gamma$  is the incidence graph of the split Cayley hexagon of order $(3,3)$---i.e., the generalised hexagon associated with the  group $G_2(3)$---then $\Gamma$ is $7$-arc-transitive with valency~$4$~\cite{Wei1987}, so $\Gamma$ is $6$-CH but not $7$-CH by Proposition~\ref{prop:7trans}.
\end{remark}

In order to prove Theorem~\ref{thm:girth5}, we first establish two more detailed results: Theorem~\ref{thm:girth5diam2} concerns those graphs with  diameter $2$, while Theorem~\ref{thm:girth5strong} concerns those graphs with diameter at least $3$. 
Observe that if $\Gamma$ is a finite connected graph with valency $2$, then $\Gamma\simeq C_n$ for some $n$, so in what follows, we focus on the case where $\Gamma$ has valency at least $3$.

First we consider the case where $\Gamma$ has diameter $2$. The Hoffman-Singleton graph is  a strongly regular graph with parameters $(50,7,0,1)$ and girth $5$; it has automorphism group $\PSigmaU_3(5)$ and point stabiliser $S_7$. See \cite[\S 13.1]{BroCohNeu1989} for several constructions of this graph. 

\begin{thm}
\label{thm:girth5diam2}
Let $\Gamma$ be a finite connected graph with girth at least $5$, valency at least $3$, and diameter $2$. If $\Gamma$ is $3$-CH, then  one of the following holds.
\begin{itemize}
\item[(i)] $\Gamma$ is   the Petersen graph. Here $\Gamma$ is CH.
\item[(ii)] $\Gamma$ is the Hoffman-Singleton graph. Here $\Gamma$ is $5$-CH but not $6$-CH.
\end{itemize}
\end{thm}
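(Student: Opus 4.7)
The plan is to reduce the problem to Moore graphs of diameter $2$ and then dispose of the three surviving valencies.

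Since $\Gamma$ is $3$-CH it is vertex- and arc-transitive, hence regular. The girth hypothesis forbids triangles (so any two adjacent vertices have no common neighbour) and forbids $4$-cycles (so any two distinct non-adjacent vertices share at most one neighbour), while $\diam(\Gamma)=2$ guarantees at least one such common neighbour between any two non-adjacent vertices. Thus $\Gamma$ is strongly regular with parameters $(n,k,0,1)$, i.e.\ a Moore graph of diameter~$2$.

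By the classical Hoffman-Singleton theorem, the valency satisfies $k\in\{2,3,7,57\}$; the hypothesis $k\geq 3$ removes $k=2$. A hypothetical Moore graph of valency $57$ on $3250$ vertices, if it exists, has an automorphism group small enough (by known bounds on $|\Aut(\Gamma)|$ for any such graph) that it cannot be vertex-transitive, hence cannot be $3$-CH. In the remaining cases $k=3$ and $k=7$ the Moore graph is unique, yielding the Petersen graph and the Hoffman-Singleton graph, respectively.

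The Petersen graph is recorded as CH in Theorem~\ref{thm:CH}(vi), giving conclusion (i). For the Hoffman-Singleton graph $\Gamma$, one has $\Aut(\Gamma)\simeq \PSigmaU_3(5)$ with point stabiliser $S_7$ acting naturally on the seven neighbours of a vertex; I would then apply Lemma~\ref{lemma:detkCH} coset-by-coset to each isomorphism type of connected induced subgraph of order up to $6$, using a routine {\sc Magma} computation as described in Remark~\ref{remark:magma}, to certify the $5$-CH property and exhibit a failure of $6$-CH. This yields (ii).

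The main obstacle is the $k=57$ case: a separate (known but nontrivial) argument bounding $|\Aut(\Gamma)|$ for any putative $57$-regular Moore graph must be invoked to rule it out as a $3$-CH graph. The other steps are essentially classical Moore-graph theory together with a finite verification.
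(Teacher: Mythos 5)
Your proposal is correct, and its skeleton matches the paper's: reduce to Moore graphs of diameter $2$ via the strongly regular parameters $(n,k,0,1)$, invoke the Hoffman--Singleton valency restriction $k\in\{2,3,7,57\}$, use uniqueness at $k=3$ and $k=7$ to land on the Petersen and Hoffman--Singleton graphs, and settle the $5$-CH/not-$6$-CH claims for the latter by a computation in the spirit of Lemma~\ref{lemma:detkCH} and Remark~\ref{remark:magma} (the paper does exactly this, additionally pinpointing the failing configuration as a tree on $6$ vertices with two vertices of valency $3$). Where you genuinely diverge is the exclusion of a putative Moore graph of valency $57$: you propose to rule it out as a $3$-CH graph by citing the known bound on the order of its automorphism group (Higman's unpublished argument, or the Ma\v{c}aj--\v{S}ir\'a\v{n} bound $|\Aut(\Gamma)|\leq 375$), which forbids vertex-transitivity. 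The paper instead observes that a $3$-CH graph of diameter $2$ is distance-transitive (equivalently, its automorphism group has rank $3$) and then appeals to Aschbacher~\cite{Asc1971}, who showed no rank-$3$ group of degree $3250$ with subdegree $57$ exists. Both routes are legitimate and both lean on a nontrivial external theorem; yours uses only vertex-transitivity, so it is formally a weaker symmetry hypothesis, but it imports a result that postdates and is independent of the rank-$3$ machinery, whereas the paper's citation is the classical one tailored to the distance-transitive setting that $3$-connected-homogeneity provides for free. Provided you make the reference to the automorphism-group bound explicit rather than leaving it as ``known bounds'', your argument is complete.
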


\begin{proof}
Since $\Gamma$ has girth at least $5$, it is locally $(t+1)\cdot K_1$ for some $t\geq 2$, and $c_2(\Gamma)=1$.  Since $\Gamma$ has diameter $2$,  it follows that $a_2(\Gamma)=t$. In particular, $\Gamma$ has girth $5$, so $\Gamma$ is a Moore graph (see \cite[\S 6.7]{BroCohNeu1989}). Since $\Gamma$ is $3$-CH, it is distance-transitive, so  either $t=2$ and $\Gamma$ is the Petersen graph, in which case (i) holds by Theorem~\ref{thm:CH}, or $t=6$ and $\Gamma$ is the Hoffman-Singleton graph~\cite{Asc1971,HofSin1960}.

Suppose that $\Gamma$ is the Hoffman-Singleton graph. Let $u\in V\Gamma$, write $\Gamma(u)=\{v,x,x_1\ldots,x_5\}$, and let $w_1,w_2\in \Gamma_2(u)\cap\Gamma(v)$. Let $\Delta_i$ be the subgraph of $\Gamma$ induced by $\{u,v,x,w_1,w_2,x_i\}$ for $i\in \{1,\ldots,5\}$. For each $i$, the graph $\Delta_i$ is isomorphic to the tree on $6$ vertices with two vertices of valency $3$. However, using {\sc  Magma}~\cite{Magma}, we determine that there exists $i\in \{1,\ldots,5\}$ such that the pointwise stabiliser in $\Aut(\Gamma)$ of $\{u,v,x,w_1,w_2\}$ also fixes $x_i$, so $\Gamma$ is not $6$-CH. Using~\cite{Magma,web-atlas}, it is routine to verify that $\Gamma$ is $5$-CH (see Remark~\ref{remark:magma}).
 \end{proof}

Next we consider the case where $\Gamma$ has diameter at least $3$. For $n\geq 4$, the odd graph $O_n$  (see~\cite[\S 9.1D]{BroCohNeu1989}) has girth $6$, valency $n$ and diameter $n-1$, and this graph is  $4$-CH but  not $5$-CH, as it is not $4$-arc-transitive. On the other hand,  we will see shortly that the valency of a $5$-CH graph with girth at least $5$ is very restricted. We will also see that the only  $5$-CH graphs with girth at least $5$ and diameter $3$ are the incidence graphs of the projective planes $\PG_2(q)$ for $2\leq q\leq 4$. Note that   the incidence graph of the Fano plane $\PG_2(2)$ is often called the Heawood graph. Recall the definition of an $s$-transitive graph from \S\ref{s:defn}.

\begin{thm}
\label{thm:girth5strong}
Let $\Gamma$ be a finite connected $5$-CH graph with girth at least $5$, valency $n\geq 3$, and $\diam(\Gamma)\geq 3$.  Then $3\leq n\leq 5$,  $\Gamma$ is  $4$-arc-transitive, and  one of the following holds.
\begin{itemize}
\item[(i)] $\diam(\Gamma)=3$ and $\Gamma$ is the incidence graph of the projective plane $\PG_2(q)$ for $2\leq q\leq 4$. Here $\Gamma$ is $6$-CH but not $7$-CH.
\item[(ii)] $\diam(\Gamma)=4$ and $\Gamma$ is the incidence graph of the generalised quadrangle $W_3(q)$ for $q=2$ or $4$. Here $\Gamma$ is $6$-CH but not $7$-CH.
\item[(iii)] $\diam(\Gamma)\geq 5$ and $\Gamma$ is not $7$-CH. Further, if $\Gamma$ is $6$-CH, then  either $\Gamma$ is $5$-transitive and $n=3$ or $5$, or $\Gamma$ is $7$-transitive and $n=4$.
\end{itemize}
\end{thm}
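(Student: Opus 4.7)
The approach is to use the $5$-CH hypothesis to force $4$-transitivity of the local action, upgrade this to $4$-arc-transitivity with girth at least $6$, and then combine Lemma~\ref{lemma:c2=1}, Weiss's $s$-arc-transitivity bound, and classical classifications in each diameter range.  Throughout let $G:=\Aut(\Gamma)$ and fix $v\in V\Gamma$.  Since $\Gamma$ has girth $\geq 5$ and $n\geq 3$, its local graph is $n\cdot K_1$ and $c_2(\Gamma)=1$.  By Lemma~\ref{lemma:localhom} applied with $k=5$, the local action $G_v^{\Gamma(v)}$ is $4$-homogeneous on the edgeless graph, hence $4$-transitive on $\Gamma(v)$; Theorem~\ref{thm:4trans} then constrains it to contain $A_n$ (for $n\geq 4$), to equal a Mathieu group $\M_n$ with $n\in\{11,12,23,24\}$, or to equal $S_3$ when $n=3$.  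Every $3$-arc is an induced $P_4$ in girth $\geq 5$, so $4$-CH already yields $3$-arc-transitivity.

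The next step is to force girth $\geq 6$.  Suppose the girth were $5$: a $5$-cycle $(v_0,v_1,v_2,v_3,v_4)$ contains the $3$-arc $(v_0,v_1,v_2,v_3)$, whose endpoints lie at distance $2$ via $v_4$.  By $3$-arc-transitivity every $3$-arc has its endpoints at distance $2$; but since $\diam(\Gamma)\geq 3$, any geodesic of length $3$ is a $3$-arc with endpoints at distance $3$, a contradiction.  Hence the girth is at least $6$, every $4$-arc is an induced $P_5$, and $5$-CH gives $4$-arc-transitivity.  By Weiss's theorem, $\Gamma$ is then $s$-arc-transitive for some $s\leq 7$, and $s=7$ forces $n=3^e+1$.

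The three diameter cases now follow.  For $\diam(\Gamma)=3$, Lemma~\ref{lemma:c2=1}(i),(iii) gives distance-transitivity with $c_1=c_2=1$, and with girth $\geq 6$ this identifies $\Gamma$ as the incidence graph of a distance-transitive projective plane $\PG_2(q)$; the $4$-transitive local action, embedded in $\PGammaL_2(q)$ on the $q+1$ points of a line, forces $q\in\{2,3,4\}$, and each resulting incidence graph is verified $6$-CH but not $7$-CH (via Proposition~\ref{prop:7trans} when $q=3$, otherwise via Lemma~\ref{lemma:detkCH}), giving (i).  For $\diam(\Gamma)=4$ the analogous argument via Lemma~\ref{lemma:c2=1} and Theorem~\ref{thm:GQ} shows $\Gamma$ is the incidence graph of a distance-transitive generalised quadrangle; the self-duality required for bipartite distance-transitivity, together with the $4$-transitive local action on the $q+1$ points of a line, restricts the parameters to $W_3(q)$ with $q\in\{2,4\}$, giving (ii).  For $\diam(\Gamma)\geq 5$, $\Gamma$ is $4$-arc-transitive with girth $\geq 6$ but not in general distance-transitive; Weiss's bound, the $4$-transitive local action, and the classifications of $s$-arc-transitive graphs force $n\in\{3,4,5\}$.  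Proposition~\ref{prop:7trans} supplies the $(s,n)=(7,4)$ case as $6$-CH but not $7$-CH; parallel stabiliser arguments confine any $6$-CH graph here to $(s,n)\in\{(5,3),(5,5),(7,4)\}$, and in every case a $7$-vertex induced subgraph analogous to the configuration in the proof of Proposition~\ref{prop:7trans} witnesses the failure of $7$-CH.

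The main obstacle is the $\diam\geq 5$ case: no explicit classification of the relevant graphs is available, so ruling out valencies $n\geq 6$ and establishing non-$7$-CH across all $6$-CH candidates requires delicately combining Weiss-type structural bounds on $s$-arc-transitive graphs with the $4$-transitive local action and orbit analysis mirroring the configuration from Proposition~\ref{prop:7trans}.  The girth-$5$ exclusion, by contrast, is a clean consequence of $3$-arc-transitivity and the geometry of the $5$-cycle, as sketched above.
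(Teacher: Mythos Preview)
Your overall architecture is close to the paper's, but there are two concrete gaps and one organisational issue worth flagging.

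\textbf{Misapplication of Proposition~\ref{prop:7trans}.} You invoke Proposition~\ref{prop:7trans} to show that the incidence graph of $\PG_2(3)$ is $6$-CH but not $7$-CH. That proposition applies only to \emph{$7$-arc-transitive} quartic graphs, and by Lemma~\ref{lemma:girth} any such graph has girth at least $12$. The incidence graph of $\PG_2(3)$ has girth $6$, so Proposition~\ref{prop:7trans} is simply inapplicable. The paper instead gives a direct collineation argument: given a point $p$, three lines $\ell_1,\ell_2,\ell_3$ through $p$, and points $q_1\in\ell_1$, $q_2\in\ell_2$, the unique line through $q_1,q_2$ meets $\ell_3$ in a point $q_3$ fixed by any automorphism fixing $p,\ell_1,\ell_2,\ell_3,q_1,q_2$; this obstructs $7$-CH for all $q\in\{2,3,4\}$ at once, and the same argument carries over to the $W_3(q)$ case in diameter $4$.

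\textbf{The $\diam(\Gamma)\geq 5$ case.} Your ``parallel stabiliser arguments'' and ``$7$-vertex induced subgraph analogous to Proposition~\ref{prop:7trans}'' are not actual arguments, and the latter would require separate obstructions for the $5$-transitive cases $(n=3,5)$ which you do not supply. The paper avoids this entirely. It first notes (via Lemma~\ref{lemma:c2=1}) that $a_i=0$ and $c_i=1$ for $i\leq 3$, so the girth is at least $8$; hence every $5$-arc (respectively $6$-arc) induces a path on $6$ (respectively $7$) vertices. Then $6$-CH forces $5$-arc-transitivity, and $7$-CH forces $6$-arc-transitivity. Since Weiss excludes $s=6$, a $7$-CH graph here is $7$-arc-transitive with $n=4$, contradicting Proposition~\ref{prop:7trans}. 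No explicit obstruction is needed for the $5$-transitive cases, because a $5$-transitive graph cannot be $6$-arc-transitive and hence cannot be $7$-CH.

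\textbf{Order of operations on the valency bound.} The paper establishes $n\in\{3,4,5\}$ \emph{before} splitting by diameter, using the sharp form of Weiss's theorem (for $s\geq 4$ one has $\PSL_2(t)\unlhd G_u^{\Gamma(u)}$ with $t$ a prime power) and comparing composition factors with the $4$-transitive classification. You defer the valency bound to each diameter case, but your diameter-$3$ and diameter-$4$ identifications tacitly rely on the classifications of distance-regular graphs of small valency (BCN Theorems~7.5.1 and~7.5.3), which already presupposes $n\leq 5$. Establishing the valency bound first, as the paper does, removes this circularity.
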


\begin{proof}
  Let $G:=\Aut(\Gamma)$, let $u\in V\Gamma$ and let $H:=G_u^{\Gamma(u)}$. The graph $\Gamma$ is locally $(t+1)\cdot K_1$ for some $t\geq 2$. Now  $H$ is an $r$-transitive permutation group of degree $t+1$   where $r:=\min{(t+1,4)}$, so by Theorem~\ref{thm:4trans},  either $A_{t+1}\unlhd H\leq S_{t+1}$, or  $H$ is the simple group $ \M_{t+1}$ where $t+1\in \{11,12,23,24\}$. Let $(u_0,\ldots,u_4)$ be a $4$-arc. Since $\Gamma$ has diameter at least $3$, it  has girth at least $6$ by Lemma~\ref{lemma:c2=1}(i), so  the subgraph induced by $\{u_0,\ldots,u_4\}$ is a path graph with $5$ vertices. Thus $\Gamma$ is $4$-arc-transitive. In particular, $\Gamma$ is $s$-transitive for some $s\geq 4$, so by~\cite{Wei1981},  $\PSL_2(t)\unlhd H$ and  $t$ is a power of a prime $\ell$ such that either
$s=4$, or $s=2\ell+1$ and $\ell\leq 3$. If $t\geq 5$, then $H$ has a unique non-abelian composition factor, namely $A_{t+1}$ or $M_{t+1}$, but neither of these groups is isomorphic to the simple group $\PSL_2(t)$, a contradiction.  Thus  $t\in\{2,3,4\}$ and $n\in \{3,4,5\}$.

If $\diam(\Gamma)=3$, then $\Gamma$ is  distance-transitive with intersection array $\{t+1,t,t; 1,1,c_3\}$ where $c_3=1$ or $t+1$ by  Lemma~\ref{lemma:c2=1}. We claim that $\Gamma$ is the incidence graph of the projective plane  $\PG_2(t)$. If $t=2$, then $\Gamma$ is the Heawood graph by~\cite[Theorem~7.5.1]{BroCohNeu1989}, so the claim holds. If $t=3$ or $4$, then $\Gamma$ is the point graph of a generalised hexagon of order $(1,t)$ by~\cite[Theorem~7.5.3]{BroCohNeu1989}, so $\Gamma$ is the incidence graph of a projective  plane of order $t$ (see \cite[\S6.5]{BroCohNeu1989}); since  $\PG_2(t)$ is the unique projective plane of order $t$ when $t=3$ or $4$, the claim holds. Next we show that $\Gamma$ is not $7$-CH.  
 Choose a point $p$ of $\PG_2(t)$, and let $\ell_1$, $\ell_2$ and $\ell_3$ be pairwise distinct lines on $p$. Let $q_1$ and $q_2$ be points on $\ell_1\setminus\{p\}$ and $\ell_2\setminus\{p\}$ respectively. Let $\ell$ be the unique line on $q_1$ and $q_2$, and let $q_3$ be the unique point on $\ell$ and $\ell_3$. For $x\in \ell_3\setminus \{p\}$, let $\Delta_x$ be the subgraph of $\Gamma$ induced by $\{p,\ell_1,\ell_2,\ell_3,q_1,q_2,x\}$, and observe that $\Delta_x\simeq \Delta_y$ for all $y\in \ell_3\setminus \{p\}$. Observe also that $q_3\in \ell_3\setminus \{p\}$ and $|\ell_3\setminus \{p\}|=t\geq 2$. 
 Suppose that $g\in \Aut(\Gamma)$ fixes $\{p,\ell_1,\ell_2,\ell_3,q_1,q_2\}$ pointwise. Note that $g$ is a collineation of $\Gamma$; that is, $g$ maps points to points and lines to lines. Since $q_1$ and $q_2$ are distinct points on $\ell$, we must have $\ell^g=\ell$. Now $q_3^g$ lies on $\ell^g=\ell$ and $\ell_3^g=\ell_3$, so $q_3^g=q_3$.  Thus $\Gamma$ is not $7$-CH. It is routine to verify that $\Gamma$ is $6$-CH, so (i) holds.

 If $\diam(\Gamma)=4$, then $\Gamma$ is distance-transitive with intersection array $\{t+1,t,t,t;1,1,1,c_4 \}$ by Lemma~\ref{lemma:c2=1}, so $\Gamma$ is the incidence graph of the generalised quadrangle $W_3(t)$ where $t\in \{2,4\}$ by~\cite[Theorems 7.5.1 and 7.5.3]{BroCohNeu1989}. We may view the points and lines of $W_3(t)$ as points and lines respectively  of $\PG_2(t)$, and the above proof shows that $\Gamma$ is not $7$-CH (even though the line $\ell$ is not a line of $W_3(t)$). It is routine to verify that $\Gamma$ is $6$-CH, so (ii) holds.

We may therefore assume that $\diam(\Gamma)\geq 5$. Now $a_i=0$ and $c_i=1$ for $1\leq i\leq 3$ by  Lemma~\ref{lemma:c2=1}, so $\Gamma$ has girth at least $8$. Thus the subgraph induced by a $5$-arc (or $6$-arc) is a path graph with $6$ (or $7$)  vertices. 
If $\Gamma$ is $6$-CH, it follows that $\Gamma$ is $5$-arc-transitive, and since $s=2\ell+1$ where $2\leq \ell\leq 3$ and $t$ is a power of $\ell$, we conclude that either $\Gamma$ is $5$-transitive and $n=3$ or $5$, or $\Gamma$ is $7$-transitive and $n=4$. Similarly, if $\Gamma$ is $7$-CH, then $\Gamma$ is $6$-arc-transitive and therefore $7$-arc-transitive with valency $4$, but no such graph exists by Proposition~\ref{prop:7trans}. Thus (iii) holds.
\end{proof}

\begin{proof}[Proof of Theorem $\ref{thm:girth5}$]
 (i) Let $\Gamma$ be a finite $7$-CH graph with girth at least $5$.  By Remark~\ref{remark:connected}, we may assume that $\Gamma$ is connected. If $\Gamma$ has valency $0$ or $1$, then $\Gamma$ is CH. If $\Gamma$ has valency $2$, then $\Gamma\simeq C_n$ for some $n\geq 5$, so $\Gamma$ is CH. If $\Gamma$ has valency at least $3$, then by Theorems~\ref{thm:girth5diam2} and~\ref{thm:girth5strong}, $\Gamma$ is the Petersen graph and $\Gamma$ is CH.  

(ii) By~\cite{ConWal1998}, there are infinitely many finite connected quartic $7$-arc-transitive graphs, all of which have girth at least $12$ by Lemma~\ref{lemma:girth}, and by Proposition~\ref{prop:7trans}, any such graph is $6$-CH but not $7$-CH.

(iii) Let $\Gamma$ be a finite graph with valency $4$ and girth at least $7$. Note that $\Gamma$ is $7$-arc-transitive if and only if $\Gamma$ is a disjoint union of connected $7$-arc-transitive graphs, all of which are isomorphic, so by Remark~\ref{remark:connected}, we may assume that $\Gamma$ is connected. 
Note that the Petersen graph has girth~$5$, and the incidence graph of  $\PG_2(q)$ has girth $6$.  If $\Gamma$ is $6$-CH, then by  Theorems~\ref{thm:girth5diam2} and~\ref{thm:girth5strong}, $\Gamma$ is $7$-arc-transitive. Conversely, if $\Gamma$ is $7$-arc-transitive, then $\Gamma$ is $6$-CH by Proposition~\ref{prop:7trans}. 
\end{proof}

We finish this section with the following observation.

\begin{prop}
\label{prop:4trans}
Any finite cubic $4$-arc-transitive graph  is $5$-CH.
\end{prop}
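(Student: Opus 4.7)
The plan is to verify the condition of Lemma~\ref{lemma:detkCH} for every possible connected induced subgraph $\Delta$ of $\Gamma$ on at most $5$ vertices. By Lemma~\ref{lemma:girth}, $\Gamma$ has girth at least $6$, so every such $\Delta$ is a tree of maximum degree at most $3$; the possibilities are $K_1$, $K_2$, the paths $P_3, P_4, P_5$, the star $K_{1,3}$, and the tree $Y$ obtained from $K_{1,3}$ by subdividing a single edge. Throughout, I would use the routine fact that $4$-arc-transitivity of a connected graph of valency $\geq 2$ implies $s$-arc-transitivity for all $s\leq 4$, since every $s$-arc extends to a $4$-arc.

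The path cases $\Delta\in\{K_1,K_2,P_3,P_4,P_5\}$ follow immediately from $s$-arc-transitivity for $s\leq 4$. For $\Delta=K_{1,3}$, I would apply Lemma~\ref{lemma:detkCH} taking $\Sigma$ to be a $2$-arc formed by the center and two of the leaves; the girth condition forces the third neighbour of the center to be non-adjacent to the other two leaves, and since $\Gamma$ is cubic this third neighbour is the unique element of the set $X$ appearing in the lemma, so the transitivity requirement is vacuous.

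The main case is $\Delta=Y$. Label the vertices so that the degree-$3$ vertex is $c$, the two leaves attached to $c$ are $a$ and $b$, the remaining neighbour of $c$ is $d$, and $e$ is the neighbour of $d$ other than $c$. I would apply Lemma~\ref{lemma:detkCH} with $\Sigma$ the copy of $K_{1,3}$ induced by $\{a,b,c,d\}$ (already handled in the previous case) and with extra vertex $e$. Because $\Gamma$ has girth at least $6$, any $v\in\Gamma(d)\setminus\{c\}$ satisfies $\Gamma(v)\cap\{a,b,c,d\}=\{d\}$: adjacency of $v$ to $c$ would produce a triangle $c,d,v$, while adjacency of $v$ to $a$ or $b$ would produce a $4$-cycle through $c$. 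Hence the set $X$ of the lemma consists of the two neighbours $e_1,e_2$ of $d$ distinct from $c$, and it remains to show that the pointwise stabiliser $P$ of $\{a,b,c,d\}$ in $G$ is transitive on $\{e_1,e_2\}$.

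This last step is the crux. Since $(b,c,d,e_1)$ and $(b,c,d,e_2)$ are both $3$-arcs and $\Gamma$ is $3$-arc-transitive, there exists $g\in G$ mapping the first to the second; in particular $g$ fixes $b$, $c$ and $d$ while sending $e_1\mapsto e_2$. The key observation is that such a $g$ automatically lies in $P$: it permutes $\Gamma(c)=\{a,b,d\}$ and fixes the two elements $b$ and $d$, so it also fixes $a$. This provides the required transitivity on $X$ and completes the argument. The only potential obstacle I foresee is exactly this rigidity argument, but it is transparent once one notes that a cubic vertex is pointwise stabilised as soon as two of its three neighbours are.
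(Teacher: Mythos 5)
Your proof is correct and follows essentially the same route as the paper: girth at least $6$ (Lemma~\ref{lemma:girth}) reduces all connected induced subgraphs on at most $5$ vertices to trees of maximum degree $3$, the paths are handled by $s$-arc-transitivity for $s\leq 4$, and the two non-path trees by Lemma~\ref{lemma:detkCH}. The only deviation is your choice of $\Sigma$ for the order-$5$ tree: taking $\Sigma$ to be the induced $3$-arc $(a,c,d,e)$ and attaching the last leaf $b$ at the degree-$3$ vertex $c$ makes the set $X$ a singleton because $\Gamma$ is cubic, which is how the paper's appeal to valency $3$ avoids the (correct, but unnecessary) $3$-arc-transitivity-plus-rigidity step you use for the stabiliser of $\{a,b,c,d\}$.
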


\begin{proof}
Let $\Gamma$ be a finite $4$-arc-transitive graph with valency $3$.  By Lemma~\ref{lemma:girth}, $\Gamma$ has girth at least $6$,  so the only connected induced subgraphs of $\Gamma$ with order at most $5$ that are not $s$-arcs  are the  trees with order $4$ and $5$ that contain a vertex of valency $3$. Since $\Gamma$ has valency $3$, it follows from Lemma~\ref{lemma:detkCH} that $\Gamma$ is $5$-CH.
\end{proof}

\section{Locally disconnected graphs with girth $3$ and $c_2=1$}
\label{s:LDbad}

Recall that a finite $3$-CH graph $\Gamma$ is locally disconnected with girth $3$ if and only if $\Gamma$ is locally $(t+1)\cdot K_s$ for some integers $t\geq 1$ and $s\geq 2$ (see Lemma~\ref{lemma:3CH}). In this section, we consider such graphs $\Gamma$ for which $c_2=1$.  In \S\ref{ss:t=1}, we consider the case where $t=1$;  in particular, we prove Theorem~\ref{thm:2Ks}. In \S\ref{ss:general}, we briefly consider the case where $t\geq 2$.

\subsection{The case where $t=1$}
\label{ss:t=1}

\begin{lemma}
\label{lemma:linegraph}
Let $\Gamma$ be a finite connected graph. For  $s\geq 2$, the following are equivalent.
\begin{itemize}
\item[(i)] $\Gamma$ is locally $2\cdot K_s$ and $c_2(\Gamma)=1$.
\item[(ii)] $\Gamma$ is the line graph of a finite connected graph  with girth at least $5$ and valency $s+1$.
\end{itemize}
\end{lemma}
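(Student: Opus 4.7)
\medskip

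The plan is to verify each direction by a direct structural argument, so no deep line-graph characterisation (Whitney, Krausz, Beineke) is needed.

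For the direction (ii)$\Rightarrow$(i), suppose $\Gamma=L(\Sigma)$ where $\Sigma$ is finite connected with valency $s+1$ and girth at least $5$. Take a vertex $e=\{u,v\}\in V\Gamma$. Its neighbours in $\Gamma$ split as $A\cup B$, where $A$ consists of the $s$ edges through $u$ other than $e$ and $B$ of the $s$ edges through $v$ other than $e$. Each of $A$ and $B$ induces $K_s$. If some $f\in A$ and $g\in B$ were adjacent, the shared endpoint together with $u$ and $v$ would make a triangle in $\Sigma$, contradicting girth $\geq 5$; hence $\Gamma(e)\simeq 2\cdot K_s$. If two edges $e_1,e_2$ of $\Sigma$ are at distance $2$ in $\Gamma$, they are vertex-disjoint but have a common neighbour in $\Gamma$, i.e.\ an edge meeting both; two such common neighbours would produce either a triangle or a $4$-cycle in $\Sigma$, ruled out by girth $\geq 5$, so $c_2(\Gamma)=1$.

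For (i)$\Rightarrow$(ii), assume $\Gamma$ is locally $2\cdot K_s$ with $c_2(\Gamma)=1$. First I would show that every edge of $\Gamma$ lies in a unique maximal clique, and that this clique has size $s+1$. Indeed, for $\{u,v\}\in E\Gamma$, $v$ lies in one of the two $K_s$-parts $P_1(u),P_2(u)$ of $\Gamma(u)$, say $P_1(u)$, and any vertex of $P_2(u)$ is non-adjacent to $v$, so $\Gamma(u)\cap\Gamma(v)=P_1(u)\setminus\{v\}$; hence $\{u\}\cup P_1(u)$ is the unique maximal clique containing $\{u,v\}$. In particular each vertex of $\Gamma$ lies in exactly two maximal cliques, each of size $s+1$. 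I would then define $\Sigma$ by letting $V\Sigma$ be the set of maximal cliques of $\Gamma$ and declaring two such cliques adjacent in $\Sigma$ whenever they share a vertex, with the shared vertex providing the corresponding edge of $\Sigma$; the map sending $u\in V\Gamma$ to the pair of maximal cliques containing $u$ is then a bijection $V\Gamma\to E\Sigma$, and one checks directly that two vertices of $\Gamma$ are adjacent iff the corresponding edges of $\Sigma$ share a vertex, giving $\Gamma=L(\Sigma)$. Each vertex of $\Sigma$ (a maximal $(s+1)$-clique) is incident to $s+1$ edges, so $\Sigma$ has valency $s+1$, and $\Sigma$ is connected because $\Gamma$ is.

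It remains to verify that $\Sigma$ has girth at least $5$. A triangle in $\Sigma$ would give three pairwise adjacent vertices of $\Gamma$ (a triangle of $\Gamma$), and such a triangle is forced into a single $K_{s+1}$ because $\Gamma$ is locally $2\cdot K_s$; this contradicts the three maximal cliques being distinct. For a $4$-cycle $C_1C_2C_3C_4$ in $\Sigma$, the corresponding edges $e_{12},e_{23},e_{34},e_{14}\in V\Gamma$ satisfy: $e_{14}$ and $e_{23}$ are non-adjacent (their shared-clique candidates lie in different parts of $\Gamma(e_{12})$), yet both $e_{12}$ and $e_{34}$ are common neighbours, contradicting $c_2(\Gamma)=1$. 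This last verification is the only place where the hypothesis $c_2=1$ is genuinely used, and is the main obstacle in the sense that one must carefully argue that the two common neighbours are distinct and that $e_{14},e_{23}$ are really at distance~$2$ rather than adjacent; the local structure $2\cdot K_s$ handles this cleanly.
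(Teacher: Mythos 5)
Your argument is correct, and it is more self-contained than what the paper actually does: the paper's proof of this lemma consists of citing \cite[Proposition~1.2.1]{BroCohNeu1989} for the implication (i)$\Rightarrow$(ii) and declaring the converse routine. Your (ii)$\Rightarrow$(i) direction is exactly the routine verification the paper has in mind (the two $K_s$-parts at an edge $\{u,v\}$ of $\Sigma$, no cross-edges by girth $\geq 5$, and a second common neighbour of two disjoint edges forcing a triangle or $4$-cycle), and your (i)$\Rightarrow$(ii) direction unpacks the cited result by hand: you show each edge of $\Gamma$ lies in a unique maximal clique $\{u\}\cup P_1(u)$ of size $s+1$, each vertex lies in exactly two such cliques, and then reconstruct $\Sigma$ as the clique graph so that $\Gamma\simeq L(\Sigma)$, with $c_2=1$ used only to exclude $4$-cycles in $\Sigma$. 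This buys independence from the reference at the cost of length; the underlying mechanism (reconstruction from maximal cliques) is the same as in the cited proposition. Two small points you should make explicit: distinct maximal cliques meet in at most one vertex (which you use tacitly for the injectivity of $u\mapsto\{C_1(u),C_2(u)\}$ and for the degree count in $\Sigma$), and in the triangle argument the three pairwise shared vertices $u_{12},u_{23},u_{13}$ are distinct --- if two coincided, that vertex would lie in three distinct maximal cliques, contradicting your earlier observation that every vertex lies in exactly two. Both follow in one line from what you have already established, so they are presentational rather than genuine gaps.
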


\begin{proof}
If (i) holds, then (ii) holds by~\cite[Proposition 1.2.1]{BroCohNeu1989}. Conversely, if (ii) holds, then it is routine to verify that (i) holds.
\end{proof}

Let $\Gamma_1$ and $\Gamma_2$ be  finite connected graphs where $E\Gamma_1\neq\varnothing$. If $\varphi:\Gamma_1\to\Gamma_2$ is an isomorphism, then there is a natural isomorphism  $\hat{\varphi}:L(\Gamma_1)\to L(\Gamma_2)$ defined by $\{u,v\}\mapsto \{u\varphi,v\varphi\}$ for all $\{u,v\}\in E\Gamma_1$. Conversely, if  $\Gamma_1$ and $\Gamma_2$   have at least five vertices, and if $\psi:L(\Gamma_1)\to L(\Gamma_2)$ is an isomorphism,  then there exists a unique isomorphism $\varphi:\Gamma_1\to \Gamma_2$ such that $\psi=\hat{\varphi}$ by~\cite[Theorem 8.3]{Har1969}. It is routine to verify that this also holds when $\Gamma_1$ and $\Gamma_2$ have at least three vertices and do not contain any triangles. In particular, if $\Gamma$ is a finite connected regular graph with girth at least $5$ and valency at least $3$, then $\Gamma$ contains a cycle with length at least $5$, so  there is a group isomorphism of $\Aut(\Gamma)$ onto $\Aut(L(\Gamma))$ defined by $g\mapsto \hat{g}$ for all $g\in \Aut(\Gamma)$.

\begin{lemma}
\label{lemma:linegraphCH}
Let $\Gamma$ be a finite connected regular graph with girth at least $5$ and valency at least~$3$. For $k\geq 2$, the following are equivalent.
\begin{itemize}
\item[(i)] The line graph $L(\Gamma)$ is $k$-CH.
\item[(ii)] $\Gamma$ is $(k+1)$-CH and has girth at least $k+2$.
\end{itemize}
\end{lemma}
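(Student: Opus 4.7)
Proof proposal.

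The plan is to transfer between (i) and (ii) via the bijection between the connected induced subgraphs of $L(\Gamma)$ on $n$ vertices and the connected subgraphs of $\Gamma$ with $n$ edges, together with the Whitney correspondence, which applies here because $\Gamma$ is connected of valency $\geq 3$ with girth $\geq 5$ (so $\Gamma$ has at least five vertices, no triangles, and every cycle of $\Gamma$ has length $\geq 5$, so $\Aut(\Gamma)\to \Aut(L(\Gamma))$, $g\mapsto \hat g$, is an isomorphism and any isomorphism between line graphs lifts to an isomorphism of their sources). A recurring observation is that if $\Gamma$ has girth $\geq k+2$, then every connected subgraph of $\Gamma$ with at most $k$ edges must be a tree on at most $k+1$ vertices, since any cycle would have length $\leq k<k+2$.

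For (ii) $\Rightarrow$ (i), I would take an isomorphism $\varphi\colon \Delta_1\to\Delta_2$ of connected induced subgraphs of $L(\Gamma)$ on $\leq k$ vertices, lift it (via Whitney, or trivially in the single-edge case) to an isomorphism $\psi\colon H_1\to H_2$ of the corresponding subtrees of $\Gamma$, extend $\psi$ to $\tilde\psi\in \Aut(\Gamma)$ using the $(k+1)$-CH hypothesis, and push forward to $\hat{\tilde\psi}\in \Aut(L(\Gamma))$, which extends $\varphi$.

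For (i) $\Rightarrow$ (ii), the argument splits in two stages. In Stage~1 (girth $\geq k+2$), I would suppose for contradiction that the girth $g$ of $\Gamma$ satisfies $g\leq k+1$ (hence $g\geq 5$ and $g-1\leq k$). A fixed $g$-cycle of $\Gamma$ supplies an induced $P_{g-1}$ in $L(\Gamma)$ (its first $g-1$ edges) that admits an induced $C_g$-extension in $L(\Gamma)$ via the remaining cycle-edge. Because $L(\Gamma)$ is $(g-1)$-CH and this property is invariant under $\Aut(L(\Gamma))$, every induced $P_{g-1}$ in $L(\Gamma)$ must admit such an extension; translating back, every length-$(g-1)$ path in $\Gamma$ must have adjacent endpoints. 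I would then build a length-$(g-2)$ path $u_0,\dots,u_{g-2}$ in $\Gamma$ greedily (possible by valency $\geq 3$ and the girth bound), observe that $u_{g-2}$ has at least two admissible continuations $u_{g-1},u'_{g-1}$ (valency $\geq 3$, and girth forbids $u_{g-2}$ from being adjacent to any $u_j$ with $j<g-3$), and conclude that both must be adjacent to $u_0$, yielding the $4$-cycle $u_0u_{g-1}u_{g-2}u'_{g-1}u_0$, contradicting girth $\geq 5$.

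In Stage~2 (showing $\Gamma$ is $(k+1)$-CH), with girth $\geq k+2$ in hand, I would reverse the argument: given an isomorphism $\psi\colon H_1\to H_2$ of connected induced subgraphs of $\Gamma$ on $\leq k+1$ vertices (necessarily trees), the map $\hat\psi$ is an isomorphism of connected induced subgraphs of $L(\Gamma)$ on $\leq k$ vertices, which extends by $k$-CH to $\alpha\in \Aut(L(\Gamma))$; writing $\alpha=\hat\varphi$ via Whitney, a second application of Whitney to $(H_1,H_2)$ yields $\varphi|_{VH_1}=\psi$ when $|VH_1|\geq 3$, and the remaining small cases $|VH_1|\leq 2$ reduce to the vertex- and arc-transitivity of $\Gamma$, which follow from the $2$-arc-transitivity of $\Gamma$, itself a consequence of $L(\Gamma)$ being arc-transitive (as $k\geq 2$). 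The principal obstacle is Stage~1, the combinatorial heart of the lemma: one must manufacture two genuinely distinct admissible extensions of the length-$(g-2)$ path so as to create a $4$-cycle. This relies essentially on both hypotheses, valency $\geq 3$ (for the multiplicity of extensions) and girth $\geq 5$ (so that a $4$-cycle is genuinely forbidden).
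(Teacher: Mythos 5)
Your proposal is correct, and its overall skeleton (transferring isomorphisms through the Whitney correspondence in both directions, with the small-order cases handled via vertex- and arc-transitivity) is the same as the paper's; the genuine difference is how you derive the girth bound in (i) $\Rightarrow$ (ii). The paper first shows $\diam(L(\Gamma))\geq 3$ and then applies Lemma~\ref{lemma:cyclebig} to $L(\Gamma)$, which is locally $2\cdot K_s$ with $c_2=1$ by Lemma~\ref{lemma:linegraph}; that lemma in turn rests on the intersection-number analysis of Lemmas~\ref{lemma:ctrick} and~\ref{lemma:c2=1}. You instead argue directly in $\Gamma$: if the girth $g$ satisfied $g\leq k+1$, then transitivity of $\Aut(L(\Gamma))$ on induced paths with $g-1$ vertices (a consequence of $k$-CH, since $g-1\leq k$) forces every path of $\Gamma$ with $g-1$ edges to have adjacent endpoints, and your translation step here is sound: because $g-1\geq 4$, the only possible ``closing'' vertex of $L(\Gamma)$ is the edge joining the two endpoints. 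Two distinct one-step extensions of a fixed path with $g-2$ edges (available since the valency is at least $3$ and, by the girth, only the penultimate vertex of the path can be a path-neighbour of its last vertex) then yield a $4$-cycle, contradicting girth at least $5$. Your route buys a short, self-contained argument avoiding the diameter computation and the $c_i$-machinery; the paper's route buys re-use of Lemma~\ref{lemma:cyclebig}, which it needs anyway for locally $(t+1)\cdot K_s$ graphs with $t\geq 2$. One small point you should make explicit in your (ii) $\Rightarrow$ (i) step: before applying the $(k+1)$-CH hypothesis to $\psi\colon H_1\to H_2$, you need the subtrees $H_1,H_2$ to be \emph{induced} subgraphs of $\Gamma$, not merely subgraphs with at most $k$ edges; this follows by the same one-line girth argument you use for tree-ness (a chord would create a cycle of length at most $k+1<k+2$), as the paper notes, but your ``recurring observation'' as stated only gives tree-ness.
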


\begin{proof}
Suppose that $L(\Gamma)$ is $k$-CH. The graph $\Gamma$ has girth at least $5$ and valency at least $3$, so   there exists an  induced subgraph $\Delta$ of $\Gamma$ such that $\Delta$ is  a path graph with $5$ vertices. Write $E\Delta=\{e_1,e_2,e_3,e_4\}$ where $e_i$ is incident with $e_{i+1}$ for $1\leq i\leq 3$. Now   $L(\Delta)$ is a path graph with $4$ vertices that is an induced subgraph of $L(\Gamma)$, so if $\diam(L(\Gamma))=2$, then there exists $e\in E\Gamma$ such that $e$ is incident with $e_1$ and $e_4$, but then $e\in E\Delta$, a contradiction. Thus $\diam(L(\Gamma))\geq 3$.

Let $r$ be the girth of $\Gamma$. Now $\Gamma$ has an induced subgraph $\Delta'$ such that $\Delta'\simeq C_r$, so $L(\Delta')$ is an induced subgraph of $L(\Gamma)$ that is  isomorphic to $C_r$.  
Let $\Gamma$ have valency $s+1$, and note that $s\geq 2$. By Lemma~\ref{lemma:linegraph}, $L(\Gamma)$ is locally $2\cdot K_s$ and $c_2(L(\Gamma))=1$.  Thus $r\geq k+2$ by Lemma~\ref{lemma:cyclebig}.

Now we prove that $\Gamma$ is $(k+1)$-CH. Let $\Delta_1$ and $\Delta_2$ be  connected induced subgraphs of $\Gamma$  where $3\leq |V\Delta_1|\leq k+1$, and let $\varphi:\Delta_1\to \Delta_2$ be an isomorphism. Since $\Gamma$ has girth at least $k+2$, the graph $\Delta_1$ is a tree and therefore has at most $k$ edges. Now $L(\Delta_1)$ and $L(\Delta_2)$ are connected induced subgraphs of $L(\Gamma)$  with order at most $k$, and $\hat{\varphi}:L(\Delta_1)\to L(\Delta_2)$ is an isomorphism, so there exists $g\in \Aut(\Gamma)$ such that $\hat{\varphi}$ extends to $\hat{g}\in \Aut(L(\Gamma))$, whence $\varphi$ extends to $g$, as desired.
In particular, we have proved that $\Gamma$ is $2$-arc-transitive; since $\Gamma$ has valency at least $2$, it follows that $\Gamma$ is  $2$-CH and therefore $(k+1)$-CH. Thus (ii) holds.

Conversely, suppose that $\Gamma$ is $(k+1)$-CH and has girth at least $k+2$. Let $\Delta_1$ and $\Delta_2$ be connected induced subgraphs of $L(\Gamma)$  with order at most $k$, and let $\psi:\Delta_1\to \Delta_2$ be an isomorphism. It is clear that $L(\Gamma)$ is vertex-transitive, so we may assume that $|\Delta_1|\geq 2$. Let $\Sigma_i$ be the subgraph of $\Gamma$ induced by the edges in $V\Delta_i$. Now $\Sigma_i$ is connected and $L(\Sigma_i)=\Delta_i$. In particular, there exists an isomorphism $\varphi: \Sigma_1\to \Sigma_2$ such that $\psi=\hat{\varphi}$. 
 Since $\Sigma_i$ has $|V\Delta_i|$ edges and $\Gamma$ has girth at least $k+2$, it follows that $\Sigma_i$ is a tree, so $|V\Sigma_i|=|V\Delta_i|+1$. If $\Sigma_i$ is not a (vertex) induced subgraph of $\Gamma$, then the graph induced by $V\Sigma_i$ contains a cycle of length at most $k+1$, a contradiction. Hence there exists $g\in \Aut(\Gamma)$ such that $\varphi$ extends to $g$, in which case $\psi=\hat{\varphi}$ extends to $\hat{g}\in \Aut(L(\Gamma))$. Thus (i) holds.
\end{proof}

\begin{remark}
By Lemma~\ref{lemma:linegraphCH}, any result from \S\ref{s:girth5} can be reinterpreted for locally $2\cdot K_s$ graphs where $s\geq 2$ and $c_2=1$. For example, if $\Gamma$ is the line graph of the incidence graph of $\PG_2(q)$ for $2\leq q\leq 4$, then $\Gamma$ is $4$-CH but not $5$-CH (since the incidence graph of $\PG_2(q)$ has girth $6$). Similarly, if $\Gamma$ is the line graph of the incidence graph of either $W_3(q)$ for $q=2$ or $4$, or the  split Cayley hexagon of order $(3,3)$, then $\Gamma$ is $5$-CH but not $6$-CH. 
\end{remark}

The following result is an easy consequence of Theorems \ref{thm:girth5diam2} and \ref{thm:girth5strong}.

\begin{thm}
\label{thm:2Ksplus}
Let $\Gamma$ be a finite connected graph  that is locally $2\cdot K_s$ where $s\geq 2$ and  $c_2(\Gamma)=1$. Then $\Gamma$ is not $6$-CH. If $\Gamma$ is $4$-CH, then $s\in \{2,3,4\}$ and $\Gamma$ is the line graph of a finite connected $4$-arc-transitive graph  with valency $s+1$.
\end{thm}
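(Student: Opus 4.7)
The plan is to bootstrap everything through the line-graph equivalence provided by Lemmas \ref{lemma:linegraph} and \ref{lemma:linegraphCH}, which translate the hypotheses on $\Gamma$ into much stronger hypotheses on a preimage graph $\Sigma$, and then to apply the classification results of Section \ref{s:girth5} together with Theorem \ref{thm:CH}.

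First, since $\Gamma$ is locally $2\cdot K_s$ with $c_2=1$ and $s\geq 2$, Lemma \ref{lemma:linegraph} produces a finite connected graph $\Sigma$ of valency $s+1\geq 3$ and girth at least $5$ such that $\Gamma\simeq L(\Sigma)$. This allows me to apply Lemma \ref{lemma:linegraphCH}: $\Gamma$ is $k$-CH if and only if $\Sigma$ is $(k+1)$-CH of girth at least $k+2$.

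For the first assertion, I suppose towards a contradiction that $\Gamma$ is $6$-CH. Then $\Sigma$ is $7$-CH with girth at least $8$, so by Theorem \ref{thm:girth5}(i), $\Sigma$ is CH. Inspecting the list in Theorem \ref{thm:CH}, the only finite connected CH graphs of valency at least $3$ are $K_n$, $K_{m[r]}$, $K_n\Box K_n$, $K_2\times K_n$, the Petersen graph, and the folded $5$-cube; the maximum girth among these is $5$ (attained by Petersen), which is incompatible with $\Sigma$ having girth at least $8$. This contradiction shows $\Gamma$ is not $6$-CH.

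For the second assertion, suppose $\Gamma$ is $4$-CH. Then $\Sigma$ is $5$-CH of valency $s+1\geq 3$ and girth at least $6$. If $\diam(\Sigma)=2$, then Theorem \ref{thm:girth5diam2} would force $\Sigma$ to be the Petersen graph or the Hoffman--Singleton graph, both of which have girth $5$, contradicting girth $\geq 6$. Hence $\diam(\Sigma)\geq 3$, and Theorem \ref{thm:girth5strong} then gives $s+1\in\{3,4,5\}$, so $s\in\{2,3,4\}$, and moreover $\Sigma$ is $4$-arc-transitive. This establishes the desired conclusion with $\Sigma$ as the required graph.

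The proof is essentially a bookkeeping exercise on top of Section \ref{s:girth5}, so there is no substantive obstacle; the only point requiring care is making sure the girth bounds extracted from Lemma \ref{lemma:linegraphCH} are strict enough to exclude Petersen and Hoffman--Singleton in the $4$-CH case and to exclude every finite CH graph of valency at least $3$ in the $6$-CH case.
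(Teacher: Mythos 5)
Your proof is correct and follows essentially the same route as the paper: reduce to $\Sigma$ via Lemmas \ref{lemma:linegraph} and \ref{lemma:linegraphCH}, then apply the girth-at-least-$5$ classification results. The only (immaterial) difference is that for the $6$-CH part you pass through Theorem \ref{thm:girth5}(i) and the CH classification in Theorem \ref{thm:CH}, whereas the paper cites Theorems \ref{thm:girth5diam2} and \ref{thm:girth5strong} directly; since Theorem \ref{thm:girth5}(i) is itself deduced from those two theorems, the arguments coincide.
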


\begin{proof}
By Lemma~\ref{lemma:linegraph}, $\Gamma=L(\Sigma)$ for some finite connected graph $\Sigma$  with girth at least $5$ and valency $s+1$. If $\Gamma$ is $4$-CH, then $\Sigma$ is $5$-CH with girth at least $6$ by Lemma~\ref{lemma:linegraphCH}, so by Theorems~\ref{thm:girth5diam2} and~\ref{thm:girth5strong}, $\Sigma$ is $4$-arc-transitive and $s\in \{2,3,4\}$.   If $\Gamma$ is $6$-CH, then $\Sigma$ is $7$-CH and has girth at least $8$ by Lemma~\ref{lemma:linegraphCH}, but no such graph exists by Theorems~\ref{thm:girth5diam2} and~\ref{thm:girth5strong}.
\end{proof}

\begin{proof}[Proof of Theorem $\ref{thm:2Ks}$]
By Theorem~\ref{thm:2Ksplus} and Remark~\ref{remark:connected}, Theorem~\ref{thm:2Ks}(i) holds. By~\cite{ConWal1998}, there are infinitely many  finite connected $7$-arc-transitive graphs with valency $4$, all of which are $6$-CH by Proposition~\ref{prop:7trans}, so Theorem~\ref{thm:2Ks}(ii) follows from  Lemmas~\ref{lemma:girth}, \ref{lemma:linegraph} and~\ref{lemma:linegraphCH}.
\end{proof}

\begin{prop}
The line graph of a finite connected cubic $4$-arc-transitive graph  is $4$-CH.
\end{prop}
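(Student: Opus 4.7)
The plan is to observe that this proposition is a direct consequence of three earlier results: Proposition~\ref{prop:4trans}, Lemma~\ref{lemma:girth}, and Lemma~\ref{lemma:linegraphCH}. Let $\Sigma$ be a finite connected cubic $4$-arc-transitive graph and set $\Gamma := L(\Sigma)$. I would verify the hypotheses of Lemma~\ref{lemma:linegraphCH} with $k = 4$ for the graph $\Sigma$, then invoke it to deduce that $\Gamma$ is $4$-CH.

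First, I would note that $\Sigma$ is regular with valency $3 \geq 3$, and by Lemma~\ref{lemma:girth} (applied with $s = 4$ and valency at least $3$), the graph $\Sigma$ has girth at least $2 \cdot 4 - 2 = 6$. In particular, the girth of $\Sigma$ is at least $5$, so $\Sigma$ satisfies the standing assumptions of Lemma~\ref{lemma:linegraphCH}.

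Next, by Proposition~\ref{prop:4trans}, any finite cubic $4$-arc-transitive graph is $5$-CH; hence $\Sigma$ is $(k+1)$-CH for $k = 4$. Combined with the girth bound of at least $6 = k + 2$ established in the previous step, condition (ii) of Lemma~\ref{lemma:linegraphCH} is satisfied. That lemma then yields the equivalent condition (i), namely that $L(\Sigma) = \Gamma$ is $4$-CH, completing the proof.

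There is no substantive obstacle here: once the relevant prior results are in place, the proposition follows by simply combining them. The only thing to check carefully is that all the side hypotheses of Lemma~\ref{lemma:linegraphCH} (finite, connected, regular, girth at least $5$, valency at least $3$) are genuinely met by $\Sigma$, which is immediate from the assumption that $\Sigma$ is finite, connected, cubic, and $4$-arc-transitive together with Lemma~\ref{lemma:girth}.
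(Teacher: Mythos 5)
Your proof is correct and follows exactly the same route as the paper: combine Proposition~\ref{prop:4trans} (to get that the cubic graph is $5$-CH), Lemma~\ref{lemma:girth} (girth at least $6$), and Lemma~\ref{lemma:linegraphCH} with $k=4$. The hypothesis-checking you describe is precisely what is needed, and there is no gap.
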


\begin{proof}
Let $\Gamma$ be a finite connected $4$-arc-transitive graph with valency $3$. Then $\Gamma$ is $5$-CH by Proposition~\ref{prop:4trans}, and  $\Gamma$ has girth at least $6$ by  Lemma~\ref{lemma:girth}, so $L(\Gamma)$ is $4$-CH by Lemma~\ref{lemma:linegraphCH}.
\end{proof}

\subsection{The  case where $t\geq 2$}
\label{ss:general}

For finite connected locally $(t+1)\cdot K_s$ graphs $\Gamma$ with $t\geq 2$, $s\geq 2$ and $c_2=1$, our only general results are Lemmas~\ref{lemma:c2=1} and~\ref{lemma:cyclebig}. Note that Kantor \cite{Kan1977} proved (without using the CFSG) 
that no finite connected  $3$-CH graph with girth $3$ and $c_2=1$ is strongly regular. 

In the following, we determine those $4$-CH  graphs $\Gamma$ that are  distance-transitive with valency at most $13$.  
The point graph of the Hall-Janko near octagon (see~\cite[\S 13.6]{BroCohNeu1989}) is locally $5\cdot K_2$ and distance-transitive with intersection array $\{10,8,8,2;1,1,4,5\}$. It has automorphism group $\J_2{:}2$, where $\J_2$ denotes  the Hall-Janko sporadic simple group.

\begin{prop}
\label{prop:badsmallval}
Let $\Gamma$ be a finite connected $4$-CH graph that is locally $(t+1)\cdot K_s$ where $t\geq 2$, $s\geq 2$ and $c_2(\Gamma)=1$. If $\Gamma$ is distance-transitive with valency at most $13$, then one of the following holds.
\begin{itemize}
\item[(i)] $\Gamma$ is the point graph of the   dual of the split Cayley hexagon of order $(2,2)$. Here $\Gamma$ is $4$-CH but not $5$-CH.
\item[(ii)] $\Gamma$ is the point graph of the  Hall-Janko near octagon. Here $\Gamma$ is $4$-CH but not $5$-CH.
\end{itemize}
\end{prop}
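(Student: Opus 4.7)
The plan is to exploit the classification of finite distance-transitive graphs of valency at most~$13$. Writing $k_1:=(t+1)s$ for the valency of $\Gamma$, the hypotheses $t,s\geq 2$ and $k_1\leq 13$ leave only the seven pairs $(t,s)$ with $k_1\in\{6,8,9,10,12\}$. Since $\Gamma$ has girth~$3$ (because $s\geq 2$) and $c_2(\Gamma)=1$, Kantor's theorem~\cite{Kan1977} rules out $\diam(\Gamma)=2$, so we may assume $\diam(\Gamma)\geq 3$. Lemma~\ref{lemma:c2=1}(i) then forces $c_1=c_2=1$ and $a_1=a_2=s-1$, fixing the first three columns of the intersection array and giving $b_1=b_2=ts$ and $k_2=(t+1)ts^2$. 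These constraints, together with the standard integrality conditions on intersection arrays, sharply restrict the feasible arrays for each of the seven valencies.

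I would then consult the classification of finite distance-transitive graphs of valency at most~$13$ (see~\cite{BroCohNeu1989} and the literature referenced there), and for each graph of valency in $\{6,8,9,10,12\}$ check whether its local graph is isomorphic to $(t+1)\cdot K_s$ for some admissible $(t,s)$ and whether its intersection array matches the restrictions derived above. Lemma~\ref{lemma:cyclebig} provides an additional filter by bounding below the length of any induced cycle in $\Gamma$. The two expected survivors are the point graph of the dual of the split Cayley hexagon of order $(2,2)$, with intersection array $\{6,4,4;1,1,3\}$ and local graph $3\cdot K_2$, and the point graph of the Hall--Janko near octagon, with intersection array $\{10,8,8,2;1,1,4,5\}$ and local graph $5\cdot K_2$.

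Finally, for each of these two candidates I would verify directly, via Lemma~\ref{lemma:detkCH}, that $\Gamma$ is $4$-CH but not $5$-CH. Since both graphs admit explicit descriptions involving $G_2(2)$ and $\J_2{:}2$, the required orbit computations are routine in {\sc Magma}, exactly as outlined in Remark~\ref{remark:magma}: one confirms $4$-connected-homogeneity by checking that the pointwise stabiliser of each connected induced $3$-vertex subgraph acts transitively on the associated extension set, and refutes $5$-CH by exhibiting a single non-transitive extension at the $4$-vertex stage. The principal obstacle is the middle step, namely scanning the classification tables at each of the valencies $6,8,9,10,12$ and ruling out every graph other than the two in the conclusion; valency~$12$ is the most delicate, since the three local shapes $3\cdot K_4$, $4\cdot K_3$ and $6\cdot K_2$ must all be considered in parallel and each eliminated by a combination of intersection-array incompatibility and direct inspection of the known graphs of that valency.
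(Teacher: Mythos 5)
Your overall strategy is essentially the paper's: use Kantor's theorem \cite{Kan1977} to force $\diam(\Gamma)\geq 3$, use Lemma~\ref{lemma:c2=1} to pin down the start of the intersection array as $\{s(t+1),st,st,\ldots;1,1,\ldots\}$, invoke the known classification of distance-transitive graphs of small valency (the paper does this through \cite[Theorem~7.5.3]{BroCohNeu1989}, supplemented by \cite{BueVan1994} to identify the geometries), and finish with {\sc Magma} checks via Lemma~\ref{lemma:detkCH}. The gap is in your elimination step. The classification scan does \emph{not} leave only the two graphs in the conclusion: it leaves the point graphs of a generalised hexagon of order $(2,2)$ (of which there are two, the split Cayley hexagon and its dual, both distance-transitive with the same array $\{6,4,4;1,1,3\}$), the Hall--Janko near octagon, the Ree--Tits octagon of order $(2,4)$ (array $\{10,8,8,8;1,1,1,5\}$, locally $5\cdot K_2$), the split Cayley hexagon of order $(3,3)$ (array $\{12,9,9;1,1,4\}$, locally $4\cdot K_3$), and the dual Ree--Tits octagon of order $(4,2)$ (array $\{12,8,8,8;1,1,1,3\}$, locally $3\cdot K_4$). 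Every one of these is distance-transitive of valency at most $13$, locally $(t+1)\cdot K_s$ with $c_2=1$ and $b_1=b_2=st$, and none is touched by Lemma~\ref{lemma:cyclebig}, since their induced cycles of length greater than $3$ already have length at least $6$. So ``intersection-array incompatibility and direct inspection'' cannot remove them.

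These extra candidates are excluded only by showing that they fail $4$-connected-homogeneity (the order $(4,2)$ octagon even fails $3$-CH), which is a computation, not a parameter argument; in particular the split Cayley hexagon of order $(2,2)$ and its dual have identical parameters, yet only the point graph of the dual is $4$-CH, so no table scan can separate them. Your plan must therefore be amended so that the Lemma~\ref{lemma:detkCH} test (e.g.\ with the tree of order $4$ having a vertex of valency $3$, and the triangle in the $(4,2)$ case) is run on \emph{every} parameter-feasible candidate, not only on the two expected survivors. A minor remark on the final step: refuting $5$-CH for the two surviving graphs needs no computation, since Lemma~\ref{lemma:c2=1}(i) and (iv) already contradict $5$-connected-homogeneity ($c_3=3$, $s=2$ for the hexagon; $c_3=4$ for the near octagon).
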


\begin{proof}
By~\cite{Kan1977}, $\Gamma$ is not strongly regular, so $d:=\diam(\Gamma)\geq 3$. By Lemma~\ref{lemma:c2=1}, $\Gamma$ has intersection array $\{s(t+1),st,st,b_3,\ldots,b_{d-1};1,1,c_3,\ldots,c_d\}$. In particular, $b_0=s(t+1)\geq 6$ and $b_1=b_2=st\geq 4$. Further, neither $b_0$ nor $b_1$ is prime, and $b_0-b_1=s\geq 2$. By~\cite[Theorem~7.5.3]{BroCohNeu1989}, one of the following holds.
\begin{itemize}
\item[(a)] $\Gamma$ is the point graph of a generalised hexagon of order $(2,2)$. Here  $\Gamma$ has intersection array $\{6,4,4;1,1,3\}$ and  is locally $3\cdot K_2$ with order $63$.
\item[(b)] $\Gamma$ is the point graph of the  Hall-Janko near octagon.  Here $\Gamma$ has intersection array $\{10,8,8,2;1,1,4,5\}$ and is locally $5\cdot K_2$ with order    $315$.
\item[(c)]   $\Gamma$ is the point graph of a generalised octagon of order $(2,4)$. Here $\Gamma$ has intersection array $\{10,8,8,8;1,1,1,5\}$ and is locally $5\cdot K_2$ with order $1755$. 
\item[(d)]  $\Gamma$ is the point graph of a generalised hexagon of order $(3,3)$. Here $\Gamma$ has intersection array $\{12,9,9;1,1,4\}$ and is locally $4\cdot K_3$ with  order $364$.
\item[(e)]  $\Gamma$ is the point graph of a generalised octagon of order $(4,2)$. Here $\Gamma$ has intersection array $\{12,8,8,8;1,1,1,3\}$ and $\Gamma$ is locally $3\cdot K_4$ with  order $2925$.
\end{itemize}
If (b) holds, then $\Gamma$ is not $5$-CH by Lemma~\ref{lemma:c2=1}, and using {\sc Magma}~\cite{Magma} and~\cite{web-atlas} 
 (see Remark~\ref{remark:magma}), it is routine to verify that $\Gamma$ is  $4$-CH, so (ii) holds. 

Thus we may assume that one of (a), (c), (d) or (e) holds. Now $\Gamma$ is the point graph of a distance-transitive generalised $n$-gon $\mathcal{S}$ for some $n$. By~\cite{BueVan1994}, one of the following holds: in (a), $\mathcal{S}$ is the split  Cayley hexagon of order $(2,2)$ or its dual; in (c), $\mathcal{S}$ is the Ree-Tits octagon of order $(2,4)$; in (d), $\mathcal{S}$ is the split Cayley hexagon of order $(3,3)$ (since this generalised hexagon is self-dual); and in (e),  $\mathcal{S}$ is the dual of the Ree-Tits octagon of order $(2,4)$.

If (c) or (d) holds, then   using~\cite{Magma,web-atlas}, it is routine to verify that $\Gamma$ is not $4$-CH (the tree of order $4$ with a vertex of valency $3$ fails). Similarly, if (e) holds, then using~\cite{Magma,web-atlas}, it is routine to verify that $\Gamma$ is not $3$-CH (the cycle of length~$3$ fails).

Lastly, suppose  that (a) holds. Neither graph is $5$-CH by Lemma~\ref{lemma:c2=1}. In order to differentiate between the split Cayley hexagon and its dual, here is a construction of the former:  its points are the one-dimensional totally singular subspaces of a quadratic space   on $V_7(2)$, and its $63$ lines are an orbit of $G_2(2)\leq \POmega_7(2)$  on the two-dimensional  totally singular subspaces. (See   \cite[\S2.4.13]{Van2012} for an explicit description of the lines.)
Using {\sc Magma}~\cite{Magma}, it is routine to verify that the point graph of the split Cayley hexagon is not $4$-CH  (the tree of order $4$ with a vertex of valency $3$ fails), while the point graph of the dual of the split Cayley hexagon is $4$-CH. Thus (i) holds.
\end{proof}

\bibliographystyle{acm}
\bibliography{jbf_references}

\end{document}